\begin{document}

\margin{Commts.\\ON}

\date{}

\title{A Universal Hochschild-Kostant-Rosenberg theorem}

\authortasos
\authormarco
\authorbertrand

\begin{abstract}
In this work we study the failure of the HKR theorem over rings
of positive and mixed characteristic. For this we construct 
a \emph{filtered circle} interpolating between the usual topological
circle and a formal version of it. By mapping to schemes we produce
this way an interpolation, realized in practice by the existence
of a natural filtration, from Hochschild and \textcolor{black}{(a filtered version of)} cyclic homology to 
derived de Rham cohomology. In particular, we show that this recovers the filtration of Antieau \cite{1808.05246} and Bhatt–Morrow–Scholze \cite{MR3949030}. The construction of our filtered circle is based on 
the theory of affine stacks and affinization introduced by the third author, together
with some facts about schemes of Witt vectors.
\end{abstract}

\maketitle

\tableofcontents

\section{Introduction}
\mylabel{Introduction}

The purpose of the present paper is to investigate the failure
of the Hochschild-Kostant-Rosenberg theorem in positive and mixed characteristic 
situations. For this, we construct a \emph{filtered circle}
$\Filcircle$,
an object of an algebro-homotopical nature, which interpolates between 
the usual homotopy type of a topological circle and 
a degenerate version of it called the \emph{formal circle}. Given an arbitrary derived scheme $X$, we take the   mapping 
stack, in the sense of derived algebraic geometry, from $\Filcircle$ to $X$; this provides an interpolation between 
Hochschild and (\textcolor{black}{a filtered version of}) cyclic homology on one side and derived de Rham cohomology of $X$ on the other. 
The existence of such an interpolation, realized concretely in terms
of a filtration, is the main content of this work.

\subsection{Circle action and de Rham differential in characteristic zero}
\mylabel{background}
 Over any commutative ring $k$, the HKR theorem \cite{MR0142598} identifies $\Omega_{\dderham}^\ast(X)$ -  the graded commutative algebra of differential forms on a smooth $k$-scheme $X=\Spec\, A$, with $\HH_\ast(A)$ - the graded algebra of Hochschild homology. When $k$ is of characteristic zero \personal{ \cite[9.8.12]{MR1269324}} this lifts to the level of chain complexes, identifying the
 de Rham complex of differential forms $\DR(A)$ with the Hochschild complex $\HH(A)$. More is true; the de Rham complex comes equipped with a natural differential which arises, via this identification, from the natural $S^1$ action on the Hochschild complex. 
 A precise implementation of this fact requires a further enhanced version of the HKR theorem, combining the intervention of the homotopical circle action and the multiplicative structure on the Hochschild complex on one side, and the full derived de Rham algebra with its natural grading and de Rham differential on the other. This is established in \cite{MR2862069} as a consequence of an equivalence of symmetric monoidal \icategories

\begin{equation}
    \label{equivalencecircleepsiloncharzero}
    \circle-\Mod_k^\otimes \simeq \epsilon-\Mod_k^\otimes,
\end{equation}
 
 \noindent where  $\circle-\Mod_k^\otimes$ denotes equivariant $k$-modules and  $\epsilon-\Mod_k^\otimes$ is the category of mixed complexes. Here $\epsilon$  is a generator of homological degree 1 that makes $k[\epsilon]\simeq k\oplus k[1]\simeq \mathrm{H}_\ast(\circle, k)$. \textcolor{black}{On both sides we have symmetric monoidal structures: on the l.h.s using the diagonal of $\circle$ and on the r.h.s using the co-multiplication given by $\epsilon\mapsto \epsilon\otimes 1+ 1\otimes \epsilon$.}

 Via this identification, one obtains a multiplicative equivalence
 
\begin{equation}
\label{HKRstackschar0functions}
\HH(A)= A\otimes_k \circle \simeq \Sym_{\rmA}(\cotangent_{\rmA/k}[1])= \DR(A),
\end{equation}

\noindent and the agreement of the symmetric monoidal structures in \eqformula{equivalencecircleepsiloncharzero} guarantees that the circle action on the left matches the de Rham differential on the right. Geometrically  as in \cite{MR2928082}, this can also be interpreted as an identification of the derived stack of free loops on an affine $k$-scheme $X=\Spec(\rmA)$, $\loops X:=\Map(\circle, X)$,  with the shifted tangent stack $\mathsf{T} [-1]\, X:=\Map(\Spec(k[\eta], X)$ where this time $k[\eta]:= k\oplus k[-1]\simeq \mathrm{H}^\ast(\circle, k)$ is the differential graded algebra given by the cohomology of the circle. Here $\eta$ is a generator of homological degree $-1$. In this language, the HKR theorem reads as an equivalence of derived stacks

\begin{equation}
\label{HKRstackschar0}
\loops X\simeq \mathsf{T} [-1]\, X
\end{equation}

\noindent and passing to global functions, recovers the isomorphism in (\ref{HKRstackschar0functions}).

\medskip

\noindent Our first observation is that  \eqformula{equivalencecircleepsiloncharzero} is no longer  a symmetric monoidal equivalence when we abandon the hypothesis that $k$ be a field of characteristic zero; indeed, the proof of \eqformula{equivalencecircleepsiloncharzero} uses two essential facts about $\B\Ga{k}$  - the classifying stack of the group $\Ga{k}$:

\medskip

\begin{enumerate}[A)]
    \item  \noindent In any characteristic, the stack $\B\Ga{k}$ is equivalent to $\Spec^{\Delta}( \Sym^{\mathsf{co}\Delta}_k(k[-1]))$ where $\Sym^{\mathsf{co}\Delta}_k(k[-1])$ is the free cosimplicial commutative $k$-algebra over one generator
    in (cosimplicial) degree $1$ (see \cref{notationcosimplicialandsimplicial} below and \cite[Lemma 2.2.5]{MR2244263}). This can be checked at the level of the functor of points. But when $k$ is a field of characteristic zero, since the cohomology of the symmetric groups with coefficients in $k$ vanishes, we recover an equivalence of commutative differential graded algebras
    
    $$
    \Sym^{\mathsf{co}\Delta}_k(k[-1])\simeq k\oplus k[-1]:= k[\eta]
    $$
    \noindent where on the r.h.s we have the split square zero extension. In particular, we have
    $$
    \Map(\B \Ga{\,,k}, X)\simeq \Map(\Spec(k[\eta]), X)=: \mathsf{T}[-1] \, X
    $$
    
    \medskip

    \item For any ring $k$ the complex of singular cochains $\C^\ast(\circle, k)$ is given by $k\oplus k[-1]$. The canonical map of groups $\bbZ \to \Ga{k}$ produces a map of group stacks $\circle:=\B \bbZ\to \B \Ga{k}$.  As in A), because the cohomology of symmetric groups with coefficients in a field of characteristic zero vanishes, the pullback map in cohomology $\C^\ast(\B\Ga{k}, \structuresheaf)\to \C^\ast(\circle, k)$ is an equivalence. This fact exhibits the abelian group stack $\B \Ga{k}$ as the \emph{affinization} of the constant group stack $\circle$ in the sense of \cite{MR2244263} (see also \cite{lurie-DAGVIII}, \cite[Lemma 3.13]{MR2928082} and our \cref{reviewaffinestacks}). It follows from the universal property of  affinization and from Zariski descent that
    $$
    \Map(\circle, X)\simeq \Map(\B \Ga{k}, X)
    $$

    \medskip
    
\end{enumerate}
 
\medskip

\noindent The accident that allows A) and B) in characteristic zero also makes the equivalences
\begin{equation}
\label{equivalenceHopfcaraczero}
    \Affinization(\circle)\underbrace{\simeq}_{B)} \B \Ga{}\underbrace{\simeq}_{A)} \Spec(k\oplus k[-1]) 
  \end{equation}
    \noindent compatible with the group structures\footnote{Essentially, by the uniqueness of the abelian group structure on the stack $\B \Ga{k}$. See \cref{corollaryequivalenceasHopfalgebras} for a similar argument in the context of this paper.}. From here it is easy to recover the symmetric monoidal equivalence \eqformula{equivalencecircleepsiloncharzero}: $k[\epsilon]\simeq \mathrm{H}_\ast(\circle, k)$ is dual to $k[\eta]:=\mathrm{H}^\ast(\circle, k)\simeq \Sym^{\mathsf{co}\Delta}_k(k[-1])) $ and this gives us a symmetric monoidal equivalence
    \begin{equation}
\label{equivalenceHopfcaraczero22}
    \epsilon-\Mod_k^\otimes\simeq k[\eta]-\mathrm{CoMod}^{\otimes}_k
\end{equation}
  \noindent where on the r.h.s we now have the tensor product induced by convolution with the Hopf algebra structure on $k[\eta]=\mathrm{H}^\ast(\circle, k)$ induced by the group structure on the circle.  Finally the equivalences of groups in  \eqref{equivalenceHopfcaraczero} gives a symmetric monoidal equivalence
    \begin{equation}
\label{equivalenceHopfcaraczero23}
   k[\eta]-\mathrm{CoMod}^{\otimes}\simeq \circle-\Mod^{\otimes}_k
\end{equation}

\medskip

\subsection{What we do in this paper}
\mylabel{thispaper}

Away from characteristic zero, equivalence \eqformula{equivalencecircleepsiloncharzero} still holds at the level of \icategories. However, the compatibility of the two symmetric monoidal structures fails dramatically. As a consequence, we can no longer identify the circle action with the de Rham differential, the latter which fundamentally requires a notion of a mixed complex.\\ 

A key observation we make in this paper is that although these symmetric monoidal structures are not equivalent, there is a natural degeneration between the two. More precisely, we equip the symmetric monoidal category of complexes with a circle action with a filtration, whose ``associated graded" is the symmetric monoidal category of mixed graded complexes, by which we mean complexes equipped with a \emph{strict} co-action of the trivial square zero extension $k \oplus k[-1]$. See \cref{proprepmonoidal,splitfiltration2}.\\

As a glimpse to our construction, we remark that the two copies of $\B\Ga{\,,k}$ appearing in A) and B) play distinct roles. What we propose, working over $\integerslocalp$, is a construction that interpolates between the two. It is inspired by an idea of \cite{MR2244263} of using the group scheme $\Wittp$ of $p$-typical Witt vectors as a natural extension of the additive group $\Ga{}$. The group $\Wittp$ is an abutment of infinitely many of copies of $\Ga{}$ and comes canonically equipped with a Frobenius map $\Frob_p$. The abelian subgroup $\Frobfixed$ of fixed points of the Frobenius map has a natural filtration whose associated graded is the kernel of the Frobenius, $\Frobkernel$. After base change from $\integerslocalp$ to $\bbQ$ both $\Frobfixed$ and $\Frobkernel$ are isomorphic to $\Ga{}$ (see the \cref{remark-characteristiczerocase} below ) but over $\integerslocalp$ they are very different.  Without further ado, our first main theorem is the following:

\begin{theorem}[See \cref{propositionaffinizationunderlying} and \cref{theorem-splitsquarezero}]\hfill\\
\mylabel{filteredcircletheorem} 
\begin{enumerate}
    \item The abelian group stack $\B\Frobfixed$ is the affinization of $\circle$ over $\integerslocalp$.
    
    
    \medskip
    \item The abelian group stack $\B\Frobkernel$ has cohomology ring given the (cosimplicial) split square zero extension (see \cref{splitsquarezeronotation})
    $\integerslocalp\oplus\integerslocalp[-1]$ given by the cohomology of the circle.
    \noindent 
    
    
    \medskip
    \item The group stack $\B\Frobfixed$ is equipped with a \emph{filtration}, compatible with the group structure, whose associated graded stack is $\B\Frobkernel$. 
    
    \medskip
    \item After base-change along $\Spec(\bbQ)\to \Spec(\integerslocalp)$, we have
    $$\B\Frobfixed\otimes \bbQ \simeq \B\Ga{\,,\bbQ}$$
   \noindent  Moreover, the filtration splits and we have

$$(\B\Frobfixed)^{\gr}_\bbQ\simeq \B\Frobkernel\otimes \bbQ \simeq \B\Ga{\,,\bbQ}$$
    \end{enumerate}
\end{theorem}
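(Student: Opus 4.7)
The proof hinges on two short exact sequences of flat affine group schemes over $\integerslocalp$:
$$
0 \to \Frobfixed \to \Wittp \xrightarrow{\,1-\Frob_p\,} \Wittp \to 0, \qquad 0 \to \Frobkernel \to \Wittp \xrightarrow{\,\Frob_p\,} \Wittp \to 0,
$$
which I interpolate by the $\bbA^1$-family $\mathcal{F} := \{(x,t) \in \Wittp \times \bbA^1 \mid \Frob_p(x) = t \cdot x\}$, specializing to $\Frobfixed$ at $t=1$ and to $\Frobkernel$ at $t=0$.

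\textbf{Parts (1) and (2).} Applying $\B$ to the two short exact sequences produces fiber sequences $\B\Frobfixed \to \B\Wittp \to \B\Wittp$ and $\B\Frobkernel \to \B\Wittp \to \B\Wittp$. Using the Verschiebung to present $\Wittp$ as an iterated extension of copies of $\Ga{}$, together with the known description $\C^\ast(\B\Ga{\,,k}, \structuresheaf) \simeq \Sym^{\mathsf{co}\Delta}_k(k[-1])$ from item A) of the Introduction, one computes the induced (co)fiber sequences on cochains. For part (2) the endomorphism $\Frob_p$ vanishes on the relevant generator in cohomology, so the cofiber collapses to the cosimplicial split square zero extension $\integerslocalp \oplus \integerslocalp[-1]$. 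For part (1), the canonical Frobenius-fixed element $1 \in \Wittp(\integerslocalp)$ furnishes a map of group schemes $\bbZ \to \Frobfixed$ and hence a map $\circle \to \B\Frobfixed$; since $\B\Frobfixed$ is an affine stack (being $\B$ of an affine unipotent group), the universal property of affinization from item B), combined with the cohomological computation, identifies this map as the affinization.

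\textbf{Part (3).} Endow $\mathcal{F}$ with the natural $\mathbb{G}_m$-action scaling $t$ and rescaling the Witt coordinates along the weight grading on $\Wittp$. This exhibits $\mathcal{F}$ as a $\mathbb{G}_m$-equivariant flat family of commutative group schemes over $\bbA^1$. Applying $\B$ and invoking the standard Rees-style correspondence between $\mathbb{G}_m$-equivariant $\bbA^1$-families and filtered objects (with associated graded given by the special fiber) yields the desired filtration on $\B\Frobfixed$, compatible with its group structure, whose associated graded stack is $\B\Frobkernel$.

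\textbf{Part (4) and main obstacle.} Over $\bbQ$ the ghost map $\Wittp \otimes \bbQ \xrightarrow{\sim} \prod_{n\geq 0} \Ga{\,,\bbQ}$ is an isomorphism of group schemes, intertwining $\Frob_p$ with the coordinate shift $(w_0, w_1, \ldots) \mapsto (w_1, w_2, \ldots)$. Its fixed points and its kernel are each canonically identified with $\Ga{\,,\bbQ}$ (respectively the constant sequences and those supported on the zeroth coordinate), giving $\Frobfixed \otimes \bbQ \simeq \Ga{\,,\bbQ} \simeq \Frobkernel \otimes \bbQ$ and hence the stated rational equivalences after applying $\B$. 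For the splitting of the filtration, the defining equations of $\mathcal{F} \otimes \bbQ$ become $w_n = t^n w_0$, exhibiting $\mathcal{F} \otimes \bbQ$ as the \emph{trivial} $\mathbb{G}_m$-equivariant family $\Ga{\,,\bbQ} \times \bbA^1_\bbQ$, which upon the Rees construction yields the desired splitting. The main obstacle I anticipate is promoting each of these fiberwise or set-theoretic identifications to the level of \emph{filtered} abelian group stacks: the group structures, the $\mathbb{G}_m$-equivariance, and the splittings must all be kept compatible throughout, which is most delicate in part (3) where one must verify that the Rees construction applied to $\mathcal{F}$ genuinely recovers $\B\Frobfixed$ as its underlying (non-filtered) object.
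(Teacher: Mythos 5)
Your overall architecture (an interpolating $\Gm{}$-equivariant family over $\affineline{}$, the Rees dictionary, ghost coordinates over $\bbQ$, and reduction of the affinization statement to the universal property) matches the paper's, but there are two genuine gaps. First, your family $\mathcal{F}=\{(x,t):\Frob_p(x)=[t](x)\}$ is \emph{not} preserved by the diagonal $\Gm{}$-action with $t$ in weight $1$: since $\Frob_p\circ[\lambda]=[\lambda^p]\circ\Frob_p$, applying $(x,t)\mapsto([\lambda]x,\lambda t)$ turns the defining equation into $[\lambda^p]\Frob_p(x)=[\lambda^{2}][t](x)$, which is not implied by $\Frob_p(x)=[t](x)$ unless $p=2$. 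Equivalently, in ghost coordinates your equations read $\omega_{p^{k+1}}=t^{p^k}\omega_{p^k}$, which are not homogeneous for the weight grading, so the fiber over $\lambda\in\Gm{}$ is not carried to the fiber over $1$ and the family does not descend to $\Filstack$. The correct interpolation is $\Frob_p(x)=[t^{p-1}](x)$, i.e.\ the kernel of $\calG_p(f,a)=(\Frob_p(f)-[a^{p-1}](f),a)$ as in \cref{construction-interpolationkernelfixedpoints}; with that exponent the equations become $\omega_{p^k}=t^{p^k-1}\omega_1$, which are homogeneous, and both the equivariance needed for part (3) and the trivialization over $\bbQ$ needed for part (4) go through. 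You also assert flatness of the family over $\affineline{}$ without argument; the paper needs \cref{lemma-Gpisfpqccover} for this, proved by reducing to the four field-valued points and recognizing Frobenius and Artin--Schreier covers.

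Second, the cohomological content of parts (1) and (2) is not actually established by your ``(co)fiber sequences on cochains.'' Away from characteristic zero, $\C^\ast(\B\Ga{},\structuresheaf)\simeq\Sym^{\mathsf{co}\Delta}(k[-1])$ is a large cosimplicial algebra (it is not $k\oplus k[-1]$), so $\C^\ast(\B\Wittp,\structuresheaf)$ is large, and the assertion that the relevant cofiber ``collapses'' to $\integerslocalp\oplus\integerslocalp[-1]$ because $\Frob_p$ kills one generator is precisely the hard point, not a consequence of it. The paper circumvents this computation entirely: it proves base-change statements (\cref{flatbasechangecohomologyBFixed}, \cref{basechangeeeee}) reducing everything to the fibers over $\bbQ$ and $\finitefieldp$; over $\finitefieldp$ it identifies $\Frobfixed$ with $\padicintegers$ via the Artin--Schreier--Witt sequences and invokes the known affinizations of $\circle$ over fields (\cref{affinizationsoverQandFp}, \cref{localcriterionaffinization}) for part (1); and for part (2) it identifies $\Frobkernelm$ over $\finitefieldp$ as the Cartier dual of $\alphapm\cong\mupm$ (an isomorphism of schemes, not of groups), whence $\cochainscosimplicial(\Frobkernel_{|_{\finitefieldp}},\structuresheaf)\simeq\cochainscosimplicial(\padicintegers,\structuresheaf)$ as coalgebras, followed by a discreteness/formality argument in the graded category to pin down the cosimplicial algebra structure (\cref{underlyingcomplexBKern}, \cref{theorem-splitsquarezero}). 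Your proposal needs some replacement for these steps; as written, parts (1) and (2) reduce to an unproven computation of $\C^\ast(\B\Wittp,\structuresheaf)$ together with the action of $\Frob_p$ on it.
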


\medskip

\begin{definition}
\mylabel{Definition-filteredcirclefirst}
The group stack $\B\Frobfixed$, equipped with the \emph{filtration} of \cref{filteredcircletheorem} - (iii), will be called the \emph{filtered circle} and denoted as $\Filcircle$.
\end{definition}

\medskip

\begin{remark}[Filtrations]
In order to define filtrations on stacks, we will follow the point of view of C. Simpson \cite[Lemma 19]{MR1159261} and \cite{MR1492538} which identifies filtered objects with objects over the stack $\Filstack$ - see \cite{tasos} and \cref{definition-filteredstacks}.  The content of \cref{filteredcircletheorem} and \cref{Definition-filteredcirclefirst} can then be reformulated as the construction of an abelian group stack $\Filcircle$ over $\Filstack$ whose fiber at $0$ has the property in A); at $1$ has the property in B); and whose pullback to $\bbQ$ is the constant family  with values $\B\Ga{\,,\bbQ}$. The construction of $\Filcircle$ is the subject of \cref{filtrationskernelfixed}, after reviewing the basics of Witt vectors in \cref{section-remindersWitt}. The proof that $\Filcircle$ satisfies (i) and (ii) will be discussed later in \cref{associatedfilteredcircle} and \cref{underlyingfilteredcircle}. The proof of (iv) is explained in the \cref{remark-characteristiczerocase}. \\
\end{remark}

\noindent Having in mind the well known interpretation of cyclic homology in terms of 
derived loop spaces, our main theorem can now be stated as follows:

\medskip
\begin{theorem}[See \cref{thmhkr-stackversion} and  \cref{thmhkr}]
\mylabel{corollaryHKRpositive}
Let $X=\Spec(\rmA)$ be a derived affine scheme  over $\integerslocalp$. Then:\\

\begin{enumerate}

    \item We have canonical equivalences of derived mapping stacks
    
    $$\Map(\circle, X)\simeq \Map(\B\Frobfixed, X)\,\,\,\,\, \text{ and }\,\,\,\,\, \mathrm{T}[-1]X\simeq \Map(\B\Frobkernel, X)$$
   In particular, the derived mapping stack $\Map(\circle, X)$ admits the structure of a filtration compatible with an action of the filtered circle $\Filcircle$ and whose associated graded is $\mathsf{T} X[-1]$;
    
    \medskip
    
    \item Passing to global functions, (i) produces a filtration on $\HH(A)$, compatible with the circle action and the multiplicative structure, and whose associated graded is the derived de Rham algebra $\DR(A)$.
    
    \medskip
    
    \item Since the filtration is compatible with the circle action, by taking homotopy fixed points of $\HH(A)$ seen as a filtered object, get a new filtered object which we call $\HCminFil(A)$. It has a canonical map to the usual fixed points 
    \begin{equation}
\label{failurebasechangeformulakey}
    \HCminFil(A)\to \HCmin(A):=\HH(A)^{\mathrm{h}\circle}
 \end{equation}
     \medskip 
     \noindent and the associated graded pieces of  $\HCminFil(A)$ are the truncated complete derived de Rham complexes
    $\mathbb{L}\widehat{\DR}^{\,\,\geq p}(A/k)$.
\end{enumerate}
\end{theorem}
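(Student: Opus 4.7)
\medskip

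\noindent\textbf{Proof proposal.}  The plan is to reduce each of the three assertions to the four properties of $\Filcircle$ proved in \cref{filteredcircletheorem}, passing through mapping stacks and then global functions.

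For part (i), I would proceed in two parallel arguments. For the first equivalence, since $X$ is derived affine, mapping into $X$ turns colimits into limits; by the universal property of affinization (cf. the $\mathrm{B})$ discussion in \cref{background}) combined with Zariski descent, any map $\circle \to X$ factors uniquely through $\Affinization(\circle)$. Since by \cref{filteredcircletheorem}(i) this affinization is $\B\Frobfixed$, we get $\Map(\circle, X)\simeq \Map(\B\Frobfixed, X)$. For the second equivalence, observe that a mapping stack into an affine derived scheme depends only on the cohomology ring of the source; by \cref{filteredcircletheorem}(ii), the cosimplicial algebra of functions on $\B\Frobkernel$ is the split square zero extension $\integerslocalp\oplus\integerslocalp[-1]$, so by definition
$$\Map(\B\Frobkernel, X)\simeq \Map\bigl(\Spec(\integerslocalp\oplus\integerslocalp[-1]), X\bigr)=\mathsf{T}[-1]X.$$
Finally, for the filtered enhancement, I would replace $\circle$ and $\mathrm{pt}$ by the total space $\Filcircle$ over $\Filstack$ and take the relative mapping stack; by \cref{filteredcircletheorem}(iii), the generic fiber gives the first equivalence and the special fiber gives the second, producing a filtration on $\Map(\circle, X)$ whose associated graded is $\mathsf{T}[-1]X$. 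The compatibility with the circle action follows from the fact that $\Filcircle$ is a group object in filtered stacks, so its action on itself induces an action on the mapping stack.

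For part (ii), I would pass to global functions on both sides. On the one hand, $\structuresheaf(\Map(\circle, X))\simeq A\otimes_k S^1 \simeq \HH(A)$ by the standard identification (this uses that $X$ is affine so global functions convert the constant-diagram presentation of $\circle$ into a tensor over the cyclic bar). On the other hand, $\structuresheaf(\mathsf{T}[-1]X)\simeq \Sym_A(\cotangent_{A/k}[1])=\DR(A)$. Taking global functions on the filtered mapping stack gives a filtered complex interpolating between these two, and the group structure on $\Filcircle$ equips the filtered object with a circle action compatible with the multiplicative structure (which itself comes from the diagonal of $\Filcircle$, cf. the symmetric monoidal structure discussion in \eqformula{equivalencecircleepsiloncharzero}).

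For part (iii), I would take homotopy $\circle$-fixed points of the filtered object $\HH(A)^{\Fil}$ constructed in (ii). Because the filtration and the circle action are compatible, this produces a filtered object $\HCminFil(A)$, with a forgetful map to the ordinary fixed points $\HCmin(A)$. The associated graded of a homotopy fixed-point spectrum of a filtered $\circle$-module is computed from the associated graded of the underlying object together with its induced (now \emph{strict}) action by the associated graded of the circle, which by \cref{filteredcircletheorem}(iii) corresponds to a strict $k[\epsilon]$-coaction; its homotopy fixed points compute exactly the $p$-truncated complete derived de Rham complex $\mathbb{L}\widehat{\DR}^{\,\,\geq p}(A/k)$ by the standard Connes-type formula (this is where the completion appears, since homotopy fixed points are a limit).

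The main obstacle will be the last step: carefully controlling the interaction of homotopy fixed points with filtrations (in particular ensuring the spectral sequence converges so that $\mathrm{gr}^p$ of $\HCminFil(A)$ really is $\mathbb{L}\widehat{\DR}^{\,\,\geq p}(A)$ and not some incomplete variant). This requires arguing that the filtration on $\HH(A)$ is complete and exhaustive, which in turn reduces to the corresponding statements on the level of the filtered stack $\Filcircle$ over $\Filstack$.
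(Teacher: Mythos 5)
Your route coincides with the paper's: affinization identifies the underlying mapping stack with $\loops X$, the split square-zero computation identifies the graded fiber with $\mathsf{T}[-1]X$, and global sections plus homotopy fixed points yield (ii) and (iii). However, two steps you treat as formal are exactly where the substance of the proof lies.

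First, in (ii) the associated graded must be identified with $\DR(A)$ as a graded \emph{mixed} algebra: one has to show that the residual action of $\Filcirclegr=\B\Frobkernel$ on $\calO(\mathsf{T}[-1]X)\simeq \Sym_A(\cotangent_{A/k}[1])$ is the de Rham differential, whereas you only identify the underlying graded algebra. This does not follow from \cref{filteredcircletheorem} alone; the paper's argument (\cref{thmhkr}) left Kan extends from polynomial algebras, uses that the space of graded mixed structures on $\Sym_A(\Omega^1_{A/k}[1])$ is discrete because the weight grading coincides with the cohomological grading, and then reduces by torsion-freeness to characteristic zero, where the identification of the mixed structure with $\dderham$ is known. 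Your part (iii) silently relies on this when invoking the ``Connes-type formula,'' so the gap propagates.

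Second, several commutations you assert require non-formal base-change arguments. In (i), the equivalence of mapping \emph{stacks} needs $\calO(\B\Frobfixed\times\Spec B)\simeq \calO(\B\Frobfixed)\otimes B$ for every test algebra $B$; the universal property of affinization only treats $B=\integerslocalp$, and the paper supplies the missing step via finite cohomological dimension of $\B\Frobfixed$ and a projection formula (\cref{lemma-Bfixfinitecohdimension}), after reducing to $X=\affineline{k}$. In (iii), the statement that $\mathrm{gr}$ commutes with $(-)^{\mathrm{h}\Filcircle}$ is precisely a Beck--Chevalley equivalence along $0:\B\Gm{}\to\Filstack$, which holds because $0$ is an lci closed immersion of finite Tor-dimension and the quotient map is affine; the analogous commutation along $1$ \emph{fails} in general, which is exactly why \eqref{failurebasechangeformulakey} is only a map and not an equivalence. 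So ``compatibility of the filtration with the action'' cannot carry this step; the asymmetry between $0$ and $1$ must be argued. Your closing worry about exhaustiveness is not the relevant obstruction: the completion in $\mathbb{L}\widehat{\DR}^{\,\geq p}$ is built into the definition of $(-)^{\mathrm{h}\Filcirclegr}$ as a limit, and the convergence question is subsumed by the base-change analysis above.
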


\begin{remark}
\mylabel{remark-comparisonfiltrationBMS}
As we will see later (\cref{filteredglobalsectionsfixedpoints}) the map \eqref{failurebasechangeformulakey} measures the difference between the underlying object of fixed points of a filtration and the fixed points of the underlying object. When $A$ is  \textcolor{black}{discrete and quasi-smooth}, the two processes coincide - see \cite[Lemma 4.10]{1808.05246} and \cref{thmhkr}-(d).

 In \cref{section-comparison-Antieu} we show that the point (iii) extends previously known filtrations constructed in  \cite{1808.05246} for discrete rings  and in  Bhatt-Morrow-Scholze in \cite{MR3949030} for $p$-adic rings.
The construction presented in our work is very different in nature.
 
The main emphasis of our result is that this filtration exists on the algebraic circle itself, before the HKR theorem.
\medskip
\end{remark}

\medskip

In \cref{lastsection} we will discuss several
applications, generalizations and possible
research directions suggested by \cref{filteredcircletheorem} and  \cref{corollaryHKRpositive}.\\

\begin{application}[Shifted Symplectic Structures in positive characteristic]\mylabel{applicationshiftedsymplectic}
In \cref{section-shiftedsymplectic} we discuss the extension of the notion of shifted symplectic structures of \cite{MR3090262} to derived stacks in positive characteristic. For this we use our HKR theorem in order to produce certain classes in the second layer of
the filtration induced on negative cyclic homology. This is achieved by 
analyzing the Chern character map at the first two graded pieces
of the filtration \textcolor{black}{on $\HCminFil$}. We also suggest a possible definition of 
$n$-shifted symplectic structures and show that the universal 
$2$-shifted symplectic structure on $\B G$ exists essentially over any 
base ring $k$. By the techniques developed in \cite{MR3090262}
we obtain this way extensions of various previously known $n$-shifted symplectic
structures over non-zero characteristic bases. 
\end{application}

\begin{application}
[Generalized Cyclic Homology and Formal groups]\mylabel{applicationellipticcurves}
The application discussed in \cref{section-generalizedcyclicformalgroups} comes from the observation that the degeneration from $\Frobfixed$  to $\Frobkernel$ of \cref{filteredcircletheorem}-(iii) is Cartier dual to the degeneration of the multiplicative formal group $\formalGm{}$ to the additive formal group $\formalGa{}$ (see \cref{recoverviaCartierduality}). In \cref{section-generalizedcyclicformalgroups} we will discuss how to generalize \cref{filteredcircletheorem} and  \cref{corollaryHKRpositive} replacing $\formalGm{}$ by a more general formal group law $\rmE$, in particular, one associated to an elliptic curve. 
\end{application}

\medskip

\begin{application}[Topological and q-analogues]\mylabel{applicationqanalogue}
In \cref{topoqanalogue} we briefly present topological and $q$-deformed 
possible generalizations of our filtered circle. We investigate two 
related ideas, a first one that predicts the existence of 
a topological, non-commutative version of $\Filcircle$ as a filtered object in spectra. A second one, along the same
spirit, predicting the existence of a $q$-deformed filtered circle $\Filcircle(q)$ possibly related to q-deformed de Rham complex (see for instance \cite{MR1792063}) in a similar fashion that $\Filcircle$ is related to de Rham theory. Again, such a \emph{quantum
circle} can only exist if one admits non-commutative objects in some sense.
\end{application}
\medskip

\textbf{Relation with other works:}\mylabel{section-futureworks} The object $\Filcircle$ and the constructions behind it can be placed in a general context. For instance, this construction is of homotopical significance, 
as the underlying object of $\Filcircle$ is the affinization of the topological 
circle over $\integerslocalp$ in the sense of \cite{MR2244263}. In \cite{1911.05509} the third author studies the integral version of the affinization and its relation to our group
scheme $\Frobfixed$. The result there may be used in order to extend our HKR theorem over 
$\bbZ$. Concerning, the filtration, 
we believe that 
our construction is much more general and that for any 
finite CW homotopy type $X$ the affinization $(X\otimes \bbZ)=\Spec\, \C^*(X,\bbZ)$ comes 
equipped with a canonical filtration whose associated graded is $\Spec\, \mathsf{H}^*(X,\bbZ)$
(at least when $X$ has torsion free cohomology groups). This is in a way the canonical
filtration that degenerates a homotopy type over $\bbZ$ to a formal 
homotopy type.

In a different direction, we would like to mention the work \cite{Arpon}, in which the 
author also constructs a filtered
circle, as filtered Hopf algebra object in a suitable $\infty$-category of \emph{derived commutative rings}. As far as the authors understand \cite{Arpon}, 
this Hopf algebra is a model for the 
Hopf algebra of functions on our filtered circle 
$\Filcircle$.



\medskip

\begin{acknowledgements}
\mylabel{acknowledgements}The idea of a filtered circle has been 
inspired by the construction 
of the Hodge filtration in Carlos Simpson's non-abelian 
Hodge theory, and we thank Carlos Simpson, Tony Pantev, Gabriele Vezzosi and Mauro Porta for several conversations on these themes along the years. We also thank  Arpon Raksit for discussions related to his approach to the filtered circle.  We are very thankful to Pavel Etingof for
having pointed to us the possible existence of the q-deformed
circle (presented in \cref{topoqanalogue}), this has also lead the authors to realize the existence of various circle associated to any commutative 
formal group law.

We are also thankful to the anonymous referee for encouraging us to include a new section comparing our filtration with the one of Antieau.

This paper was written while the first two authors were in residence at the Mathematical Sciences Research Institute in Berkeley, California, during the Spring 2019 semester, supported by the National Science Foundation under Grant No. DMS-1440140. Tasos Moulinos and Bertrand To\"en are 
supported by the grant NEDAG ERC-2016-ADG-741501. Marco Robalo was supported by the grant ANR-17-CE40-0014.
\end{acknowledgements}

\medskip

\begin{notation}[Simplicial, Cosimplicial and $\Einfinity$]
\mylabel{notationcosimplicialandsimplicial}
Unless mentioned otherwise, all higher categorical notations are borrowed from \cite{lurie-ha, lurie-htt}. Let $k$ be a discrete commutative ring. Throughout the paper we will denote by 
\medskip
\begin{enumerate}[a)]

    \item We use homological conventions. We write $\Mod_k$ for the \icategory of chain complexes of $k$-modules; $\Mod_{k}^{ \geq 0}$, resp. $\Mod_{k}^{\leq 0}$ the categories of connective and coconnective complexes.
      \medskip
     \item The notation $\CAlg$ will always be used to denote  $\Einfinity$-algebras. In particular $\CAlg_k$ will denote $\Einfinity$-algebras in $\Mod_k$; $\CAlg_k^{\mathsf{cn}}\simeq \CAlg(\Mod_k^{\geq 0})$ the full subcategory of connective algebras \cite[2.2.1.3, 2.2.1.8, 7.1.3.10]{lurie-ha}
     and $\CAlg_k^{\mathsf{ccn}}$ the category of $\Einfinity$-algebras in $\Mod_k^{\leq 0}$ for the symmetric monoidal structure induced from the fact $\tau_{\leq 0}$ is a monoidal localization. In particular, as the inclusion $\Mod_k^{\leq 0}\subseteq\Mod_k$ is lax monoidal, we have an induced map at the level of algebras $\CAlg_{k}^{\mathsf{ccn}}\to \CAlg_k$
  
    \medskip
    \item $\SCRings{k}$ the \icategory of simplicial commutative rings over $k$. This is the sifted completion of the discrete category of polynomial algebras $\Nerve(\Poly{k})$. See \cite[25.1.1.5]{lurie-sag} and \cite[5.5.9.3]{lurie-htt}. The universal property of sifted completion gives us the normalized Dold-Kan functor $\theta:\SCRings{k}\to \CAlg_k^{\mathsf{cn}}$. By  \cite[25.1.2.2, 25.1.2.4]{lurie-sag} this is both monadic and comonadic and if $k$ is of characteristic zero it is an equivalence. Given $A\in \SCRings{k}$, $\theta(A)$ will be called the \emph{underlying $\Einfinity$-algebra of $A$}. 
    \medskip

    \item By $\Sym$ we will always mean the simplicial version $\Sym^\Delta$ as a monad in $\Mod_{k}^{\geq 0}$;
     \medskip

    \item $\coSCRings{k}$ the \icategory of cosimplicial commutative rings over $k$ (See Definition \ref{cosimplicialcommalgebra} below.)  We also denote by $\theta:\coSCRings{k}\to  \CAlg_k$ the  conormalized Dold-Kan construction (see \cite[\S 2.1]{MR2244263}).  This functor is conservative, commutes with tensor products and preserves limits by the discussion in Section \ref{hopfalgebrasandcomodules}.  It can be factored by a functor $\theta^{\mathsf{ccn}}$
    
    $$
    \xymatrix{
    \coSCRings{k}\ar[rr]^-{\theta^{\mathsf{ccn}}}&& \CAlg_k^{\mathsf{ccn}}\subseteq \CAlg_k
    }
    $$
    
    \noindent where $\theta^{\mathsf{ccn}}$ is the co-dual Dold-Kan construction of \cite{MR1243609}. 

    \medskip
    
    
  \item By $\Sym^{\mathsf{co}\Delta}$ we will mean the free cosimplicial commutative algebra on $\Mod_k^{\mathsf{\leq 0}}$.
  \medskip
    
    \item $\Stacks{k}$ the \icategory of stacks over the site of discrete commutative $k$-algebras and $\dSt_k$ the \icategory of derived stacks, ie, stacks over 
    $\SCRings{k}$.
    \medskip
    \item $\dSch_k$ the $\infty$-category of derived schemes and $\Spec: \SCRings{k}^{\op}\to \dSch_k$ the affine derived scheme associated to of a simplicial commutative ring.
     \item $\cospec: \coSCRings{k}^{\op}\to \Stacks{k}$ the \ifunctor   sending an object $\rmA\in \coSCRings{k}$ to the (higher) stack which sends a classical commutative ring $\rmB$ to the mapping space $\Map_{\coSCRings{k}}(\rmA,\rmB)$. See also \cref{reviewaffinestacks}
    \medskip
    
\end{enumerate}
\end{notation}

\medskip

\begin{notation}
\mylabel{geometrynotations}
We assume $k$ as in \cref{notationcosimplicialandsimplicial}. $\Gm{k}$ and $\affineline{k}$ will always denote the flat versions of the multiplicative group and affine line over $k$.
\end{notation}

\medskip
\begin{notation}[Quasi-coherent sheaves on stacks and $t$-structures]
\mylabel{left-complete-t-structure}
Again, assume $k$ as in \cref{notationcosimplicialandsimplicial}.
\begin{enumerate}[a)]
\item $\Qcoh$ will always denote the \icategory of quasi-coherent  sheaves in the sense of \cite[6.2.2.1, 6.2.2.7, 6.2.3.4]{lurie-sag}, ie, $\Qcoh(X):=\lim_{\Spec A\to X}\Mod_A$ with $\Spec(A)$ a derived affine scheme with $A\in \SCRings{k}$ and $\Mod_A$ the category of modules in spectra of the connective $\Einfinity$-ring $\theta(A)$.
    
    \medskip
    
    \item Under some mild conditions, $\Qcoh$ admits a t-structure where, by definition, $F\in \Qcoh(X)$ is connective if  the pullback to every affine  is connective. See \cite[6.2.5.7, 6.2.5.8, 6.2.5.9, 6.2.3.4-(3)]{lurie-sag}. We will see this in \cref{left-complete-t-structure}. In particular, if for any map $f:X\to Y$, the pullback $f^\ast:$ $\Qcoh(Y)\to \Qcoh(X)$ is right t-exact (ie, preserves $\geq 0$) and the right adjoint $f_\ast$ is left t-exact (preserves $\leq 0$) \cite[1.3.3.1]{lurie-ha}.
    \medskip
    
    \item In the light of the previous item and particularly useful in this paper, is the condition that a stack $X$ over $k$ is presented by a simplicial scheme $\Spec(\rmA^\bullet)$, with $\rmA$ a cosimplicial commutative algebra $\rmA^\bullet$ with $\rmA^0=k$, each $\rmA^m$ flat and discrete over $\rmA^0$ and such that all boundary maps $\rmA^m\to \rmA^n$ are flat. Then we can proceed as in \cite[4.5.2]{lurie-DAGVIII} and \cite[6.2.5.7, 6.2.5.8, 6.2.5.9, 6.2.3.4-(3)]{lurie-sag} and prescribe a left-complete t-structure on $\Qcoh(X)$ compatible with the limit decomposition by descent

$$
\Qcoh(X)\simeq \,\, \lim_{[n]\in \Nerve(\Delta^\op)}\,\Mod_{\rmA^n}
$$

\noindent Since the transition maps $\rmA^m\to \rmA^n$ are flat, the extension of scalars $\Mod_{\rmA^m}\to \Mod_{\rmA^n}$ are both left and right $t$-exact and we can define

$$
\Qcoh(X)_{\geq 0}:=  \lim_{[n]\in \simplicial} \, (\Mod_{\rmA^{n}}^{\geq 0})
$$

 $$\Qcoh(X)_{\leq n}\simeq \lim_{[m]\in \simplicial}\, \Mod_{\rmA^m}^{\leq n}$$

\noindent In particular, an object $\rmM\in \Qcoh(X)$ connective or coconnective if it is so after pullback along the atlas and is in the heart if and only if its image along the first projection $\Qcoh(X)\to \Mod_k$ is in $\Mod_{k}^{\heartsuit}$ - the classical abelian category of $k$-modules. We conclude that $\structuresheaf_X$ is in the heart.
    
    \medskip
    
    \item For a (derived) stack $X$ over a ring $k$, we will denote by $\cochains(X,\structuresheaf)$ the $\Einfinity$-ring in $\Mod_k$ given by the quasi-coherent pushforward of $\structuresheaf_X$ along the structure map $X\to \Spec\, k$.
\end{enumerate}
\end{notation}
\medskip

\section{Filtrations, Fixed Points and Kernel of Frobenius on Witt vectors}
\mylabel{filtrationskernelfixed}

\subsection{Review of Witt-Vectors}
\mylabel{subsection-Witt}

We start by reviewing the materials concerning Witt vectors that will be used in the paper. We follow \cite[\S 1]{MR3316757} closely. Other references are \cite{lurie-ambidexterity,nikolaus-notes,
MR0209285,MR2553661,MR2987372, MR565469, MR0302656}.
\begin{guide}[Witt Vectors]
\mylabel{guideWitt}
\label{ptypicalWittvectors}
\label{Wittvectors}
\label{section-remindersWitt}

\hfill\\
\begin{enumerate}[1)]
\item Throughout this paper we fix $p$ a prime and unless mentioned otherwise, we work with $\integerslocalp$-algebras;
\medskip
    \item $\Wittp: \CRings_{\integerslocalp}\to \Ab$ the commutative group scheme of $p$-\emph{typical Witt vectors}, evaluated on $\integerslocalp$-algebras.  The underlying scheme of $\Wittp$ is the infinite product $\prod_{n\in S}\affineline{}$ where $S=\{1, p, p^2,...\}$.
    \medskip
    \item $\mathrm{Ghost}: \Wittp \to \prod_{n\in S}\Ga{}$, the map implemented by the Ghost coordinates, defined via $(\lambda_n)_{n\in S}\mapsto (\omega_n)_{n\in S }$ where $\omega_n:= \sum_{d|n}d.\lambda_d^{\frac{n}{d}}$. In particular $\omega_1=\lambda_1$. The group structure on $\Wittp$ of (i) is uniquely determined by the requirement that $\mathrm{Ghost}$ defines a map of groups, where on the r.h.s we have the degrewise additive group structure of $\Ga{}$. See \cite[Prop. 1.2]{MR3316757}. The group unit is the Witt vector $(1,0,..)\in \Wittp$.
    \medskip
    \item The map $\mathrm{Ghost}$ is an isomorphism after base change to $\bbQ$. See \cite[B.3(2)]{nikolaus-notes}.
    \medskip
    \item $\Wittpm$ the group scheme of truncated $p$-typical Witt vectors of length $m$. Let $S_m:=\{1, p, ..., p^{m-1}\}$. The underlying scheme of $\Wittpm$ is $\prod_{n\in S_m}\affineline{}$. As a group, $\Wittpm$ is again defined under the requirement that the truncated Ghost coordinates $\Wittpm\to \prod_{n\in S_m}\Ga{}$ define a map of groups. By the same argument as in \cite[B.3(2)]{nikolaus-notes} \begin{equation}
    \label{ghostcomponentstruncatedptypical}
\Wittpm\otimes \bbQ\simeq \prod_{\{1, p, ..., p^{m-1}\}} \Ga{\bbQ}
\end{equation}
\medskip
    \item There are restriction maps $\Wittpmplus\to \Wittpm$  The $\Wittp^{(1)}$ is canonically isomorphic to $\Ga{}$ and there are exact sequences
    \begin{equation}
    0\to \Ga{}\to \Wittpmplus\to \Wittpm\to 0    
    \end{equation}
    Passing to the limit we get a pro-structure $\Wittp\simeq \lim_{m} \Wittpm$ as group schemes.
    \medskip
    \item The group scheme $\Wittp$ comes naturally equipped with a Frobenius endomorphism $\Frob_p:\Wittp\to \Wittp$. This is uniquely defined as a map of abelian groups under the requirement that on Ghost coordinates it acts by
    $$
    (\omega_1, \omega_p, \omega_{p^2},...)\mapsto (\omega_p, \omega_{p^2}, ...)
    $$
    \noindent See \cite[Lemma 1.4]{MR3316757}. In terms of the pro-structure it decomposes as maps
    $$
    \Frob_p: \Wittpm\to \Wittpmone
    $$
    After base change to $\finitefieldp$, $\Frob_p$ is the standard Frobenius on each coordinate, namely, $(\lambda_n)_{n\in S}\mapsto (\lambda_n^p)_{n\in S}$. See \cite[Lemma 1.8]{MR3316757}. 
    \medskip
    \item We write $\Frobfixed$ for the group scheme given by the kernel of the map 
$\Frob_p-\Id:\Wittp\to\Wittp$ and $\Frobkernel$ for the kernel of $\Frob_p:\Wittp\to\Wittp$.
    \end{enumerate}
\end{guide}

\medskip

To illustrate how Witt vectors will be used in our HKR theorem, let us start with the following observation of what happens in  characteristic zero:

\medskip

\begin{remark}
\mylabel{remark-characteristiczerocase}
Let $\rmA$ be a $\bbQ$-algebra. Then the explicit formula for $\Frob_p$ on $\Wittp(\rmA)$ in terms of the Ghost coordinates tells us that the fixed points for the Frobenius are given by
the diagonal embedding
$$
\Frobfixed(\rmA) \simeq \Delta \subseteq \prod_{i\geq 0} \Ga{}(\rmA)
$$

\noindent Another easy computation in Ghost coordinates, also tells us that the Kernel of the Frobenius is given by
the inclusion of the first coordinate:

$$
\Frobkernel(\rmA) \simeq  (\Ga{}(\rmA),0,0,0,0,...) \subseteq \prod_{i\geq 0} \Ga{}(\rmA)
$$

\noindent In other words, as group-schemes we obtain 

$$\Frobfixed_{|_{\bbQ}}\simeq \Ga{\bbQ}\,\,\,\,\,\, \text{ and }\,\,\,\,\Frobkernel_{|_{\bbQ}}\simeq \Ga{\bbQ}$$

\end{remark}

\medskip

The \cref{remark-characteristiczerocase} shows that for $\bbQ$-algebras, the additive group scheme $\Ga{}$ can be defined abstractly via Witt vectors, either as Frobenius fixed points or as the kernel. In this paper we utilize this feature to understand the HKR theorem in positive characteristic. Away from $\bbQ$-algebras, the fixed points and the kernel of $\Frob_p$ on $\Wittp$ do not agree. However, we shall see that there is a natural degeneration from the first to the second, or more precisely, a filtration on $\Frobfixed$ whose associated graded is $\Frobkernel$. The delooping
of this filtration will be, by definition, our filtered circle.
For this purpose we will need to explain what is a filtration on a stack. Before addressing that question, let us be precise about the linear versions of filtrations and gradings used in this paper:

\subsection{Graded and filtered objects}
\mylabel{subsection-graded-filteredobjects}

We start this section by recalling the definitions of filtered and graded objects and how t-structures give rise to filtered objects. We conclude by discussing Simpson point of view on filtrations.

\begin{construction}\cite{lurie-K}
\mylabel{construction-Filteredobjects}
Let $\C$ be a cocomplete stable \icategory. The category of filtered objects in $\C$ is the \icategory of diagrams $\Fil(\C):=\Fun(\Nerve(\bbZ)^\op, \C)$, with $\Nerve(\bbZ)$ the nerve of the category associated to the poset $(\bbZ,\leq)$. The category of $\bbZ$-graded objects in $\C$ is the \icategory of diagrams 
$\C^{\bbZ-\mathrm{gr}}:=\Fun(\bbZ^\mathrm{disc,\op}, \C)\simeq \prod_{i\in \bbZ} \C$ where $\bbZ^\mathrm{disc}$ is the $\bbZ$ seen as a discrete category. If $\C$ has a symmetric monoidal structure with unit $\unit$, both categories are endowed with symmetric monoidal structures given by Day convolution. The unit of $\C^{\bbZ-\mathrm{gr}}$ is given by the graded object $(\cdots, \underbrace{0}_{1}, \underbrace{\unit}_{0}, \underbrace{0}_{-1}, \underbrace{0}_{-2},\cdots)$. The unit of $\Fil(\C)$ is the filtered object $\cdots \underbrace{0}_{1}\to \underbrace{\unit}_{0} =\underbrace{\unit}_{-1}=\underbrace{\unit}_{-2}=\cdots$. Following \cite[\S 3.1 and 3.2]{lurie-K}, the construction of the associated graded object, respectively, the underlying object, are implemented by symmetric monoidal functors

$$
\mathrm{gr}:\Fil(\C)\to \C^{\bbZ-\mathrm{gr}}\,\,\, \text{ and }\,\,\, \colim:\Fil(\C)\to \C.
$$

\end{construction}

\medskip

\begin{remark}
\mylabel{mapoffiltratinsisoiffassgradandunderiso}
A map in $\Fil(\C)$ is an equivalence if and only if both associated graded and underlying maps are equivalences. Indeed, since $\Fil(\C)$ is stable, passing to cofibers, this amounts to show that a filtered object $\rmE$ is zero if and only both ita associated graded and underlying objects are zero. But if the associated graded is zero, we see that all maps $\rmE_{n+1}\to \rmE_n$ are equivalences and therefore the filtered object is constant. If moreover its colimit is zero, then the object itself is zero.
\end{remark}

\begin{definition}
\mylabel{definition-completefiltration}
Assume furthermore that $\C$ has all limits. Let $\rmF\in \Fil(\C)$ be a filtered object. We say that $\rmF$ is \emph{complete} if $\lim\, \rmF\simeq 0$. We denote by $\Filcomplete(\C)$ the full subcategory of $\Fil(\C)$ spanned by complete filtered objects.
\end{definition}

\medskip
\begin{remark}
\mylabel{remark-completemodulesleftorthogonal}
For any filtered object $\rmF$ and for every $i\in \bbZ$ we have a cofiber sequence  in $\C$

\begin{equation}
\label{eq-exactsequencecompletionfiltered}
 \lim\, \rmF\underbrace{\simeq}_{cofinal}\lim_{j>i}\,\rmF_j \to \rmF_i\to \rmF_i/(\lim_{j>i}\, \rmF_j)\simeq \lim_{j>i} \,\rmF_i/\rmF_j
\end{equation}

\noindent where the last equivalence follows because $\Fil(\C)$ is stable. Therefore, a filtered object is complete if and only if the canonical map $
\rmF_i\to \lim_{j\geq i}\, \rmF_i/\rmF_j
$ is an equivalence for all $i$. Inspire by this, we define a new filtered object $\widehat{\rmF}$ by the formula $(\widehat{\rmF})_n:= \rmF_n/ (\lim\, \rmF)$ and by construction, it is complete \personal{(Use the exact sequence \eqformula{eq-exactsequencecompletionfiltered} with the constant filtered object $\lim\, \rmF)$.)} and comes with a natural map of filtered objects $\rmF\to \widehat{\rmF}$ that is an equivalence precisely when $\rmF$ is complete. Following \cite[Proposition 2.14, Lemma 2.15]{MR3806745}, the assignment $\rmF\mapsto \widehat{\rmF}$ provides a left adjoint $\widehat{(-)}:\Fil(\C)\to \Filcomplete(\C)$ to the inclusion of the full subcategory $\Filcomplete(\C)\subseteq \Fil(\C)$ and presents $\Filcomplete(\C)$ as a Bousfield localization of $\Fil(\C)$ with respect to the class of maps of filtered objects  $\rmE\to \rmF$ whose associated graded $\gr(\rmE)\to \gr(\rmF)$ is an equivalence of graded modules. This follows from the two fiber sequences
$$
\lim\, \rmF/\lim\, \rmE \to \rmF_i/\rmE_i\to \widehat{\rmF}_i/\widehat{\rmE}_i\,\,\,\,\text{ and  }\,\,\,\rmF_{i+1}/\rmE_{i+1} \to \rmF_i/\rmE_i\to \gr(\rmF/\rmE)_i$$

\noindent In particular, the inclusion of $\Filcomplete(\C)\subseteq \Fil(\C)$ is stable under all limits.
\end{remark}

\medskip

\begin{construction}
\mylabel{construction-whiteheadtowerfunctor}
\noindent Let $\C$ be stable $\infty$-category with a $t$-structure $(\C_{\geq 0}, \C_{\leq 0})$. Then we have an associated Whitehead tower $\infty$-functor
$$
\tau_{\geq}: \C\to \Fil(\C)\,\,\,, \text{ sending } X\mapsto \tau_\geq X:= [\cdots \to \tau_{\geq n+1}X \to\tau_{\geq n}X\to \cdots]
$$
\noindent  If the $t$-structure is right complete, then $\tau_{\geq }$ admits a left inverse by extracting the underlying object $\colim:\Fil(\C)\to \C$. The essential image of $\tau_{\geq}$ consists of those filtered objects $\rmE$ such that $\pi_i(\rmE_n)=0$ if $i< n$ and such that the canonical maps $\pi_i(\rmE_n)\to \pi_i(\colim \, \rmE)$ are  equivalences for every $i\geq n$. In particular, if $\rmF\in \C$, the graded pieces are given by $\gr^n(\tau_{\geq } \rmF)\simeq \pi_n(\rmF)[n]\in \C^{\heartsuit}[n]$.
\end{construction}

\medskip

\begin{construction}
\mylabel{remark-conditionforcomplete} Let $\C$ be stable presentable $\infty$-category with a left-complete presentable $t$-structure (in the sense of \cite[1.2.1.17, 1.2.1.19]{lurie-ha}) and such that $\C_{\geq 0}$ is stable under countable products (such as $\Spectra$ or $\Mod_k$). Denote by $\Fildescend(\C)$ the full subcategory of $\Fil(\C)$ spanned by those filtered objects $\rmE$ such that for every $i$, $\rmE_i\in \C_{\geq i}$. Then we have

$$
\Fildescend(\C)\subseteq \Filcomplete(\C)
$$

\noindent Indeed, we argue that the limit $(\lim \, \rmE)\in \C_{\geq i}$ for all $i\in \bbZ$, so that $(\lim \, \rmE) \in \bigcap_{i} \C_{\geq i}=\{0\}$ because of left-completeness. To show that $(\lim \, \rmE)\in \C_{\geq i}$ we see that by cofinality we have $$\lim \, \rmE\simeq \lim_{n\geq i} \, (\cdots \to \rmE_{i+3}\to \rmE_{i+2}\to \rmE_{i+1})$$
This sequential limit can be computed using the cotensorization of $\C$ over the $\infty$-category of spaces via the pullback diagrams in $\C$

$$
\xymatrix{
\prod_{n\geq i+1} \rmE_n[-1]\ar[d]\ar[r]&\lim_{n\geq i} \, (\cdots \to \rmE_{i+3}\to \ar[d] \rmE_{i+2}\to \rmE_{i+1})\ar[rr]&&\prod_{n\geq i+1} \rmE_n^{\Deltaone}\ar[d]\\
0\ar[r]&\prod_{n\geq i+1} \rmE_n\ar[rr]&&\prod_{n\geq i+1} \rmE_n\times \rmE_n
}
$$

\noindent and we obtain

$$
\lim \, \rmE\, \simeq \cofiber \, (\prod_{n\geq i+1} \rmE_n[-1]\to \prod_{n\geq i+1} \rmE_n[-1])
$$

\noindent Since, by assumption $\C_{\geq i}$ is stable under countable products, we have $\prod_{n\geq i+1} \rmE_n[-1]\in \C_{\geq i}$. Since $\C_{\geq i}$ is stable under all colimits \cite[1.2.1.6]{lurie-ha} we conclude that $\lim \, \rmE\in \C_{\geq i}$.

\end{construction}

\medskip

\begin{remark}
\mylabel{remarkwhiteheadtoweriscomplete}
\noindent The discussion in \cref{remark-conditionforcomplete} also shows that whenever the $t$-structure on $\C$ is left complete, the Whitehead filtration  $\tau_\geq X$ of \cref{construction-whiteheadtowerfunctor} is complete so that $\tau_{\geq}$ factors as 
$$\tau_{\geq}: \C\to \Fildescend\subseteq \Filcomplete(\C)\subseteq \Fil(\C)$$.
\end{remark}

\medskip

We will also need the notion of graded and filtered categories: 
\medskip

\begin{definition}
\mylabel{FilteredCategory}
We define a \emph{graded category} to be a stable presentable \icategory endowed with a structure of object in $\Mod_{\Spectra^{\bbZ-\mathrm{gr},\otimes}}(\Prl)$. Similarly,  a \emph{filtered category} is an object in $\Mod_{\Fil(\C)^\otimes}(\Prl)$.
\end{definition}
\medskip

We will need to use the point of view on filtrations and gradings given by the Rees construction of Simpson \cite[Lemma 19]{MR1159261} where the following geometric objects play a central role:

\begin{construction}
\mylabel{construction-geometricstackfiltration}
Let $\B\Gm{\Sphere}$ be the classifying stack of the flat multiplicative  abelian group scheme over the sphere spectrum  and $e:\Spec(\Sphere)\to \B\Gm{\Sphere}$ the canonical atlas. Consider also the canonical geometric action of $\Gm{\Sphere}$ on the flat affine line $\affineline{\Sphere}$ of weight 1, ie, $(\lambda, a)\mapsto \lambda^1. a$. Algebraically, this is the action for each the variable $t$ is of weight $(-1)$. Form the quotient stack $[\affineline{\Sphere}/\Gm{\Sphere}]$. This lives canonically as a stack over $\B\Gm{\Sphere}$ via a map 
$$
\pi:[\affineline{\Sphere}/\Gm{\Sphere}]\to \B\Gm{\Sphere}
$$

The inclusion of the zero point $0:\Spec(\Sphere) \to \affineline{\Sphere}$ provides a section of the canonical projection $[\affineline{\Sphere}/\Gm{\Sphere}]\to \B\Gm{\Sphere}$
$$
0:\B\Gm{\Sphere}\to [\affineline{\Sphere}/\Gm{\Sphere}]
$$
The inclusion of $\Gm{\Sphere}$ in $\affineline{\Sphere}$ also passes to the quotient and provides a map
$$
 [\affineline{\Sphere}/\Gm{\Sphere}] \leftarrow \Gm{\Sphere}/\Gm{\Sphere}\simeq\ast:  1
$$

\end{construction}

\medskip

The Rees construction gives us a geometric interpretation of filtered objects and gradings when $\C=\Spectra$  is the \icategory of spectra, in terms of objects over $[\affineline{\Sphere}/\Gm{\Sphere}] $ and $\B \Gm{\Sphere}$.  The proof of the following result appears in \cite{tasos}: 

 \begin{theorem}
 \mylabel{prop:gradingsandstacks}
 There exists symmetric monoidal equivalences
 
 \begin{equation}
 \label{formula-BGmgradedspectra}
  \Spectra^{\bbZ-\mathrm{gr},\otimes}\simeq \Qcoh(\B \Gm{\Sphere})^\otimes
 \end{equation}
 
 \begin{equation}
 \label{formula-filteredstackfilteredspectra}
\mathrm{Rees}:\Fil(\Spectra)^\otimes\simeq \Qcoh([\affineline{\Sphere}/\Gm{\Sphere}])^\otimes
 \end{equation}
 
 \noindent such that the following diagram commutes:
 
 $$
 \xymatrix{
 \Qcoh(\B \Gm{\Sphere})^\otimes \ar[d]_{\sim }& \ar[l]_{0^\ast}  \ar[d]_{\sim }\Qcoh([\affineline{\Sphere}/\Gm{\Sphere}])^\otimes\ar[r]^-{1^\ast}& \Qcoh(\Spec(\Sphere))^\otimes=\Spectra^\otimes \ar@{=}[d]\\
\Spectra^{\bbZ-\mathrm{gr},\otimes} & \ar[l]_{\gr} \Fil(\Spectra)^\otimes\ar[r]^{\colim}& \Spectra^\otimes
 }
 $$
 Moreover, after base change along $\Spec(\bbZ)\to \Spec(\Sphere)$ we recover analogues of these comparisons for filtered and graded objects in $\Mod_\bbZ$ the \icategory derived category of abelian groups and quasi-coherent sheaves on $\B \Gm{\bbZ}$ and $[\affineline{\bbZ}/\Gm{\bbZ}]$
 via the Rees construction (see for instance \cite{MR1492538}).
\end{theorem}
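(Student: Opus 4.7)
The plan is to establish both equivalences through the Rees--Simpson dictionary, first identifying the target categories of quasi-coherent sheaves as categories of graded/filtered modules by descent along the atlas, and then matching this with the Day convolution structure on filtered/graded spectra.

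For the graded equivalence, I would begin by unfolding $\Qcoh(\B \Gm{\Sphere})$ via the cover $e:\Spec(\Sphere)\to \B\Gm{\Sphere}$ as the category of comodules over the Hopf algebra $\structuresheaf(\Gm{\Sphere})\simeq \Sphere[t^{\pm 1}]$. Because $\Gm{\Sphere}$ is the flat base change of the classical $\Gm{\bbZ}$, the comodule structure splits canonically into weight spaces indexed by characters $\bbZ\simeq\mathrm{Hom}(\Gm{},\Gm{})$, giving
\[
\Qcoh(\B \Gm{\Sphere}) \simeq \prod_{n\in\bbZ}\Spectra = \Spectra^{\bbZ-\mathrm{gr}}.
\]
On the monoidal side, the symmetric monoidal structure on $\Qcoh(\B\Gm{\Sphere})$ is pointwise tensor product on the atlas, and since the weight decomposition is multiplicative with $\Sphere(n)\otimes\Sphere(m)\simeq \Sphere(n+m)$, this is precisely the Day convolution structure on $\Spectra^{\bbZ-\mathrm{gr}}$.

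For the filtered case, I would use the atlas $\affineline{\Sphere}\to [\affineline{\Sphere}/\Gm{\Sphere}]$, so that $\Qcoh([\affineline{\Sphere}/\Gm{\Sphere}])$ identifies with $\Gm{\Sphere}$-equivariant modules over $\structuresheaf(\affineline{\Sphere})\simeq \Sphere[t]$, where $t$ has weight $1$. Such an equivariant module is a $\bbZ$-graded $\Sphere[t]$-module, i.e. a sequence $\{M_n\}_{n\in\bbZ}$ together with weight-shift maps $t\cdot - : M_n\to M_{n+1}$; this is exactly the data of a filtered spectrum, and the assignment $\mathrm{Rees}(F)_n := F_{-n}$ with $t$ encoding the transition maps $F_{-n}\to F_{-n-1}$ gives an equivalence of \icategories. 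Again the symmetric monoidal structure on the stack side, obtained via tensor product over $\Sphere[t]$, matches Day convolution on $\Fil(\Spectra)$ because both encode the convolution of weights/filtration degrees, with the unit sent to $\Sphere[t]$ viewed as the tautological filtered object with $F_n = \Sphere$ for $n\leq 0$ and $F_n=0$ otherwise.

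To check commutativity of the diagram, I would track both restriction functors through the Rees picture: $0^*$ corresponds to base change along $\Sphere[t]\to \Sphere$ quotienting by $t$, which on a Rees module $\{M_n\}$ yields the associated graded $\{M_n/tM_{n-1}\}$, matching $\gr$; while $1^*$ corresponds to inverting $t$, which computes the sequential colimit $\colim_n M_n$ (now indexed in the filtered direction), matching the underlying object functor. Both restrictions are symmetric monoidal by construction, so the diagram of symmetric monoidal \ifunctors commutes. The compatibility with base change along $\Spec(\bbZ)\to \Spec(\Sphere)$ is automatic: pullback of $\Qcoh$ along a flat affine morphism is tensoring with $\Mod_\bbZ$, and the Rees construction is natural in the base.

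The main technical obstacle is a careful verification of the matching of symmetric monoidal structures at the level of $\infty$-operads, i.e. that the Day convolution operad structure on $\Fil(\Spectra)$ (obtained from the additive structure of $(\bbZ,+,\leq)$) really corresponds to the tensor product of quasi-coherent sheaves under the Rees equivalence. This requires unwinding the universal property of Day convolution and identifying it with the pushforward-tensor structure on $\Qcoh([\affineline{\Sphere}/\Gm{\Sphere}])$; I would handle this by presenting both sides as modules in $\Prl$ over $\Spectra^{\bbZ-\mathrm{gr},\otimes}$ and appealing to a universal property characterization of the Rees algebra.
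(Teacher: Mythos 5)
Your proposal is correct and follows essentially the same route as the actual proof, which this paper does not reproduce but defers entirely to the cited reference \cite{tasos}: descent along the atlas to identify $\Qcoh(\B\Gm{\Sphere})$ with weight-decomposed comodules over $\Sphere[t^{\pm 1}]$, identification of $\Qcoh([\affineline{\Sphere}/\Gm{\Sphere}])$ with graded $\Sphere[t]$-modules (using that $\Sphere[t]$ is free on a weight-one generator, so such modules are sequences with transition maps), and the recognition of $0^\ast$ and $1^\ast$ as killing and inverting $t$ respectively. The one point deserving emphasis is the monoidal comparison you correctly single out as the technical crux; it is handled exactly as you suggest, by exhibiting both $\Fil(\Spectra)^\otimes$ and $\Qcoh([\affineline{\Sphere}/\Gm{\Sphere}])^\otimes$ as $\Mod_{\Sphere[t]}$ of the graded category.
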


\medskip
In view of the \cref{prop:gradingsandstacks} the following definition becomes natural.

\begin{definition}
\mylabel{definition-filteredstacks}
We define a graded stack to be a stack over $\B\Gm{}$ and a filtered stack to be a stack over $\Filstack $. Let $X\to \Filstack $ be a filtered stack. The  \emph{associated graded} of $X$, denoted $X^{\gr}$, is the base change of $X$ along the map $0: \B\Gm{}\to \Filstack $. By abuse of notation we will also write $X^{\gr}$ to denote the further pullback along the atlas $\ast\to \B \Gm{}$, endowed with its canonical $\Gm{}$-action. 

The \emph{ underlying stack}, $X^u$, is the base-change along $1:\ast\to \Filstack $.
\end{definition}

\begin{definition}
\mylabel{filteredglobalsections}
Let $\pi: X \to \Filstack$ be a (derived) stack over $\Filstack$. We use the notation 
$$\Ofil(X) := \pi_{*} \calO_{X}$$
for the push-forward  of the structure sheaf; this is an object of the $\infty$-category $\Qcoh(\Filstack)$ of filtered $k$-modules. 
\end{definition}

\medskip

\begin{remark}
\mylabel{filteredcategoriesQcohfilteredstack}
Let $X$ be a filtered (resp. graded) stack. Then $\Qcoh(X)$ is a filtered (resp. graded) category in the sense of \cref{FilteredCategory} via the symmetric monoidal pullback along the structure map to $\Filstack$ (resp. $\B \Gm{}$).  
\end{remark}

\begin{remark}
\mylabel{globalsectionsoffilteredarefiltered}
Let $\pi:X\to \Filstack$ be a filtered stack and consider the pullback diagram

\begin{equation}
\label{primitivebasechangediagram}
\xymatrix{
X^{\gr} \ar[d]_{\pi^{\gr}}\ar[r]^{\tilde{0}} & X \ar[d]^{\pi} & X^u \ar[d]^{\pi^u}\ar[l]_{\tilde{1}}\\
\B \Gm{}\ar[r]^{0}& \Filstack & \ar[l]_{1} \ast
}
\end{equation}

In this paper we will need to deal with different situations under which the base change  property for quasi-coherent sheaves of $\structuresheaf_X$-modules along the two pullback diagrams, holds. Namely, the Beck-Chevalley transformations 

$$
0^\ast \pi_\ast \to (\pi^{\gr})_\ast\tilde{0}^\ast\,\,\,\,\,\,\,\,,\,\,\,\,1^\ast \pi_\ast \to (\pi^{u})_\ast \tilde{1}^\ast\,\,\,\,\,\,\,\, \text{ on } \Qcoh
$$

\noindent are equivalences.

\begin{enumerate}
    \item  Following \cite[A.1.3 (1)]{1402.3204} (see \cref{left-complete-t-structure}-c) and  \cite[6.2.5.9]{lurie-sag}) this holds for $1$ whenever $X$ admits a flat hypercover by affines with flat transition maps and $F\in \Qcoh(X)_{<\infty}$ (homologically bounded above). Indeed, in this case the argument in the proof of \cite[A.1.3 (1)]{1402.3204} applies since 1 is an open immersion, ergo flat, ergo of finite tor-dimension.
    \item Also following \cite[A.1.3 (1)]{1402.3204}, the Beck-Chevalley transformation is an isomorphism for $0$ whenever $X$ admits a flat atlas by affines with flat transition maps (see \cref{left-complete-t-structure}-c) and \cite[6.2.5.9]{lurie-sag}) and $F\in \Qcoh(X)$. This is possible since $0$ is an lci closed immersion and therefore of finite tor-dimension and finite.

    \item Whenever the map $\pi$ is of finite cohomological dimension. See different analysis in \cite[A.1.4, A.1.5, A.1.9]{1402.3204}, \cite[9.1.5.7, 9.1.5.8, 9.1.5.3]{lurie-sag}, \cite[6.3.4.1]{lurie-sag} or \cite[Prop. 3.10]{MR2669705}.

\end{enumerate}

Under these assumptions, the global sections $\pi_\ast(\structuresheaf_X)=\Ofil(X)$ admit the structure of an $\Einfinity$-algebra in $\Fil(\Spectra)$ which can be interpreted as a filtration on the $\Einfinity$-algebra $(\pi^u)_\ast \structuresheaf=\cochains(X^u, \structuresheaf)$, whose associated graded is $(\pi^{\gr})_\ast\structuresheaf=\cochains(X^{\gr}, \structuresheaf)$.
By the same mechanics, given a graded stack, $Y\to \B \Gm{}$, the $\Einfinity$-algebra $\cochains(Y, \structuresheaf)$ carries a grading compatible with the $\Einfinity$-structure.
\end{remark}

\medskip

\begin{construction}
\mylabel{construction-filteredassociatedstack}
Let $X$ be a stack with a $\Gm{}$-action and consider the stacky quotient $[X/\Gm{}]$ which lives canonically over $\B \Gm{}$. We defined the filtered stack associated to $X$ with the $\Gm{}$-action to be the fiber product of stacks

$$
\xymatrix{
X^{\Fil}:=X/\Gm{}\times_{\B\Gm{}} \Filstack\ar[d] \ar[r]& [X/\Gm{}]\ar[d]\\
\Filstack \ar[r]& \B\Gm{} 
}
$$

We have cartesian squares

$$
\xymatrix{
(X^{\Fil})^{\gr}=[X/\Gm{}]\ar[r] \ar[d]&X^{\Fil} \ar[d] & \ar[l](X^{\Fil})^{u} \ar[d]\\
\B \Gm{} \ar[r]^{0} & \Filstack & \ar[l]_{1} \ast
}
$$

Moreover, the cartesian diagram

\CSquare{\affineline{};\ast; \Filstack; \B\Gm{};h;e;p;\pi;diagram-remark-pullbackA1A1Gm}

\noindent tells us that the pullback of $X^{\Fil}$ to $\affineline{}$ is isomorphic to $\affineline{}\times X$ and exhibits $X^\Fil$ as the quotient $[(X\times \affineline{})/\Gm{}]$  for the product of the $\Gm{}$-action on $X$ and the canonial action of $\Gm{}$ on $\affineline{}$. Moreover, this also tells us that $(X^{\Fil})^u\simeq X$.  

The filtered stacks obtained from $\Gm{}$-equivariant stacks using the
previous construction are precisely the \emph{split filtered stacks}.

\end{construction}

\medskip
\subsection{Kernel and fixed points of the Frobenius}
\mylabel{subsection-kernelandfixedpointsfrobenius}

We now turn back to Witt vectors and the interpolation between fixed points and kernel of the Frobenius. The starting ingredient is the following natural grading:

\begin{construction}
\mylabel{construction-GmactionWittVectors}
The abelian group of $p$-typical Witt vectors $\Wittp(\rmA)$ carries an action of the underlying multiplicative monoid of $\rmA$: for a given $a\in \rmA^\times$, we define $[a]:\Wittp\to \Wittp$ by the formula $(\lambda_n)_{n\in S}\mapsto (a^{n}. \lambda_n)_{n\in S}$ where $S$ is as in the \cref{guideWitt}. It is an easy exercice to check that in terms of the Ghost coordinates this becomes $(\omega_n)_{n\in S}\mapsto (a^n.\omega_n)_{n\in S}$ which is coordinatewise a map of abelian groups under addition. By the unique characterization of the group structure on $\Wittp$ re-engineered from Ghost coordinates (see the \cref{guideWitt}), the map $[a]:\Wittp\to \Wittp$ is a map of abelian groups. This defines an action of the multiplicative group scheme $\Gm{}$ on the group scheme $\Wittp$ and makes it a graded group scheme.

We consider the quotient stack $[\Wittp/\Gm{}]$  which lives canonically as an abelian group stack over $\B \Gm{}$.
\end{construction}

\medskip
\begin{remark}
\mylabel{remark-Frobeniuscompatiblegrading}
The operation $\Frob_p$ is compatible with the action
of the multiplicative monoid of $\affineline{}$ on $\Wittp$ in the following sense: given  $a\in \rmA$ and denoting by $[a]:\Wittp(\rmA)\to \Wittp(\rmA)$ the action by $a$, we have
$$
\Frob_p\circ [a]= [a^p]\circ \Frob_p
$$
 \end{remark}

\medskip

\begin{construction}
\mylabel{construction-trivialfamilyfiltered}
We let $\Wittp^{\Fil}$ be the output of the \cref{construction-filteredassociatedstack} applied to the action of $\Gm{}$ on $\Wittp$ of the \cref{construction-GmactionWittVectors}. As $[\Wittp/\Gm{}]$ is a group stack over $\B\Gm{}$, it follows that $\Wittp^{\Fil}$ is a group stack over $\Filstack$. Explicitly, 

$$
\Wittp^{\Fil}\simeq [(\Wittp\times \affineline{})/\Gm{}]
$$

\noindent the quotient of trivial family $\Wittp\times \affineline{}$ by the diagonal action of $\Gm{}$.  
\end{construction}

\medskip

We will now explain how to use the trivial family of the \cref{construction-trivialfamilyfiltered} to construct a new family that interpolates between Frobenius fixed points and the kernel.

\medskip

\begin{construction}
\mylabel{construction-interpolationkernelfixedpoints}
Consider the trivial group scheme $\Wittp\times \affineline{}$ over $\affineline{}$. For each $\rmA$ over $\integerslocalp$, consider the endomorphism of abelian groups
$$
\calG_p: \Wittp(\rmA)\times \rmA\to \Wittp(\rmA)\times \rmA
\,\,\,\, \text{ given by }\,\,\,(f, a)\mapsto (\Frob_p(f) - [a^{p-1}](f),\, a)
$$
This is functorial in $\rmA$ and defines a morphism of abelian group schemes over $\affineline{}$
$$
\calG_p: \Wittp\times \affineline{}\to \Wittp\times \affineline{}
$$
\end{construction}

\medskip

\begin{remark}
\mylabel{remark-underlyingandassociatedgradedFilCircle}
The \cref{remark-Frobeniuscompatiblegrading} is equivalent to the statement that  $\calG_p$ is $\Gm{}$-equivariant with respect to the diagonal action of $\Gm{}$  on the source and the twist by $(-)^p: \Gm{}\to \Gm{}$ on the target. This implies that the inclusion
$$\ker \calG_p\subseteq \Wittp\times \affineline{} $$ 
\noindent is $\Gm{}$-equivariant. 
The fiber of $\ker \calG_p$ over $0$ is $\Frobkernel$  (\cref{guideWitt}) and is closed under the $\Gm{}$-action. The fiber over any $\lambda\in \rmA^\times$, $(\ker \calG_p)_\lambda$ is isomorphic to  $(\ker \calG_p)_1\simeq \Frobfixed$ via the isomorphism sending $((\lambda_n)_{n\in S}, \lambda)\mapsto ([\frac{1}{\lambda}]((\lambda_n)_{n\in S}), 1)$. 
\end{remark}

\medskip

\begin{lemma}
\mylabel{lemma-Gpisfpqccover}
The morphism $
\calG_p: \Wittp\times \affineline{}\to \Wittp\times \affineline{} $ is a cover for the fpqc topology\footnote{\textcolor{black}{ie, surjective on points and flat}}. In particular, it is fpqc-locally surjective and we have a short exact sequence of  abelian group-stacks over $\affineline{}$
$$
\xymatrix{
0\ar[r]& \ker \calG_p \ar[r]& \Wittp\times \affineline{}\ar[r]^-{\calG_p}& \Wittp\times \affineline{}\ar[r]& 0
}
$$
In particular, $\ker \calG_p$ is a flat group scheme over $\affineline{}$.
\begin{proof}
As discussed in the \cref{ptypicalWittvectors}, the group scheme $\Wittp$ is the inverse limit of the system $\Wittpm$ of $m$-truncated $p$-typical Witt vectors and each restriction map $\Wittpm\to  \Wittpmone$ is isomorphic to a projection $\Wittpmone\times \affineline{}\to  \Wittpmone$. Each restriction map is a flat surjection between affine schemes, and therefore, an fpqc cover. In this case (see for instance \cite[\href{https://stacks.math.columbia.edu/tag/05UU}{Lemma 05UU}]{stacks-project}), in order to show that $\calG_p$ is an fpqc cover, it is enough to show that each composition

\begin{equation}
\label{mapcompositionwithtruncation}
\xymatrix{
 \Wittp\times \affineline{}\ar[r]^-{\calG_p}& \Wittp\times \affineline{}\ar[r]& \Wittpm\times \affineline{}
}
\end{equation}

\noindent is an fpqc cover. 

As remarked in the \cref{ptypicalWittvectors}, the Frobenius map on $m$-truncated Witt vectors factors as $\Wittpm\to \Wittpmone$. The $\Gm{}$-action on the contrary is defined levelwise  $[x]:\Wittpm\to \Wittpm$. By composing with the truncation maps $[x]:\Wittpm\to \Wittpm\to \Wittpmone$ we obtain a system of maps that after passing to the inverse limit, it recovers the $\Gm{}$-action on $\Wittp$. In this case, the composition (\ref{mapcompositionwithtruncation}) factors as

\begin{equation}
\label{mapcompositionwithtruncation2}
\xymatrix{
 \Wittp\times \affineline{}\ar[d]\ar[r]^-{\calG_p}& \Wittp\times \affineline{}\ar[d]\\
 \Wittpmplus\times \affineline{}\ar[r]&\Wittpm\times \affineline{}
}
\end{equation}

So that (as in \cite[\href{https://stacks.math.columbia.edu/tag/090N}{Lemma 090N}]{stacks-project}) to show that each composition (\ref{mapcompositionwithtruncation}) is an fpqc cover, it is enough to show that each truncated map is an fpqc cover

\begin{equation}
\label{mapcompositionwithtruncation3}
\xymatrix{
 \Wittpmplus\times \affineline{}\ar[r]&\Wittpm\times \affineline{}
}
\end{equation}

This morphism is a map of smooth group schemes that commutes with the projections to $\affineline{}:=\affineline{\integerslocalp}\simeq \affineline{\bbZ}\times \Spec(\integerslocalp)$ and therefore, as now for each $n$ the map is of finite presentation, to check that it is a fpqc cover, it is enough (\textcolor{black}{by a local criterion for flatness \cite[\href{https://stacks.math.columbia.edu/tag/039D}{Lemma 039D}]{stacks-project}, \cite[\href{https://stacks.math.columbia.edu/tag/03NV}{Section 03NV}]{stacks-project}, and since surjectivity in the topological sense can be tested on closed points  \cite[\href{https://stacks.math.columbia.edu/tag/0485}{Section 0485}]{stacks-project} and therefore on fields}) to check that it is so after base change to any field valued point $\Spec K\to \affineline{\bbZ}\times \Spec(\integerslocalp)$. As both projections to $\affineline{\integerslocalp}$ are compatible with the $\Gm{}$-action, it is enough to test the statement for the four different points
$$
 (0, \bbQ), \,\, (1, \bbQ),\,\, (0, \finitefieldp),\,\,(1, \finitefieldp)
$$

\noindent ie, the four maps

$$
\xymatrix{
\Wittpmplus\times \bbQ \ar[r]^-{\Frob_p}&\Wittpm\times \bbQ &&\Wittpmplus\times\bbQ \ar[r]^-{\Frob_p-\Id}&\Wittpm\times \bbQ\\
\Wittpmplus\times \finitefieldp \ar[r]^-{\Frob_p}&\Wittpm\times \finitefieldp &&\Wittpmplus\times \finitefieldp \ar[r]^-{\Frob_p-\Id}&\Wittpm\times \finitefieldp
}
$$

Using the Ghost components for truncated $p$-typical Witt vectors of (\ref{ghostcomponentstruncatedptypical}), the first two maps becomes isomorphic to, respectively, the projection away from the first coordinate and a linear projection

$$
\xymatrix{
\prod_{i\in S_{m+1}} \Ga{\bbQ} \ar[r]^-{}&\prod_{i\in S_{m}} \Ga{\bbQ} &&\prod_{i\in S_{m+1}} \Ga{\bbQ} \ar[r]^-{}&\prod_{i\in S_{m}} \Ga{\bbQ}\\
}
$$
\noindent both being clearly surjective and flat.

 After base-change to $\finitefieldp$, $\Frob_p$ acts by the standard power $p$ Frobenius on $\affineline{}$ (see \cref{guideWitt}) and the morphisms over $\finitefieldp$ become, respectively, the compositions

 $$\Frob_p:\prod_{i\in S_{m+1}} \affineline{}\to \prod_{i\in S_{m+1}}\affineline{}\to \prod_{i\in S_{m}} \affineline{}$$
 
 $$(\lambda_1, \lambda_p, .., \lambda_{p^{m-1}},\lambda_{p^{m}})\mapsto (\lambda_1^p, \lambda_p^p.., \lambda_{p^{m-1}}^p, \lambda_{p^{m}}^p)\mapsto   (\lambda_1^p,.., \lambda_{p^{m-1}}^p)$$

 $$\Frob_p-\Id: \prod_{i\in S_{m+1}} \affineline{}\to \prod_{i\in S_{m+1}} \affineline{}\to \prod_{i\in S_{m}} \affineline{}$$
 
 $$(\lambda_1, \lambda_p,..., \lambda_{p^{m-1}},\lambda_{p^{m}})\mapsto (\lambda_1^p-\lambda_1,.., \lambda_{p^{m-1}}^p-\lambda_{p^{m-1}}, \lambda_{p^{m}}^p-\lambda_{p^{m}})\mapsto   (\lambda_1^p-\lambda_{1},.., \lambda_{p^{m-1}}^p-\lambda_{p^{m-1}})$$

Both projections are fpqc covers. We conclude using the fact that the standard power $p$ Frobenius on $\affineline{}$,  $(-)^p$, is fpqc in our situation (see for instance \cite[\S 4, Exercice 3.13]{MR1917232}) and each $(-)^p-\Id$ is the Artin-Schreier isogeny well known to be an \'etale cover in characteristic $p$.

\end{proof}
\end{lemma}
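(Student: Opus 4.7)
The plan is to exploit the pro-finite presentation $\Wittp \simeq \lim_m \Wittpm$ to reduce fpqc-ness of $\calG_p$ to a finite-dimensional assertion, and then test that assertion on fibers over closed points. The flatness claim for $\ker\calG_p \to \affineline{}$ follows formally from fpqc-ness of $\calG_p$: the kernel is the base change of the identity section of $\Wittp\times\affineline{}$ along the flat map $\calG_p$, and flatness is preserved by base change, so $\ker\calG_p$ inherits flatness over $\affineline{}$.

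For fpqc-ness itself, the first step is to invoke the standard descent criterion (Stacks project 05UU): since each truncation $\Wittp\times\affineline{}\to \Wittpm\times\affineline{}$ is an fpqc cover (a composition of projections off affine coordinates), it suffices to show that every composition $\Wittp\times\affineline{}\xrightarrow{\calG_p}\Wittp\times\affineline{}\to \Wittpm\times\affineline{}$ is an fpqc cover. Both constituents of $\calG_p$ factor compatibly through truncation — the Frobenius lowers the level of truncation by one per \cref{guideWitt}, and the $\Gm{}$-action is defined level-wise — so the composition above factors through a truncated map $\calG_p^{(m+1)}: \Wittpmplus\times\affineline{}\to \Wittpm\times\affineline{}$. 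It is therefore enough to prove each $\calG_p^{(m+1)}$ is fpqc.

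Each $\calG_p^{(m+1)}$ is of finite presentation between smooth affine group schemes over $\affineline{}_{\integerslocalp}$, so by the local criterion for flatness together with the fact that topological surjectivity can be tested on closed points, fpqc-ness is checked fiberwise over $\affineline{}\times\Spec(\integerslocalp)$. The $\Gm{}$-equivariance recorded in \cref{remark-Frobeniuscompatiblegrading} (with the twist $(-)^p$ on the target) collapses this further to the four distinguished fibers $(0,\bbQ)$, $(1,\bbQ)$, $(0,\finitefieldp)$, $(1,\finitefieldp)$. The rest is routine: over $\bbQ$, Ghost coordinates turn the map into either a coordinate projection or an invertible linear transformation on $\prod_{i\in S_m}\Ga{\bbQ}$; over $\finitefieldp$, the map becomes the usual power-$p$ Frobenius or the Artin-Schreier isogeny $(-)^p-\Id$ on each affine coordinate, both classical fpqc covers. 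The only delicate point I anticipate is keeping the bookkeeping of the truncation levels consistent with the $\Gm{}$-equivariance, so that the factorization of $\calG_p$ through the truncated maps is not shifted by one the wrong way; once that is set up correctly, the fiberwise verification is straightforward.
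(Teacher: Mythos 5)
Your proposal is correct and follows essentially the same route as the paper's proof: reduction via the pro-structure and Stacks 05UU to the truncated maps $\Wittpmplus\times\affineline{}\to\Wittpm\times\affineline{}$, then a fiberwise check at the four points $(0,\bbQ),(1,\bbQ),(0,\finitefieldp),(1,\finitefieldp)$ using Ghost coordinates over $\bbQ$ and the Frobenius/Artin--Schreier maps over $\finitefieldp$. The only cosmetic difference is your explicit remark that flatness of $\ker\calG_p$ follows by base change of the flat map $\calG_p$ along the unit section, which the paper leaves implicit.
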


\medskip

\begin{definition}
\mylabel{definition-Hgroupstack}
We define a filtered stack $\Hgroup \to \Filstack$ to be stack over $\Filstack$ given by the quotient 

$$
\Hgroup:= [(\ker \calG_p)/\Gm{}]\to \Filstack
$$
\end{definition}

\begin{remark}
\mylabel{remark-underlyingandassociatedgradedFilCircle2}
It follows from \cref{remark-underlyingandassociatedgradedFilCircle} that the underlying stack $\Hgroup^u$ is $\Frobfixed$ and the associated graded $\Hgroup^{\gr}$ is the quotient  $[\Frobkernel/\Gm{}]$ under the $\Gm{}$-action of the \cref{construction-GmactionWittVectors} and \cref{remark-underlyingandassociatedgradedFilCircle}.
\end{remark}

\section{The Filtered Circle}
\mylabel{section-filteredcircle}


\subsection{Hopf algebras and comodules}
\mylabel{hopfalgebrasandcomodules}
As discussed in the introduction, a key feature that any reasonable  construction of a ``filtered circle" should satisfy is that it must carry the structure of an abelian group stack. This will manifest itself in the form of a Hopf algebra structure on its cohomology, which on the one hand specializes to the  group structure on $\circle$, and on the other to a strict square-zero multiplication. \\

Before we proceed, we first set straight what we mean by the $\infty$-category of cosimplicial commutative $k$-algebras. Recall that the ordinary category of cosimplicial commutative $k$-algebras is defined to be $\Fun(\Delta, \CAlg_k^{0})$, the category of cosimplicial objects in (discrete) commutative $k$-algebras. By \cite[Théorème 2.1.2]{MR2244263}, this may be equipped with a closed (simplicial) model structure, with weak equivalences maps with induce isomorphisms on cohomology and fibrations being the level-wise surjections.

\begin{definition}
\mylabel{cosimplicialcommalgebra}
We define the \icategory of cosimplicial commutative $k$-algebras $\coSCRings{k}$ to be the localization of the category $\Fun(\Delta, \CAlg^{0}_k)$, equipped with the model category structure described above. 
\end{definition}

There exists a functor $\theta: \coSCRings{k} \to \CAlg_k^{\mathsf{ccn}}$, which arises as the cosimplicial version of the Dold-Kan construction; to be more precise, we remark that the model structure on $\coSCRings{k}$ is right induced by that of the \icategory $\Mod_k^{\Delta}$ of cosimplicial objects in (discrete) $k$-modules via the forgetful functor; this in turn obtains a model category structure from $\Mod_R^{ccn}$ via the adjunction
$$
N: \Mod_{R}^{\Delta} \to \Mod_R^{\mathsf{ccn}} \, \, \, \, \, D: \Mod_R^{\mathsf{ccn}} \to \Mod_{R}^{\Delta}
$$
making it into a Quillen equivalence. By the classical Eilenberg-Zilberg theorem, $N(-)$ commutes with tensor products (see \cite[2.1]{MR2244263}), so the composition with the forgetful functor,  $N \circ U: \coSCRings{k} \to \Mod_K^{\mathsf{ccn}}$ factors through $\CAlg_k^{\mathsf{ccn}}$, giving us $\theta$.  Moreover, this functor preserves homotopy limits as it is the composition of the forgetful functor 
$$
\coSCRings{} \to \Mod_R^{\mathsf{ccn}} 
$$
composed with the functor $N: \Mod_{R}^{\Delta} \to \Mod_R^{\mathsf{ccn}}$ which is a right adjoint functor. 

\begin{construction}
\mylabel{symmetricmonoidalstructurecosimplicialcommutativealgebras}
We will always consider the $\infty$-category $\coSCRings{k}$ equipped with its cocartesian monoidal structure. Explicitly, this is given by taking coproducts between cofibrant replacements in the model category of cosimplicial commutative algebras. In this case coproducts coincide with usual tensor products of cosimplicial commutative algebras.
\end{construction}

\medskip

\begin{construction}
\mylabel{generalizedBarconstruction}
Let $\C$ be a presentable \icategory endowed with the cartesian symmetric monoidal structure. We denote by $\groups(\C):=\Mongrouplike(\C)$ the category of group objects in $\C$. Following \cite[5.2.6.6, 4.1.2.11, 2.4.2.5.]{lurie-ha}, an explicit model is given by category of diagrams $\Fun(\Delta^{\op}, \C)$ satisfying the Segal conditions. The colimit functor 
\begin{equation}
\label{Barconstructioncolimitmodel}
\B: \groups(\C)\to \C_{\ast}
\end{equation}
\noindent lands in the category of pointed objects in $\C$ and admits a right adjoint, sending a pointed object $\ast\to X$ to its nerve. \\
There is a dual version of this given by cogroup objects $\cogroups(\C)$; these will be given by the category of cosimplicial objects $\Fun( \Delta, \C)$  of $\C$ satisfying the relevant Segal conditions in the opposite category. One has an adjunction 
$$
\lim_{\Delta}: \cogroups(\C) \rightleftarrows \C : \mathsf{CoNerve}
$$
\end{construction}

\medskip

\begin{definition}
\mylabel{cosimplicialHopfalgebra}
We define, following \cite[Definition 3.1.4]{MR2244263}, a \emph{$H_{\infty}$-Hopf algebra} to be a cogroup object in the $\infty$-category $\coSCRings{}$ of cosimplicial (commutative) algebras. Similarly, a \emph{graded $H_{\infty}$-Hopf algebra} will be a cogroup object in the category $\coSCRings{}^{\gr}$ of graded cosimplicial commutative rings. 

\begin{equation}
\label{cosimplicialHopfalgebrascategory}
\cosimplicialHopf:=\cogroups(\coSCRings{})
\end{equation}

\begin{equation}
\label{cosimplicialHopfalgebrascategorygraded}
\cosimplicialHopfgraded:=\cogroups(\coSCRings{}^{\gr})
\end{equation}

\end{definition}

\medskip

\begin{remark}
Equivalently, this is the data of a cosimplicial object $\mathsf{H}^{\bullet}$ in $\coSCRings{k}$ with the following properties:
\begin{itemize}
    \item $\mathsf{H}^0 \simeq k$, 
    \item the natural map  $\mathsf{H}^1 \otimes_k ...\otimes_k \mathsf{H}^1 \simeq \mathsf{H}^n$ is an equivalence.
\end{itemize}
As the symmetric monoidal structure on $\coSCRings{k}$ is cocartesian (see \cref{symmetricmonoidalstructurecosimplicialcommutativealgebras}), this is precisely the data of a cogroup object in this \icategory. 
\end{remark}



$$
$$
We now introduce a notion of \emph{homotopy coherent comodules} over a fixed $H_{\infty}$-Hopf algebra $\mathsf{H}^{\bullet}$. 

\begin{definition}
\mylabel{comodulesasQcohBH}
Given $\mathsf{H}= \mathsf{H}^{\bullet}\in \cosimplicialHopf$, we define the \icategory of $\mathsf{H}$-comodules as
$$
\CoMod_{\mathsf{H}}:=\Qcoh(\B \cospec(\mathsf{H}^{\bullet})) \simeq \lim_{\Delta} 
\Qcoh (\cospec(H^\bullet)). 
$$
\end{definition}

\begin{remark}
We explain the idea behind this definition a bit further. Since $\cospec: \coSCRings{k} \to \Stacks{k}$ is fully faithful, a cogroup object $\mathsf{H}^\bullet$ in $\coSCRings{}$ gives rise to a group object $G_\bullet$ in $\Stacks{k}$, which we may think of as the nerve of an affine group stack $G_1 = \cospec \mathsf{H}_1$. Then $\CoMod_{\mathsf{H}}$ may be thought of as the \icategory of $G_1= \cospec \mathsf{H}_1$ representations 
$$
\Qcoh(\B G) \simeq \lim_{\Delta} \Qcoh(G_\bullet)
$$
\end{remark}

\begin{remark}
This notion will be important to us later on, when we identify $\mathsf{CoMod}_{C^*(\mathsf{Ker}, \calO)^{\bullet}}$ with the underlying $\infty$-category of the category of strict dg-comodules over a particular dga.  
\end{remark}

\personal{add comment about comodules in the sense of higher algebra. we defined only commmutative cosimplicial Hopf algebra.}


$$
$$



\subsection{ Affinization and Cohomology of stacks}
\mylabel{FilteredcircleandAffinization}

At this point, after the constructions in the previous section, we have in fact proved \cref{filteredcircletheorem}-(iii). We absorb it in the following definition:

\begin{definition}\mylabel{definition-filteredcircle}
The \emph{filtered circle} (local at $p$) is the filtered stack given by the classifying stack (relatively to $\Filstack$ and \textcolor{black}{with respect to the fpqc-topology}) of the filtered abelian group stack $\Hgroup$:
$$
\Filcircle:= \B \Hgroup \to \Filstack
$$
\end{definition}

\medskip

\begin{construction}
\mylabel{remark-abeliangroupstructurefilteredcircle}
The filtered stack $\Filcircle$,  being the classifying stack of a filtered abelian group stack is again a filtered abelian group stack. In other words, it carries a canonical abelian group structure compatible with the filtration. In particular, we can take its classifying stack 
$$\B\Filcircle\simeq \Kspace(\Hgroup,2)$$

\noindent obtained as the \textcolor{black}{fpqc} quotient $\Filstack/\Filcircle$.

\end{construction}

\medskip

\begin{remark}
\mylabel{flatgroupschemesrelativeflatatlas}
If follows from the \cref{lemma-Gpisfpqccover} and \cref{definition-Hgroupstack} that $\Hgroup$ is a flat  group stack relatively to $\Filstack$ and also, relatively affine. In particular, both $\Filcircle$ and $\B\Filcircle$ are flat group stacks over $\Filstack$. This implies that both admit a flat hypercover by affines with flat transition maps. For $\Filcircle$ this can be seen directly using the bar construction for $\Hgroup$ relatively to 
$\Filstack$

$$
\xymatrix{
 \cdots \,\, \,  \, \,\,\,  \Hgroup^{\times_2}\,\, \ar[dr]\ar@<-0.7ex>[r]_{} \ar@<-.0ex>[r]^{} \ar@<+0.7ex>[r]^{} & \ar[d] \, \,\,\, \, \,\,\,\Hgroup \ar@<-1ex>[r]_{} \ar@<1ex>[r]^{}& \ar[dl]\Filstack \, \,\,\, \ar[r]&\Filcircle\ar[dll]\\
 &\Filstack&&\\
}
$$

\noindent For $\B\Filcircle$, we use the fact that $\Nerve(\Delta^\op)$ is sifted, to exhibit $\B\Filcircle$ as the geometric realization of the diagonal of the double bar construction:

\begin{equation}
\label{diagonalflatatlasbyaffines}
\xymatrix{
 \cdots \,\, \,  \, \,\,\,  \Hgroup^{\times_4}\,\, \ar[dr]\ar@<-0.7ex>[r]_{} \ar@<-.0ex>[r]^{} \ar@<+0.7ex>[r]^{} & \ar[d] \, \,\,\, \, \,\,\,\Hgroup \ar@<-1ex>[r]_{} \ar@<1ex>[r]^{}& \ar[dl]\Filstack \, \,\,\, \ar[r]&\B\Filcircle\ar[dll]\\
 &\Filstack&&\\
}
\end{equation}

\end{remark}

\medskip

\begin{construction}
\mylabel{tstructureQcohBfilteredcircle}
The discussion in \cref{flatgroupschemesrelativeflatatlas} implies that  both $\Qcoh(\Filcircle)$ and $\Qcoh(\B \Filcircle)$ admit left-complete $t$-structures as described in \cref{left-complete-t-structure}-c). Concretely, for  $\Qcoh(\B \Filcircle)$, it is induced by transferring the $t$-structure on $\Qcoh(\Filstack)$ by pullback along the atlas $\Filstack\to \B\Filcircle$. Here the $t$-structure on $\Qcoh(\Filstack)$ is itself induced by the standard $t$-structure on $\Qcoh(\affineline{})$ via the same argument as in \cref{tstructureQcohBfilteredcircle} using the flat simplicial atlas encoding the geometric action of $\Gm{}$ on $\affineline{}$ of weight 1 (see \cref{construction-geometricstackfiltration}). 

Under the equivalence $\Qcoh(\Filstack)\simeq \Fil(\Spectra)$ of \cref{prop:gradingsandstacks} this $t$-structure corresponds to the levelwise $t$-structure on filtered spectra (see \cite[Proof of 6.1]{tasos}).
\end{construction}

\medskip

\begin{remark}
\mylabel{basechangeBfilteredcircle}
The conclusion of the \cref{flatgroupschemesrelativeflatatlas} is also particularly important in the context of the base change property of \cref{globalsectionsoffilteredarefiltered} with $\pi:\B\Filcircle\to \Filstack$

\begin{equation}
\label{primitivebasechangediagram}
\xymatrix{
(\B\Filcircle)^{\gr} \ar[d]_{\pi^{\gr}}\ar[r]^{0_a} & \B\Filcircle \ar[d]^{\pi} & (\B\Filcircle)^\mathrm{u} \ar[d]^{\pi^\mathrm{u}}\ar[l]_{1_a}\\
\B \Gm{}\ar[r]^{0}& \Filstack & \ar[l]_{1} \ast
}
\end{equation}

Indeed, using \cref{flatgroupschemesrelativeflatatlas} we conclude that that base-change holds for $\pi$ in the following contexts:

\begin{itemize}
    \item from the \cref{globalsectionsoffilteredarefiltered}-(ii): the Beck-Chevalley transformation

\begin{equation}
\label{beckchevalley2}
0^\ast\,\, \pi_\ast\,\, \to (\pi^{\mathrm{gr}})_\ast\,\,(0_a)^\ast\,\,
\end{equation}

\noindent is an equivalence for any $\mathrm{F}\in \Qcoh(\B\Filcircle)$.

 \medskip
 
\item from the \cref{globalsectionsoffilteredarefiltered}-(i):  we can only guarantee that the Beck-Chevalley transformation

\begin{equation}
\label{beckchevalley2foropen}
1^\ast\,\, \pi_\ast\,\, \to (\pi^{\mathrm{u}})_\ast\,\,(1_a)^\ast\,\,
\end{equation}

\noindent is an equivalence for homologically bounded above objects, ie, in $\Qcoh(\B\Filcircle)_{<\infty}$ (for the t-structure of \cref{left-complete-t-structure}).
\end{itemize}
\end{remark}

\medskip

One of the claims in \cref{filteredcircletheorem}-(i) is that our filtered circle $\Filcircle$ is related to the topological circle $\circle$ via the notion of \emph{affinization} of \cite{MR2244263}. \emph{Affine stacks} were introduced in \cite[Def. 2.2.4]{MR2244263} (see also \cite{lurie-DAGVIII} where these are called \emph{Coaffine}). Informally, an (higher) stack $X$ is affine if it can be recovered from its cohomology of global sections. More precisely:

\begin{review}[Affine Stacks]\mylabel{reviewaffinestacks}
See \cref{notationcosimplicialandsimplicial}. By \cite[2.2.3]{MR2244263} the \ifunctor $\cospec: \coSCRings{\integerslocalp}^{\op}\to\Stacks{\integerslocalp} $ is fully faithful and admits a left adjoint $\cochainscosimplicial(-, \structuresheaf)$ that enhances the standard $\Einfinity$-algebra structure of cohomology of global sections $\cochains(-, \structuresheaf)$ with a structure of cosimplicial commutative algebra, namely, it provides a lifting in $\inftycatsbig$

$$
\xymatrix{
&\coSCRings{\integerslocalp}\ar[d]^\theta\\
\Stacks{\integerslocalp}\ar[ur]^{\cochainscosimplicial(-, \structuresheaf)}\ar[r]_{\cochains(-, \structuresheaf)}& \CAlg_{\integerslocalp}
}
$$

We say that $X\in \Stacks{\integerslocalp}$ is affine if it lives in the essential image of $\cospec$. More generally, given $X\in \Stacks{\integerslocalp}$, we define its  affinization as the stack $\cospec(\cochainscosimplicial(X, \structuresheaf))$ \cite[2.3.2]{MR2244263}.
\end{review}

\begin{proposition}
\mylabel{filteredcircleisaffine}
Both $\Filcircle$ and $\B \Filcircle$ are relatively affine stacks over $\Filstack$ in the sense of \cite{MR2244263}. By stability of affine stacks under base-change \footnote{see \cite[2.2.7, 2.2.9, Remarque p.49]{\Bertrandaffine}} so are all the stacks $\Filcircleund$, $\Filcirclegr$, $\B\Filcircleund$ and $\B\Filcirclegr$.
\begin{proof}
Let us start by showing that $\Filcircle$ is relatively affine over $\Filstack$. Using the atlas $\affineline{}\to \Filstack$ this is equivalent to showing that $\B\ker \calG_p$ is an affine stack over $\affineline{}= \Spec(\integerslocalp[T])$ . Applying $\B$ to the short exact sequence in the \cref{lemma-Gpisfpqccover}, we get a fiber sequence of fpqc sheaves over $\affineline{}$

$$
\xymatrix{
\B\ker \calG_p\ar[r]\ar[d]& \B\Wittp \times \affineline{}\ar[d]^{\B\calG_p\times \Id}\\
\affineline{}\ar[r]& \B\Wittp \times \affineline{}
}
$$

By \cite[2.2.7]{\Bertrandaffine} the class of affine stacks over any commutative ring is closed under limits. Therefore, to conclude that $\B\ker \calG_p$  is affine it is enough to show that  $\B\Wittp \times \affineline{}$ is affine. But the group scheme $\Wittp$ can be written as a limit $\lim \Wittpm$ where each projection $\Wittpmplus\to \Wittpm$ is a smooth epimorphism of affine groups with fiber $\Ga{}$. We claim that this fact together with the fact with the Witt schemes are truncated, implies that the limit decomposition of $\Wittp$ induces a limit decomposition

\begin{equation}
\label{limitdecompositionB}
    \B\Wittp\simeq \lim \,\,\B\Wittpm.
\end{equation}

Indeed, the Milnor sequences (see for instance  \cite[2.2.9]{MR2840650}) tell us that the  obstructions for the limit decomposition \eqformula{limitdecompositionB} are given by the groups  $\lim^1 \pi_{i}\Map_{fpqc}(X, \Wittpm)$ for $i\geq 0$ and $X$ affine classical. For $i\geq 1$ these groups vanish because the mapping spaces are discrete. For $i\geq 0$ we have  $\pi_{i}\Map_{fpqc}(X, \Wittpm)\simeq \mathsf{H}^i_{fpqc}(X, \Wittpm)$, which vanishes for $i>0$ because
$X$ is affine. \personal{ For $i=0$ this is the Mittag-Leffler condition provided by the (fppf) surjectivity proven above.\\}

In fact, more generally, we also have the same decomposition for the iterated construction

\begin{equation}
\label{limitdecompositionBiterated}
    \B^j\Wittp\simeq \lim\,\, \B^j\Wittpm.
\end{equation}

This follows again because the mapping spaces are discrete and because of the vanishing of the higher cohomology groups

\begin{equation}
\label{vanishinghighercohomologyWitt}
    \pi_0 \Map_{fpqc}(X,\B^j \Wittpm)=\mathsf{H}^j_{fpqc}(X, \Wittpm)=0\,\,\,\, j\geq 1
\end{equation}

\noindent for $X$ affine classical. One can see this by induction using the long exact sequences extracted from the fact  $\Wittpmplus$ is an extension of $\Wittpm$ by $\Ga{}$, 

\begin{equation}
\label{groupextensionsWittrestriction}
 0\to \Ga{}\to \Wittpmplus\to \Wittpm\to 0
   \end{equation}

The result is true for $\Ga{}$ and  as $\Wittp^{(1)}=\Ga{}$, by induction, it is true all for $m$.

\medskip

Finally, knowing \eqformula{limitdecompositionB}, by \cite[2.2.7]{\Bertrandaffine} it becomes enough to show that each $\B\Wittpm$ is affine. But now, each of the group extensions \eqformula{groupextensionsWittrestriction} is classified by a map of group stacks $\Wittpm\to \rmK(\Ga{}, 1)$, which we can write as a map $\B\Wittpm\to \rmK(\Ga{}, 2)$. By definition of this map, we have a pullback square

$$
\xymatrix{
\B \Wittpmplus\ar[r]\ar[d]& \ast \ar[d]\\
\B\Wittpm \ar[r]& \Kspace(\Ga{}, 2)
}
$$

Each $\Kspace(\Ga{}, n)$ is known to be affine \cite[2.2.5]{\Bertrandaffine}. An induction argument concludes the proof.\\

To prove the claim for $\B \Filcircle$ it is enough to show that $\B (\B\ker \calG_p)$ is affine over $\affineline{}$. The argument runs the same, using the iterated formula \eqformula{limitdecompositionBiterated}.

\end{proof}
\end{proposition}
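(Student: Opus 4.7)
The plan is to reduce the statement to the known fact that the Eilenberg--MacLane stacks $\Kspace(\Ga{},n)$ are affine (for any $n$) and to use the closure properties of the class of affine stacks, especially stability under limits and under base change established in \cite{\Bertrandaffine}. Since affineness over $\Filstack$ can be checked after pulling back along the smooth atlas $\affineline{} \to \Filstack$, the task becomes: show that the stacks $\B \ker\calG_p$ and $\B^2 \ker\calG_p$ are affine over $\affineline{}$. The last claim of the proposition about $\Filcircleund$, $\Filcirclegr$ and their deloopings will then follow immediately from base-change stability of relatively affine stacks.

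For $\B \ker\calG_p$, I would deloop the fpqc short exact sequence of \cref{lemma-Gpisfpqccover}, producing a fiber square
\[
\xymatrix{
\B \ker\calG_p \ar[r]\ar[d] & \B(\Wittp \times \affineline{}) \ar[d]^{\B\calG_p} \\
\affineline{} \ar[r] & \B(\Wittp \times \affineline{}).
}
\]
Because affine stacks are closed under limits, it then suffices to show that $\B \Wittp$ is an affine stack. Using the pro-system presentation $\Wittp \simeq \lim_m \Wittpm$ together with the extensions $0 \to \Ga{} \to \Wittpmplus \to \Wittpm \to 0$, I would argue inductively that each $\B\Wittpm$ is affine: the extension is classified by a map $\B\Wittpm \to \Kspace(\Ga{},2)$, producing a pullback square that realizes $\B\Wittpmplus$ as a limit of affine stacks (using that $\Kspace(\Ga{},n)$ is affine by \cite{\Bertrandaffine}). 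The base case $\B\Wittp^{(1)} = \B\Ga{} = \Kspace(\Ga{},1)$ is then known to be affine.

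The subtle step I expect to be the main obstacle is the commutation $\B\Wittp \simeq \lim_m \B\Wittpm$, which is not a purely formal consequence of $\Wittp \simeq \lim_m \Wittpm$. This requires controlling the Milnor-type $\lim^1$ obstructions: for any affine classical test scheme $X$ one has $\pi_i \Map(X,\B\Wittpm) = \mathrm{H}^i_{\mathrm{fpqc}}(X,\Wittpm)$, and one needs the vanishing of these cohomology groups for $i \geq 1$. This vanishing is established by induction along the extensions $0\to \Ga{} \to \Wittpmplus \to \Wittpm \to 0$, starting from the affineness of $X$ which gives $\mathrm{H}^{i\geq 1}_{\mathrm{fpqc}}(X,\Ga{}) = 0$, and propagating through the long exact sequence.

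Finally, for the claim about $\B\Filcircle = \B^2 \ker\calG_p$ (relatively over $\Filstack$), exactly the same strategy applies: deloop the fiber sequence once more, reducing to $\B^2 \Wittp$; then invoke the iterated version $\B^j \Wittp \simeq \lim_m \B^j \Wittpm$, whose validity hinges on the same vanishing of $\mathrm{H}^{j}_{\mathrm{fpqc}}(X,\Wittpm)$ for $j\geq 1$ established inductively above. Once $\B^j \Wittpm$ is written as an iterated pullback involving $\Kspace(\Ga{},n)$-type stacks, affineness follows again from closure under limits.
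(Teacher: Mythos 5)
Your proposal follows the paper's proof essentially verbatim: the same reduction along the atlas $\affineline{}\to\Filstack$, the same delooped fiber square from \cref{lemma-Gpisfpqccover}, the same limit decomposition $\B^j\Wittp\simeq\lim_m\B^j\Wittpm$ controlled by Milnor $\lim^1$ obstructions and vanishing of $\mathrm{H}^{\geq 1}_{\mathrm{fpqc}}(X,\Wittpm)$, and the same induction via the extensions classified by maps to $\Kspace(\Ga{},2)$. The argument is correct and complete as outlined.
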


\begin{remark}
\mylabel{KHnareaffine}
The proof of the \cref{filteredcircleisaffine} shows that more generally, the stacks $\Kspace(\Hgroup, n)$ are affine.
\end{remark}

\subsection{The Underlying Stack of $\Filcircle$}
\mylabel{underlyingfilteredcircle}

As we now know, by the \cref{filteredcircleisaffine}, the underlying stack $\Filcircleund$ is affine.  We would like, in order to establish \cref{filteredcircletheorem}-(i), to identify it with the affinization of $\circle$ over $\integerslocalp$.

\begin{construction}
\mylabel{mapS1toBFix}
Recall that $\Filcircle=\B\Hgroup$ (\cref{definition-Hgroupstack}). Let $\bbZ$ denote the constant group scheme with value $\bbZ$. There is a canonical morphism of group schemes $\bbZ \to \Wittp$ given by  $1 \mapsto (1,0,0,...) \in \Wittp(R)$. This Witt vector is fixed by the Frobenius \footnote{Can easily be checked using $\mathrm{Ghost}(1,0,...)=(1,1,1,..)$} so clearly the map factors through $\Hgroup^{u}= \Frobfixed$. By passing to classifying stacks we obtain a morphism of stacks

\begin{equation}
\label{affinizationunderlyingmap}
u:\circle = \B\bbZ \to \Filcircleund = \B \Frobfixed
\end{equation}

\noindent with affine target.
\end{construction}

\medskip

The main result of this section is the following:

\begin{proposition}
\mylabel{propositionaffinizationunderlying}
\noindent The map (\ref{affinizationunderlyingmap}) displays $\Filcircleund=\B\Frobfixed$ as the affinization of $\circle$ over $\Spec \bbZ_{(p)}$. By \cite[Corollaire 2.3.3]{\Bertrandaffine}, this is equivalent to say that (\ref{affinizationunderlyingmap}) induces an equivalence on cosimplicial cochain algebras
\begin{equation}
\label{affinizationunderlyingmapcomplexes}
\cochainscosimplicial( \B \Frobfixed, \structuresheaf) \simeq \cochainscosimplicial(\circle, \integerslocalp)
\end{equation}
\end{proposition}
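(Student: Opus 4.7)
By \cite[Cor.~2.3.3]{\Bertrandaffine}, proving the proposition amounts to checking that the map $u^*$ of \eqref{affinizationunderlyingmapcomplexes} is an equivalence in $\coSCRings{\integerslocalp}$. The plan is to first exhibit the left-hand side as an explicit pushout of cosimplicial algebras, and then verify the comparison with $\cochainscosimplicial(\circle, \integerslocalp)$ separately over $\bbQ$ and over $\finitefieldp$, leveraging the fact that a map of connective $\integerslocalp$-modules is an equivalence iff it is so after base change to both.

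For the pushout, I would restrict the short exact sequence of \cref{lemma-Gpisfpqccover} to the fiber over $1 \in \affineline{}$ to obtain
\[ 0 \to \Frobfixed \to \Wittp \xrightarrow{\Frob_p - \Id} \Wittp \to 0, \]
and deloop to exhibit $\B\Frobfixed$ as the fiber of $\B(\Frob_p - \Id): \B\Wittp \to \B\Wittp$. All stacks appearing here are affine by \cref{filteredcircleisaffine}, so $\cospec$ converts this pullback square into a pushout square of cosimplicial commutative algebras, giving
\[ \cochainscosimplicial(\B\Frobfixed, \structuresheaf) \simeq \integerslocalp \otimes^L_{\cochainscosimplicial(\B\Wittp, \structuresheaf)} \cochainscosimplicial(\B\Wittp, \structuresheaf), \]
with the two $\cochainscosimplicial(\B\Wittp)$-module structures induced respectively by the augmentation (dual to $\ast \to \B\Wittp$) and by $(\B(\Frob_p-\Id))^*$.

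Rationally, \cref{remark-characteristiczerocase} identifies $\Frobfixed \otimes \bbQ$ with $\Ga{\bbQ}$, so $u \otimes \bbQ$ is the classical affinization map $\B\bbZ \to \B\Ga{\bbQ}$ of \cite{MR2244263}, and its cochain equivalence with $\cochainscosimplicial(\circle, \bbQ) \simeq \bbQ \oplus \bbQ[-1]$ is well known. Modulo $p$, the same short exact sequence becomes the Artin--Schreier--Witt sequence over $\finitefieldp$, identifying $\Frobfixed \otimes \finitefieldp$ with the pro-\'etale $\integerslocalp$-sheaf $\underline{\bbZ_p} = \lim_m \underline{\bbZ/p^m}$. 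Since $\bbZ \hookrightarrow \bbZ_p$ is a $p$-adic equivalence, a continuous-group-cohomology computation should yield $\cochainscosimplicial(\B\underline{\bbZ_p}, \finitefieldp) \simeq \finitefieldp \oplus \finitefieldp[-1] \simeq \cochainscosimplicial(\circle, \finitefieldp)$, compatibly with $u$.

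The main obstacle will be the mod-$p$ analysis: reconciling the affine-stack pushout computation above with the pro-\'etale computation on $\B\underline{\bbZ_p}$ and verifying naturality with respect to $u$. Rationally the argument essentially repeats the characteristic-zero setup recalled in the introduction, so the arithmetic novelty and the bulk of the technical work are concentrated in the characteristic-$p$ fiber, where the Artin--Schreier--Witt sequence supplies the key input.
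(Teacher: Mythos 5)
Your overall architecture coincides with the paper's: reduce to the two residue fields via the principle that a map of $\integerslocalp$-modules is an equivalence iff it is so after base change to $\bbQ$ and to $\finitefieldp$ (\cref{basechangeeeee} and \cref{localcriterionaffinization}); handle $\bbQ$ by the identification $\Frobfixed_{|_{\bbQ}}\simeq\Ga{\bbQ}$ of \cref{remark-characteristiczerocase}; and handle $\finitefieldp$ by the Artin--Schreier--Witt sequence identifying $\Frobfixed_{|_{\finitefieldp}}$ with $\padicintegers$, together with the fact that $\B\padicintegers$ is the affinization of $\circle$ over $\finitefieldp$ (the paper imports this last input from To\"en as \cref{affinizationsoverQandFp} rather than recomputing continuous cohomology, but your computation is the content of that citation). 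The pushout description of $\cochainscosimplicial(\B\Frobfixed,\structuresheaf)$ obtained by delooping the exact sequence of \cref{lemma-Gpisfpqccover} is valid (by affineness, \cref{filteredcircleisaffine}, and the fact that $\cospec$ preserves limits), but the paper does not use it here and it is not obviously helpful for an explicit computation, since $\cochainscosimplicial(\B\Wittp,\structuresheaf)$ is itself complicated mod $p$.

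The genuine gap sits in the reduction step, not in the mod-$p$ analysis you single out as the main obstacle. To deduce anything about the map \eqref{affinizationunderlyingmapcomplexes} from its base changes you must first know that $\cochainscosimplicial(\B\Frobfixed,\structuresheaf)\otimes_{\integerslocalp}\bbQ\simeq\cochainscosimplicial(\B\Frobfixed_{|_{\bbQ}},\structuresheaf)$ and likewise over $\finitefieldp$; cohomology of a non-finite-type stack such as $\B\Frobfixed$ does not commute with base change for free. This is exactly \cref{flatbasechangecohomologyBFixed}, where most of the technical work of the paper's proof lives: over $\finitefieldp$ it uses flatness of $\Frobfixed$ and a levelwise cofiber-sequence argument in the bar resolution, and over $\bbQ$ (a localization, not a finite extension, of $\integerslocalp$) it requires coconnectivity of the cosimplicial bar construction and the $\lim_{\Delta_{\leq n+1}}$ truncation trick to commute $-\otimes\bbQ$ past the infinite totalization. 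Your pushout formula could in principle deliver this, since derived tensor products commute with base change, but it only displaces the problem to proving the same base-change statement for $\cochainscosimplicial(\B\Wittp,\structuresheaf)$, which needs the identical argument. (A minor slip: the cochain algebras are coconnective rather than connective, but \cref{basechangeeeee} holds for arbitrary $\integerslocalp$-modules, so nothing is lost.)
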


\medskip

We will establish below in the \cref{localcriterionaffinization} a local criterion for affinization: in order to prove that the map (\ref{affinizationunderlyingmap}) is the affinization of $\circle$ over $\integerslocalp$ it is enough to know that when base-changed to $\bbQ$ and $\finitefieldp$, the maps

\begin{equation}
\label{affinizationunderlyingmap2}
\circle \to \B\Frobfixed_{|_{\bbQ}}:=\B\Frobfixed\times_{\Spec(\integerslocalp)} \Spec(\bbQ)
\end{equation}

\begin{equation}
\label{affinizationunderlyingmap3}
\circle \to \B\Frobfixed_{|_{\finitefieldp}}:=\B\Frobfixed\times_{\Spec(\integerslocalp)} \Spec(\finitefieldp) 
\end{equation}

\noindent are affinizations of $\circle$, respectively, over $\bbQ$ and $\finitefieldp$. 

For the moment let us describe the targets of the maps \eqformula{affinizationunderlyingmap2} and \eqformula{affinizationunderlyingmap3}.  By the  \cref{remark-characteristiczerocase}, we already know that $\B\Frobfixed_{|_{\bbQ}}\simeq \B\Ga{\bbQ}$. It remains to work over $\finitefieldp$:

\begin{lemma} \mylabel{basechangetoFp}
There is an an equivalence 

\begin{equation}
\label{formulabasechangetoFp}
\Filcircleund_{|_{\finitefieldp}}  \simeq \B \padicintegers
\end{equation}

\noindent where $\B \padicintegers$ is the classifying stack of the proconstant group scheme with values in the $p$-adic integers.

\begin{proof}
As discussed in \cref{guideWitt}, for an $\finitefieldp$-algebra $\rmA$ the map $\Frob_p$ on $\Wittp(\rmA)$ takes the form
$$
\Frob_{p}(\lambda_1,\lambda_{p},..\lambda_{p^k},...) = (\lambda_1^p, \lambda_{p}^{p},...\lambda_{p^k}^p,...)
$$

\noindent so that levelwise it coincides with the standard Frobenius of $\rmA$. Now, for each $n$ we have Artin-Schreier-Witt exact sequences \cite[Proposition 3.28]{MR565469}

\begin{equation}
\label{exactsequencefixedpointsArtinSchreier}
\xymatrix{
0\ar[r]& (\bbZ/p^n \bbZ) \ar[r]& \Wittpmfinitefieldp \ar[rr]^-{\Frob_{p} - \Id}&& \Wittpmfinitefieldp \ar[r]& 0
}
\end{equation}
\noindent where we consider  $(\bbZ/p^n \bbZ)$ as the constant-valued group scheme over $\finitefieldp$. By passing to the limit over $n$ we obtain exact sequences

\begin{equation}
\label{exactsequencefixedpointsArtinSchreier33}
\xymatrix{
0\ar[r]& \padicintegers  \ar[r]& \Wittpfinitefieldp \ar[rr]^-{\Frob_{p} - \Id}&& \Wittpfinitefieldp\ar[r]& 0
}
\end{equation}

\noindent from where we can conclude the identification $\Frobfixedfinitefieldp\simeq \padicintegers$.
\end{proof}
\end{lemma}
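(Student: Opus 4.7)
The plan is to identify $\Frobfixedfinitefieldp$ explicitly as a group scheme over $\finitefieldp$ by exploiting the fact that over $\finitefieldp$ the Witt vector Frobenius degenerates into something classical, and then simply apply $\B$ to pass to classifying stacks. Since $\Filcircleund = \B\Frobfixed$ by construction, and $\B$ commutes with base change along $\Spec(\finitefieldp)\to \Spec(\integerslocalp)$, we have $\Filcircleund_{|_{\finitefieldp}} \simeq \B(\Frobfixed\times_{\Spec(\integerslocalp)}\Spec(\finitefieldp))$, so it suffices to produce an equivalence of group schemes $\Frobfixedfinitefieldp \simeq \padicintegers$ over $\finitefieldp$.

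First I would invoke item 7 of \cref{guideWitt}, which records that over an $\finitefieldp$-algebra, $\Frob_p$ acts on a Witt vector $(\lambda_n)_{n\in S}$ coordinate-wise by $p$-th power. This reduces the problem to a classical calculation with Artin--Schreier--Witt. Concretely, for every truncation level $m$, there is a classical short exact sequence of fpqc-sheaves of abelian groups over $\finitefieldp$
\begin{equation*}
0 \to \bbZ/p^m\bbZ \to \Wittpmfinitefieldp \xrightarrow{\Frob_p - \Id} \Wittpmfinitefieldp \to 0,
\end{equation*}
where the kernel is the constant group scheme and surjectivity is already an \'etale-local statement (the Artin--Schreier isogeny). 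One can reference this directly from \cite[Proposition 3.28]{MR565469}, or deduce it inductively from the length-one case (ordinary Artin--Schreier on $\Ga{}$) using the extensions in item 6 of \cref{guideWitt}.

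Next I would take the inverse limit over $m$. Since $\Wittpfinitefieldp \simeq \lim_m \Wittpmfinitefieldp$ compatibly with the Frobenius truncations recorded in item 7 of \cref{guideWitt}, and since kernels commute with arbitrary limits in the category of fpqc-sheaves of abelian groups, we get a natural identification
\begin{equation*}
\Frobfixedfinitefieldp = \ker\bigl(\Frob_p - \Id : \Wittpfinitefieldp \to \Wittpfinitefieldp\bigr) \simeq \lim_m \bbZ/p^m\bbZ =: \padicintegers,
\end{equation*}
the latter viewed as a proconstant group scheme over $\finitefieldp$. Applying $\B$ then yields the desired equivalence $\Filcircleund_{|_{\finitefieldp}} \simeq \B\padicintegers$.

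The only delicate point is the exactness in the middle of the Artin--Schreier--Witt sequence when passing to the inverse limit; but one does not actually need exactness of the limit sequence here, only that kernels commute with limits, which is automatic. Alternatively, flatness and surjectivity of $\Frob_p - \Id$ (a particular case of \cref{lemma-Gpisfpqccover} with $a=1$, restricted to $\finitefieldp$) could be invoked to reach the same conclusion without appealing to the Milnor-type vanishing. The one thing worth being careful about is that the canonical transition maps of the inverse system on the kernels really are the standard projections $\bbZ/p^{m+1}\bbZ \twoheadrightarrow \bbZ/p^m\bbZ$, which one verifies directly from the compatibility of the Artin--Schreier--Witt sequences under restriction of Witt vectors.
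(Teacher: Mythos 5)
Your proposal is correct and follows essentially the same route as the paper: reduce to identifying $\Frobfixedfinitefieldp$ via the coordinatewise description of $\Frob_p$ over $\finitefieldp$, invoke the Artin--Schreier--Witt exact sequences for each truncation $\Wittpmfinitefieldp$, and pass to the inverse limit to get $\padicintegers$ before applying $\B$. Your extra care about why the limit step is harmless (kernels commute with limits; transition maps are the standard projections) is a welcome tightening of a point the paper leaves implicit.
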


\medskip

The following is a key computation:

\begin{proposition}\cite[Corollaire 2.5.3 and Lemma 2.5.2 ]{\Bertrandaffine}
\mylabel{affinizationsoverQandFp}
The affinization of $\circle$ over $\bbQ$ is $\B \Ga{}$ and over $\finitefieldp$ is $\B \padicintegers$ .
\end{proposition}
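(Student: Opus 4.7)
The plan is to handle the two cases separately. In each case, by the characterization of affinization in \cref{reviewaffinestacks}, it suffices to exhibit a map from $\circle$ to the claimed target, check that the target is an affine stack, and verify that the induced morphism of cosimplicial cochain algebras is an equivalence.

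For the $\bbQ$-case, the candidate is the map $\circle = \B\bbZ \to \B\Ga{\bbQ}$ induced by the inclusion $\bbZ \hookrightarrow \Ga{\bbQ}$. The target is affine by part A) of \cref{background}, where it is identified with $\cospec(\Sym^{\mathsf{co}\Delta}_{\bbQ}(\bbQ[-1]))$; in characteristic zero this cosimplicial algebra collapses, by vanishing of symmetric group cohomology, to the split square-zero extension $\bbQ \oplus \bbQ[-1]$, which coincides with $\cochainscosimplicial(\circle,\bbQ)$. Naturality of cochains applied to $\bbZ \to \Ga{}$ pins down the induced map on the degree-$1$ generator, yielding the desired equivalence of cosimplicial algebras.

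For the $\finitefieldp$-case, take $\circle = \B\bbZ \to \B\padicintegers$ induced by $\bbZ \hookrightarrow \padicintegers$. First, $\B\padicintegers$ is an affine stack: writing $\padicintegers = \lim_n \bbZ/p^n\bbZ$ and applying the Milnor-sequence argument used in the proof of \cref{filteredcircleisaffine}, one obtains $\B\padicintegers \simeq \lim_n \B(\bbZ/p^n\bbZ)$, and each finite factor is affine by induction on $n$ using the extensions $0 \to \bbZ/p\bbZ \to \bbZ/p^n\bbZ \to \bbZ/p^{n-1}\bbZ \to 0$ together with closure of affine stacks under limits. Next, for the cochain equivalence, at each level I would invoke the Artin-Schreier-Witt exact sequence over $\finitefieldp$
\begin{equation*}
0 \to \bbZ/p^n\bbZ \to \Wittpm \xrightarrow{\Frob_p - \Id} \Wittpm \to 0
\end{equation*}
as in \cref{basechangetoFp}; the resulting fiber sequence $\B(\bbZ/p^n\bbZ) \to \B\Wittpm \to \B\Wittpm$ of affine stacks dualizes to a pushout of cosimplicial commutative algebras from which $\cochainscosimplicial(\B(\bbZ/p^n\bbZ),\finitefieldp)$ may be extracted. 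Passing to the inverse limit over $n$ and matching against $\cochainscosimplicial(\circle,\finitefieldp) \simeq \finitefieldp \oplus \finitefieldp[-1]$ produces the identification.

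The chief obstacle is the cochain computation in the $\finitefieldp$-case: one must control the full cosimplicial algebra structure, not merely the cohomology groups, through both the Artin-Schreier-Witt pushout and the passage to the inverse limit over $n$. These are precisely the technical matters settled in Lemma 2.5.2 and Corollaire 2.5.3 of \cite{MR2244263}, and the proof would proceed by importing those results rather than re-deriving them here.
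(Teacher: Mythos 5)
The paper gives no proof of this proposition: it is imported wholesale from To\"en's work on affine stacks, the bracketed citation in the statement's header being the entire justification, and your proposal ends the same way by deferring the substantive cochain computations to Lemma 2.5.2 and Corollaire 2.5.3 of that reference. Your preliminary sketch --- the characteristic-zero collapse of $\Sym^{\mathsf{co}\Delta}_{\bbQ}(\bbQ[-1])$ to the split square-zero extension for the rational case, and affineness of $\B\padicintegers$ together with the Artin--Schreier--Witt sequences and the passage to the limit over $n$ for the $\finitefieldp$ case --- is a faithful outline of how those cited results are actually proved (and of the discussion in \cref{background}), so in substance your argument and the paper's coincide.
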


Let us now work our local criterion for affinization over $\integerslocalp$. We start with a simple remark:

\begin{remark} \mylabel{basechangeeeee}
Let $M$ be an object in $\Mod_{\integerslocalp}(\Spectra)$. Suppose that both base changes $M\otimes_{\integerslocalp} \bbQ$ and $M\otimes_{\integerslocalp} \finitefieldp$ are zero. Then $M\simeq 0$. Indeed, as  $\bbQ$ is obtained from $\integerslocalp$ by inverting $p$, $M\otimes_{\integerslocalp} \bbQ$ is obtain via the filtered colimit of the diagram given by multiplication by $p$
\begin{equation}
\label{multiplicationbyp}
\xymatrix{
... \ar[r]^{.p}& M\ar[r]^{.p}& M\ar[r]^{.p}& M\ar[r]^{.p}& ...}
\end{equation} 

At the same time, we know that $\finitefieldp$ is obtained from $\integerslocalp$ via an exact sequence
$$
\xymatrix{
0\ar[r]& \integerslocalp \ar[r]^{.p}& \integerslocalp\ar[r] & \finitefieldp \ar[r]& 0}
$$
In particular, we have a cofiber-fiber sequence

\begin{equation}
\label{multiplicationbypfibersequence}
\xymatrix{
\ar[d]M\simeq M\otimes_{\integerslocalp} \integerslocalp\ar[r]^{.p}& M\simeq M\otimes_{\integerslocalp}\integerslocalp\ar[d]\\
0\ar[r]& M\otimes_{\integerslocalp} \finitefieldp
}
\end{equation}
It follows that $M\otimes_{\integerslocalp} \finitefieldp\simeq 0$ if and only if the multiplication by $p$ is an equivalence of $M$. In that case, the colimit of the diagram (\ref{multiplicationbyp}), meaning $M\otimes_{\integerslocalp} \bbQ$, is  equivalent to $M$. But the assumption $M\otimes_{\integerslocalp} \bbQ\simeq 0$ concludes that $M\simeq 0$. 

In particular, given $f: E \to F$ a morphism of chain complexes over $\bbZ_{(p)}$, if the two base changes to $\bbQ$ and $\finitefieldp$ are equivalences, then so is $f$. 
\end{remark}

\begin{lemma}
\mylabel{flatbasechangecohomologyBFixed}
Consider the pullback diagrams:
$$
\xymatrix{
\B\Ga{\bbQ}\simeq \B\Frobfixed_{|_{\bbQ}}\ar[r]^-{J}\ar[d]^{f_\bbQ}&\B\Frobfixed\ar[d]^f& \B\Frobfixed_{|_{\finitefieldp}}\ar[l]_-{I}\ar[d]^{f_{p}}\simeq \B\padicintegers\\
\Spec(\bbQ)\ar[r]^j&\Spec(\integerslocalp)&\Spec(\finitefieldp)\ar[l]_i
}
$$
Then we have equivalences of cosimplicial commutative $k$-algebras

\begin{equation}
\label{basechanecohomologyFixedFrob}
\cochainscosimplicial(\B\Frobfixed, \structuresheaf)\otimes_{\integerslocalp}\bbQ\simeq \cochainscosimplicial(\B\Frobfixed_{|_{\bbQ}}, \structuresheaf) \,\,\,\, \text{ and }\,\,\, \cochainscosimplicial(\B\Frobfixed, \structuresheaf)\otimes_{\integerslocalp}\finitefieldp\simeq \cochainscosimplicial(\B\Frobfixed_{|_{\finitefieldp}}, \structuresheaf)
\end{equation}

\begin{proof}
To show this lemma one could evoke directly the two cases in the \cref{basechangeBfilteredcircle} and \cref{globalsectionsoffilteredarefiltered}: one deals with $\bbQ$ the other with the lci closed immersion $i$.

We shall unfold the argument in more detail.  Let us start with the rational equivalence. We write $A^\bullet$ for the co-simplicial object $$
[n]\mapsto \C^\ast(\Frobfixed, \structuresheaf)^{\otimes n}
$$
\noindent which is a diagram of discrete and flat modules over $\integerslocalp$ with flat transition maps
$$A^\bullet:\Nerve(\Delta)\to \Mod_{\integerslocalp}^{\leq 0}$$
Let $\mathrm{Tot}(A^\bullet)$ denote the totalization of this co-simplicial object which we can read as the homotopy limit $\lim_{\Delta}A^\bullet$. The fact that the cosimplicial object is levelwise discrete implies that the associated spectral sequence is in the third quadrant and that the homotopy limit is also in $\Mod_{\integerslocalp}^{\leq 0}$ - use the dual of \cite[1.2.4.5]{lurie-ha}. In particular, $\lim_\Delta A^\bullet$ satisfies the homological bounded condition of the \cref{basechangeBfilteredcircle}-(ii). The claim in the lemma is that the comparison map
$$(\lim_\Delta A^\bullet)\otimes^{\mathbb{L}}_{\integerslocalp}\bbQ\to \lim_{\Delta} (A^\bullet\otimes_{\integerslocalp}^{\mathbb{L}}\bbQ)
$$
is an equivalence. 
Therefore, it is enough to show that for every $n\geq 0$ the map induces an isomorphism on cohomology groups
$$\mathrm{H}^n((\lim_\Delta A^\bullet)\otimes^{\mathbb{L}}_{\integerslocalp}\bbQ)\to \mathrm{H}^n(\lim_{\Delta} (A^\bullet\otimes_{\integerslocalp}^{\mathbb{L}}\bbQ))
$$
For the l.h.s. we have a chain of isomorphisms
$$\mathrm{H}^n((\lim_\Delta A^\bullet)\otimes^{\mathbb{L}}_{\integerslocalp}\bbQ)\simeq \mathrm{H}^n(\lim_\Delta A^\bullet)\otimes_{\integerslocalp}\bbQ\simeq  \mathrm{H}^n(\lim_{\Delta_{\leq n+1}} A^\bullet)\otimes_{\integerslocalp}\bbQ$$
\medskip
\noindent the first because $\bbQ$ is discrete flat over $\integerslocalp$; the second follows from \cite[1.2.4.5(5)]{lurie-ha}.
For the r.h.s, we have
$$\mathrm{H}^n(\lim_{\Delta} (A^\bullet\otimes_{\integerslocalp}^{\mathbb{L}}\bbQ))\simeq 
\mathrm{H}^n(\lim_{\Delta_{\leq n+1}} (A^\bullet\otimes_{\integerslocalp}\bbQ))\simeq \mathrm{H}^n([\lim_{\Delta_{\leq n+1}} A^\bullet]\otimes_{\integerslocalp}\bbQ)
$$
\noindent where first we used again \cite[1.2.4.5(5)]{lurie-ha} and in the second isomorphism we used the fact that since the homotopy limit is now finite, it commutes with derived tensor products.
Finally, the comparison between the r.h.s and the l.h.s follows from the isomorphism
$$
\mathrm{H}^n(\lim_{\Delta_{\leq n+1}} A^\bullet)\otimes_{\integerslocalp}\bbQ\simeq \mathrm{H}^n([\lim_{\Delta_{\leq n+1}} A^\bullet]\otimes_{\integerslocalp}\bbQ)
$$
\noindent which holds because $\bbQ$ is discrete and flat over $\integerslocalp$.\\


Let us now show the second equivalence, over $\finitefieldp$. The \cref{lemma-Gpisfpqccover} gives us a short exact sequence of fpqc sheaves of groups 
$$
\xymatrix{
0\ar[r]& \Frobfixed  \ar[r]& \Wittp \ar[r]^-{\Frob_{p} - \Id}& \Wittp \ar[r]& 0
}
$$
\noindent where the map $\Frob_{p} - \Id$ is flat. This implies that $\Frobfixed$ is a flat group scheme over $\Spec(\integerslocalp)$ and that the square

\CSquare{\Frobfixed ;\Wittp; \Spec(\integerslocalp);\Wittp;;\Frob_{p} - \Id;;0;}

\noindent is actually a derived fiber product and so the diagram is

\CSquare{\Frobfixed_{|_{\finitefieldp}} ;\Frobfixed;\Spec(\finitefieldp) ;\Spec(\integerslocalp);;;;;}

The derived base change \cite[2.5.3.3, 2.5.4.5]{lurie-sag} formula applied to the last square tells us that

\begin{equation}
\label{basechanecohomologyFixedFrob22}
\cochainscosimplicial(\Frobfixed, \structuresheaf)\otimes_{\integerslocalp}\finitefieldp\simeq \cochainscosimplicial(\Frobfixed_{|_{\finitefieldp}}, \structuresheaf)
\end{equation}
\noindent where $\cochainscosimplicial(\Frobfixed, \structuresheaf)$ is the Hopf-algebra of functions on the affine group scheme $\Frobfixed$. To show that \formula{basechanecohomologyFixedFrob22} implies \formula{basechanecohomologyFixedFrob} over $\finitefieldp$ we use the description of the classifying stack $\B\Frobfixed$ as the geometric realization of the simplicial object
\SimplicialObjectNoLabels{\Spec(\integerslocalp);\Frobfixed;\Frobfixed\times \Frobfixed}
\noindent which exhibits 
\begin{equation}
\label{limiformulacohomologyBG}
\cochainscosimplicial(\B\Frobfixed, \structuresheaf)\simeq \lim_{[n]\in \Delta}\, \cochainscosimplicial(\Frobfixed, \structuresheaf)^{\otimes_n}
\end{equation}
\medskip

\noindent Here, because $\Frobfixed$ is affine, the Kunneth formulas for the cohomology of its cartesian powers are automatic. The same argument also tells us that

\begin{equation}
\label{limiformulacohomologyBG2}
\cochainscosimplicial(\B\Frobfixed_{|_{\finitefieldp}}, \structuresheaf)\simeq \lim_{[n]\in \Delta}\, \cochainscosimplicial(\Frobfixed_{|_{\finitefieldp}}, \structuresheaf)^{\otimes_n}
\end{equation}

But now we know that, as in the \cref{basechangeeeee}, $\C^\ast(\B\Frobfixed, \structuresheaf)\otimes_{\integerslocalp}\finitefieldp$ is the cofiber of multiplication by $p$

\begin{equation}
\label{limitsquare}
\xymatrix{
\ar[d]\cochainscosimplicial(\B\Frobfixed, \structuresheaf)\ar[r]^{.p}& \cochainscosimplicial(\B\Frobfixed, \structuresheaf)\ar[d]\\
0\ar[r]& \cochainscosimplicial(\B\Frobfixed, \structuresheaf)\otimes_{\integerslocalp}\finitefieldp
}
\end{equation}

As multiplication by $p$ is actually happening levelwise in \eqformula{limiformulacohomologyBG}, we deduce that the square \eqformula{limitsquare} is obtained from the squares

\begin{equation}
\label{limitsquare2}
\xymatrix{
\ar[d]\cochainscosimplicial(\Frobfixed, \structuresheaf)^{\otimes_n}\ar[r]^{.p}& \cochainscosimplicial(\Frobfixed, \structuresheaf)^{\otimes_n}\ar[d]\\
0\ar[r]& \cochainscosimplicial(\Frobfixed, \structuresheaf)^{\otimes_n}\otimes_{\integerslocalp}\finitefieldp
}
\end{equation}

\noindent by passing to the limit in $\Delta$. Now, \formula{basechanecohomologyFixedFrob} over $\finitefieldp$  follows from the comparison \eqformula{basechanecohomologyFixedFrob22} applied to each entry of the cartesian square \eqformula{limitsquare2}, upon passing to the limit and using \formula{limiformulacohomologyBG2}. 

\end{proof}
\end{lemma}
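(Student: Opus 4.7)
My plan is to reduce both base-change statements to the affine level, where one merely compares rings of functions on honest schemes, and then transport the equivalence through the limit that computes cochains of $\B\Frobfixed$. The starting point is the description of $\B\Frobfixed$ as the geometric realization of the simplicial bar construction on $\Frobfixed$. Since $\Frobfixed$ is affine (it is a closed subgroup of the affine scheme $\Wittp$) and affineness is stable under finite products, we obtain
\begin{equation*}
\cochainscosimplicial(\B\Frobfixed, \structuresheaf)\simeq \lim_{[n]\in\Delta}\cochainscosimplicial(\Frobfixed,\structuresheaf)^{\otimes n},
\end{equation*}
and the same formula applies after base change to $\bbQ$ or to $\finitefieldp$. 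The problem is thus to commute the base change past this cosimplicial limit, once we know it holds for the affine group scheme $\Frobfixed$ itself.

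For the base change to $\finitefieldp$, the short exact sequence of \cref{lemma-Gpisfpqccover} shows that $\Frobfixed$ is the (classical and derived) fiber of the flat map $\Frob_p-\Id$, so $\Frobfixed$ is flat over $\integerslocalp$ and $\Frobfixed_{|_{\finitefieldp}}$ is the derived base change. For an affine scheme this immediately gives $\cochainscosimplicial(\Frobfixed,\structuresheaf)\otimes_{\integerslocalp}\finitefieldp\simeq \cochainscosimplicial(\Frobfixed_{|_{\finitefieldp}},\structuresheaf)$. To push this equivalence through the totalization, the key observation is that $\finitefieldp=\mathrm{cofib}(\integerslocalp\xrightarrow{p}\integerslocalp)$, so the endofunctor $-\otimes_{\integerslocalp}\finitefieldp$ is a cofiber of a natural transformation of the identity on $\Mod_{\integerslocalp}$. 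In a stable $\infty$-category, such a cofiber agrees with a shift of the fiber and therefore commutes with arbitrary limits; applying this to the limit over $\Delta$ yields the claim.

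For the base change to $\bbQ$, flatness of $\bbQ$ over $\integerslocalp$ makes $-\otimes_{\integerslocalp}\bbQ$ compatible with \emph{finite} limits, and I would trade the infinite totalization for a finite one at each cohomological degree. Concretely, the cosimplicial object $A^\bullet:=\cochainscosimplicial(\Frobfixed,\structuresheaf)^{\otimes\bullet}$ consists of discrete flat $\integerslocalp$-modules with flat transition maps; its spectral sequence lies in the third quadrant so $\lim_\Delta A^\bullet$ is coconnective, and by \cite[1.2.4.5]{lurie-ha} each $\mathrm{H}^n(\lim_\Delta A^\bullet)$ is already computed by the partial totalization $\lim_{\Delta_{\leq n+1}}A^\bullet$. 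Finite limits commute with flat base change, so $\mathrm{H}^n$ of both sides of the desired equivalence agree, and then \cref{basechangeeeee} (together with the structured multiplicative refinement) transfers the equivalence to the level of cosimplicial commutative algebras.

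The main obstacle is really the $\finitefieldp$ case, because $\finitefieldp$ is not flat over $\integerslocalp$ and one cannot naively commute the tensor product with the infinite limit defining $\cochainscosimplicial(\B\Frobfixed,\structuresheaf)$. The resolution, as above, is to exploit the fact that $\finitefieldp$ is a \emph{perfect} $\integerslocalp$-module, realized as a two-term complex, so that tensoring with it is a cofiber (equivalently, a shifted fiber) of natural transformations of the identity functor and is therefore limit-preserving on spectra. Once this is in place, the rest is a matter of threading the bar-construction formula carefully; the $\bbQ$ case is easier in principle but requires the truncation/boundedness bookkeeping to convert the infinite totalization into finite pieces where flatness of $\bbQ$ directly applies.
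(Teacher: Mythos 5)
Your proposal is correct and follows essentially the same route as the paper: the $\bbQ$ case via coconnectivity of the totalization, the partial totalizations $\lim_{\Delta_{\leq n+1}}$ from \cite[1.2.4.5]{lurie-ha}, and flatness of $\bbQ$ over $\integerslocalp$; the $\finitefieldp$ case via flatness of $\Frobfixed$ from \cref{lemma-Gpisfpqccover}, derived base change at the affine level, and the observation that $-\otimes_{\integerslocalp}\finitefieldp$ is the cofiber of multiplication by $p$ and hence commutes with the limit over $\Delta$. The only cosmetic difference is that you phrase the last point functorially (cofiber of an endotransformation of the identity is a shifted fiber), whereas the paper phrases it as multiplication by $p$ acting levelwise in the totalization; these are the same argument.
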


In fact we can show something more general than \cref{flatbasechangecohomologyBFixed} that will be useful to us later:

\begin{proposition}
\mylabel{lemma-Bfixfinitecohdimension}
The structure map $\B \Frobfixed \to \Spec \, \integerslocalp$ is of finite cohomological dimension. In particular, by \cite[A.1.5, A.1.6,A.1.9]{1402.3204} (see also \cite[9.1.5.6]{lurie-sag}), it verifies base-change against any map $Y\to \Spec \, \integerslocalp$.
\end{proposition}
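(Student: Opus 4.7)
The plan is to prove finite cohomological dimension of $p: \B\Frobfixed \to \Spec(\integerslocalp)$ by descending to comodules, verifying a dimension bound over the two residue scenarios, and then lifting via conservativity.

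First, I would use the fpqc atlas $e: \Spec(\integerslocalp) \to \B\Frobfixed$ (fpqc since $\Frobfixed$ is a flat affine group scheme by \cref{lemma-Gpisfpqccover}) to present $\Qcoh(\B\Frobfixed) \simeq \CoMod_A(\Mod_{\integerslocalp})$, where $A := \structuresheaf(\Frobfixed)$ is the (flat) Hopf algebra of functions. Under this equivalence, for $F \in \Qcoh(\B\Frobfixed)^{\heartsuit}$ the pushforward $p_*F$ is computed by the cobar complex of the $A$-coaction on $e^*F$. Finite cohomological dimension is then equivalent to exhibiting a uniform $N$ such that this cobar complex has cohomology concentrated in degrees $\leq N$, independently of $F$.

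Next, I would establish the bound after base change along $\Spec(\bbQ) \to \Spec(\integerslocalp)$ and $\Spec(\finitefieldp) \to \Spec(\integerslocalp)$, which together form a conservative family on $\Mod_{\integerslocalp}$ by \cref{basechangeeeee}. Over $\bbQ$, \cref{remark-characteristiczerocase} gives $\Frobfixed_{|_{\bbQ}} \simeq \Ga{\bbQ}$; since every algebraic $\Ga{\bbQ}$-representation is an iterated extension of the trivial one and $\cochains(\B\Ga{\bbQ},\structuresheaf) \simeq \bbQ \oplus \bbQ[-1]$, the classifying stack $\B\Ga{\bbQ}$ has cohomological dimension $1$. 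Over $\finitefieldp$, \cref{basechangetoFp} gives $\Frobfixed_{|_{\finitefieldp}} \simeq \padicintegers$ as a constant pro-finite group scheme; the profinite cohomological dimension of $\padicintegers$ is $1$, so $\B\padicintegers$ is also of cohomological dimension $1$.

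Finally, I would assemble these local bounds. Using the flatness of $A$ and the partial base-change results already established in \cref{flatbasechangecohomologyBFixed}, the cobar complex computing $p_*F$ commutes, after $\otimes\bbQ$ and $\otimes\finitefieldp$, with the analogous cobar complexes for $\Frobfixed_{|_{\bbQ}}$ and $\Frobfixed_{|_{\finitefieldp}}$. Each such residue of $p_*F$ lies in degrees $[0,1]$ by the preceding step, so by conservativity $p_*F$ itself lies in $\Mod_{\integerslocalp}^{\leq 1}$ (or at worst in a bounded range around $1$). The ``in particular'' clause then follows directly from the cited general principles \cite[A.1.5, A.1.6, A.1.9]{1402.3204} relating finite cohomological dimension to base change against arbitrary maps into $\Spec(\integerslocalp)$.

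The main obstacle is the compatibility of the (a priori infinite) cobar totalization with base change to $\finitefieldp$, which is not flat over $\integerslocalp$. The cleanest route is to upgrade the degreewise analysis to a uniform statement about projective dimension, showing that $\integerslocalp$ admits a length-one resolution as an $A$-comodule; flatness of $A$ would then propagate this resolution to both residues, and the bound $N=1$ (or a small fixed value) would hold simultaneously over $\integerslocalp$, $\bbQ$ and $\finitefieldp$, bypassing delicate $\mathrm{lim}^1$ phenomena.
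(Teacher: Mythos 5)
Your overall skeleton — reduce to the two residue fields $\bbQ$ and $\finitefieldp$, check that $\B\Ga{\bbQ}$ and $\B\padicintegers$ each have cohomological dimension $1$, and lift by conservativity of $(-\otimes\bbQ,\ -\otimes\finitefieldp)$ — is exactly the paper's strategy (\cref{mainlemmafinitecohodimension}). But the step you yourself flag as the main obstacle is a genuine gap, and your proposed repair does not close it. You cannot in general commute the (infinite) cobar totalization computing $p_*F$ with the non-flat base change $-\otimes_{\integerslocalp}\finitefieldp$, and the suggested fix — that $\integerslocalp$ admits a length-one resolution as an $\structuresheaf(\Frobfixed)$-comodule — is unsubstantiated and almost certainly too strong: the integral cohomological dimension that actually comes out of this argument is $2$, not $1$. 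Indeed, the conservativity step loses a degree: from the cofiber sequence $N\xrightarrow{\cdot p}N\to N\otimes\finitefieldp$, the vanishing of $\pi_i(N\otimes\finitefieldp)$ for $i<-d$ only forces $\cdot p$ to be an isomorphism on $\pi_i(N)$ for $i\leq -d-2$, whence $\pi_i(N)=0$ there after tensoring with $\bbQ$; so bounds of $d$ over both residues yield only $d+1$ integrally. Your parenthetical ``or at worst in a bounded range around $1$'' is where the actual content lives, and it needs the argument just sketched, not an appeal to a length-one resolution.

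The paper avoids the cobar-commutation problem entirely by never commuting a totalization with $\otimes\finitefieldp$. Instead, for $F$ in the heart of $\Qcoh(\B\Frobfixed)$ it invokes the Beck--Chevalley equivalences of \cite[A.1.3]{1402.3204}: the transformation $i^*p_*F\to (p_{\finitefieldp})_*I^*F$ is an equivalence because $i:\Spec\finitefieldp\to\Spec\integerslocalp$ is an lci closed immersion, hence finite and of finite Tor-amplitude, and $j^*p_*F\to(p_{\bbQ})_*J^*F$ is an equivalence because $F$, being in the heart, is homologically bounded above and $j$ is an open immersion. With those two equivalences in hand the residue-field bounds transport directly to $i^*p_*F$ and $j^*p_*F$, and the $(d+1)$-argument above finishes. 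If you replace your cobar/projective-dimension step with this Beck--Chevalley input (and accept the bound $2$ rather than $1$, which is all that ``finite cohomological dimension'' requires), your proof becomes the paper's.
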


\medskip

The \cref{lemma-Bfixfinitecohdimension} follows from the following more general analysis:

\begin{lemma}
\mylabel{mainlemmafinitecohodimension}
Let $G$ be an affine group scheme over $\integerslocalp$. Assume that \textcolor{black}{$G$ is flat} and that both base change maps 

$$f_{\bbQ}:\B G_{|_{\bbQ}}\to \Spec\, \bbQ \,\,\, \text{ and }\,\,\, f_{\finitefieldp}:\B G_{|_{\finitefieldp}}\to \Spec\, \finitefieldp \,\,\,$$ 

\noindent are of finite cohomological dimension $d$. Then $f:\B G\to \Spec \integerslocalp$ is of finite cohomological dimension $\textcolor{black}{d+1}$.
\begin{proof}
To check that $f$ is of finite cohomological dimension it is enough (see \cite[A.1.4]{1402.3204}) to see that $f_\ast$ sends $ \Qcoh(\B G)^{\heartsuit}$ to $\Mod_{\integerslocalp}^{\geq -d}$ for some $d>0$. Let us now consider the pullback squares

\begin{equation}
    \label{diagramfinitecohomologicaldimension}
    \xymatrix{
\B G_{|_{\bbQ}}\ar[r]^-{J}\ar[d]^{f_\bbQ}&\B G \ar[d]^f& \B G{|_{\finitefieldp}}\ar[l]_-{I}\ar[d]^{f_{p}}\\
\Spec(\bbQ)\ar[r]^j&\Spec(\integerslocalp)&\Spec(\finitefieldp)\ar[l]_i
}
\end{equation}

Let $M\in \Qcoh(\B G)^{\heartsuit}$. In this case we can use the cases in \cite[A.1.3]{1402.3204}:

\begin{itemize}
    \item one to conclude that tbe Beck-Chevalley transformation

$$
i^\ast\, f_\ast\, M \to (f_p)_\ast\, I^\ast \, M
$$

\noindent is an equivalence since $i$ is an lci closed immersion and therefore finite of finite Tor-dimension. \\

\medskip

\item the other to conclude that the Beck-Chevalley transformation

$$
j^\ast\, f_\ast\, M \to (f_\bbQ)_\ast\, J^\ast \, M
$$

\noindent is also an equivalence since $M$ is in the heart and therefore homologically bounded above.
\end{itemize}

\medskip

Since pullbacks are right t-exact, it follows that $$I^\ast( M)\in \Qcoh(\B G_{|_{\finitefieldp}})_{\geq 0}\,\, \text{ and }\,\,\, J^\ast( M)\in \Qcoh(\B G_{|_{\bbQ}})_{\geq 0}$$

Now we use our assumption that both $\B G_{|_{\bbQ}}$ and $\B G_{|_{\finitefieldp}}$ are of finite cohomological dimension $d$, combined with \cite[A.1.6]{1402.3204}, to deduce that

$$(f_p)_\ast I^\ast( M)\in \Mod_{\finitefieldp}^{\geq -d}\,\, \text{ and }\,\,\, (f_\bbQ)_\ast J^\ast( M)\in \Mod_\bbQ^{\geq -d}$$

Our final goal is to show that $f_\ast M$ is in $\Mod_{\integerslocalp}^{\geq -(d+1)}$. Using the fact that the Beck-Chevalley transformations above are equivalences, we are therefore reduced to prove the following statement: let $N\in \Mod_{\integerslocalp}$ and assume that

$$i^\ast( N)\in \Mod_{\finitefieldp}^{\geq -d}\,\, \text{ and }\,\,\,j^\ast( N)\in \Mod_\bbQ^{\geq -d}$$

Then $N\in \Mod_{\integerslocalp}^{\geq -(d+1)}$, ie, $\pi_i(N)=0$ for $i<-(d+1)$ (homological notation). To see this we use the cofiber sequence \cref{multiplicationbypfibersequence}

\begin{equation}
\xymatrix{
\ar[d]N\ar[r]^{.p}& M\ar[d]\\
0\ar[r]& N\otimes_{\integerslocalp} \finitefieldp
}
\end{equation}

\noindent and the induced long exact sequence

\begin{equation}
\label{longexactsequencemultiplicationbyp}
\cdots \to \pi_i(N)\to \pi_i(N)\to \pi_i(N\otimes_{\integerslocalp}\finitefieldp)\to \pi_{i-1}(N)\to \cdots 
\end{equation}

But by assumption we have $\pi_i(N\otimes_{\integerslocalp}\finitefieldp)=0$ for $i<-d$. In particular, for $i\leq -d-2$ the long exact sequence \eqref{longexactsequencemultiplicationbyp} gives isomorphisms

\begin{equation}
\label{multiplicationbypisomorphismbelowdplusone}
\xymatrix{\pi_{i}(N)\ar[r]^{.p}_{\sim}& \pi_{i}(N)}
\end{equation}

Finally, using the description  $\bbQ=\integerslocalp[p^{-1}]$ and the fact that this localization is flat over $\integerslocalp$, we find

\begin{equation}
\label{multiplicationbypisomorphismbelowdplusone2}
\pi_i(N)\otimes_{\integerslocalp}\bbQ\simeq \pi_i(N\otimes_{\integerslocalp}\bbQ)
\end{equation}

for all $i$ and by assumption we have

\begin{equation}
\label{multiplicationbypisomorphismbelowdplusone3}
\pi_i(N)\otimes_{\integerslocalp}\bbQ\simeq 0
\end{equation}

for $i<-d$. At the same time, for each $i$ we can write

\begin{equation}
\label{multiplicationbypisomorphismbelowdplusone4}
\xymatrix{
\pi_i(N)\otimes_{\integerslocalp}\bbQ\ar[r]^-{\sim} &\colim\, (\pi_i(N)\ar[r]_-{.p}& \pi_i(N)\ar[r]_-{.p} & \cdots)
}
\end{equation}

So that for $i\leq -d-2$, the combination of \eqref{multiplicationbypisomorphismbelowdplusone} and \eqref{multiplicationbypisomorphismbelowdplusone3} gives

\begin{equation}
\label{multiplicationbypisomorphismbelowdplusone5}
\xymatrix{
0\simeq\pi_i(N)\otimes_{\integerslocalp}\bbQ\ar[r]^{\sim} &\colim\, (\pi_i(N)\ar[r]_-{.p}^-{\sim}& \pi_i(N)\ar[r]_-{.p}^-{\sim} & \cdots)\ar[r]^-{\sim}& \pi_i(N),
}
\end{equation}

concluding the proof.

\end{proof}
\end{lemma}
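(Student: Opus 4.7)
\textbf{Proof proposal for Lemma \ref{mainlemmafinitecohodimension}.}

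The plan is to check the cohomological dimension criterion of \cite[A.1.4]{1402.3204}: it suffices to show that $f_{*}$ carries every $M \in \Qcoh(\B G)^{\heartsuit}$ into $\Mod_{\integerslocalp}^{\geq -(d+1)}$, i.e.\ $\pi_{i}(f_{*} M) = 0$ for $i < -(d+1)$. We will analyze $f_{*} M$ by base change along the two points $i:\Spec(\finitefieldp)\hookrightarrow \Spec(\integerslocalp)$ and $j:\Spec(\bbQ)\to \Spec(\integerslocalp)$ of $\Spec(\integerslocalp)$, use the cohomological dimension bound $d$ on the fibers, and then extract a bound on $f_{*} M$ from a short exact sequence argument that is responsible for the extra $+1$.

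First I would record the two Beck--Chevalley comparison maps attached to the obvious pullback square \eqref{diagramfinitecohomologicaldimension}. Since $j$ is flat, base change $j^{*} f_{*} M \simeq (f_{\bbQ})_{*} J^{*} M$ is automatic on all of $\Qcoh$. The closed immersion $i$ is lci hence of finite Tor-dimension, so by the bounded-above case of \cite[A.1.3]{1402.3204} the map $i^{*} f_{*} M \to (f_{\finitefieldp})_{*} I^{*} M$ is also an equivalence, applied to the bounded-above object $M \in \Qcoh(\B G)^{\heartsuit}$. Since $I^{*}$ and $J^{*}$ are right $t$-exact, both $I^{*} M$ and $J^{*} M$ remain connective (indeed in the heart by flatness of the base changes). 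The hypothesis that $f_{\bbQ}$ and $f_{\finitefieldp}$ have cohomological dimension $d$ then yields $(f_{\bbQ})_{*} J^{*} M \in \Mod_{\bbQ}^{\geq -d}$ and $(f_{\finitefieldp})_{*} I^{*} M \in \Mod_{\finitefieldp}^{\geq -d}$.

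Writing $N := f_{*} M$, the problem therefore reduces to the following purely module-theoretic claim: if $N \in \Mod_{\integerslocalp}$ satisfies $N \otimes_{\integerslocalp} \bbQ \in \Mod_{\bbQ}^{\geq -d}$ and $N \otimes_{\integerslocalp} \finitefieldp \in \Mod_{\finitefieldp}^{\geq -d}$, then $N \in \Mod_{\integerslocalp}^{\geq -(d+1)}$. To prove this, apply homotopy to the cofiber sequence $N \xrightarrow{\cdot p} N \to N \otimes_{\integerslocalp} \finitefieldp$ of \cref{basechangeeeee}. The resulting long exact sequence, combined with the vanishing $\pi_{i}(N \otimes \finitefieldp) = 0$ for $i < -d$, forces multiplication by $p$ to be an isomorphism on $\pi_{i}(N)$ for $i \leq -d-2 = -(d+1)-1$. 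On the other hand, flatness of $\bbQ$ over $\integerslocalp$ gives $\pi_{i}(N) \otimes_{\integerslocalp} \bbQ \simeq \pi_{i}(N \otimes_{\integerslocalp} \bbQ) = 0$ for $i < -d$, and $\pi_{i}(N) \otimes \bbQ$ computes the colimit of the $\cdot p$-tower on $\pi_{i}(N)$; together with the isomorphism statement this forces $\pi_{i}(N) = 0$ in that range, giving the desired bound.

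The main obstacle, I expect, is purely bookkeeping: making sure the Beck--Chevalley maps really apply in the form stated, and in particular that the ``bounded-above'' hypothesis in \cite[A.1.3]{1402.3204} is satisfied for $M$ in the heart (which it is, trivially). The conceptual content is entirely in the last paragraph, where the degree shift from $-d$ to $-(d+1)$ arises from the single extra step that appears in the long exact sequence of $0 \to \integerslocalp \xrightarrow{p} \integerslocalp \to \finitefieldp \to 0$; this is the price one pays for gluing cohomological information from the generic and special fibers of $\Spec(\integerslocalp)$.
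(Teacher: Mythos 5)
Your proposal is correct and follows essentially the same route as the paper: Beck--Chevalley base change to the fibers over $\bbQ$ and $\finitefieldp$, the cohomological dimension bound $d$ there, and then the arithmetic fracture argument via the cofiber sequence $N \to N \to N\otimes_{\integerslocalp}\finitefieldp$ for multiplication by $p$, which is exactly where the extra $+1$ comes from. The one imprecision is your claim that base change along the flat map $j$ is ``automatic on all of $\Qcoh$'': since $f_*$ here is an infinite totalization it need not commute with the filtered colimit $-\otimes_{\integerslocalp}\bbQ$ in general, and the paper instead justifies this equivalence by the fact that $M$ lies in the heart and is therefore homologically bounded above (a hypothesis you do have, so your argument still goes through).
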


\medskip
\begin{proof}[Proof of \cref{lemma-Bfixfinitecohdimension}]
By \cref{mainlemmafinitecohodimension}, to prove the statement in  \cref{lemma-Bfixfinitecohdimension}, we only need to show that both $\B \Ga{\bbQ}\simeq \B \Frobfixed\times_{\Spec \, \integerslocalp}\Spec\, \bbQ \to \Spec\, \bbQ$ and $\B \padicintegers \simeq \B \Frobfixed\times_{\Spec \, \integerslocalp}\Spec\, \finitefieldp \to \Spec\, \finitefieldp$ are maps of finite cohomological dimension.  For $\B \Ga{}$ over a field of characteristic zero, this is well-known \cite[9.1.5.4]{lurie-sag}.
More generally, we can use the fact that if $H$ is an affine group scheme over a field $k$ and $\rmM$ is an object in the heart of the category of $\structuresheaf(H)$-comodules, then
\begin{equation}
\label{formulachampsaffinescohomologyaffinegroup}
\mathsf{H}^i(\B H, \rmM)\simeq \Map_{\Qcoh(\B H)}(\structuresheaf, \rmM[i])\simeq \Map_{\Stacks{k}/\B H}(\B H, \Kspace(\rmM, i))\simeq \Ext^{i}_{H-\mathrm{rep}^{\heartsuit}}(k, \rmM)
\end{equation}
\noindent where the $\Ext$-groups are computed in the classic abelian category of $\structuresheaf(H)$-comodules. See for instance \cite[Lemme 1.5.1]{MR2244263}.
We can now use this to show that $\B \padicintegers$ is of cohomological dimension 1 over $\finitefieldp$. See \cite[Chapter XIII, \S 1, Corollary]{MR554237} and \cite[\S 3.2, Example]{MR1324577}.
\end{proof}

\begin{remark}
\mylabel{singularcochainscircleuniversalcoefficients}
 For a fixed discrete commutative ring $R$, the $\Einfinity$ (resp. cosimplicial) algebra of singular cochains $\cochains(\circle, R)$ (resp. $\cochainscosimplicial(\circle, R)$) can be defined as the cotensor $R^{\circle}$ in the \icategory  $\CAlg(\Mod_R)$ (resp. $\coSCRings{R}$) which can be explicitely described as the limit of the constant diagram with value $R$, $\lim_{\circle}R$. Because $\circle$ has a finite model as a simplicial set, this limit is finite. Moreover, these two constructions coincide under the functor $\theta^{\mathsf{ccn}}$. It follows that for any map of rings $R\to R'$, the derived base change $\cochainscosimplicial(\circle, R)\otimes_R R'\to \cochainscosimplicial(\circle, R')$ is an equivalence of algebras. In particular, we have equivalences

$$
\cochainscosimplicial(\circle, \integerslocalp)\otimes_{\integerslocalp}\finitefieldp\simeq \cochainscosimplicial(\circle,\finitefieldp)\,\,\,\,\,\text{ and }\,\,\,\,\cochainscosimplicial(\circle, \integerslocalp)\otimes_{\integerslocalp}\bbQ\simeq \cochainscosimplicial(\circle,\bbQ)
$$
\end{remark}

We are now ready to prove our local criterion for affinization:

\begin{lemma}
\mylabel{localcriterionaffinization}
If the two maps (\ref{affinizationunderlyingmap2}) and (\ref{affinizationunderlyingmap3}) are affinizations of $\circle$, respetively over $\bbQ$ and $\finitefieldp$, then the map (\ref{affinizationunderlyingmap}) is an affinization over $\integerslocalp$.
\begin{proof}
The combination of the \cref{singularcochainscircleuniversalcoefficients} and the \cref{flatbasechangecohomologyBFixed} with \cite[Corollaire 2.3.3]{\Bertrandaffine} tells us that the statement in the lemma is equivalent to the following:  to deduce that the map (\ref{affinizationunderlyingmapcomplexes}) is an equivalence, it is enough to check that both maps
\begin{equation}
\label{affinizationunderlyingmapcomplexes2}
\cochainscosimplicial(  \B\Frobfixed, \structuresheaf)\otimes_{\integerslocalp} \bbQ \to  \cochainscosimplicial(\circle, \bbQ)\,\,\, \text{ and }\,\,\,\cochainscosimplicial(  \B\Frobfixed, \structuresheaf)\otimes_{\integerslocalp} \finitefieldp \to  \cochainscosimplicial(\circle, \finitefieldp)
\end{equation}

\noindent are equivalences. Formulated this way, the lemma is immediate from the \cref{basechangeeeee}.

\end{proof}
\end{lemma}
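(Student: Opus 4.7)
The proof reduces to a chain of three translations, each already prepared in the excerpt. The plan is as follows.

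First I would invoke the characterization of affinization from \cite[Corollaire 2.3.3]{\Bertrandaffine} to reformulate the claim: the map \eqref{affinizationunderlyingmap} is an affinization if and only if the induced map of cosimplicial commutative $\integerslocalp$-algebras
\[
\varphi \colon \cochainscosimplicial(\B\Frobfixed,\structuresheaf) \longrightarrow \cochainscosimplicial(\circle,\integerslocalp)
\]
is an equivalence. Since both sides lie in $\Mod_{\integerslocalp}$ after forgetting algebra structure, the issue is whether $\varphi$ is an equivalence in $\Mod_{\integerslocalp}$.

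Next I would reduce this to a problem over $\bbQ$ and $\finitefieldp$ separately by applying \cref{basechangeeeee}: a morphism in $\Mod_{\integerslocalp}$ is an equivalence if and only if it becomes so after base change along both $\integerslocalp \to \bbQ$ and $\integerslocalp \to \finitefieldp$. The key point is then to identify $\varphi \otimes_{\integerslocalp} \bbQ$ and $\varphi \otimes_{\integerslocalp} \finitefieldp$ with the corresponding maps of cochain algebras over $\bbQ$ and $\finitefieldp$. On the source side, \cref{flatbasechangecohomologyBFixed} supplies the required base change equivalences
\[
\cochainscosimplicial(\B\Frobfixed,\structuresheaf) \otimes_{\integerslocalp} \bbQ \simeq \cochainscosimplicial(\B\Frobfixed_{|_\bbQ},\structuresheaf), \quad \cochainscosimplicial(\B\Frobfixed,\structuresheaf) \otimes_{\integerslocalp} \finitefieldp \simeq \cochainscosimplicial(\B\Frobfixed_{|_{\finitefieldp}},\structuresheaf).
\]
On the target side, \cref{singularcochainscircleuniversalcoefficients} ensures the analogous base change identities for the cochain algebra of $\circle$, because $\circle$ admits a finite simplicial model and so the computation of $\cochainscosimplicial(\circle,-)$ commutes with any base change of coefficient rings.

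Finally, under these identifications the two base changes $\varphi \otimes_{\integerslocalp} \bbQ$ and $\varphi \otimes_{\integerslocalp} \finitefieldp$ become exactly the cochain-level maps induced by \eqref{affinizationunderlyingmap2} and \eqref{affinizationunderlyingmap3}. By hypothesis these are affinization maps, hence equivalences of cosimplicial algebras by \cite[Corollaire 2.3.3]{\Bertrandaffine}. Applying \cref{basechangeeeee} to the cofiber of $\varphi$ then gives that $\varphi$ itself is an equivalence, completing the proof. The only subtle step is the compatibility of the base change equivalences with the maps induced by \eqref{affinizationunderlyingmap}, but this is functorial in the pair $(\B\Frobfixed,\circle)$ since both \cref{flatbasechangecohomologyBFixed} and \cref{singularcochainscircleuniversalcoefficients} come from the universal base change map being an equivalence in each case.
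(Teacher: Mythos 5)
Your proposal is correct and follows exactly the paper's argument: reformulate affinization via \cite[Corollaire 2.3.3]{\Bertrandaffine} as an equivalence of cochain algebras, use \cref{flatbasechangecohomologyBFixed} and \cref{singularcochainscircleuniversalcoefficients} to identify the base changes of that map to $\bbQ$ and $\finitefieldp$ with the cochain maps induced by \eqref{affinizationunderlyingmap2} and \eqref{affinizationunderlyingmap3}, and conclude with \cref{basechangeeeee}. No gaps; your extra remark on the functoriality of the base-change comparisons is a fair point the paper leaves implicit.
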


This concludes the proof of \cref{propositionaffinizationunderlying}.

\begin{remark}
\mylabel{KFixnaffinization}
The results of this section, combined with the \cref{KHnareaffine}, show that, more generally, the stacks $\Kspace(\Frobfixed, n)$ are the affinization of $\Kspace(\bbZ, n)$ over $\integerslocalp$. This was left open in \cite{\Bertrandaffine}.
\end{remark}

\subsection{The associated graded of $\Filcircle$}
\mylabel{associatedfilteredcircle}

Our next order of business is to prove \cref{filteredcircletheorem}-(ii) concerning the associated graded $\Filcirclegr \simeq \B \Frobkernel$. As in the previous section,  we reduce the problem to computations over $\bbQ$ and $\mathbb{F}_{p}$.

\begin{lemma}
\mylabel{flatbasechangecohomologyBKernel}
We have canonical equivalences of commutative cosimplicial algebras

\begin{equation}
\label{basechanecohomologyKernFrob}
\cochainscosimplicial(\B\Frobkernel, \structuresheaf)\otimes_{\integerslocalp}\bbQ\simeq \cochainscosimplicial(\B\Frobkernel_{|_{\bbQ}}, \structuresheaf) \,\,\, \text{ and }\,\,\,\cochainscosimplicial(\B\Frobkernel, \structuresheaf)\otimes_{\integerslocalp}\finitefieldp\simeq \cochainscosimplicial(\B\Frobkernel_{|_{\finitefieldp}}, \structuresheaf)
\end{equation}
\begin{proof}
As in the proof of \cref{flatbasechangecohomologyBFixed}, we use base change together with the observation that the exact sequence
$$
\xymatrix{
0\ar[r]& \Frobkernel  \ar[r]& \Wittp \ar[r]^-{\Frob_{p}}& \Wittp \ar[r]& 0
}
$$
exhibits $\Frobkernel$ as a flat group scheme over $\integerslocalp$. From here the proof goes as in \cref{flatbasechangecohomologyBFixed}. 
\end{proof}
\end{lemma}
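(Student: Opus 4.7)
The plan is to mimic the proof of \cref{flatbasechangecohomologyBFixed} almost verbatim, with the Artin--Schreier-type map $\Frob_p - \Id$ replaced by $\Frob_p$ itself. The first step is therefore to establish the analogue of \cref{lemma-Gpisfpqccover}, namely that the Frobenius
$$\Frob_p : \Wittp \longrightarrow \Wittp$$
is an fpqc cover over $\integerslocalp$. Indeed, by the same truncation argument used in \cref{lemma-Gpisfpqccover} (reducing along the pro-structure $\Wittp \simeq \lim_m \Wittpm$ to each truncated map $\Wittpmplus \to \Wittpm$, then checking flatness and surjectivity on closed field-valued points in $\affineline{\bbZ}\times\Spec(\integerslocalp)$), it suffices to verify that $\Frob_p$ is an fpqc cover after base change to $\bbQ$ and to $\finitefieldp$. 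Over $\bbQ$, the Ghost-coordinate description of $\Frob_p$ identifies it with a projection/shift of products of $\Ga{\bbQ}$, which is manifestly flat and surjective; over $\finitefieldp$, $\Frob_p$ acts coordinatewise as the classical Frobenius $(-)^p$ on $\affineline{}$, which is fpqc in positive characteristic. This yields a short exact sequence
$$0 \longrightarrow \Frobkernel \longrightarrow \Wittp \xrightarrow{\ \Frob_p\ } \Wittp \longrightarrow 0,$$
and in particular $\Frobkernel$ is a flat affine group scheme over $\integerslocalp$, so its defining square is derived cartesian.

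Once flatness is known, I would import the base-change argument of \cref{flatbasechangecohomologyBFixed} with no change. For the $\finitefieldp$-case, derived base change along the lci closed immersion $\Spec(\finitefieldp) \hookrightarrow \Spec(\integerslocalp)$ applied to the (now derived) pullback square defining $\Frobkernel_{|_{\finitefieldp}}$ gives the equivalence of underlying Hopf algebras
$$\cochainscosimplicial(\Frobkernel, \structuresheaf)\otimes_{\integerslocalp}\finitefieldp \simeq \cochainscosimplicial(\Frobkernel_{|_{\finitefieldp}}, \structuresheaf),$$
and one bootstraps from $\Frobkernel$ to $\B\Frobkernel$ exactly as in loc.~cit.: write
$$\cochainscosimplicial(\B\Frobkernel, \structuresheaf) \simeq \lim_{[n]\in\Delta}\, \cochainscosimplicial(\Frobkernel,\structuresheaf)^{\otimes n}$$
(Kunneth is automatic since $\Frobkernel$ is affine), note that derived tensoring with $\finitefieldp$ over $\integerslocalp$ is given by the cofiber of multiplication by $p$, and commute this cofiber (a finite colimit) past the totalization to reduce to the levelwise statement just proven.

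For the $\bbQ$-case, the same bookkeeping with totalizations works: the cosimplicial object $[n]\mapsto \cochainscosimplicial(\Frobkernel,\structuresheaf)^{\otimes n}$ is levelwise discrete and flat over $\integerslocalp$, so its totalization sits in $\Mod_{\integerslocalp}^{\leq 0}$, and one can truncate $\Delta$ to $\Delta_{\leq n+1}$ before tensoring with $\bbQ$ (using \cite[1.2.4.5]{lurie-ha}); the finite limit then commutes with the flat base change $-\otimes_{\integerslocalp}\bbQ$, which also commutes with cohomology since $\bbQ$ is discrete and flat over $\integerslocalp$. Comparing the two sides yields the desired isomorphism on each cohomology group and hence the equivalence.

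The only substantive obstacle is the first step, the fpqc cover property of $\Frob_p$; however this is strictly easier than the analysis in \cref{lemma-Gpisfpqccover} because over $\finitefieldp$ one only needs the classical Frobenius $(-)^p$ on $\affineline{}$ to be fpqc rather than the Artin--Schreier isogeny, and over $\bbQ$ the Ghost coordinates trivialize the map. Everything else is a formal repetition of the arguments already in place for $\Frobfixed$.
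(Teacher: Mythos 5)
Your proposal is correct and follows essentially the same route as the paper, which likewise deduces flatness of $\Frobkernel$ from the short exact sequence $0\to\Frobkernel\to\Wittp\xrightarrow{\Frob_p}\Wittp\to 0$ and then repeats the base-change argument of \cref{flatbasechangecohomologyBFixed} verbatim. The only cosmetic difference is that you re-derive the fpqc-cover property of $\Frob_p$ from scratch, whereas it is already contained in \cref{lemma-Gpisfpqccover} as the fibre of $\calG_p$ over $0\in\affineline{}$.
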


\medskip

\begin{construction}
\mylabel{maptocohomologyBker}
Consider the composition 

\begin{equation}
\label{Ghostkernelprojectionfirstfactor}
    \xymatrix{
   \Frobkernel\subseteq \Wittp \ar[rr]^{\mathrm{Ghost}} && \prod_{n\in S} \Ga{}\ar[r]^-{\mathrm{proj}_{1\in S}}&\Ga{}
    }
\end{equation}

\noindent and the induced map

\begin{equation}\label{maptoBGafromKernel}u:\B\Frobkernel\to \B\Ga{}\end{equation}

\noindent By definition of $\B\Ga{}$, the map $u$ \eqformula{maptoBGafromKernel} corresponds to an element $u\in \mathsf{H}^1(\cochains(\B\Frobkernel, \structuresheaf))$, $u:\integerslocalp[-1]\to \cochains(\B\Frobkernel, \structuresheaf)$. One can check using explicit formulas for the Ghost map (see \cref{guideWitt}) that the composition \eqformula{Ghostkernelprojectionfirstfactor} is compatible with the $\Gm{}$-actions, where on the l.h.s we have the action of the \cref{construction-GmactionWittVectors} and \cref{remark-underlyingandassociatedgradedFilCircle} and on the r.h.s we have the standard $\Gm{}$-action on $\Ga{}$. In particular, \eqformula{maptoBGafromKernel} is $\Gm{}$-equivariant and the element $u$ is realized as a map in  $\Mod_{\integerslocalp}^{\bbZ-\gr}$; $\integerslocalp[-1]$ is concentrated in weight $1$ by definition. At the same time we consider the canonical element $1:\integerslocalp\to \C^\ast(\B\Frobkernel, \structuresheaf)$ in $\mathsf{H}^0(\C^\ast(\B\Frobkernel, \structuresheaf))$. Because the structure map $\B\Frobkernel\to \Spec(\integerslocalp)$ is $\Gm{}$-equivariant for the trivial action on the target, $1$ also defines a graded map, with $\integerslocalp$ sitting in weight $0$.

The sum of the graded maps $u$ and $1$ give us a map 

\begin{equation}\label{maptoBGafromKernel2}\integerslocalp\oplus \integerslocalp[-1]\to  \cochains(\B\Frobkernel, \structuresheaf) \,\,\, \text{ in } \,\, \Mod_{\integerslocalp}^{\bbZ-\gr} \end{equation}

\noindent This map becomes an equivalence after base change to $\bbQ$ ( \cref{remark-characteristiczerocase}).

\end{construction}

\medskip

\begin{proposition}
\mylabel{underlyingcomplexBKern} The map of graded complexes \eqformula{maptoBGafromKernel2} is an equivalence after tensoring with $\finitefieldp$. By the \cref{flatbasechangecohomologyBKernel} and the \cref{basechangeeeee}, it is also an equivalence over $\integerslocalp$. In particular, the grading on the complex $\cochains(\B\Frobkernel, \structuresheaf)$ coincides with the cohomological grading.
\end{proposition}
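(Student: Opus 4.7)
The plan is to use \cref{flatbasechangecohomologyBKernel} to reduce to checking the equivalence of graded complexes separately after base change to $\bbQ$ and to $\finitefieldp$; a graded (weight-by-weight) version of the reasoning in \cref{basechangeeeee} then assembles these to the statement over $\integerslocalp$. The rational case follows immediately from \cref{remark-characteristiczerocase}, which gives $\Frobkernel_{|_{\bbQ}}\simeq \Ga{\bbQ}$ and makes \eqformula{maptoBGafromKernel2} into the standard graded equivalence $\bbQ\oplus\bbQ[-1]\simeq \cochainscosimplicial(\B\Ga{\bbQ},\structuresheaf)$. Everything therefore reduces to the $\finitefieldp$-case.

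Over $\finitefieldp$, an argument parallel to \cref{lemma-Gpisfpqccover} shows that $\Frob_p:\Wittp\to\Wittp$ is an fpqc cover: levelwise it is the Frobenius $\lambda\mapsto\lambda^p$ on $\Ga{}$ (itself an fpqc cover), and the same reduction to the truncations $\Wittpm$ and the quotient pieces $\Ga{}$ applies. This yields a short exact sequence $0\to \Frobkernel_{|_{\finitefieldp}} \to \Wittp \xrightarrow{\Frob_p} \Wittp \to 0$ of fpqc sheaves, hence, after applying $\B$, a fiber sequence of affine stacks $\B\Frobkernel_{|_{\finitefieldp}} \to \B\Wittp \xrightarrow{\B\Frob_p} \B\Wittp$. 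By the equivalence between affine stacks and cosimplicial algebras of \cref{reviewaffinestacks}, this dualizes to a pushout of cosimplicial $\finitefieldp$-algebras:
\begin{equation*}
\cochainscosimplicial(\B\Frobkernel_{|_{\finitefieldp}}, \structuresheaf) \;\simeq\; \finitefieldp \otimes^{\mathbb{L}}_{\cochainscosimplicial(\B\Wittp,\structuresheaf)} \cochainscosimplicial(\B\Wittp,\structuresheaf),
\end{equation*}
where the two structure maps are the augmentation (coming from the basepoint) and $\Frob_p^{*}$.

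The computation then proceeds by tracking the $\Gm{}$-grading. By \cref{remark-Frobeniuscompatiblegrading}, $\Frob_p\circ [a]=[a^p]\circ \Frob_p$, so $\Frob_p$ is $\Gm{}$-equivariant up to twisting the target action by the $p$-th power; consequently $\Frob_p^{*}$ sends the weight-$n$ component of $\cochainscosimplicial(\B\Wittp,\structuresheaf)$ into the weight-$pn$ component, while the augmentation collapses everything of positive weight. Combining these constraints in the pushout should show that the weight-$0$ piece is $\finitefieldp$, the weight-$1$ piece is $\finitefieldp[-1]$ (generated by $u$), and every piece of weight $n\geq 2$ is killed by the Frobenius relation.

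The main obstacle is the explicit weight-by-weight description of $\cochainscosimplicial(\B\Wittp,\structuresheaf)$ over $\finitefieldp$ and of the induced Frobenius action. One can access this via the tower $\B\Wittp\simeq \lim \B\Wittpm$ from the proof of \cref{filteredcircleisaffine}, together with the short exact sequences $0\to \Ga{}\to \Wittpmplus\to \Wittpm\to 0$ of item 6) of \cref{guideWitt}; inductively the analysis reduces to the building block $\B\Ga{\,,\finitefieldp}$, whose cohomology algebra is $\Sym^{\mathsf{co}\Delta}_{\finitefieldp}(\finitefieldp[-1])$ by item A) of the introduction. The cosimplicial structure of this algebra in positive characteristic, combined with the weight-by-$p$ shift induced by Frobenius, should force all contributions in weights $p, p^2, \dots$ to be annihilated by the pushout relation, leaving precisely the claimed weights $0$ and $1$.
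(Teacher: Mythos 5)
Your overall reduction (to $\bbQ$ and $\finitefieldp$ via \cref{flatbasechangecohomologyBKernel} and \cref{basechangeeeee}, with the rational case handled by \cref{remark-characteristiczerocase}) matches the paper, and your Eilenberg--Moore-type formula
$\cochainscosimplicial(\B\Frobkernel_{|_{\finitefieldp}},\structuresheaf)\simeq \finitefieldp\otimes_{\cochainscosimplicial(\B\Wittp,\structuresheaf)}\cochainscosimplicial(\B\Wittp,\structuresheaf)$
is legitimate: all three corners are affine stacks, affine stacks are closed under limits, and $\cospec$ is a fully faithful right adjoint, so the pullback of affine stacks dualizes to the relative tensor product of cosimplicial algebras. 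But the proof has a genuine gap exactly where you write ``should force'': the collapse of this relative tensor product to $\finitefieldp\oplus\finitefieldp[-1]$ is the entire content of the proposition, and the weight bookkeeping you sketch does not deliver it. Over $\finitefieldp$ the building block $\cochainscosimplicial(\B\Ga{},\structuresheaf)\simeq\Sym^{\mathsf{co}\Delta}_{\finitefieldp}(\finitefieldp[-1])$ is \emph{not} the square-zero extension (that collapse is the characteristic-zero accident of item A) of the introduction); its cohomology is an infinitely generated polynomial-tensor-exterior algebra with generators in weights $1,p,p^2,\dots$ produced by Steenrod-type operations. Consequently $\mathrm{H}^\ast(\B\Wittp,\structuresheaf)$ over $\finitefieldp$ is large, one needs to know explicitly how $\Frob_p^\ast$ acts on each generator, and one then needs to control all the Tor terms of the bar complex computing the relative tensor product. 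None of this is formal: weights $0$ and $1$ could a priori still carry higher cohomology, and the Frobenius relation identifying weight $n$ with weight $pn$ does not by itself annihilate anything. As written, the argument is a plan whose critical step is unexecuted, and that step is a substantial computation.

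The paper avoids this computation entirely by a different route: Cartier duality. One shows that $\Frobkernelm$ is Cartier dual to $\alphapm$, that $\alphapm$ and $\mupm$ have the same underlying scheme, and that $\mupm$ is Cartier dual to $\bbZ/p^m\bbZ$; passing to the colimit identifies $\cochainscosimplicial(\Frobkernel_{|_{\finitefieldp}},\structuresheaf)$ with $\cochainscosimplicial(\padicintegers,\structuresheaf)$ \emph{as coalgebras}. Since $\cochains(\B G,\structuresheaf)$ is computed by the cobar construction on the coalgebra of functions on $G$, this gives $\cochains(\B\Frobkernel_{|_{\finitefieldp}},\structuresheaf)\simeq\cochains(\B\padicintegers,\structuresheaf)$, and the latter is $\finitefieldp\oplus\finitefieldp[-1]$ because $\B\padicintegers$ is the affinization of $\circle$ over $\finitefieldp$ (\cref{affinizationsoverQandFp}). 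If you want to salvage your approach, you would need to either import the known computation of $\mathrm{H}^\ast(\B\Wittp,\structuresheaf)$ over $\finitefieldp$ with its Frobenius action and run the bar spectral sequence honestly, or find an argument like the paper's that sidesteps it.
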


We will establish the proof of \cref{underlyingcomplexBKern} by computing the underlying complex of global sections of the structure sheaf of $\B\Frobkernel_{|_{\finitefieldp}}$.

\begin{remark}
\mylabel{formulacohomologyBkern}
Notice that $\Frobkernel_{|_{\finitefieldp}}=\Spec(\cochainscosimplicial(\Frobkernel_{|_{\finitefieldp}}, \structuresheaf)) $ is an affine abelian group scheme over $\finitefieldp$. Therefore, both $\B\Frobkernel$ and $\Frobkernel$ are graded and \textcolor{black}{because these are abelian groups} we have an equivalence of cosimplicial graded Hopf  algebras

\begin{equation}
\label{formulabarBKer}
\cochainscosimplicial(\B\Frobkernel_{|_{\finitefieldp}}, \structuresheaf)\simeq \lim_{\Delta} \, \cochainscosimplicial(\Frobkernel_{|_{\finitefieldp}}, \structuresheaf)^{\otimes_n}
\end{equation}

\end{remark}

In order to understand the underlying complex of $\C^\ast_\Delta(\Frobkernel_{|_{\finitefieldp}}, \structuresheaf)$ we will characterize its category of representations as an Hopf algebra. 




\medskip


\begin{construction}
\mylabel{prostructureBker}
The group scheme $\Frobkernel_{|_{\finitefieldp}}$ has a natural pro-group structure induced from the decompositon $\Wittp\simeq \lim \, \Wittpm$ by defining $\Frobkernel_{|_{\finitefieldp}}$ to be  the kernel of the exact sequence

$$
\xymatrix{
0\ar[r]& \Frobkernelm \ar[r]&\ (\Wittpm)_{|_{\finitefieldp}}\ar[r]^{\Frob_p}& (\Wittpm)_{|_{\finitefieldp}}\ar[r]&0
}
$$
We obtain $\Frobkernel_{|_{\finitefieldp}}\simeq \lim \, \Frobkernelm$ and therefore a colimit of Hopf algebras
\begin{equation}
\label{colimitHopfalgebras1}
    \cochainscosimplicial(\Frobkernel_{|_{\finitefieldp}}, \structuresheaf)\simeq \colim_m \,\cochainscosimplicial(\Frobkernelm, \structuresheaf)
\end{equation}
\end{construction}

\medskip

\begin{definition}
\mylabel{progroupalphap}
  Let us denote by $\alphapm$ the affine scheme over $\finitefieldp$ given by $\Spec(\finitefieldp[T]/(T^{p^{m}}))$. Its functor of points is given by $R\mapsto \{r\in R: r^{p^{m}}=0\}$ classifying $p^m$-roots of zero. This is an abelian affine group scheme under the additive law over $\finitefieldp$.
   \end{definition}

   \begin{lemma}
   \mylabel{cartierdualityformula}
\noindent  For each $m\geq 1$, the Cartier dual of the group scheme $\alpha_{p^{m}}$ is the algebraic group $\Frobkernelm$. In particular, it follows from Cartier duality that we have an equivalence of strict abelian 1-categories
  \begin{equation}
  \label{comodulesandmodulesCartierduality}
  \CoMod^{\heartsuit}_{\cochainscosimplicial(\Frobkernelm,\structuresheaf)}\simeq \Mod^{\heartsuit}_{\finitefieldp[T]/(T^{p^{m}})}
  \end{equation}

  \begin{proof}
  This is \cite[II.10.3, Remark]{MR0213365}\footnote{Formula $L_{m,n}^D=L_{n,m}$} (see also \cite[III \S 4]{MR883960}). See also \cite{MR519769} for the equivalence of categories.
  \end{proof}
   \end{lemma}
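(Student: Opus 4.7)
\textbf{Proof plan for \cref{cartierdualityformula}.} The plan is to deduce both claims from classical Cartier duality for finite commutative group schemes, the point being that both $\alphapm$ and $\Frobkernelm$ are finite flat affine commutative group schemes over $\finitefieldp$ of order $p^m$.

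First I would verify that $\Frobkernelm$ is indeed finite flat by exhibiting a flat short exact sequence
\[
0 \to \Frobkernelm \to (\Wittpm)_{|_{\finitefieldp}} \xrightarrow{\Frob_p} (\Wittpm)_{|_{\finitefieldp}} \to 0,
\]
which is what defines it in \cref{prostructureBker}; both outer terms are flat and affine of the same relative dimension and, over $\finitefieldp$, the Frobenius is the $p$-th power map in every Ghost/Witt coordinate, so the kernel is finite of length $p^m$. Having both sides presented as finite flat commutative affine group schemes, Cartier duality produces a perfect pairing and the identification $\alphapm^D \simeq \Frobkernelm$ reduces to a direct comparison of the defining Hopf algebra structures. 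Concretely, one can take the functorial description of Cartier duality $G^D(R) = \Hom_{R\mathrm{-gp}}(G_R, \Gm{R})$ and check that a character of $\alphapm$ valued in $\Gm{}$ is the same data as a Witt vector of length $m$ killed by Frobenius, via the truncated Artin-Hasse exponential. Alternatively, and more efficiently, I would simply cite the computation of \cite[II.10.3]{MR0213365} (as the authors already do) which identifies the Dieudonné modules, or equivalently the co-Hopf algebras, of both sides.

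Once Cartier duality is in hand, the stated equivalence
\[
\CoMod^{\heartsuit}_{\cochainscosimplicial(\Frobkernelm,\structuresheaf)} \simeq \Mod^{\heartsuit}_{\finitefieldp[T]/(T^{p^m})}
\]
is an instance of the general fact that, for a finite-dimensional commutative Hopf algebra $A$ over a field, the forgetful functor identifies $A$-comodules with $A^\vee$-modules: a coaction $M \to M \otimes A$ is equivalent, by finite-dimensional linear duality, to an action $A^\vee \otimes M \to M$, and the comodule axioms translate into module axioms because the multiplication on $A^\vee$ is dual to the comultiplication on $A$. Taking $A = \cochainscosimplicial(\Frobkernelm, \structuresheaf)$, Cartier duality gives $A^\vee = \cochainscosimplicial(\alphapm, \structuresheaf) = \finitefieldp[T]/(T^{p^m})$, which yields the claim. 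The reference \cite{MR519769} cited by the authors records precisely this equivalence of abelian categories.

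The only substantive obstacle is the Cartier-dual identification itself; the passage from duality to the module/comodule equivalence is formal. If one wishes to avoid quoting \cite{MR0213365} directly, the cleanest self-contained route is to compare the Dieudonné modules (both are $\mathsf{k}$ with $V=0$ and $F$ acting by the appropriate structure), since Dieudonné theory over $\finitefieldp$ is an anti-equivalence intertwining Cartier duality with the duality of Dieudonné modules, and both $\alphapm$ and $\Frobkernelm$ have Dieudonné modules that are visibly swapped under this duality.
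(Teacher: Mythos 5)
Your proposal takes essentially the same route as the paper, whose proof is itself just the citation of \cite[II.10.3, Remark]{MR0213365} (the formula $L_{m,n}^D=L_{n,m}$, with $\alphapm = L_{1,m}$ and $\Frobkernelm = L_{m,1}$) for the Cartier-dual identification, together with the standard finite-dimensional Hopf-algebra duality between comodules over $\cochainscosimplicial(\Frobkernelm,\structuresheaf)$ and modules over its linear dual $\finitefieldp[T]/(T^{p^m})$, exactly as you spell out. The only slip is in your optional Dieudonn\'e aside: the Dieudonn\'e module of $\alphapm$ is $m$-dimensional over $\finitefieldp$, not one-dimensional, but since you do not rely on that route the argument is unaffected.
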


\medskip

\begin{remark}
\mylabel{alphapandmup}
 If we forget the group structures, the affine scheme $\alphapm$ is isomorphic to the underlying scheme of $\mupm=\Spec(\finitefieldp[U]/(U^{p^m}-1))$ of $p^{m}$-roots of unity under the change of coordinates $T\mapsto (U-1)$. This induces an equivalence of strict abelian 1-categories

\begin{equation}
\label{comodulesandmodulesCartierduality2}
\Mod^{\heartsuit}_{\finitefieldp[U]/(U^{p^m}-1)}\simeq \Mod^{\heartsuit}_{\finitefieldp[T]/(T^{p^{m}})}
\end{equation}

\noindent Furthermore, we know that $\mupm$ is Cartier dual to the group scheme $\bbZ/p^{m}\bbZ$ \cite[I.2.12, Lemma 2.15]{MR0213365} and this gives us an equivalence of strict abelian 1-categories

\begin{equation}
\label{comodulesandmodulesCartierduality3}
 \Mod^{\heartsuit}_{\finitefieldp[T]/(T^{p^{m}})}\simeq \CoMod^{\heartsuit}_{\cochainscosimplicial(\bbZ/p^{m}\bbZ, \structuresheaf)}
\end{equation}

\noindent Notice moreover that the isomorphisms of schemes $\mupm\simeq \alphapm$ are compatible for different $m$'s under inclusions.
\end{remark}

 \begin{proof}[\textbf{Proof of \cref{underlyingcomplexBKern}}]
 \mylabel{ProofofcohomologyBKern}
 Composing the equivalences \eqformula{comodulesandmodulesCartierduality}, \eqformula{comodulesandmodulesCartierduality2} and \eqformula{comodulesandmodulesCartierduality3}, the Barr-Beck theorem for strict abelian 1-categories gives us an equivalence of \textcolor{black}{coalgebras} \

 \begin{equation}
     \label{equivalencecoalgebraslevelm}
     \cochainscosimplicial(\Frobkernelm, \structuresheaf)\simeq \cochainscosimplicial(\bbZ/p^{m}\bbZ, \structuresheaf)
 \end{equation}

\noindent  Because Cartier duality is functorial, these equivalences are now compatible under the restriction maps and therefore, the equivalence extends to the filtered colimit of coalgebras
 
  \begin{equation}
     \label{equivalencecoalgebras}
     \cochainscosimplicial(\Frobkernel_{|_{\finitefieldp}}, \structuresheaf)\simeq \colim_m \,  \cochainscosimplicial(\Frobkernelm, \structuresheaf)\simeq  \colim_m \, \cochainscosimplicial(\bbZ/p^{m}\bbZ, \structuresheaf)
 \end{equation}

 But now the r.h.s is by definition the coalgebra of the group scheme $\padicintegers$, and we get an equivalence of coalgebras
   \begin{equation}
     \label{equivalencecoalgebras}
     \cochainscosimplicial(\Frobkernel_{|_{\finitefieldp}}, \structuresheaf)\simeq  \cochainscosimplicial(\padicintegers, \structuresheaf)
 \end{equation}
 
 Finally, using the \formula{formulabarBKer}, we find an equivalence of \textcolor{black}{cosimplicial coalgebras}

\begin{equation}
\label{formulabarBKer2}
\cochainscosimplicial(\B\Frobkernel_{|_{\finitefieldp}}, \structuresheaf)\simeq \lim_{\Delta} \, \cochainscosimplicial(\Frobkernel_{|_{\finitefieldp}}, \structuresheaf)^{\otimes_n}\simeq \lim_{\Delta} \, \cochainscosimplicial( \padicintegers, \structuresheaf)^{\otimes_n}\simeq \cochainscosimplicial(\B\padicintegers, \structuresheaf)
\end{equation}
 
Here, the last equivalence uses the fact that $\padicintegers$ is an affine group scheme over $\finitefieldp$ \footnote[1]{It is affine of infinite type,  being a projective limit of finite constant groups schemes which are affine.}.\\

After applying the co-dual Dold-Kan construction (see \cref{notationcosimplicialandsimplicial}) and using the \cref{affinizationsoverQandFp} we deduce an equivalence in $\Mod_{
 \finitefieldp}$

 \begin{equation}
\label{formulabarBKer3}
\cochains(\B\Frobkernel_{|_{\finitefieldp}}, \structuresheaf)\simeq \finitefieldp\oplus \finitefieldp[-1]
\end{equation}

\noindent which one can check is implemented by the underlying map of \eqformula{maptoBGafromKernel2}.

 \end{proof}

\medskip

As in \cref{lemma-Bfixfinitecohdimension} we have

\begin{lemma}
\mylabel{lemma-BKerfinitecohdimension}
The structure map $\B \Frobkernel \to \Spec \, \integerslocalp$ is of finite cohomology dimension. In particular, by \cite[B.15, B.16]{1402.3204}, it verifies base-change against any map $Y\to \Spec \, \integerslocalp$.
\begin{proof}
As in the  proof of the \cref{lemma-Bfixfinitecohdimension}, it is now enough to argue that both $\B\Frobkernel_{|_{\finitefieldp}}\to \Spec\, \finitefieldp$ and $\B\Ga{\bbQ}\to \Spec\, \bbQ$ are of finite cohomological dimension. The case of $\B\Ga{\bbQ}$ has already been discussed. It remains to discuss $\B\Frobkernel_{|_{\finitefieldp}}$. The case of 
$\Frobkernel_{|_{\finitefieldp}}$ follows from the equivalence of coalgebras \eqref{equivalencecoalgebras} which implies that the categories of comodules are equivalent.
\end{proof}
\end{lemma}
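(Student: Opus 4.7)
The plan is to mirror the proof of \cref{lemma-Bfixfinitecohdimension}, applying \cref{mainlemmafinitecohodimension} to reduce the claim over $\integerslocalp$ to finite cohomological dimension over the two residue fields $\bbQ$ and $\finitefieldp$. For this we need to know that $\Frobkernel$ is flat over $\integerslocalp$ — this was established in the proof of \cref{flatbasechangecohomologyBKernel} using the exact sequence $0\to\Frobkernel\to\Wittp\xrightarrow{\Frob_p}\Wittp\to 0$ — so the hypotheses of \cref{mainlemmafinitecohodimension} will be met once the two fiberwise bounds are verified.

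For the rational fiber, \cref{remark-characteristiczerocase} gives $\Frobkernel_{|_{\bbQ}}\simeq \Ga{\bbQ}$, so $\B\Frobkernel_{|_{\bbQ}}\simeq \B\Ga{\bbQ}$. This is of finite cohomological dimension over a field of characteristic zero by \cite[9.1.5.4]{lurie-sag}, just as in the proof of \cref{lemma-Bfixfinitecohdimension}. The only step requiring new work is the characteristic $p$ fiber $\B\Frobkernel_{|_{\finitefieldp}}\to \Spec\,\finitefieldp$.

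The key input for this remaining case is the equivalence of coalgebras \eqref{equivalencecoalgebras} established in the proof of \cref{underlyingcomplexBKern}, namely $\cochainscosimplicial(\Frobkernel_{|_{\finitefieldp}},\structuresheaf)\simeq \cochainscosimplicial(\padicintegers,\structuresheaf)$. Via the formula \eqref{formulachampsaffinescohomologyaffinegroup} expressing cohomology of $\B H$ with coefficients in an object of the heart as $\Ext$-groups in the abelian category of $\structuresheaf(H)$-comodules, an equivalence of coalgebras yields an equivalence of the underlying abelian categories of comodules and hence an identification of the cohomological dimensions of $\B\Frobkernel_{|_{\finitefieldp}}$ and $\B\padicintegers$. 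Since $\padicintegers$ has cohomological dimension $1$ over $\finitefieldp$ (as recalled at the end of the proof of \cref{lemma-Bfixfinitecohdimension} via \cite[Chapter XIII, \S 1, Corollary]{MR554237}), the same holds for $\B\Frobkernel_{|_{\finitefieldp}}$.

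The only conceptual subtlety — and the step most likely to require care — is justifying that the equivalence of coalgebras upgrades to an equivalence of the derived categories of comodules at the level needed to conclude equality of cohomological dimensions on all objects of the heart, rather than merely on the structure sheaf. However, this follows formally from Barr–Beck applied in the abelian setting (as already invoked in the proof of \cref{underlyingcomplexBKern}), since the comodule category depends only on the underlying coalgebra structure. With the two fiberwise bounds in hand, \cref{mainlemmafinitecohodimension} concludes that $\B\Frobkernel\to \Spec\,\integerslocalp$ is of finite cohomological dimension, and the base-change assertion then follows from \cite[B.15, B.16]{1402.3204}.
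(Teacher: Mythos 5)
Your proposal is correct and follows essentially the same route as the paper's own proof: reduce via \cref{mainlemmafinitecohodimension} to the two fibers, dispose of the rational fiber using $\Frobkernel_{|_{\bbQ}}\simeq\Ga{\bbQ}$, and handle the $\finitefieldp$ fiber through the coalgebra equivalence \eqref{equivalencecoalgebras} with $\cochainscosimplicial(\padicintegers,\structuresheaf)$, which identifies the comodule categories and hence the cohomological dimensions. The extra detail you supply on flatness of $\Frobkernel$ and on why the coalgebra equivalence controls cohomology of all heart objects is consistent with what the paper implicitly relies on.
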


\medskip

The combination of \cref{lemma-Bfixfinitecohdimension} and \cref{lemma-BKerfinitecohdimension} also allow us to conclude that

\begin{corollary}
\mylabel{Filteredcirclefinitecohomologicaldimension}
The structure map $\Filcircle\to \Filstack$ is of finite cohomolodical dimension. In particular it verifies the base change formula against any map.
\begin{proof}
The argument boils down to an open-closed complement reduction as in the proof of \cref{lemma-Bfixfinitecohdimension}.
Namely, we argue that to show that $\Filcircle\to \Filstack$ is of finite cohomological dimension, it is enough to have both fibers
$$
\xymatrix{
\B \Frobfixed \ar[d]\ar[r]& \Filcircle\ar[d]& \ar[l]\B \Frobkernel/ \B \Gm{}\ar[d]\\
\Spec\, \integerslocalp \ar[r]^j& \Filstack & \ar[l]_i \B\Gm{}
}
$$

\noindent of finite cohomological dimension (which we now know to be true thanks to \cref{lemma-Bfixfinitecohdimension} and \cref{lemma-BKerfinitecohdimension}).

To confirm the claim, we use the fact that the $t$-structure on $\Filstack$, is determined by its flat atlas $\affineline{}\to \Filstack$ as discussed in the \cref{left-complete-t-structure}-c). Therefore, by flat base change, we are reduced to discuss the same situation over the open-closed pair obtained by pullback

$$
\xymatrix{
\Gm{}\ar[r]^j \ar[d] &\ar[d] \affineline{}& \ar[l]_i 0\ar[d]\\
[\Gm{}/\Gm{}]\simeq \Spec \, \integerslocalp\ar[r]& \Filstack& \ar[l] \B \Gm{}}
$$

\noindent But here, the proof runs exactly as for the pair $\Spec\,\bbQ\subseteq \Spec \, \integerslocalp \supseteq \Spec\, \finitefieldp$ in the proof \cref{lemma-Bfixfinitecohdimension} since the situation amounts to the inversion of the coordinate $t$ in $\affineline{}$ (for the open immesion) and the quotient by $t$ (for the closed).

\end{proof}
\end{corollary}

\medskip

We now discuss both the $\Einfinity$ and cosimplicial algebra structure on $\C^\ast(\B\Frobkernel, \structuresheaf)$. We start by the $\Einfinity$-structure:

\begin{lemma}
\mylabel{uniqueEinfinityalgebra}
Let $\rmM:=\integerslocalp\oplus \integerslocalp[-1]\in \Mod_{\integerslocalp}^{\bbZ-\gr}$ denote the graded complex of the \cref{maptocohomologyBker}. Then, the space of $\Einfinity$-algebra structures on $\rmM$ compatible with the grading is equivalent to the set of  classical commutative graded algebra structures on its cohomology $\mathsf{H}^\ast(\rmM)$. In particular, it is homotopically discrete.
\begin{proof}
The data of an $\Einfinity$-algebra structure on a given object $\rmM\in \Mod_{\integerslocalp}$ is the data of a map of \ioperads from $\Einfinity$ to the \ioperad of endomorphisms of $\rmM$, for which we shall write $\End(\rmM)^\otimes$. In our case,  the object $\rmM:= \integerslocalp\oplus \integerslocalp[-1]$ is endowed with a structure of object in $\Mod_{\integerslocalp}^{\bbZ-\gr}$ of the \cref{maptocohomologyBker}, so in fact, we are interested in the \ioperad of endomorphisms of $\rmM$ in this \icategory $\End_{\gr}(\rmM)^\otimes$. Its space of $n$-ary operations is given by the mapping space $\Map_{\Mod_{\integerslocalp}^{\bbZ-\gr}}(\rmM^{\otimes_n},\rmM)$ where the powers $\rmM^{\otimes_n}$ are taken with respect to the graded tensor product. It follows from the formula for the Day convolution (see \cref{construction-Filteredobjects} and the references to \cite{lurie-K}) that for every $n\geq 1$ the piece of weight 0 in $\rmM^{\otimes_n}$ is $\integerslocalp$ and the piece in weight $1$ is given by $\bigoplus_{i=1}^n\integerslocalp[-1]$. In particular, we get 

$$
\End_{\gr}(\rmM)^{\otimes}(n)\simeq \Map_{\Mod_{\integerslocalp}}(\integerslocalp, \integerslocalp)\times \Map_{\Mod_{\integerslocalp}}(\bigoplus_{i=1}^n\integerslocalp[-1], \integerslocalp[-1])\simeq \bigoplus_{i=0}^n\integerslocalp
$$

\noindent which is a discrete space.  This implies that the space of maps of \ioperads $$\Map_{\inftyoperads}(\Einfinity, \End_{\gr}(\rmM))^{\otimes}$$ is discrete. But more is true: consider the cohomology $\mathsf{H}^\ast(\rmM)$ as a (classical) graded $\integerslocalp$-vector space and $\End_{\gr,\mathsf{cl}}(\mathsf{H}^\ast(\rmM))^{\otimes})$ its classical operad of (graded) endomorphisms. As  $\mathsf{H}^\ast$ is lax monoidal, we get a map 

$$ \Map_{\inftyoperads}(\Einfinity, \End_{\gr}(\rmM)^{\otimes})\to \Map_{\inftyoperads}(\Einfinity,\End_{\gr,\mathsf{cl}}(\mathsf{H}^\ast(\rmM))^{\otimes})$$

The computation above applied to the classical graded version shows that this map is actually an equivalence of spaces. 

\end{proof}
\end{lemma}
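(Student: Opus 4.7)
The plan is to unpack the definition of an $\Einfinity$-algebra structure on a graded object as a map of $\infty$-operads $\Einfinity \to \End_{\gr}(\rmM)^{\otimes}$, where $\End_{\gr}(\rmM)^{\otimes}$ is the endomorphism $\infty$-operad computed inside the symmetric monoidal $\infty$-category $\Mod_{\integerslocalp}^{\bbZ-\gr}$. The mapping space of such operad maps is assembled from the spaces of $n$-ary operations, so it will suffice to show that each $\End_{\gr}(\rmM)^{\otimes}(n) = \Map_{\Mod_{\integerslocalp}^{\bbZ-\gr}}(\rmM^{\otimes n}, \rmM)$ is discrete.

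First I would compute the iterated Day convolution power $\rmM^{\otimes n}$. By the Day convolution formula for the graded symmetric monoidal structure, the weight-$w$ summand of $\rmM^{\otimes n}$ is the coproduct, over tuples $(w_{1}, \dots, w_{n})$ with $\sum w_{i} = w$, of the tensor products of the corresponding weight pieces of $\rmM$. Since $\rmM$ is supported in weights $0$ and $1$ only, with values $\integerslocalp$ and $\integerslocalp[-1]$, the only summands relevant for mapping into $\rmM$ are the weight-$0$ piece, which is simply $\integerslocalp$, and the weight-$1$ piece, which is $\bigoplus_{i=1}^{n} \integerslocalp[-1]$ coming from the $n$ choices of which tensor slot contributes the shifted summand. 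Higher-weight summands exist but contribute nothing to the mapping space, since $\rmM$ itself vanishes in those weights.

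Next I would compute $\Map_{\Mod_{\integerslocalp}^{\bbZ-\gr}}(\rmM^{\otimes n}, \rmM)$ as a product of the mapping spaces at weights $0$ and $1$, yielding $\Map_{\Mod_{\integerslocalp}}(\integerslocalp, \integerslocalp) \times \Map_{\Mod_{\integerslocalp}}(\bigoplus_{i=1}^{n} \integerslocalp[-1], \integerslocalp[-1])$, both of which are discrete free $\integerslocalp$-modules of ranks $1$ and $n$ respectively. Hence the $n$-ary operation space is discrete, and it follows that $\Map_{\inftyoperads}(\Einfinity, \End_{\gr}(\rmM)^{\otimes})$ is a discrete set.

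Finally, to identify this discrete set with classical commutative graded algebra structures on $\mathsf{H}^{\ast}(\rmM)$, I would exploit the fact that the cohomology functor $\mathsf{H}^{\ast}$ is lax symmetric monoidal, inducing a comparison map from $\Einfinity$-structures on $\rmM$ to commutative graded algebra structures on $\mathsf{H}^{\ast}(\rmM)$. Running the same arity-by-arity computation for the classical operad of graded endomorphisms of $\mathsf{H}^{\ast}(\rmM)$ yields the same discrete space of operations. The main subtlety to verify is not the discreteness itself, which is a direct Day convolution bookkeeping, but that the $\Sigma_{n}$-equivariant structure on the weight-$1$ piece (a permutation action on $\bigoplus_{i=1}^{n} \integerslocalp[-1]$) matches the classical one; once this symmetric-group comparison is in place, the comparison map becomes an equivalence and the lemma follows.
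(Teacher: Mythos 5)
Your proposal is correct and follows essentially the same route as the paper: unpacking the graded endomorphism $\infty$-operad, computing its $n$-ary operation spaces via the Day convolution decomposition of $\rmM^{\otimes n}$ into its weight $0$ and weight $1$ pieces, concluding discreteness, and then comparing with the classical graded endomorphism operad of $\mathsf{H}^\ast(\rmM)$ via lax monoidality of cohomology. Your added remarks on the irrelevance of higher-weight summands and on the $\Sigma_n$-equivariance are sensible refinements of the same argument rather than a different approach.
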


\medskip

\begin{remark}
\mylabel{uniqueEinfinityalgebracoconnective}
The argument in the proof of the \cref{uniqueEinfinityalgebra} also shows that there exists a unique commutative $\Einfinity$-algebra structure on the object $\rmM:=\integerslocalp\oplus\integerslocalp[-1]$ seen as an object of $\Mod_{\integerslocalp}^{\leq 0, \gr}$ with the symmetric monoidal structure of \cref{notationcosimplicialandsimplicial}.
\end{remark}

We now discuss the co-simplicial multiplicative structure.

\begin{notation}
\mylabel{gradedversionscosimplicialandcoconnectivealgebras}
Let us consider graded versions $\coSCRings{\integerslocalp}^{\gr}$ and $\CAlg_{\integerslocalp}^{\mathsf{ccn},\gr}:=\CAlg(\Mod_{\integerslocalp}^{\leq 0, \gr})$ of respectively, cosimplicial and $\Einfinity$-algebras and $\theta^{\mathsf{ccn}}:\coSCRings{\integerslocalp}^{\gr}\to \CAlg_{\integerslocalp}^{\mathsf{ccn},\gr}$ denote the graded version of the dual Dold-Kan construction (see \cref{notationcosimplicialandsimplicial}).
\end{notation}

\medskip

\begin{notation}
\mylabel{splitsquarezeronotation}
We denote by $\splitlocalp$ the trivial square zero extension structure on the complex $\cohcirclelocalp$ ($\eta$ of degree 1 cohomological) as an object in $\coSCRings{\integerslocalp}$.  We will use the same notation for its underlying $\Einfinity$-algebra under the functor $\theta$ of \cref{notationcosimplicialandsimplicial}.
\end{notation}

\medskip

\begin{corollary}[of \cref{uniqueEinfinityalgebra} and \cref{uniqueEinfinityalgebracoconnective}]
\mylabel{equivalenceasEinfinity}
 The map \eqformula{maptoBGafromKernel2} extends as an equivalence of graded $\Einfinity$-algebras in $\Mod_{\integerslocalp}^{\leq 0}$ compatible with the augmentations to $\integerslocalp$:
\begin{equation}
\label{equivalencesasEinfinityalgebras}
\splitlocalp\to \cochains(\B\Frobkernel, \structuresheaf)
\end{equation}
In particular, $\cochains(\B\Frobkernel, \structuresheaf)$ is \textcolor{black}{formal} as a graded $\Einfinity$-algebra.
\end{corollary}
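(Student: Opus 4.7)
The plan is to reduce the existence of the equivalence to the uniqueness result for graded $\Einfinity$-algebra structures on $\rmM = \integerslocalp \oplus \integerslocalp[-1]$ already established in \cref{uniqueEinfinityalgebra} and \cref{uniqueEinfinityalgebracoconnective}. By \cref{underlyingcomplexBKern}, the map \eqformula{maptoBGafromKernel2} is an equivalence of underlying graded objects in $\Mod_{\integerslocalp}^{\leq 0, \gr}$, so what remains is to promote it to the level of $\Einfinity$-algebras and to check compatibility with augmentations.

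First, I would exhibit both sides as graded $\Einfinity$-algebras. The source $\splitlocalp$ is one by definition (\cref{splitsquarezeronotation}). For the target, the $\Gm{}$-action on $\Frobkernel$ from \cref{construction-GmactionWittVectors} and \cref{remark-underlyingandassociatedgradedFilCircle} makes the structure map $\B\Frobkernel \to \Spec(\integerslocalp)$ a $\Gm{}$-equivariant map of stacks (with trivial action on the target), which, by \cref{globalsectionsoffilteredarefiltered}, endows $\cochains(\B\Frobkernel, \structuresheaf)$ with a compatible grading; coconnectivity is the content of \cref{underlyingcomplexBKern}.

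Second, I would invoke \cref{uniqueEinfinityalgebracoconnective}: any graded $\Einfinity$-algebra structure on $\rmM$ is determined by a classical graded commutative algebra structure on its cohomology $\mathsf{H}^*(\rmM)$. Since the only potentially non-trivial product $\eta \cdot \eta$ lies in cohomological degree $2$, where the cohomology vanishes, this classical structure is forced to be the split square-zero one on both sides. Hence the $\Einfinity$-structure on $\cochains(\B\Frobkernel, \structuresheaf)$ transported along \eqformula{maptoBGafromKernel2} coincides with that of $\splitlocalp$, producing the desired equivalence.

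The main obstacle is the compatibility with the augmentations to $\integerslocalp$, but this is handled by the same discreteness principle: an essentially identical endomorphism-operad computation shows that the space of graded $\Einfinity$-algebra maps $\rmM \to \integerslocalp$ is discrete and detected on underlying cohomology rings. Both augmentations induce the canonical projection to the weight-zero summand (for the target, this comes from the base-point $\Spec(\integerslocalp) \to \B\Frobkernel$), so they agree under the promoted equivalence. Formality is then automatic, since $\splitlocalp$ is by construction equal to its cohomology as a graded $\Einfinity$-algebra.
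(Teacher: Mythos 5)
Your proof is correct and takes essentially the same route as the paper, which states this as an immediate corollary of the discreteness lemmas: you transport the $\Einfinity$-structure along the graded equivalence of \cref{underlyingcomplexBKern}, observe that the classical graded commutative ring structure on $\mathsf{H}^\ast(\rmM)$ is forced to be the split square-zero one for degree/weight reasons, and conclude via \cref{uniqueEinfinityalgebracoconnective}. Your handling of the augmentations by the same discreteness principle is the natural completion of the argument the paper leaves implicit.
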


\medskip

We now claim that  \eqformula{equivalencesasEinfinityalgebras} actually 
lifts via $\theta^{\mathsf{ccn}}$ to an equivalence of graded cosimplicial commutative algebras. In order to prove this we use the limit formula \eqformula{formulabarBKer} to reduce to an argument about the discrete graded abelian group scheme $\Frobkernel$. We will need some preliminaries:

\medskip

\begin{construction}
\mylabel{constructioncogroupscobar}
We consider cogroup objects in the categories $\C= \coSCRings{}^{\gr}$ and $\C=\CAlg_{}^{\mathsf{ccn},\gr}$ as in the \cref{generalizedBarconstruction}. Namely,

\begin{equation}
\label{Barconstructioncolimitmodel22}
\xymatrix{  \cogroups(\coSCRings{\integerslocalp}^{\gr})\ar@<+1ex>[r]^-{\lim_{\Delta}}& \ar@<+1ex>[l]^-{\mathsf{CoNerve}} (\coSCRings{\integerslocalp}^{\gr})_{./\integerslocalp}& \cogroups(\CAlg_{\integerslocalp}^{\mathsf{ccn},\gr})\ar@<+1ex>[r]^-{\lim_{\Delta}}& \ar@<+1ex>[l]^-{\mathsf{CoNerve}} (\CAlg_{\integerslocalp}^{\mathsf{ccn},\gr})_{./\integerslocalp}}
\end{equation}

\noindent Both adjunctions commute with $\theta^{\mathsf{ccn}}:\coSCRings{\integerslocalp}^{\gr}\to \CAlg^{\mathsf{cnn},\gr}_{\integerslocalp}$ (that $\theta^{\mathsf{ccn}}$ commutes with $\mathsf{CoNerve}$  follows from the fact that it commutes with tensor products)   (\cref{notationcosimplicialandsimplicial}):

\begin{equation}
\label{Barconstructioncolimitmodel33}
\xymatrix{\ar[d]^{\theta^{\mathsf{ccn}}} \cogroups(\coSCRings{\integerslocalp}^{\gr})\ar@<+1ex>[r]^-{\lim_{\Delta}}& \ar@<+1ex>[l]^-{\mathsf{CoNerve}} (\coSCRings{\integerslocalp}^{\gr})_{./\integerslocalp} \ar[d]^{\theta^{\mathsf{ccn}}}\\ \cogroups(\CAlg_{\integerslocalp}^{\mathsf{ccn},\gr})\ar@<+1ex>[r]^-{\lim_{\Delta}}& \ar@<+1ex>[l]^-{\mathsf{CoNerve}} (\CAlg_{\integerslocalp}^{\mathsf{ccn},\gr})_{./\integerslocalp}}
\end{equation}

\end{construction}

\medskip

\medskip

Our main computation is the following:

\begin{theorem}
\mylabel{theorem-splitsquarezero}
The equivalence of graded $\Einfinity$-algebras of the \cref{equivalenceasEinfinity} can be promoted to an equivalence of graded commutative cosimplicial algebras
\begin{equation}
\label{formulacohomologyBkernel}
\splitlocalp\simeq \cochainscosimplicial(\B\Frobkernel, \calO) 
\end{equation}
In particular, as $\B\Frobkernel$ is an affine stack (\cref{filteredcircleisaffine}) we can write 
\begin{equation}
\label{equivalencestacksBKersplit}
\B \Frobkernel \simeq \Spec^{\Delta}(\splitlocalp)
    \end{equation}

\begin{proof}
By construction, the cosimplicial object $\cochainscosimplicial(\Frobkernel, \structuresheaf)^{\bullet}$ in the formula \eqformula{formulabarBKer} defines an object in $\cogroups(\coSCRings{}^{\gr})$ and in the terminology of the \cref{constructioncogroupscobar} the equivalence \eqformula{formulabarBKer} reads as an equivalence in $\coSCRings{./\integerslocalp}^{\gr}$:

$$\cochainscosimplicial(\B \Frobkernel, \structuresheaf)\simeq \lim_{\Delta} [\cochainscosimplicial(\Frobkernel, \structuresheaf)^{\bullet}]$$

\noindent Because the stack $\B\Frobkernel$ is affine (\cref{filteredcircleisaffine}) we know that $
\B \Frobkernel \simeq \Spec^{\Delta}(\cochainscosimplicial(\B \Frobkernel, \structuresheaf))$. Now, using the fact $\cospec$ is a right adjoint, we deduce an equivalence in $\cogroups(\coSCRings{}^{\gr})$

$$\cochainscosimplicial(\Frobkernel, \structuresheaf)^{\bullet}\simeq \mathsf{coNerve}[\cochainscosimplicial(\B \Frobkernel, \structuresheaf)]$$

\noindent where we see $\cochainscosimplicial(\B \Frobkernel, \structuresheaf)$  augmented over $\integerslocalp$ via the atlas. We deduce that

$$
\cochainscosimplicial(\B \Frobkernel, \structuresheaf)\simeq \lim_{\Delta}\circ\,\, \mathsf{coNerve} \,\,\,[\cochainscosimplicial(\B \Frobkernel, \structuresheaf)]
$$

\noindent in $\coSCRings{./\integerslocalp}^{\gr}$. 

The commutativity of the diagram \eqformula{Barconstructioncolimitmodel33} tells us that a similar formula holds for the graded coconnective $\Einfinity$-version. Therefore, the equivalence of the \cref{equivalenceasEinfinity} tells us that a similar formula holds for the graded coconnective $\Einfinity$-version of $\splitlocalp$, ie, an equivalence in $\CAlg_{./\integerslocalp}^{\mathsf{ccn},\gr}$

$$
\theta^\mathsf{ccn}(\splitlocalp) \simeq \lim_{\Delta}\circ\,\, \mathsf{coNerve} \,\,\,[\theta^\mathsf{ccn}(\splitlocalp)]
$$

In particular, we have an equivalence in $\cogroups(\CAlg^{\mathsf{ccn},\gr})$

\begin{equation}
\label{equationdiscreteEinfinity}
\mathsf{coNerve} \,\,\,[\theta^\mathsf{ccn}((\splitlocalp)] \simeq \mathsf{coNerve} \,\,\,[\theta^\mathsf{ccn}((\cochainscosimplicial(\B \Frobkernel, \structuresheaf))]\simeq \theta^\mathsf{ccn}((\cochainscosimplicial(\Frobkernel, \structuresheaf)^{\bullet})
\end{equation}

\medskip

\noindent Finally, we remark that $\Frobkernel$ is a classical affine scheme, so that its global sections are discrete in the co-simplicial direction. In particular, as the functor $\theta^\mathsf{ccn}:\coSCRings{\integerslocalp}^{\gr}\to \CAlg_{\integerslocalp}^{\mathsf{ccn},\gr}$ induces an equivalence on discrete algebras, the equivalence \eqformula{equationdiscreteEinfinity} lifts to an equivalence in $\cogroups(\coSCRings{}^{\gr})$

\begin{equation}
\label{equationdiscretecosimplicial2}
\mathsf{coNerve} \,\,\,[\splitlocalp] \simeq \mathsf{coNerve} \,\,\,[\cochainscosimplicial(\B \Frobkernel, \structuresheaf)]
\end{equation}

\medskip

\noindent and therefore an equivalence in $\coSCRings{./\integerslocalp}^{\gr}$

\begin{equation}
\label{equationdiscretecosimplicial3}
\splitlocalp \simeq \lim_{\Delta}\circ\,\,\mathsf{coNerve} \,\,\,[\splitlocalp] \simeq \lim_{\Delta}\circ\,\,\mathsf{coNerve} \,\,\,[\cochainscosimplicial(\B \Frobkernel, \structuresheaf)]\simeq \cochainscosimplicial(\B \Frobkernel, \structuresheaf)
\end{equation}

\end{proof}
\end{theorem}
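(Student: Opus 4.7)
The plan is to leverage the graded $\Einfinity$-equivalence already established in \cref{equivalenceasEinfinity} and promote it to a cosimplicial equivalence by exploiting the fact that the group scheme $\Frobkernel$ is classical (and hence discrete in the cosimplicial direction), combined with the fact that $\theta^{\mathsf{ccn}}$ restricted to discrete objects is an equivalence. The main idea is that both sides can be reconstructed from their coNerves via a cobar-type limit, and this reconstruction is compatible with $\theta^{\mathsf{ccn}}$.

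First, I would use the affineness of $\B\Frobkernel$ (\cref{filteredcircleisaffine}) together with the fact that $\cospec$ is a right adjoint to identify the cosimplicial object $\cochainscosimplicial(\Frobkernel, \structuresheaf)^\bullet$ entering into the limit formula \eqformula{formulabarBKer} with $\mathsf{coNerve}[\cochainscosimplicial(\B\Frobkernel, \structuresheaf)]$ in the category $\cogroups(\coSCRings{\integerslocalp}^{\gr})$ of the \cref{constructioncogroupscobar}. This yields
\[
\cochainscosimplicial(\B\Frobkernel,\structuresheaf) \simeq \lim_{\Delta}\circ\,\mathsf{coNerve}[\cochainscosimplicial(\B\Frobkernel,\structuresheaf)]
\]
in $(\coSCRings{\integerslocalp}^{\gr})_{/\integerslocalp}$, and similarly on the $\Einfinity$-side after applying $\theta^{\mathsf{ccn}}$, using that $\theta^{\mathsf{ccn}}$ preserves limits and commutes with the coNerve by the commutative diagram \eqformula{Barconstructioncolimitmodel33}.

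Next, I would apply the same cobar decomposition to the trivial square-zero extension $\splitlocalp$ viewed as a cogroup object in $\coSCRings{\integerslocalp}^{\gr}$, with augmentation to $\integerslocalp$. The $\Einfinity$-equivalence of \cref{equivalenceasEinfinity} then yields an equivalence in $\cogroups(\CAlg_{\integerslocalp}^{\mathsf{ccn},\gr})$:
\[
\mathsf{coNerve}[\theta^{\mathsf{ccn}}(\splitlocalp)] \simeq \mathsf{coNerve}[\theta^{\mathsf{ccn}}(\cochainscosimplicial(\B\Frobkernel,\structuresheaf))] \simeq \theta^{\mathsf{ccn}}(\cochainscosimplicial(\Frobkernel,\structuresheaf)^\bullet).
\]

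The key and most delicate step, which I expect to be the main obstacle, is descending this $\Einfinity$-level equivalence back to a genuine cosimplicial equivalence. Here I would use the crucial observation that $\Frobkernel$ is a classical affine group scheme, so that $\cochainscosimplicial(\Frobkernel,\structuresheaf)$ is \emph{discrete} in the cosimplicial direction; and similarly $\splitlocalp$ is discrete cosimplicially by construction. On the full subcategory of discrete objects, the functor $\theta^{\mathsf{ccn}}: \coSCRings{\integerslocalp}^{\gr}\to \CAlg_{\integerslocalp}^{\mathsf{ccn},\gr}$ induces an equivalence with discrete $\Einfinity$-algebras (this is part of the content of \cref{notationcosimplicialandsimplicial}), so the equivalence on coNerves lifts uniquely to $\cogroups(\coSCRings{\integerslocalp}^{\gr})$. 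Finally, applying $\lim_{\Delta}$ and combining with the cobar reconstruction of both sides produces the desired equivalence $\splitlocalp \simeq \cochainscosimplicial(\B\Frobkernel,\structuresheaf)$ in $(\coSCRings{\integerslocalp}^{\gr})_{/\integerslocalp}$, from which \eqformula{equivalencestacksBKersplit} follows by applying $\cospec$ and using affineness once more.
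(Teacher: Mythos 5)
Your proposal is correct and follows essentially the same route as the paper's proof: cobar reconstruction of $\cochainscosimplicial(\B\Frobkernel,\structuresheaf)$ via affineness and the right-adjointness of $\cospec$, transfer of the graded $\Einfinity$-equivalence of \cref{equivalenceasEinfinity} to the level of coNerves, and descent back to cosimplicial algebras using the discreteness of $\cochainscosimplicial(\Frobkernel,\structuresheaf)$ and the fact that $\theta^{\mathsf{ccn}}$ is an equivalence on discrete objects. No substantive differences from the paper's argument.
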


\medskip

\begin{corollary}
\mylabel{corollaryequivalenceasHopfalgebras}
The stack $\Spec^\Delta(\splitlocalp)$ admits a unique $\Einfinity$ group structure compatible with the grading and the natural base point
$* \to\Spec^\Delta(\splitlocalp)$ induced from the augmentation
$\Spec^\Delta(\splitlocalp) \to \integerslocalp$ . In particular the equivalence of \eqformula{equivalencestacksBKersplit} is compatible with the group structures.

\begin{proof}
We already know that $\Spec^\Delta(\splitlocalp)$ is of the form
$\B\Frobkernel$. The corollary thus follows from the observation that, for 
any sheaf of abelian groups $A$, over any Grothendieck site, 
the stack $\B A$ carries a unique $\Einfinity$-group structure (up to
an equivalence) compatible with its natural base point $*\to \B A$. 
Indeed, suppose that $BA$ is endowed with a
$\Einfinity$-group structure with unit $* \rightarrow \B A$. 
The stack $\Omega_*\B A\simeq A$ comes equipped with an induced 
$\Einfinity$-group structure 
which is now compatible with the abelian group structure given on $A$.
By the Eckmann-Hilton argument we know that 
these two group structures on $A$ must be equal. We have an
adjunction morphism of
of stacks $\B(\Omega_*\B A) \longrightarrow \B A$, which also compatible with $\Einfinity$-group structures on both sides. This shows that any
$\Einfinity$-group structure on $\B A$ compatible with the
natural base point is canonically equivalent to the standard $\Einfinity$-group
structure induced by the abelian group structure of $A$ itself. 

The corollary is then obtained by considering $A=\Frobkernel$, as a sheaf 
over the big site of affine schemes over the stack $\B\Gm{}$.
\end{proof}
\end{corollary}

\subsection{Quasi-coherent sheaves on the filtered circle}
\mylabel{section-quasicoherentsheavesfilteredcircle}

To conclude this section we describe the categories of representations of $\Frobkernel$ and $\Frobfixed$. More precisely, we understand how the filtered category $\Qcoh(\Filcircle)$ endows a filtration on $\Qcoh(\B \Frobfixed)$ with associated graded given by $\Qcoh(\B \Frobkernel)$ and we conclude that as categories, this filtration in fact is split.

\begin{remark}
\mylabel{filtrationsplitsasE1algebras}
If follows as a consequence of \cref{theorem-splitsquarezero} and \cref{propositionaffinizationunderlying} that the filtration on  $\cochains(\Filcircle,\structuresheaf)$ splits when regarded as an $\Eone$-algebra. That same is not true when regarded as an $\Einfinity$-algebra.
\end{remark}

The main result of this section is the following:

\begin{lemma}
\mylabel{lemmarepGcompactlygenerated}
Let $G$ be a flat affine group scheme over $\integerslocalp$ and assume that:
\medskip
\begin{enumerate}
    \item $\B G$ is of finite cohomological dimension (in the sense of the \cref{mainlemmafinitecohodimension});
    \medskip
    \item Both base changes $G_{|_{\bbQ}}$ and $G_{|_{\finitefieldp}}$ are unipotent groups.
    \medskip
\end{enumerate}
Then the category $\Qcoh(\B G)$ is compactly generated by the tensor unit $\structuresheaf_{\B G}$ and in particular, we have
\begin{equation}
\label{equivalencecompactgenerator}
\Qcoh(\B G)\simeq \Mod_{\cochains(\B G, \structuresheaf)}
\end{equation}
\begin{proof}
The equivalence of categories follows directly from compact generation by \cite[7.1.2.1]{lurie-ha}. To prove the claim that the tensor unit is a compact generator we use  \cite[9.1.5.3 -(1)-(2)]{lurie-sag} to show that $\structuresheaf_{\B G}$ is a compact object. To show that it is a generator we have to show that if $\Map_{\Qcoh(\B G)}(\structuresheaf, E)\simeq 0$ then $E$ vanishes. If we denote by $f:\B G\to \Spec \integerslocalp$ the projection, this is equivalent to show that if $f_\ast (E)$ vanishes then $E$ has to vanish. But via the \cref{basechangeeeee}, $f_\ast (E)$ vanishes if and only if both base changes $f_\ast (E)\otimes_{\integerslocalp}\finitefieldp$ and $f_\ast(E)\otimes_{\integerslocalp}\bbQ$ vanish. But since $f$ is assumed to be of finite cohomological dimension, by \cite[A.1.5]{1402.3204} (see also the \cref{globalsectionsoffilteredarefiltered}-(iii)), we have

$$
f_\ast (E)\otimes_{\integerslocalp}\finitefieldp\simeq (f_p)_\ast I^\ast(E)\simeq \bbR\Hom( \structuresheaf_{\B G_{|_{\finitefieldp}}}, I^\ast(E))
$$

\noindent and 

$$
f_\ast (E)\otimes_{\bbQ}\finitefieldp\simeq (f_\bbQ)_\ast J^\ast (E)\simeq \bbR\Hom(\structuresheaf_{\B G_{|_{\bbQ}}}, J^\ast(E))
$$

\medskip

\noindent (with the notations of \eqref{diagramfinitecohomologicaldimension}). 
But, since we are assuming the base changes to fields to be unipotent groups, this means by definition that the tensor units $\structuresheaf_{\B G_{|_{\finitefieldp}}}$ and $\structuresheaf_{\B G_{|_{\bbQ}}}$ given by the trivial representations, are generators.  This implies now that both $J^\ast (E)$ and $I^\ast(E)$ vanish. But now we can again use the \cref{basechangeeeee} to conclude that E vanishes. Indeed, it sufficies, if $e:\Spec \integerslocalp\to \B G$ denotes the atlas, and since the functor $e^\ast:\Qcoh(\B G)\to \Mod_{\integerslocalp}$ is conservative, it suffices to use the commutativivity of the diagram

$$
\xymatrix{
\Spec(\bbQ)\ar[d]\ar[r]^j & \Spec \integerslocalp \ar[d]^{e}& \ar[d] \ar[l]_i \Spec (\finitefieldp\\
\B G_{|_{\bbQ}}\ar[r]^{J}& \B G& \ar[l]_{I} \B G_{|_{\finitefieldp}}
}
$$

\noindent and the induced commutative diagram of pullbacks

$$
\xymatrix{
\Mod_{\bbQ } &\ar[l]_{j^\ast } \Mod_{\integerslocalp} \ar[r]^{i^\ast }& \Mod_{\finitefieldp}\\
\Qcoh(\B G_{|_{\bbQ}})\ar[u]& \ar[u]^{e^\ast}\ar[l]_{j^\ast }\ar[r]^{I^\ast} \Qcoh(\B G) & \ar[u]\Qcoh( \B G_{|_{\finitefieldp}})
}
$$

\end{proof}
\end{lemma}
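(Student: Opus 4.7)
The plan is to deduce the equivalence \eqref{equivalencecompactgenerator} from compact generation by a single generator via the Schwede--Shipley-style result \cite[7.1.2.1]{lurie-ha}, so all the content is to verify that $\structuresheaf_{\B G}$ is both compact and a generator of $\Qcoh(\B G)$. Let $f:\B G\to \Spec \integerslocalp$ denote the structure morphism. Compactness is immediate from hypothesis (i): finite cohomological dimension of $f$ implies that $f_\ast$ commutes with filtered colimits (e.g.\ by \cite[9.1.5.3]{lurie-sag}), and since $f_\ast E\simeq \Map_{\Qcoh(\B G)}(\structuresheaf_{\B G},E)$, this is exactly the statement that the unit is compact.

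For the generator property I would argue by contraposition. Take $E\in \Qcoh(\B G)$ with $f_\ast E\simeq 0$; I want to show $E\simeq 0$. The arithmetic fracture of \cref{basechangeeeee} reduces this to the simultaneous vanishing of $f_\ast(E)\otimes_{\integerslocalp}\bbQ$ and $f_\ast(E)\otimes_{\integerslocalp}\finitefieldp$. Since $f$ has finite cohomological dimension, \cite[A.1.5]{1402.3204} makes the Beck--Chevalley transformations $j^\ast f_\ast\to (f_\bbQ)_\ast J^\ast$ and $i^\ast f_\ast\to (f_p)_\ast I^\ast$ in the diagram \eqref{diagramfinitecohomologicaldimension} equivalences on all of $\Qcoh(\B G)$, so the vanishing $f_\ast E\simeq 0$ is the same as the simultaneous vanishing of $(f_\bbQ)_\ast J^\ast E$ and $(f_p)_\ast I^\ast E$, i.e.\ the simultaneous vanishing of $\Map(\structuresheaf_{\B G_{|_\bbQ}},J^\ast E)$ and $\Map(\structuresheaf_{\B G_{|_{\finitefieldp}}},I^\ast E)$. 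At this point hypothesis (ii) kicks in: for a unipotent affine group scheme over a field, the trivial representation is a generator of $\Qcoh(\B G)$, so we conclude $J^\ast E\simeq 0$ and $I^\ast E\simeq 0$.

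It remains to promote these two vanishings back to the vanishing of $E$ itself. For this I would use the atlas $e:\Spec \integerslocalp\to \B G$, whose pullback $e^\ast$ is conservative, together with the obvious commutative rectangle relating $e^\ast$ with the two base changes along $i,j$. The vanishings of $J^\ast E$ and $I^\ast E$ force the vanishings of $e^\ast E\otimes_{\integerslocalp}\bbQ$ and $e^\ast E\otimes_{\integerslocalp}\finitefieldp$, and a second application of \cref{basechangeeeee} in $\Mod_{\integerslocalp}$ then gives $e^\ast E\simeq 0$, hence $E\simeq 0$. The main obstacle I foresee is the derived-category version of the unipotent input: rather than using the classical abelian-category statement ``every nontrivial representation of a unipotent group has invariants'', one must carry out the devissage coherently along an exhaustive filtration of $G_{|_\bbQ}$ (respectively $G_{|_{\finitefieldp}}$) by successive $\Ga{}$-extensions, reducing the claim to the case $G=\Ga{}$ where it is standard. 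Everything else is bookkeeping with base change and arithmetic fracture.
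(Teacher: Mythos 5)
Your proposal is correct and follows essentially the same route as the paper's proof: compactness of the unit from finite cohomological dimension, and the generator property via arithmetic fracture at $\bbQ$ and $\finitefieldp$, Beck--Chevalley base change, the unipotence of the two fibres, and conservativity of pullback along the atlas. The only difference is cosmetic: where you propose a d\'evissage along $\Ga{}$-extensions to justify that the trivial representation generates $\Qcoh$ of the classifying stack of a unipotent group over a field, the paper simply invokes this as the defining property of unipotence.
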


\begin{corollary}
\mylabel{representationsFixandKer}
We have canonical equivalences of $\infty$-categories
\begin{equation}
    \label{quasicohBFix}
    \Qcoh(\B\Frobfixed)\simeq \Mod_{\cochains(\B \Frobfixed, \structuresheaf)}
\end{equation}
\begin{equation}
    \label{quasicohBKer}
    \Qcoh(\B\Frobkernel)\simeq \Mod_{\cochains(\B \Frobkernel, \structuresheaf)}
\end{equation}
\noindent Moreover, since by the \cref{filtrationsplitsasE1algebras}, $\cochains(\B \Frobfixed, \structuresheaf)$ and $\cochains(\B \Frobkernel, \structuresheaf)$ are equivalent as $\Eone$-algebras we conclude

\begin{equation}
    \label{quasicohBKerequivalentquasicohBFix}
    \Qcoh(\B \Frobfixed)\simeq \Qcoh(\B \Frobkernel)
\end{equation}

\begin{proof}
The equivalence \eqformula{quasicohBFix} is a consequence of \cref{lemmarepGcompactlygenerated} and \cref{lemma-Bfixfinitecohdimension} for $G=\Frobfixed$. The equivalence \eqformula{quasicohBKer} is a consequence of \cref{lemmarepGcompactlygenerated} and \cref{lemma-BKerfinitecohdimension} for $G=\Frobkernel$.
\end{proof}
\end{corollary}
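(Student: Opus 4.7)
The plan is to reduce all three equivalences to a single application of \cref{lemmarepGcompactlygenerated} for $G = \Frobfixed$ and $G = \Frobkernel$, and then exploit the splitting of the filtration at the level of $\Eone$-algebras stated in \cref{filtrationsplitsasE1algebras}. First I would verify the hypotheses of \cref{lemmarepGcompactlygenerated} for $G = \Frobfixed$: flatness and affineness follow from the exact sequence used in \cref{lemma-Gpisfpqccover} and the construction of $\Frobfixed$ as a kernel of a flat surjection of affine group schemes. The finite cohomological dimension of $\B\Frobfixed$ is exactly the content of \cref{lemma-Bfixfinitecohdimension}. The base change to $\bbQ$ is $\Ga{\bbQ}$ by \cref{remark-characteristiczerocase}, which is unipotent, and the base change to $\finitefieldp$ is $\padicintegers$ by \cref{basechangetoFp}.

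Next I would apply the same template to $G = \Frobkernel$: flatness and affineness follow from the short exact sequence $0 \to \Frobkernel \to \Wittp \to \Wittp \to 0$ used in \cref{flatbasechangecohomologyBKernel}. The finite cohomological dimension is \cref{lemma-BKerfinitecohdimension}. The base change to $\bbQ$ is again $\Ga{\bbQ}$ by \cref{remark-characteristiczerocase}; the base change to $\finitefieldp$ is the pro-system $\lim_m \Frobkernelm$ identified in \cref{ProofofcohomologyBKern} with the (Cartier dual of the) system of $\alpha_{p^m}$, which are infinitesimal unipotent group schemes in characteristic $p$. The two applications of \cref{lemmarepGcompactlygenerated} then yield \eqref{quasicohBFix} and \eqref{quasicohBKer}.

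Finally, to obtain \eqref{quasicohBKerequivalentquasicohBFix}, I would invoke \cref{filtrationsplitsasE1algebras} which provides an $\Eone$-algebra equivalence between $\cochains(\B\Frobfixed,\structuresheaf)$ and $\cochains(\B\Frobkernel,\structuresheaf)$. Since both equivalences \eqref{quasicohBFix} and \eqref{quasicohBKer} depend only on the underlying $\Eone$-algebra structure (via \cite[7.1.2.1]{lurie-ha}), this induces the desired equivalence of module $\infty$-categories, noting that the comparison discards the $\Einfinity$-refinement, which does not split.

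The main obstacle I foresee is checking that $\padicintegers$, viewed as a profinite group scheme over $\finitefieldp$, satisfies the unipotence hypothesis of \cref{lemmarepGcompactlygenerated} — namely, that the trivial representation generates $\Qcoh(\B\padicintegers)$. This requires interpreting $\padicintegers$ as a filtered limit of finite constant $p$-groups $\bbZ/p^m\bbZ$, each of which is unipotent in characteristic $p$ (its group algebra is local with residue field $\finitefieldp$, so all irreducibles are trivial), and then transferring generation through the coalgebra colimit presentation established in the proof of \cref{underlyingcomplexBKern}. Once this is settled, the rest of the proof is a routine application of the cited lemmas.
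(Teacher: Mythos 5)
Your proposal is correct and follows exactly the paper's route: both equivalences \eqref{quasicohBFix} and \eqref{quasicohBKer} are obtained by applying \cref{lemmarepGcompactlygenerated} with $G=\Frobfixed$ (using \cref{lemma-Bfixfinitecohdimension}) and $G=\Frobkernel$ (using \cref{lemma-BKerfinitecohdimension}), and the final equivalence is deduced from the $\Eone$-splitting of \cref{filtrationsplitsasE1algebras}. Your extra care in verifying unipotence of $\padicintegers$ over $\finitefieldp$ via the local group algebras $\finitefieldp[\bbZ/p^m\bbZ]$ is a legitimate check that the paper leaves implicit, but it does not change the argument.
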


\section{Representations of the Filtered Circle}
\mylabel{section-RepresentationsFilteredCircle}

Our goal in this section is to describe the category $\Qcoh(\B \Filcircle)$ of representations of our filtered $\Filcircle$. For this purpose turn our attention to the classifying stack $\B \Filcircle$ (\cref{remark-abeliangroupstructurefilteredcircle}); note that by \cref{filteredcircleisaffine} this is an affine stack relatively to $\Filstack$.

\subsection{The cohomology of $\B \Filcircle$}
\mylabel{subsection-cohomologyofBS1Fil}

By the \cref{globalsectionsoffilteredarefiltered}, the cohomology ring $\cochains(\B \Filcircle, \calO)$ admits a natural structure as a filtered $\Einfinity$-algebra.

\medskip

As we shall see, the $\infty$-category of  $\Filcircle$-representations will coincide with that of mixed complexes; but this identification will not preserve the relevant symmetric monoidal structures.  Our first evidence of this is the following 

\begin{proposition}
\mylabel{groupcohomologyfilteredcircle}
There is an equivalence of filtered $\Eone$-algebras 
$$
\cochains(\B \Filcircle, \calO) \simeq \integerslocalp[u]
$$
where $u$ sits in degree 2 (cohomological) and has a split filtration induced by the canonical grading for which $u$ is of weight $-1$. 
\end{proposition}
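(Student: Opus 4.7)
The plan is to compute $\cochainscosimplicial(\B\Filcircle, \calO)$ as a filtered object by the base-change of \cref{basechangeBfilteredcircle}, separately identifying its underlying $\Eone$-algebra and its associated graded, and then showing that the filtration is split. Using the description $\Filcircle = \B\Hgroup$ together with \cref{remark-underlyingandassociatedgradedFilCircle2} and the fact that classifying constructions commute with pullback along $0 : \B\Gm{} \to \Filstack$ and $1 : \ast \to \Filstack$, we have
$$(\B\Filcircle)^{u} \simeq \Kspace(\Frobfixed, 2), \qquad (\B\Filcircle)^{\gr} \simeq \Kspace(\Frobkernel, 2),$$
and the underlying and graded pieces of $\Ofil(\B\Filcircle)$ compute the cohomologies of these two stacks.

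For the underlying, \cref{KFixnaffinization} identifies $\Kspace(\Frobfixed, 2)$ with the affinization of $\Kspace(\bbZ, 2)$ over $\integerslocalp$, so by the universal property of affinization
$$\cochains((\B\Filcircle)^{u}, \calO) \simeq \cochains(\Kspace(\bbZ, 2), \integerslocalp) \simeq \integerslocalp[u], \qquad |u|=2.$$

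For the associated graded, the bar presentation $\Kspace(\Frobkernel, 2) \simeq \colim_{\Delta^{\op}} (\B\Frobkernel)^{\times \bullet}$, combined with \cref{theorem-splitsquarezero} (which identifies $\cochainscosimplicial(\B\Frobkernel, \calO) \simeq \splitlocalp$ as a graded cosimplicial Hopf algebra, with primitive $\eta$ of weight $+1$), and Kunneth for the affine stack $\B\Frobkernel$, yields
$$\cochainscosimplicial((\B\Filcircle)^{\gr}, \calO) \simeq \lim_{[n] \in \Delta} \splitlocalp^{\otimes n},$$
the cobar construction on the graded coalgebra $\splitlocalp$. Classically, the cobar on the exterior coalgebra $\Lambda(\eta)$ with $|\eta|=1$ gives the polynomial algebra $\bbZ[u]$ with $|u|=2$; run in the graded setting over $\integerslocalp$, it produces $\integerslocalp[u]$ with $u$ of cohomological degree $2$ and of weight $-1$, coming from the Koszul-dualization of the weight $+1$ primitive $\eta$ of \cref{maptocohomologyBker}.

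For the splitting, since $u$ sits in even cohomological degree, $\integerslocalp[u]$ coincides with the free $\Eone$-algebra on $\integerslocalp[-2]$ and is in that sense rigid. Choosing any lift of the associated-graded generator $u$ to the relevant filtration piece of $\cochains(\B\Filcircle, \calO)$ (possible since the filtration is exhaustive), freeness gives a map of filtered $\Eone$-algebras from $\integerslocalp[u]$ with its canonical split filtration (with $u$ in weight $-1$) to $\cochains(\B\Filcircle, \calO)$ which is an equivalence on associated graded, hence an equivalence. This parallels the $\Eone$-splitting of \cref{filtrationsplitsasE1algebras}.

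The main obstacle is the graded cobar computation: one must verify integrally that the Koszul-dual cobar of $\splitlocalp$ yields $\integerslocalp[u]$ in the appropriate graded sense, and track the weight of $u$ through the totalization $\lim_{\Delta}$. Note that the analogous statement at the $\Einfinity$-level will fail, mirroring the failure noted in \cref{filtrationsplitsasE1algebras}; it is essential that we work at the $\Eone$-level and that $u$ is in even cohomological degree so that the rigidity argument applies.
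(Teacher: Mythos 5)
Your overall strategy --- compute the underlying object and the associated graded separately, then splice --- is not the paper's route, and the splicing step is where the gaps are. The paper instead constructs the filtered class $u$ \emph{geometrically from the start}: the Ghost-coordinate projection gives a map of filtered group stacks $\Hgroup \to \Ga{}(1)$ (\cref{maptocohomologyBker}), and applying $\B^2$ produces a map $\B\Filcircle \to \B^2\Ga{}(1)$, i.e.\ a class $u:\structuresheaf(1)[-2]\to \cochains(\B\Filcircle,\calO)$ living in $\Qcoh(\Filstack)$. Freeness of $\integerslocalp[u]$ as a filtered $\Eone$-algebra then yields the comparison map, and the equivalence is checked at the four field-valued points $(0,\bbQ),(1,\bbQ),(0,\finitefieldp),(1,\finitefieldp)$ as in \cref{lemma-Gpisfpqccover}. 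Your replacement for this --- ``choose any lift of the associated-graded generator to the relevant filtration piece, possible since the filtration is exhaustive'' --- is not justified: given the cofiber sequence $F^{i+1}\to F^i\to \mathrm{gr}^i$, lifting a degree-$2$ class from $\mathrm{gr}^i$ to $F^i$ is obstructed by a boundary class in $F^{i+1}$, and exhaustiveness says nothing about this. The geometric construction of $u$ is precisely what makes the lift exist canonically; without it your filtered map is not constructed.

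The second gap is the final inference ``an equivalence on associated graded, hence an equivalence.'' For filtered objects this implication requires completeness of both filtrations (or boundedness), which you have not established; checking on $\mathrm{gr}$ alone only controls the completion. The paper avoids this by verifying the map at \emph{both} $0$ and $1$ of $\Filstack$ (over $\bbQ$ and $\finitefieldp$), i.e.\ on the associated graded \emph{and} on the underlying object; note also that base change along $1$ (\cref{basechangeBfilteredcircle}) needs the homological boundedness of $\structuresheaf_{\B\Filcircle}$, which the paper records. Your separate identifications of $\cochains(\Kspace(\Frobfixed,2),\calO)$ via affinization and of $\cochains(\Kspace(\Frobkernel,2),\calO)$ via the cobar on $\splitlocalp$ are each essentially correct (the latter is how the paper handles the point $(0,\finitefieldp)$, by transporting the cobar to $\cochains(\Kspace(\padicintegers,2),\structuresheaf)$ through the coalgebra equivalence \eqref{formulabarBKer2}), but as stated they identify two objects rather than verify that \emph{your} map realizes those identifications; you would still need to track that the chosen lift of $u$ hits a polynomial generator on each fiber, which is exactly the content of Part II of the paper's proof.
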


The following construction is a key ingredient in the proof of \cref{groupcohomologyfilteredcircle}:

\begin{construction}
\mylabel{elementu}
We denote by $\integerslocalp(-1)$, the copy of $\integerslocalp$ seen as a graded module pure of weight $(-1)$. This is an invertible object with respect to the tensor product of graded objects (\cref{construction-Filteredobjects}) and we denote by $\structuresheaf(-1)$ the line bundle given by its pullback to $\Filstack$ along the map $q:\Filstack\to \B \Gm{}$. We construct a morphism  

\begin{equation}
    \label{morphismdegree2weight1}
    u:\structuresheaf(-1)[-2]\to \cochains(\B \Filcircle, \calO)
\end{equation}

\medskip

\noindent in the $\infty$-category $\Qcoh(\Filstack)$, as follows: Consider again the composition $\Wittp\to \Ga{}$ of  \eqformula{Ghostkernelprojectionfirstfactor}. As discussed in the \cref{maptocohomologyBker}, this is a map of groups compatible with the $\Gm{}$-actions. This induces a map of filtered abelian group stacks

\begin{equation}
    \label{mapdegree1}
   \Hgroup:= [(\ker \calG_p)/\Gm{}]\to \Wittp^{\Fil}\underbrace{:=}_{\cref{construction-trivialfamilyfiltered}} [(\Wittp\times\affineline{})/\Gm{}]\to [(\Ga{}\times \affineline{})/\Gm{}]=:\Ga{}(1)
\end{equation}
\medskip

\noindent where $\Ga{}(1)$ can also be described as $\Ga{}(1)=\Spec(\Sym(\structuresheaf(-1))$. Geometrically, $\Ga{}(1)$  corresponds to the filtered group scheme with filtration induced by the geometric action pure weight 1 action of $\Gm{}$ on $\Ga{}$ (see \cref{construction-geometricstackfiltration}). We can now take $\B^2$ to obtain a morphism of stacks $\B \Filcircle\to \B^2 \Ga{}(1)$. We define $u \in \mathsf{H}^2(\B\Filcircle, \calO(1))$ to be the resulting class induced by pullback in cohomology, incarnated as a map \eqformula{morphismdegree2weight1}.

\medskip

Since by definition $\cochains(\B\Filcircle,\structuresheaf)\simeq \pi_\ast(\structuresheaf)$ where $\pi:\B\Filcircle\to \Filstack$, by adjunction the map $u$ can also read as a map

\begin{equation}
\label{uinS1rep}
\pi^\ast(\structuresheaf(-1))[-2]\to \structuresheaf
\end{equation}

\noindent in $\Qcoh(\B\Filcircle)$.\\ 

\medskip

As $\cochains(\B \Filcircle, \structuresheaf)$ may be viewed as a filtered $\Einfinity$-algebra, and in particular a filtered $\Eone$-algebra, there is a morphism of filtered $\Eone$-algebras induced by the universal property

\begin{equation}
    \label{cohomologyBS1filtered}
\integerslocalp[u] \to \cochains(\B \Filcircle, \calO) \,\,\,\, \text{ in }\,\,\, \Alg_{\Eone}(\Qcoh([\affineline{\integerslocalp}/\Gm{\integerslocalp}]))
\end{equation}

\medskip

\noindent where $\integerslocalp[u]$ is the free $\Eone$-algebra on $\structuresheaf(-1)[-2]$. 
\end{construction}

\medskip

\begin{remark}
\mylabel{Zusplits}
Notice in particular that since the filtration splits for $\integerslocalp[u]$ as a filtered $\Eone$-algebra (see \cref{construction-filteredassociatedstack}), one has an equivalence of the underlying $\Eone$-algebras

\begin{equation}
    \label{splitfiltrationu}
    (\integerslocalp[u])^{\mathsf{und}} \simeq (\integerslocalp[u])^{\mathsf{ass}-\gr}
\end{equation}

\medskip

\noindent where on the r.h.s we forget the grading.

\medskip
\end{remark}

\medskip

\begin{proof}[Proof of \cref{groupcohomologyfilteredcircle}:] We now show that the map of $\Eone$-algebras constructed in \eqformula{cohomologyBS1filtered} is an equivalence. 
 As in the \cref{lemma-Gpisfpqccover} (see also \cref{mapoffiltratinsisoiffassgradandunderiso}), it will be enough to show that \eqformula{cohomologyBS1filtered} is an equivalence after base-change to the field-valued points

$$(0, \bbQ), (1, \bbQ), (0, \finitefieldp), (1, \finitefieldp).$$ 

\medskip
\noindent It will then be enough to test the underlying maps of complexes, forgetting the grading and the algebra structures.\\

\noindent Let us first deal with the associated-graded, ie, the base change of \eqformula{cohomologyBS1filtered} along $0$. As a consequence of the base change formula \eqref{beckchevalley2} in the \cref{basechangeBfilteredcircle} we obtain

\begin{equation}
\label{cohomologyBS1filteredassgraded}
    \xymatrix{
    (\integerslocalp[u])^{\mathsf{ass}-\gr}\ar[r]\ar[dr]&\cochains(\B \Filcircle, \calO)^{\mathsf{ass}-\gr}\ar[d]_{\sim}^{\eqref{beckchevalley2}}\\
    &\cochains(\B^2 \Frobkernel, \calO)
    }\,\,\,\text{ in }\,\,\,\Alg_{\Eone}(\Qcoh([\B\Gm{\integerslocalp}]))
\end{equation}

\medskip

\noindent  First we test the base change of \eqref{cohomologyBS1filteredassgraded} to $\bbQ$. In this case we get

\begin{equation}
  \label{cohomologyBS1filteredrational2}
    \xymatrix{
     (\integerslocalp[u])^{\mathsf{ass}-\gr}\otimes_{\integerslocalp} \bbQ \ar[rr]\ar[ddd]^{\sim}_{\integerslocalp[u]\,\, \text{free filtered } \Eone-algebra}&&\cochains(\B^2 \Frobkernel, \calO)\otimes_{\integerslocalp} \bbQ\ar[d]_{\sim}^{\text{ as in \cref{flatbasechangecohomologyBKernel} using diagonal res.} }\\
    &&\cochains(\B^2 \Frobkernel_\bbQ, \calO)\ar[d]_{\sim}^{\text{\cref{remark-characteristiczerocase}}}\\
   &&\cochains(\B^2 \Ga{\bbQ}, \calO)\ar[d]_{\sim}^{\text{\cref{affinizationsoverQandFp}}}\\
    \bbQ[u]^{\mathrm{und}}\ar[rr] && \cochains(\Kspace(\bbZ,2),\bbQ) \\
     }
\end{equation}

\noindent Finally, the bottom map obtained by clockwise composition around the diagram \eqref{cohomologyBS1filteredrational2}, is an equivalence: indeed, after passing to cohomology groups we get the isomorphism of commutative graded algebras 

\begin{equation}
    \label{isomorphismincohomologyBcircleQ}
    \bbQ[u]\to \mathsf{H}^\ast(\Kspace(\bbZ,2),\bbQ)
\end{equation}

\medskip

\noindent Now we test the base change of \eqref{cohomologyBS1filteredassgraded} to $\finitefieldp$. We get

\begin{equation}
  \label{point0Fp}
    \xymatrix{
     (\integerslocalp[u])^{\mathsf{ass}-\gr}\otimes_{\integerslocalp} \finitefieldp \ar[rr]\ar[d]^{\sim}_{\integerslocalp[u]\,\, \text{free filtered } \Eone-algebra}&&\cochains(\B^2 \Frobkernel, \calO)\otimes_{\integerslocalp} \finitefieldp\ar[d]_{\sim}^{\text{ as in \cref{flatbasechangecohomologyBKernel} using diagonal res.} }\\
    \finitefieldp[u]^{\mathrm{und}}\ar[rr] && \cochains(\B^2 \Frobkernel_{|_{\finitefieldp}}, \calO)\\
     }
\end{equation}

\medskip

\noindent and we have to explain why the bottom map obtained by clockwise composition in \eqref{point0Fp} is an equivalence. Thanks to  \eqformula{formulabarBKer2}, we know that  $\cochains( \B\Frobkernel_{|_{\finitefieldp}}, \calO)$ and  $\cochains(\B \padicintegers,\structuresheaf )$ are equivalent as coalgebras, and in particular, as complexes. Moreover, as both $\B\Frobkernel_{|_{\finitefieldp}}$ and $\B \padicintegers$ are affine stacks over $\finitefieldp$ (resp. \cref{filteredcircleisaffine} and \cref{affinizationsoverQandFp}), we deduce that 

\begin{equation}
\cochains( \B\Frobkernel_{|_{\finitefieldp}}^{\times^n}, \calO)\simeq \cochains( \B\Frobkernel_{|_{\finitefieldp}}, \calO)^{\otimes^n}\,\,\,\,\,\,\,\,\, \text{ and }\,\,\,\,\,\cochains(\B \padicintegers^{\times^n},\structuresheaf )\simeq \cochains(\B \padicintegers,\structuresheaf )^{\otimes^n}
\end{equation}

\medskip

\noindent But then, by passing to the limit, we obtain

\begin{equation}
\cochains(\B^2 \Frobkernel_{|_{\finitefieldp}}, \calO)=\cochains(\Kspace( \Frobkernel_{|_{\finitefieldp}},2), \calO) \simeq \cochains(\Kspace(\padicintegers, 2),\structuresheaf )
\end{equation}

\medskip

\noindent Finally, we get

\begin{equation}
    \label{cohomologyBS1filteredfinite1}
(\finitefieldp[u])^{\mathsf{und}}=(\finitefieldp[u])^{\mathsf{ass}-\gr} \to  \cochains(\Kspace( \Frobkernel_{|_{\finitefieldp}},2), \calO) \simeq \cochains(\Kspace(\padicintegers, 2),\structuresheaf ) \underbrace{\simeq}_{\text{\cref{affinizationsoverQandFp}}} \cochains(\Kspace(\bbZ, 2), \finitefieldp) 
\end{equation}

\noindent and it is now clear that \eqformula{cohomologyBS1filteredfinite1} produces an isomorphism after passing to cohomology 

\begin{equation}
\label{isomorphismincohomologyBcircleFp}
    \finitefieldp[u] \simeq \mathsf{H}^{*}(\Kspace(\bbZ,2), \finitefieldp)
\end{equation}

\vspace{1cm}

\noindent Let us now deal with the underlying objects, ie, the base change of \eqformula{cohomologyBS1filtered} along $1$. The Beck-Chevalley transformation of \eqref{beckchevalley2foropen} in the \cref{basechangeBfilteredcircle} a priori only gives the possibility of a composition

\begin{equation}
    \label{cohomologyBS1filteredunder}
    \xymatrix{
(\integerslocalp[u])^{\mathsf{und}} \ar[r]\ar[dr] &\ar[d]^{\text{\eqref{beckchevalley2foropen}}}\cochains(\B\Filcircle, \structuresheaf)^\mathrm{und}\\
&\cochains((\B\Filcircle)^{\mathrm{und}}, \calO)\simeq \cochains(\B^2 \Frobfixed, \calO)} \,\,\,\, \text{ in }\,\,\, \Alg_{\Eone}(\Mod_{\integerslocalp})
\end{equation}

\medskip

But, since the structure sheaf of $\B\Filcircle$ is in the heart of the t-structure (see \cref{left-complete-t-structure}), the map \eqref{beckchevalley2foropen} is in fact an equivalence.\\

\medskip

\noindent As before, first we test the base change of \eqref{cohomologyBS1filteredunder} to $\bbQ$. We get

\begin{equation}
    \xymatrix{
     (\integerslocalp[u])^{\mathsf{und}}\otimes_{\integerslocalp} \bbQ \ar[rr]\ar[ddd]^{\sim}_{\integerslocalp[u]\,\, \text{free filtered } \Eone-algebra}&&\cochains(\B^2 \Frobfixed, \calO)\otimes_{\integerslocalp} \bbQ\ar[d]_{\sim}^{\text{ as in \cref{flatbasechangecohomologyBFixed} using res. of \eqref{diagonalflatatlasbyaffines}} }\\
    &&\cochains(\B^2 \Frobfixed_\bbQ, \calO)\ar[d]_{\sim}^{\text{\cref{remark-characteristiczerocase}}}\\
   &&\cochains(\B^2 \Ga{\bbQ}, \calO)\ar[d]_{\sim}^{\text{\cref{affinizationsoverQandFp}}}\\
    \bbQ[u]^{\mathrm{und}}\ar[rr] && \cochains(\Kspace(\bbZ,2),\bbQ) \\
     }
\end{equation}

\noindent After passing to cohomology we recover the isomorphism in \eqref{isomorphismincohomologyBcircleQ}.

\medskip

\noindent Finally, concerning the base change to $\finitefieldp$, we get:

\begin{equation}
    \label{cohomologyBS1filteredfinite2}
(\finitefieldp[u])^{\mathsf{und}}\to \cochains(\B^2 \Frobfixed_{|_{\finitefieldp}}, \calO) \underbrace{\simeq}_{\eqref{formulabasechangetoFp}} \cochains(\Kspace(\padicintegers,2),\structuresheaf) \underbrace{\simeq}_{\text{\cref{affinizationsoverQandFp}}} \cochains(\Kspace(\bbZ, 2), \finitefieldp) 
\end{equation} 

\noindent which induces recovers the isomorphism \eqref{isomorphismincohomologyBcircleFp} after passing to cohomology.\\

\medskip


\end{proof}

\subsection{Representations of  $\Filcircle$}
\mylabel{subsection-representationsofS1Fil}

Before we proceed with the ramifications of \cref{groupcohomologyfilteredcircle}, we recall in more detail the notion of mixed complexes. 

\begin{definition}
\mylabel{strictlambdamodules}
Let $\Lambda := \integerslocalp[\epsilon]=\mathsf{H}_{*}(\circle, \integerslocalp)$ be the graded associative dg-algebra freely generated with $\epsilon$ in degree 1 (homological) and $\epsilon^{2}=0$. The dga $\Lambda$ is graded, with $\epsilon$ sitting in weight $1$.  Alternatively, $\Lambda$ can be characterized as the dual graded Hopf algebra of $\mathsf{H}^\ast(\B \Frobkernel, \structuresheaf)$ \footnote{Notice that $\mathsf{H}^\ast(\B \Frobkernel, \structuresheaf)\simeq \cochains(\B \Frobkernel, \structuresheaf)$  by the formality in \cref{equivalenceasEinfinity} of graded $\Einfinity$-algebras}.

By giving $\Lambda$ the split filtration corresponding to its grading, we may consider $\Lambda$ as a filtered $\Eone$-algebra. 
\end{definition}

\begin{construction}
\mylabel{strictlambdamodules-symmetricmonoidalstructure}
We let $\Mod_{\Lambda}$ denote the $\infty$-category associated to \emph{strict} graded $\Lambda$-modules: concretely, this is obtained by localizing the ordinary symmetric monoidal category of (strict) graded $\Lambda$-modules with respect to quasi-isomorphisms. This carries a symmetric monoidal structure $\otimes_k$ due to the fact that $\Lambda$ is a strict graded dg-Hopf algebra. As a category  of modules over a filtered algebra it acquires the structure of a \emph{filtered} $\infty$-category, which we will denote as  $(\Mod_{\Lambda})_{\Fil}$.

Together with the symmetric monoidal structures, it becomes a sheaf of $\Einfinity$-categories over $\Filstack$. 
\end{construction}


It turns out that the induced filtration on the categories of representations 
$\Qcoh(\B\Filcircle)$ is split, when no tensor structure is involved (see \cref{splitfiltration2}). In proving the HKR theorem we will need a finer result which describes
the symmetric monoidal structures on
$\Qcoh(\B\Filcircleund)$ and $\Qcoh(\B\Filcirclegr)$. The following is the main result of this section:

\medskip


\medskip

\begin{proposition}\mylabel{proprepmonoidal}

\begin{enumerate}

    \item The pullback along the map of $\integerslocalp$-stacks
    $u:\circle \longrightarrow \Filcircleund$ of \eqformula{affinizationunderlyingmap}
    induces a symmetric monoidal equivalence

    \begin{equation}
    \label{equivalencerepresentationsfilteredcircleunderlying}
    \xymatrix{
    u^\ast: \Qcoh(\B\Filcircleund)\ar[r]_-{\sim}& \Qcoh(\B \circle) 
    }
    \end{equation}
    
     \medskip

    \item There exist a natural 
    symmetric monoidal equivalence
    
    \medskip
    
    \begin{equation}
    \label{equivalencerepresentationsfilteredcirclegraded}
    \Mod_\Lambda^{\otimes} \simeq \Qcoh(\B\Filcirclegr)^{\otimes}
    \end{equation}
    
     \medskip

  \noindent  compatible with the graduations on both sides. Here the symmetric monoidal monoidal structure on the l.h.s is the one of \cref{strictlambdamodules-symmetricmonoidalstructure}. In particular, an action of $\Filcirclegr$ is given by a strict square zero differential.

\end{enumerate}

\end{proposition}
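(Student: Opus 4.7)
My strategy for both parts is to identify $\Qcoh$ of each classifying stack with modules over its cochain algebra via a compact-generation argument, and then to invoke the cochain computations of \cref{groupcohomologyfilteredcircle} and \cref{theorem-splitsquarezero}.

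For part~(1), by \cref{propositionaffinizationunderlying} and \cref{KFixnaffinization} the map $\B u:\B\circle\to\B\Filcircleund$ is an $\integerslocalp$-affinization, so $(\B u)^{*}$ induces an equivalence of cochain algebras; both $\C^{*}(\B\Filcircleund,\calO)$ and $\C^{*}(\B\circle,\integerslocalp)$ are identified with $\integerslocalp[u]$ (with $|u|=2$) by the underlying version of \cref{groupcohomologyfilteredcircle} together with the classical computation of $\C^{*}(K(\bbZ,2),\integerslocalp)$. The next step is to identify both $\Qcoh(\B\circle)$ and $\Qcoh(\B\Filcircleund)$ with $\Mod_{\integerslocalp[u]}$ by extending the compact-generation argument of \cref{lemmarepGcompactlygenerated} from flat affine group schemes to the deloopings $\B^{2}\bbZ$ and $\B^{2}\Frobfixed$; the key input is finite cohomological dimension over $\integerslocalp$, which I would establish by the fracture argument of \cref{mainlemmafinitecohodimension} applied to the $\bbQ$- and $\finitefieldp$-fibers. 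Since $u$ is a homomorphism of group stacks, $u^{*}$ is automatically symmetric monoidal for the tensor-of-representations structure, and under these identifications it corresponds to the identity.

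For part~(2), the same compact-generation strategy (now using \cref{lemma-BKerfinitecohdimension}) gives a graded symmetric monoidal identification $\Qcoh(\B\Filcirclegr)\simeq\Mod_{\C^{*}(\B^{2}\Frobkernel,\calO)}$. The essential new input is \cref{theorem-splitsquarezero}, which asserts that $\C^{*}(\B\Frobkernel,\calO)\simeq\splitlocalp$ as a graded cosimplicial Hopf algebra, \emph{strictly}. Applying the cosimplicial bar construction then realizes $\C^{*}(\B^{2}\Frobkernel,\calO)$ as a strict graded Hopf algebra, Cartier-dual to $\Lambda=\integerslocalp[\epsilon]$; the classical dictionary between modules over Cartier-dual strict Hopf algebras delivers the symmetric monoidal equivalence $\Mod_{\Lambda}^{\otimes}\simeq\Qcoh(\B\Filcirclegr)^{\otimes}$, compatibly with the gradings.

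The main obstacle is promoting \cref{lemmarepGcompactlygenerated}---formulated for flat affine group schemes---to the deloopings of group stacks appearing here, which requires some care with higher base-change and $t$-structure arguments. The crucial difference between parts~(1) and~(2), and the reason a genuinely strict mixed-complex structure appears only in the associated graded, is the strictness of the Hopf algebra structure furnished by \cref{theorem-splitsquarezero}, which is absent from the merely $\Einfinity$-level data available in part~(1).
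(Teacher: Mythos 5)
There is a genuine gap, and it sits at the heart of your strategy for both parts: the compact-generation-by-the-unit argument of \cref{lemmarepGcompactlygenerated} does \emph{not} extend to the second deloopings $\B^{2}\bbZ$, $\B^{2}\Frobfixed$, $\B^{2}\Frobkernel$, and the identifications $\Qcoh(\B\circle)\simeq \Mod_{\integerslocalp[u]}$ and $\Qcoh(\B\Filcircleund)\simeq\Mod_{\integerslocalp[u]}$ are false. Concretely, $\Qcoh(\B\circle)=\Fun(\Kspace(\bbZ,2),\Mod_{\integerslocalp})$, and the unit $\calO$ is not compact there: $\Map(\calO,-)$ is global sections over $\Kspace(\bbZ,2)$, an infinite totalization that does not commute with colimits (this already fails over a field, e.g.\ for $\Qcoh(\B^{2}\Ga{\bbQ})$, so no fracture argument over $\bbQ$ and $\finitefieldp$ can repair it). The correct compact generator is the regular corepresentation --- this is exactly the content of \cref{newlemmacovid}, where the generator is $\Psi(B)$, not $\calO$ --- and the resulting identification is $\Qcoh(\B^{2}\Frobkernel)\simeq\Mod_{\Lambda}$ with $\Lambda=\integerslocalp[\epsilon]$ the \emph{Koszul} dual of $\integerslocalp[u]$, not $\Mod_{\integerslocalp[u]}$ itself. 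A quick sanity check: if your identifications in (1) and (2) both held, they would combine with \cref{groupcohomologyfilteredcircle} to give $\Mod_{\integerslocalp[u]}\simeq\Mod_{\Lambda}$, which is false ($\calO$ is perfect over $\integerslocalp[u]$ via the finite Koszul complex, but has an infinite resolution over $\Lambda$; equivalently $\Mod_{\integerslocalp[u]}$ has the nontrivial localization $\Mod_{\integerslocalp[u,u^{-1}]}$ killed by maps from $\calO$). The same confusion appears in your part (2): $\C^{*}(\B^{2}\Frobkernel,\calO)\simeq\integerslocalp[u]$ and $\Lambda$ are Koszul dual, not Cartier dual; Cartier duality operates one level down, between $\mathsf{H}^{*}(\B\Frobkernel,\calO)$ and $\Lambda$.

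The repair is to never leave the first delooping. The paper writes $\Qcoh(\B\Filcircleund)=\lim_{\Delta}\Qcoh(\B\Frobfixed^{\times n})$ and $\Qcoh(\B\circle)=\lim_{\Delta}\Qcoh(\circle^{\times n})$ via the bar resolutions, applies \cref{representationsFixandKer} and \cref{propositionaffinizationunderlying} \emph{levelwise} to get $\Qcoh(\B\Frobfixed^{\times n})\simeq\Mod_{\C^{*}(\circle,\integerslocalp)^{\otimes n}}$, and checks that the comparison with $\Fun(\circle^{\times n},\Mod_{\integerslocalp})$ is fully faithful because $\integerslocalp$ \emph{is} compact on the finite complexes $\circle^{\times n}$; essential surjectivity is then checked levelwise using that all higher levels are pulled back from level $0$. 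For (2), the strictness supplied by \cref{theorem-splitsquarezero} is indeed the key input, as you say, but it is used to build a symmetric monoidal functor from strict $\mathsf{H}^{*}(\B\Frobkernel,\calO)$-comodules $\simeq\Mod_{\Lambda}$ into $\lim_{\Delta}\Mod_{\C^{*}(\B\Frobkernel,\calO)^{\otimes n}}=\Qcoh(\B\Filcirclegr)$, and \cref{newlemmacovid} shows this functor sends $\Lambda$ (not $\calO$) to the compact generator.
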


\medskip

\begin{proof}[Proof of \cref{proprepmonoidal}:]
We start with the proof of (i).  To show that  \eqref{equivalencerepresentationsfilteredcircleunderlying} is an equivalence, we use presentation of both $\B\Filcircleund$ and $\B \circle$ as the geometric realizations of simplicial diagrams 

$$
[n]\mapsto \Filcircleundntimes\simeq  \B \Frobfixed^{\times_n}
$$
\noindent respectively,

$$
[n]\mapsto \circletimesn
$$

\medskip

Since \eqref{equivalencerepresentationsfilteredcircleunderlying} is induced by a map of groups $\circle\to \B\Frobfixed$, to show that it is fully faithful, it is enough to check that levelwise the induced pullback functors

\begin{equation}
 \label{covidmillion}
u_n^\ast:  \Qcoh(\B \Frobfixed^{\times_n})\to \Qcoh(\circletimesn)
\end{equation}

\noindent are fully faithful.

Using the equivalence \eqref{quasicohBFix}, combined with the \cref{propositionaffinizationunderlying}, we therefore reduced to show that the pullback functors

\begin{equation}
 \label{covid2million}
u_n^\ast:  \Qcoh(\B \Frobfixed^{\times_n})\underbrace{\simeq}_{\eqref{quasicohBFix}} \Mod_{\cochains(\B \Frobfixed, \structuresheaf)^{\otimes_n}}\underbrace{\simeq}_{\ref{propositionaffinizationunderlying}}
\Mod_{\cochains(\circle,\integerslocalp)^{\otimes_n}}\to \Qcoh(\circletimesn)
\end{equation}

\medskip

\noindent are fully faithful for every $n$. But this follows from the fact that $k$ is a compact object in $\Qcoh(\circletimesn)\simeq \Fun(\circletimesn, \Mod_{k})$ (under this equivalence $k$ denotes with constant diagram with value $k$) since the spaces $\circletimesn$ are finite CW-complexes.


To check this, we observe that the functor $u_n^\ast$ admits a right adjoint
$$
(u_n)_\ast: \Fun(\circletimesn, \Mod_k)\to \Mod_{\cochains(\circletimesn, \integerslocalp)}
$$
\noindent given by taking the homotopy limit of diagrams. The fact that $k$ is compact says that this functor commutes with all colimits. So does $u_n^\ast$. Therefore, in order to show that the unit of this adjuntion 
\begin{equation}
\label{covid21}
N\to (u_n)_\ast u_n^\ast(N)
\end{equation}
\noindent is an equivalence for every $N\in \Mod_{\cochains(\circletimesn, \integerslocalp)}$ we can use the fact that the category $\Mod_{\cochains(\circletimesn, \integerslocalp)}$ is generated under colimits by $\cochains(\circletimesn, \integerslocalp)$ and that $u_n^\ast$ preserves colimits. Using these two facts, it is enough to check that the unit map \eqref{covid21} is an equivalence when $N=\cochains(\circletimesn, \integerslocalp)$. 
\begin{equation}
\label{covid212}
\cochains(\circletimesn, \integerslocalp)\to (u_n)_\ast u_n^\ast(\cochains(\circletimesn, \integerslocalp))
\end{equation}
But finally, since $u_n^\ast$ is monoidal, we get that $u_n^\ast(\cochains(\circletimesn, \integerslocalp))$ is the constant diagram with values $k=\integerslocalp$ and therefore we get that \eqref{covid212} is an equivalence. This concludes the proof that the $u_n^\ast$ are fully faithful.
\medskip

To show that \eqref{equivalencerepresentationsfilteredcircleunderlying} is essentially surjective we observe that since $u^\ast$ is a limit of fully faithful functors on each degree $n$

$$
\lim_{\Delta} \Qcoh(\B \Frobfixed^{\times_n})\to \lim_{\Delta} \Qcoh(\circletimesn)
$$

\noindent an object $(E_n)$ in the r.h.s is in the essential image of the limit functor if and only if levelwise each $E_n$ is in the essential image of the functor $u_n^\ast$. For $n=0$, $u_0^\ast$ is an equivalence, and for $n>0$, each $E_n$ is obtained from $E_0$ by pullback from $\Spec (\integerslocalp)$.

\medskip

\vspace{1cm}

We now address the proof of (ii). The proof is essentially the same as 
for (1), except that we have to produce a symmetric monoidal $\infty$-functor
$$\Mod_\Lambda \longrightarrow \Qcoh(\B^2 \Frobkernel).$$
\noindent where $\Lambda$ is seen as a graded algebra as in \cref{strictlambdamodules}.
Such a functor is obtained as follows. 
We can consider  $\mathsf{H}^*(\B\Frobkernel,\mathcal{O})$ as a strict
commutative dg-Hopf algebra. It is a consequence of \cref{corollaryequivalenceasHopfalgebras} that $\mathsf{H}^*(\B\Frobkernel,\mathcal{O})$ and $\mathsf{H}^\ast(\circle, \integerslocalp)$ are isomorphic as graded Hopf algebras. 

We consider a strict symmetric monoidal category of comodules in chain complexes $\CoMod_{\mathsf{H}^*(\B\Frobkernel,\mathcal{O})}^{\mathsf{strict}}$ defined as the limit internal to strict symmetric monoidal 1-categories

\begin{equation}
\label{strictcomodulesdefinition}
\CoMod_{\mathsf{H}^*(\B\Frobkernel,\mathcal{O})}^{\mathsf{strict}}:= \lim_{\Delta}\Mod_{\mathsf{H}^*(\B\Frobkernel,\mathcal{O})^{\otimes_n}}^{\mathsf{strict}}
\end{equation}

\noindent It follows from the \cref{strictlambdamodules} that $\Lambda$, is the dual graded Hopf algebra of $\mathsf{H}^*(\B\Frobkernel,\mathcal{O})$. It follows in this case that we have an equivalence of strict symmetric monoidal 1-categories of comodules (resp. modules) on chain complexes

\begin{equation}
\label{strictcomodulesdefinition}
\CoMod_{\mathsf{H}^*(\B\Frobkernel,\mathcal{O})}^{\mathsf{strict}, \otimes}\simeq \Mod_{\Lambda}^{\mathsf{strict}, \otimes}
\end{equation}

\noindent given by the identity on objects. It can now be localized on both sides along quasi-isomorphism to produce an equivalence of symmetric monoidal $\infty$-categories.

\begin{equation}
\label{strictcomodulesdefinition}
\CoMod_{\mathsf{H}^*(\B\Frobkernel,\mathcal{O})}^{\mathsf{strict}, \otimes}[q.iso^{-1}]\simeq \Mod_{\Lambda}^{\mathsf{strict}, \otimes}[q.iso^{-1}] \underset{\cref{strictlambdamodules-symmetricmonoidalstructure}}{=:} \Mod_{\Lambda}^\otimes
\end{equation}

\noindent We now claim that we are able to construct a symmetric monoidal functor

\begin{equation}
\label{strictcomodulestoinfinitycomodulesfunctor}
\CoMod_{\mathsf{H}^*(\B\Frobkernel,\mathcal{O})}^{\mathsf{strict}, \otimes}[q.iso^{-1}]\to \Qcoh(\B^2\Frobkernel)
\end{equation}

\noindent Indeed, after inverting quasi-isomorphisms we get a naturally defined symmetric monoidal $\infty$-functor

\begin{equation}
\xymatrix{
\Mod_\Lambda^{\otimes}\simeq (\lim_{n\in \Delta}\Mod^{\mathsf{strict}}_{\mathsf{H}^*(\B\Frobkernel,\mathcal{O})^{\otimes n}})[q.iso^{-1}] \ar[r]&
\lim_{n\in \Delta}
(\Mod^{\mathsf{strict}}_{\mathsf{H}^*(\B\Frobkernel,\mathcal{O})^{\otimes_n}}[q.iso^{-1}])\ar[d]^{\sim}\\
&\lim_{n\in \Delta}
\Mod_{\mathsf{H}^*(\B\Frobkernel,\mathcal{O})^{\otimes_n}}
}
\end{equation}

\noindent  On the other hand, the symmetric monoidal $\infty$-category $\Qcoh(\B^2\Frobkernel)$ is obtained
as a limit of symmetric monoidal \icategories (by descent from $\B\Frobkernel$ to $\B^2 \Frobkernel$)

\begin{equation}
    \label{descriptionQcohBBker}
    \Qcoh(\B^2 \Frobkernel) \simeq \lim_{n\in \Delta}
\Qcoh( \B\Frobkernel^{\times_n})
\end{equation}

\medskip

\noindent We are therefore reduced to the construction of symmetric monoidal functors

\begin{equation}
    \label{stricttoinfinity}
    \Mod_{\mathsf{H}^*(\B\Frobkernel,\mathcal{O})^n}\to \Qcoh(\B\Frobkernel^{\times_n})
\end{equation}

\noindent compatible with the transition maps in $\Delta$. But this follows again because of formality (\cref{equivalenceasEinfinity}) as $$\mathsf{H}^*(\B\Frobkernel,\mathcal{O})^n\simeq \cochains(\B\Frobkernel,\mathcal{O})^n$$
\noindent and there is always a symmetric monoidal functor from  $$\Mod_{\cochains(\B\Frobkernel,\mathcal{O})^n}\to \Qcoh(\B\Frobkernel^{\times_n})$$
\noindent from modules over global sections to quasi-coherent sheaves, which in fact we showed is an equivalence in \eqref{quasicohBKer}.\\

We obtain in this manner the desired symmetric monoidal \ifunctor \eqformula{strictcomodulestoinfinitycomodulesfunctor}. The fact that \eqformula{strictcomodulestoinfinitycomodulesfunctor} is an equivalence on the underlying categories is a consequence of \cref{groupcohomologyfilteredcircle} together with the following \cref{newlemmacovid}, which implies that the functor sends $\Lambda$ to the compact generator of $\Qcoh(\B^2 \Frobkernel)$. By construction this equivalence is clearly compatible with the action of 
$\mathbb{G}_m$ on both sides.

\end{proof}

\begin{lemma}
\mylabel{newlemmacovid}
Let $B$ be a strict (bi)-commutative dg Hopf algebra which is strictly dualizable as a $k$-module. Set  $G = \Spec(B)$ and let 
$$
\Psi: \CoMod_{B}^{\mathsf{strict}, \otimes}[q.iso^{-1}]\to \Qcoh(\B G) \simeq  \lim_{\Mod_{B^{ \otimes \bullet}}}
$$
be the functor as constructed above, between strict and homotopically coherent $B$-comodules. Then $\Psi(B)$ is a compact generator for the right hand side above. 
\end{lemma}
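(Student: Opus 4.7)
The plan is to identify $\Psi(B)$ with the pushforward $e_{*} k$ of the structure sheaf along the canonical atlas $e : \Spec(k) \to \B G$, and then to deduce compact generation from the strict dualizability of $B$. First I would unwind the construction of $\Psi$ to see that the strict coregular comodule $(B, \Delta)$ is the image, under the strict cofree-comodule functor, of $k \in \Mod_{k}^{\mathsf{strict}}$. Since $\Psi$ is symmetric monoidal and built levelwise from the descent presentation $\Qcoh(\B G) \simeq \lim_{\Delta} \Mod_{B^{\otimes \bullet}}$, it intertwines the strict and homotopy-coherent cofree adjunctions, yielding $\Psi(B) \simeq e_{*} k$.

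For compactness, strict dualizability of $B$ over $k$ promotes $\Psi(B)$ to a dualizable object of the symmetric monoidal $\infty$-category $\Qcoh(\B G)^{\otimes}$: the underlying $k$-module is $B$, whose $k$-linear dual $B^{\vee}$ carries a natural $B$-comodule structure via the antipode, furnishing an explicit dual for $\Psi(B)$. Moreover, because $B$ is perfect over $k$, the projection $\B G \to \Spec(k)$ has finite cohomological dimension, so the tensor unit $\calO_{\B G}$ is compact; and in a compactly generated symmetric monoidal $\infty$-category with compact unit, every dualizable object is compact.

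For generation, the bi-commutativity of $B$ gives $S^{2} = \mathrm{id}$, so $G$ has trivial modular character and the atlas $e$ (a finite flat morphism of relative dimension zero) satisfies an ambidexterity identification $e_{\sharp} \simeq e_{*}$, where $e_{\sharp}$ denotes the left adjoint of $e^{*}$; this left adjoint exists because dualizability of $B$ forces $e^{*}$ to preserve limits. Consequently, for every $F \in \Qcoh(\B G)$ one obtains
\[
\Map_{\Qcoh(\B G)}(\Psi(B), F) \;\simeq\; \Map_{\Qcoh(\B G)}(e_{\sharp} k, F) \;\simeq\; \Map_{\Mod_{k}}(k, e^{*} F) \;\simeq\; e^{*} F,
\]
so that vanishing of the left-hand side after every cohomological shift of $F$ entails $e^{*} F \simeq 0$, and conservativity of $e^{*}$ (the atlas being faithfully flat) forces $F \simeq 0$. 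Hence $\Psi(B)$ generates.

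The main subtlety will be establishing the ambidexterity equivalence $e_{*} \simeq e_{\sharp}$ in the dg setting. In the paper's applications, where $B \simeq k[\eta]/(\eta^{2})$ is the strict exterior algebra on a primitive degree-one class, this reduces to a direct verification using the integral $\eta \in B$; more generally it follows from the Larson--Sweedler theory for finite-rank bi-commutative Hopf algebras.
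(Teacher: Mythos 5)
Your generation argument is essentially sound and lands where the paper's does: both reduce the statement to the fact that the atlas pullback $e^{*}:\Qcoh(\B G)\to \Mod_{k}$ is conservative and preserves colimits. The paper gets there more directly, by computing $\Map_{\Qcoh(\B G)}(\Psi(B),F)$ levelwise in the cobar diagram ($\Map_{\Mod_{B^{\otimes n}}}(B^{\wedge}\otimes B^{\otimes n},F^{n})\simeq B\otimes_{k}F^{n}$, whose totalization is the cofree resolution of $F$ and hence recovers $F^{0}=e^{*}F$), so that $\Psi(B)$ corepresents the projection to the zeroth cosimplicial level. Your detour through $\Psi(B)\simeq e_{*}k$ and ambidexterity works, but note that $e_{\sharp}\simeq e_{*}$ cannot hold on the nose in the dg setting: the two differ by tensoring with the invertible relative dualizing object, which here is a shift (for $B=k\oplus k[-1]$ one has $B^{\vee}\simeq B[1]$ as comodules). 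Since you test against all shifts of $F$, this is harmless for generation, but the claimed identification should be stated as a twisted one.

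The compactness step, however, has a genuine gap. You deduce that $\calO_{\B G}$ is compact from the assertion that $B$ perfect over $k$ forces $\B G\to \Spec k$ to have finite cohomological dimension; both claims are false precisely in the situation this lemma is used for. For $B=\mathsf{H}^{*}(\B\Frobkernel,\calO)$ the paper computes $\cochains(\B G,\calO)\simeq k[u]$ with $u$ in cohomological degree $2$, so the cohomology of the unit is unbounded; equivalently, under the resulting identification $\Qcoh(\B G)\simeq \Mod_{\Lambda}$ the unit corresponds to $k$, which is not perfect over the exterior algebra $\Lambda$ and hence not compact. (Already for $G=\alpha_{p}$ over $\finitefieldp$, a finite flat group scheme, $\B G$ fails to have finite cohomological dimension.) So the implication ``dualizable $+$ compact unit $\Rightarrow$ compact'' is not available: $\Psi(B)$ is dualizable but the unit is not compact, and dualizable objects need not be compact in that case. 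The correct route — and the one the paper takes — is that compactness comes for free from the same corepresentability statement used for generation: $\Map(\Psi(B),-)\simeq e^{*}(-)$ is the projection $\lim_{\Delta}\Mod_{B^{\otimes\bullet}}\to\Mod_{k}$ to the zeroth level, which preserves all colimits. You should replace your compactness paragraph by this observation (or by the twisted-ambidexterity identification $\Psi(B)\simeq e_{\sharp}k\otimes L$ with $L$ invertible, which yields the same corepresentability up to a shift).
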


\begin{proof}
We show that the functor 

$$
F \mapsto \Map_{\Qcoh(\B G)}(\Psi(B),-),
$$
corepresented by $\Psi(B)$ coincides with the forgetful functor 
$$
\lim\, \Mod_{B^{n}} \to \Mod_{k};
$$
this itself is just the projection to the zeroth component in the cosimplicial diagram of stable \icategories above. For this we view $\Qcoh(\B G) $ as a cosimplicial \icategory. By construction, $\Psi(B)$ is represented, in each cosimplicial degree, by the object $B^{\wedge} \otimes B^{\otimes  n}$, where $B^{\wedge}$ is the $k$-linear dual of $B$. Moreover, for any $(F_n)\in \lim_{\Mod_{B^{n}}}$ there will be equivalences

$$
\Map_{\Mod_B^{\otimes  n}}(B^{\wedge} \otimes B^{\otimes  n}, F^n) \simeq  \Map_{\Mod_k}(B^{\wedge}, F^n) \simeq \Map_{\Mod_k}(k, B \otimes_{k} F^n) \simeq B \otimes_k F^n
$$
The limit of all the $B \otimes_{k} F^n$ is the standard cofree 
resolution of $F$ as $B$-comodule. Therefore, this colimit is naturally equivalent
to the underlying $k$-module of $F^{\bullet}$, ie. $F^{0}$. Moreover, as this is just the data of the projection of $F$ to the zeroth level of the cosimplicial diagram, this computes the left-adjoint forgetful functor 
$$
\lim\,\Mod_{B^\bullet} \to \Mod_{B^0} \simeq \Mod_{k}.
$$
The fact that this functor is conservative and commutes with colimits implies
that $\Psi(B)$ is a compact generator. 
\end{proof}

\medskip

\begin{remark}
\mylabel{splitfiltration2}
Using arguments similar to the ones of \cref{proprepmonoidal} and the main computation of \cref{groupcohomologyfilteredcircle} one can show that there exists an equivalence of filtered $\infty$-categories
$$
(\Mod_{\Lambda})_{\Fil} \simeq \Qcoh(\B \Filcircle)
$$
between filtered mixed complexes (\cref{strictlambdamodules-symmetricmonoidalstructure}) and $\Filcircle$-representations. As
we do not use this equivalence it in the sequel we do not provide its proof. This equivalence means that the filtration on the underlying $\infty$-category $\Qcoh(\B \Filcircle)$ canonically splits, since the filtration on filtered mixed complexes comes from a grading. A key point however, is that, although this is an equivalence of $\infty$-categories; the symmetric monoidal structure is not preserved. Hence this is not a multiplicative splitting. Over characteristic zero this can be promoted to a symmetric monoidal equivalence, for more see \cref{background} and \cite{MR2862069}. 
\end{remark}

\medskip

\section{The HKR Theorem}
\mylabel{HKRsection}

We are now ready to prove the results listed in \cref{corollaryHKRpositive}
for a general simplicial commutative 
$k$-algebra $A\in \SCRings{k}$ (and more generally for derived
schemes or stacks by gluing) where $k$ is a fixed commutative 
$\integerslocalp$-algebra. 

The filtered circle $\Filcircle$ sits over 
$\integerslocalp$ and we pull-it back over 
$\Spec\, k$ to make it a filtered group stack over $k$. We will continue to denote it by $\Filcircle$. \\

\subsection{The filtered loop stack}
\mylabel{filteredloopstack}

\begin{definition}
\mylabel{filteredloopspace}
Let $X=\Spec\, A$ be an affine derived scheme over $k$. The \emph{filtered loop space} of $X$ (over $k$) is
defined as the relative mapping stack
$$\filteredloops X:=\Map_{/\Filstack}(
\Filcircle, X\times \Filstack).$$
This is a derived scheme over $\Filstack$
equipped with a canonical action of the group
stack $\Filcircle$. As in \cref{filteredglobalsections} we denote by $$\Ofil(\filteredloops X):= p_\ast \structuresheaf_{\filteredloops X}$$
\noindent the object in $\Qcoh(\Filstack)$.
\end{definition}

\medskip

\begin{proposition}
\mylabel{filteredloopsrepresentablyrelative}
Let $X=\Spec\, A$ be an affine derived scheme over $k$ and $\filteredloops X$ its filtered loop space. Then, the derived stack $\filteredloops X$ is representable by an affine derived scheme relative to $\Filstack$.
\begin{proof}
We start noticing that the truncation of $\filteredloops X$ is simply the truncation of $X\times \Filstack$, 
and thus the truncation is a relativitely affine scheme. In order
to prove that $\filteredloops X$ is relativitely affine
it remains to check that 
$\filteredloops X$ has a global cotangent complex, 
admits an obstruction theory and is nil-complete 
(see \cite{MR2394633} Appendix C). But this follows easily from the fact that it is a mapping derived stack
from $\B \Hgroup$, the classifying stack of a filtered group scheme.
\end{proof}
\end{proposition}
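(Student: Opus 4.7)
The plan is to apply the relative Artin--Lurie representability criterion over $\Filstack$, as indicated in the proof sketch via the reference to \cite{MR2394633} Appendix C. We must verify four conditions: (a) the classical truncation of $\filteredloops X$ is a relative affine classical scheme over $\Filstack$; (b) $\filteredloops X$ admits a global relative cotangent complex; (c) it admits an obstruction theory; and (d) it is nil-complete.

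For (a), the stack $\Filcircle = \B\Hgroup$ is connected, being the classifying stack of a group stack. For any classical commutative ring $R$ together with a map $\Spec R \to \Filstack$, a classical morphism $\Filcircle_R \to X \times \Spec R$ over $\Spec R$ is therefore equivalent to a classical $R$-point of $X$. Consequently the truncation of $\filteredloops X$ is canonically $t_0(X) \times \Filstack$, which is a classical affine scheme relative to $\Filstack$.

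For (b)--(d), the key structural input is that $\Filcircle = \B\Hgroup$ is flat and relatively affine over $\Filstack$ (\cref{flatgroupschemesrelativeflatatlas}), admits a flat atlas by affines with flat transition maps, and its projection to $\Filstack$ is of finite cohomological dimension (obtained by combining \cref{lemma-Bfixfinitecohdimension} and \cref{lemma-BKerfinitecohdimension} with the base-change analysis of \cref{basechangeBfilteredcircle}). At a point $f:\Filcircle_T \to X$ over $T = \Spec A \to \Filstack$, with structure map $\pi_T:\Filcircle_T \to T$, the relative cotangent complex is expected to be
$$
\mathbb{L}_{\filteredloops X/\Filstack,\, f} \simeq (\pi_T)_{*}\, f^{*} \mathbb{L}_{X/k},
$$
and this pushforward is well-defined, commutes with base change along morphisms $T' \to T$, and preserves quasi-coherence thanks precisely to the finite cohomological dimension of $\pi_T$. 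The existence of an obstruction theory is deduced from the corresponding property of $X$ by the same pushforward formalism, and nil-completeness follows because $\pi_T$ is relatively affine, so $(\pi_T)_{*}$ commutes with the relevant totalisations.

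The main technical obstacle is the cotangent complex computation: the finite cohomological dimension of $\pi_T$ is only a fibrewise statement in the references cited, and one has to pass from the fibrewise statement to a globally well-behaved pushforward. However, this is formal given the flatness and base-change compatibility already established. Once the four conditions are verified, representability follows from the criterion; affineness (as opposed to general Artin representability) is then guaranteed by the equivalence
$$
\filteredloops X \simeq \lim_{[n] \in \Delta}\, \Map_{/\Filstack}\bigl(\Hgroup^{\times n},\, X \times \Filstack\bigr),
$$
obtained from the bar resolution of \cref{flatgroupschemesrelativeflatatlas}. Each term in this limit is a Weil restriction of the affine $X$ along the relatively affine flat map $\Hgroup^{\times n} \to \Filstack$, hence itself a relative affine derived scheme over $\Filstack$, and the limit of such is again relatively affine.
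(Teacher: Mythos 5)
Your overall strategy is the same as the paper's: identify the truncation with $t_0(X)\times\Filstack$ and then invoke the representability criterion of \cite{MR2394633}, Appendix C, by checking the existence of a cotangent complex, an obstruction theory, and nil-completeness, all of which come from the good properties of $\B\Hgroup$ over $\Filstack$. Up to that point the argument is fine (modulo one cosmetic slip: the formula $\mathbb{L}_{\filteredloops X/\Filstack,\,f}\simeq (\pi_T)_*f^*\mathbb{L}_{X/k}$ computes the \emph{tangent}, not the cotangent, complex; the cotangent complex is corepresented by the left adjoint of $\pi_T^*$, or equivalently by a dual, and it is the existence of such a corepresenting object that finite cohomological dimension buys you).

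The genuine problem is your final step. First, it is unnecessary: the criterion of \cite{MR2394633}, Appendix C is type-preserving --- a nil-complete derived stack with an obstruction theory is the limit of its Postnikov tower, each stage of which is a square-zero extension of the previous one, so if the truncation is a relative affine scheme the derived stack is automatically a relative affine derived scheme; no separate affineness argument is needed. Second, the argument you give for it is false: the terms $\Map_{/\Filstack}(\Hgroup^{\times n}, X\times\Filstack)$ of the cosimplicial limit are Weil restrictions along the morphisms $\Hgroup^{\times n}\to\Filstack$, which are relatively affine and flat but very far from finite (their fibers are infinite-dimensional affine schemes, being closed subschemes of products of copies of $\Wittp$). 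Weil restriction of an affine scheme along such a morphism is in general only an ind-scheme, not a scheme: already for $A=k[x]$ the functor in question is $T\mapsto \mathcal{O}(\Hgroup^{\times n}_T)$, i.e.\ a filtered colimit of affine spaces, which is not corepresentable. So the individual terms of your limit are not relative affine derived schemes, and the claimed conclusion cannot be extracted from them termwise (even though a limit of affines, were the terms affine, would indeed be affine). Drop that paragraph and let the criterion itself deliver affineness from the computation of the truncation.
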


\medskip

\begin{theorem}
\mylabel{thmhkr-stackversion}
Let $X=\Spec\, A$ be an affine derived scheme over $k$. Then, the filtered loop stack exhibits a filtration on the usual derived loop stack $\loops X$ whose associated graded in the shifted tangent stack $\mathrm{T}[-1]X$. More precisely, we have:

\medskip

\begin{enumerate}[(1)]
\item The composition with the natural map $\eqformula{affinizationunderlyingmap}:\circle \longrightarrow \Filcircleund_k=\B \Frobfixed_k$. 
    induces an isomorphism of derived schemes:
    \begin{equation}
\label{HHformulaisomappingspaces}
(\filteredloops X)^{\mathrm{u}}:=\Map_k(\Filcircleund_k,X) \longrightarrow \Map_k(\circle,X)=:\loops X
\end{equation}

\medskip

\item There is a natural equivalence of graded derived schemes

\begin{equation}
    \label{equationFiltcircleSymL}
(\filteredloops X)^{\mathrm{gr}}:=\Map_k(\Filcirclegr_k,X) \simeq \Spec\, \Sym^{\Delta}_A(\cotangent_{A/k}[1])=:\mathrm{T}[-1]X
 \end{equation}
 
 \noindent where the action of $\Gm{}$  on $\mathrm{T}[-1]X$ is the geometric action of weight (-1), ie, $\cotangent_{A/k}[1]$ is in weight 1.
 \end{enumerate}
 \begin{proof}
  We first analyse the statement (1). For this purpose we show that the map \eqformula{HHformulaisomappingspaces} is an equivalence as functor of points, ie, that for every simplicial commutative algebra $B$ the induced map

    \begin{equation}
\label{HHformulaisomappingspaces2}
(\filteredloops X)^{\mathrm{u}}(B):=\Map_k(\Filcircleund_k\times \Spec\, B,X) \longrightarrow (\loops X)(B):=\Map_k(\circle\times \Spec\, B,X)
\end{equation}

\noindent is an equivalence. To show this we use the fact that every derived affine scheme can be written as a limit of copies of the affine line $\affineline{k}$ (see \cite[4.1.9]{lurie-structuredspaces}). In this case, we are reduced to showing the statement for $X=\affineline{k}$ and use the fact that for any derived stack $F$, the mapping space $\Map_k(F, \affineline{k})$ is the Dold-Kan construction of the complex of global sections on $F$, $\structuresheaf(F)$. In this case, on the l.h.s we have

\begin{equation}
\label{covid1111}
\Map_k(\Filcircleund_k\times \Spec\, B,\affineline{k})\simeq  \DK(\structuresheaf(\B \Frobfixed\times \Spec \, B))
\end{equation}

\noindent and on the r.h.s we have 

\medskip

\begin{equation}
\label{covid1112}
\Map_k(\circle\times \Spec\, B,X)\simeq \DK( \structuresheaf(\circle \times \Spec \, B))
\end{equation}

It remains to show that the induced pullback map

$$
\structuresheaf(\B \Frobfixed\times \Spec \, B)\to \structuresheaf(\circle \times \Spec \, B)
$$

\noindent is a quasi-isomorphism of complexes.  In this case, we can use the fact that $\B \Frobfixed$ is of finite cohomological dimension (\cref{lemma-Bfixfinitecohdimension}) to apply the base change formula \cite[A.1.5-(2)]{1402.3204}\footnote{See also \cite[9.1.5.6-(c) and 9.1.5.7-(c)]{lurie-sag}} to establish an quasi-isomorphism between the underlying complexes

\begin{equation}
\label{covid11113}
\structuresheaf(\B \Frobfixed\times \Spec \, B)\simeq \structuresheaf(\B\Frobfixed)\otimes  B
\end{equation}\footnote{ If $i:\Spec\, B \to \Spec\, k$ and $\pi:\B \Frobfixed\to \Spec\, k$, with projections $p_1:\B \Frobfixed\times_{\Spec\, k} \Spec\, B \to \B \Frobfixed$, $p_2:\B \Frobfixed\times_{\Spec\, k} \Spec\, B \to \Spec \, B$, then the r.h.s is $i^\ast \pi_\ast$ and the l.h.s is $(p_2)_\ast \, (p_1)^\ast $.}

\medskip

\medskip

In the same way, the finite cohomological dimension for $\circle$ with the base change formula, gives

\begin{equation}
\label{covid11113}
\structuresheaf(\circle \times \Spec \, B)\simeq \structuresheaf(\circle)\otimes  B
\end{equation}

Finally, the fact that $\B\Frobfixed$ is the affinization of $\circle$ (\cref{propositionaffinizationunderlying}) concludes the proof.\\

 We now deal with statement (2). We start constructing the map by noticing the existence of the following commutative square of graded affine stacks
$$\xymatrix{
\Spec\, \dualnumbers{k} \ar[r] \ar[d] & \ar[d] \Spec\, k \\
\Spec\,k \ar[r] & \Filcirclegr_k.
}$$

\noindent with $\epsilon$ in degree $0$ and weigth 1. Such a commutative square of stacks 
is given by an element of  $\Frobkernel(\dualnumbers{k})$ interpreted as a map of graded affine schemes

$$
\Spec \dualnumbers{k}\to \Frobkernel\simeq \Omega \, \Filcirclegr
$$

\medskip

\noindent which simply is the $p$-typical Witt vector 

\begin{equation}
    \label{covid11115}
    \underline{\epsilon}:=(\epsilon, 0,0,0,...)
\end{equation}

\noindent whose Ghost components are $(\epsilon, \epsilon^p, \epsilon^{p^2}, ..)$. As explained in the \cref{guideWitt}, the action of $\Frob_p$ on $\Wittp$ is determined by what it does on the Ghost coordinates, in this case, $(\epsilon, \epsilon^p, \epsilon^{p^2}, ..)\mapsto ( \epsilon^p, \epsilon^{p^2}, ..)$. Reverse-engineering the Ghost cordinates we obtain $\Frob_{p}(\epsilon, 0,0,0,...)=(\epsilon^p, 0,0,..)$. The fact that $\epsilon^p=0$ tells us that the $p$-typical Witt vector $\underline{\epsilon}$ is in the kernel of  $\Frob_p$.

\noindent This square induces a commutative diagram of graded derived affine schemes,
obtained by mapping to $X$

\begin{equation}
\label{covid11116}
\xymatrix{
\Spec \,\Sym_A(\cotangent_A)  & \ar[l] X \\
X \ar[u] & \Map_k(\Filcirclegr_k,X). \ar[u] \ar[l]
}
\end{equation}

\medskip

\noindent This in turn produces a natural morphism of derived stacks

\begin{equation}
\label{covid11114}
\Map_k(\Filcirclegr_k,X) \longrightarrow 
X\times_{\Spec\, \Sym_A(\cotangent_A)}X \simeq \Spec\, 
\Sym_A(\cotangent_{A}[1])
\end{equation}

\medskip

\noindent To check its an equivalence we proceed as before by reducing to $X=\affineline{k}$ using the fact that the construction $A\mapsto  \Sym_A(\cotangent_A[1])$ is compatible with colimits (using the fact that the cotangent complex is left adjoint) and that $\B\Frobkernel$ is of finite cohomological dimension (\cref{lemma-BKerfinitecohdimension}): indeed, for $X=\affineline{k}$ the l.h.s of \eqref{covid11114} we find the functor of points

$$
B\mapsto  \DK( \structuresheaf(\B \Frobkernel \times \Spec \, B))\simeq \DK(\structuresheaf(\B \Frobkernel)\otimes B)\simeq \DK(B\oplus B[-1])
$$

\noindent where the last equivalence follows from \cref{theorem-splitsquarezero}. At the same time, on the r.h.s we find

$$
B\mapsto \DK(B\times_{B[\epsilon]} B)\simeq \DK(B\oplus B[-1])
$$

\noindent where $B[\epsilon]$ is with $\epsilon$ in degree $0$. By the choice of the canonical element \eqref{covid11115} in  $\Frobkernel(\dualnumbers{k})$ used to produce the diagram \eqref{covid11116}, we conclude that the map from the r.h.s to the l.h.s
sends the generator in degree $-1$ to a generator in degree $-1$.

\medskip

\personal{In fact what we uncarefully proved is that \eqref{covid11116} is cocartesian inside affine stacks. }

 \end{proof}
\end{theorem}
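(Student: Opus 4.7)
The plan is to verify both statements by passing to the functor of points, reducing to the universal affine case $X=\affineline{k}$, and then exploiting the cohomology computations for $\B\Frobfixed$ and $\B\Frobkernel$ that were established earlier. Since any affine derived scheme $X=\Spec\,A$ can be exhibited as a limit of copies of $\affineline{k}$ (indexed by the underlying simplicial commutative algebra $A$), and since all the mapping stacks in question send colimits in $A$ to limits, it suffices to establish the two equivalences for $X=\affineline{k}$. For such $X$, the functor of points is computed by $\Map_k(F\times\Spec\,B,\affineline{k})\simeq \DK(\structuresheaf(F\times\Spec\,B))$, so the problem reduces to a cochain-level comparison.

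For (1), I would first use the finite cohomological dimension of $\B\Frobfixed\to\Spec\,\integerslocalp$ (Lemma \ref{lemma-Bfixfinitecohdimension}), together with the associated projection formula of \cite[A.1.5]{1402.3204}, to obtain a K\"unneth-type identification $\structuresheaf(\B\Frobfixed\times\Spec\,B)\simeq \structuresheaf(\B\Frobfixed)\otimes B$, and similarly (using that $\circle$ is a finite CW complex) $\structuresheaf(\circle\times\Spec\,B)\simeq\structuresheaf(\circle)\otimes B$. The pullback map between these is then induced by the map $\circle\to\B\Frobfixed$ of Construction \ref{mapS1toBFix}, which by Proposition \ref{propositionaffinizationunderlying} is exactly the affinization of $\circle$ over $\integerslocalp$. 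Since $\affineline{k}$ is an affine stack, mapping out of $\circle$ factors through its affinization, and the claimed equivalence follows.

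For (2), I would construct the comparison map geometrically from a canonical square-zero element. The Witt vector $\underline{\epsilon}:=(\epsilon,0,0,\dots)\in\Wittp(\dualnumbers{k})$ has Ghost coordinates $(\epsilon,\epsilon^p,\epsilon^{p^2},\dots)$, so $\Frob_p(\underline{\epsilon})$ has Ghost coordinates $(\epsilon^p,\epsilon^{p^2},\dots)$; reverse-engineering yields $\Frob_p(\underline{\epsilon})=(\epsilon^p,0,0,\dots)=0$ since $\epsilon^2=0$. This produces a map $\Spec\,\dualnumbers{k}\to\Frobkernel\simeq\Omega\,\Filcirclegr$, and hence a commutative square of graded stacks that, upon mapping to $X$, yields a natural morphism
$$\Map_k(\Filcirclegr_k,X)\longrightarrow X\times_{\Spec\,\Sym_A(\cotangent_A)}X\simeq \Spec\,\Sym_A(\cotangent_A[1]).$$
To check this is an equivalence, I again reduce to $X=\affineline{k}$: the LHS computes, via Theorem \ref{theorem-splitsquarezero} and the finite cohomological dimension of $\B\Frobkernel$ (Lemma \ref{lemma-BKerfinitecohdimension}), as $\DK(B\oplus B[-1])$; while the RHS computes as $\DK(B\times_{B[\epsilon]}B)\simeq \DK(B\oplus B[-1])$. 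The choice of $\underline{\epsilon}$ ensures that the degree $-1$ generator goes to the degree $-1$ generator, so the map is an equivalence.

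The main technical point is to be sure that the projection formulas used to identify $\structuresheaf(F\times\Spec\,B)$ with $\structuresheaf(F)\otimes B$ really apply in the present derived setting; this is where invoking the finite cohomological dimension of $\B\Frobfixed$ and $\B\Frobkernel$ (established via the analysis over $\bbQ$ and $\finitefieldp$) becomes essential. The rest is a computation in Ghost coordinates and a direct application of Theorem \ref{theorem-splitsquarezero} and Proposition \ref{propositionaffinizationunderlying}.
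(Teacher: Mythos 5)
Your proposal is correct and follows essentially the same route as the paper's own proof: reduction to $X=\affineline{k}$ via the limit presentation of derived affine schemes, the K\"unneth identification via finite cohomological dimension and the projection formula, the affinization property for part (1), and the canonical element $\underline{\epsilon}\in\Frobkernel(\dualnumbers{k})$ together with \cref{theorem-splitsquarezero} for part (2). No substantive differences.
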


\medskip

\medskip

\subsection{The action of the filtered circle on filtered loops and its fixed points}

\begin{construction}
\mylabel{filteredglobalsectionscircleaction}
Consider the filtered loops stack $p:\filteredloops X\to \Filstack$ as in \cref{filteredloopspace}. Since the action of $\Filcircle$ is compatible with the filtration, the map $p$ is $\Filcircle$-equivariant and descends to the quotients

$$
a: \filteredloops X/\Filcircle\to \Filstack/\Filcircle \simeq \B \Filcircle
$$

\noindent and we have a pullback diagram of filtered stacks
$$
\xymatrix{
\ar[d]^{p}\filteredloops X \ar[r]^-{\rho}& \filteredloops X/\Filcircle\ar[d]^{a}\\
\Filstack\ar[r]^-{e}& \B \Filcircle
}$$

\noindent where $\rho$ and $e$ are the canonical atlases. Since $a$ is an \textcolor{black}{affine map} (\cref{filteredloopsrepresentablyrelative}), we can use \cite[9.1.5.8, 9.1.5.3, 9.1.5.7]{lurie-sag} (see the \cref{globalsectionsoffilteredarefiltered}-(iii)) to deduce the base change property for the diagram

\begin{equation}
    \label{basechangeflatatlasaction}
\xymatrix{
\ar[d]^{p_\ast}\Qcoh(\filteredloops X)& \ar[l]^-{\rho^\ast} \Qcoh(\filteredloops X)^{\Filcircle-eq}\ar[d]^{a_\ast}\\
\Qcoh(\Filstack)&\ar[l]^-{e^\ast}\Qcoh( \B \Filcircle)
}\end{equation}

\noindent where $\rho^\ast$ and $e^\ast$ corresponding to the functors forgetting the action. We find that that the object $\Ofil(\filteredloops X)= p_\ast \structuresheaf_{\filteredloops X}$ of \cref{filteredloopspace} carries a canonical action of $\Filcircle$ \personal{(In fact this can be seen explicitely from the fact that $a_\ast$ is limit functor induced from the simplicial diagrams encoding the actions)}. We will sometimes write $\Ofil(\filteredloops X)$ to denote its lift as an object in $\Qcoh(\B\Filcircle)$. The result of this construction can be exhibited as an $\infty$-functor which we will suggestively\footnote{This choice of notation will become clear with the \cref{thmhkr}-(a) below.} denote as 

\begin{equation}
    \label{functorHHfiltered}
    \HHFil:\SCRings{k}\to \Qcoh(\B\Filcircle)\, \,\,\,, \,\,\, \HHFil(A):= \Ofil(\filteredloops \Spec A)
\end{equation}

\noindent As a consequence of  \cref{proprepmonoidal}-(i), the associated underlying object is an $\Einfinity$-algebra with a compatible $\Filcircleund$-action, and by \cref{proprepmonoidal}-(ii), the
associated graded is a graded mixed $\Einfinity$-algebra with a compatible $\Filcirclegr$-action. 
\end{construction}

 \medskip

\begin{construction}
\mylabel{filteredglobalsectionsfixedpoints}
Consider the filtered loops stack $p:\filteredloops X\to \Filstack$ as in \cref{filteredloopspace}. We want to understand the operation of extracting fixed points of $\Filcircle$-action on the filtered object $\Ofil(\filteredloops X)\in \Qcoh(\B \Filcircle)$. Concretely this operation is given by pushforward along the structure map $\pi:\B \Filcircle\to \Filstack$. 

We want to understand how this is compatible with taking associated graded and underlying objects. To that end, we start by considering the following commutative diagram of pullback squares

\begin{equation}
\label{keydiagramobstructionbasechange}
\xymatrix{
&(\filteredloops X)^{\mathrm{gr}}\ar[rr]^{\Tilde{0}}_{}\ar[dl]_{\rho^{\mathrm{gr}}}\ar[dd]^(.3){p^{\mathrm{gr}}}&&\ar[dl]^\rho \ar[dd]^(.3){p}\filteredloops X&&\ar[ll]_{\Tilde{1}}^{}\ar[dd]^(.3){p}(\filteredloops X)^{\mathrm{u}}\ar[dl]^{\rho^{\mathrm{u}}}\\
 (\filteredloops X)^{\mathrm{gr}}/(\Filcircle)^{\mathrm{gr}}\ar[dd]^(.3){a^{\mathrm{gr}}}\ar[rr]^(0.7){\tilde{0}_a}&&\filteredloops X/\Filcircle \ar[dd]^(.3){a}&&(\filteredloops X)^{\mathrm{u}}/(\Filcircle)^{\mathrm{u}}\ar[ll]_(0.3){\tilde{1}_a}\ar[dd]^(.3){a^{\mathrm{u}}}&\\
&\B \Gm{}\ar[dl]^{e^\mathrm{gr}}\ar[rr]^(0.3){0}&&\Filstack\ar[dl]^{e}&&\ast\ar[dl]^{e^{\mathrm{u}}}\ar[ll]_(0.3){1}\\
\B \Filcirclegr\ar@{}[urrr]_-{}\ar[d]^{\pi^{\mathrm{gr}}}\ar[rr]^{0_a} \ar@{}[uuur]_-{}&&\ar@{}[ur]^(0.3){(j)}\B \Filcircle\ar[d]^{\pi}&&\B \Filcircleund\ar@{}[uur]^>{}\ar[d]^{\pi^{\mathrm{u}}}\ar[ll]_{1_a}&\\
\B \Gm{} \ar@{}[urr]^-{}\ar[rr]^{0}&&\Filstack \ar@{}[urr]^-{}&&\ast\ar[ll]_{1}&\\
}
\end{equation}

\noindent and observe that the Beck-Chevalley transformations

\begin{equation}
    \label{beckchevalley1}
(0_a)^\ast \,\, a_\ast \,\, \structuresheaf\to (a^{\mathrm{gr}})_\ast \,\, (\tilde{0}_a)^\ast\,\, \structuresheaf
\end{equation}

\noindent and

\begin{equation}
    \label{beckchevalley1foropen}
(1_a)^\ast \,\, a_\ast \,\, \structuresheaf\to (a^{\mathrm{u}})_\ast \,\, (\tilde{1}_a)^\ast\,\, \structuresheaf
\end{equation}

\medskip

\noindent are equivalences. As in the \cref{filteredglobalsectionscircleaction}, this follows because the map $a$ is an \textcolor{black}{affine map}, and we can use \cite[9.1.5.8, 9.1.5.3, 9.1.5.7]{lurie-sag} (see the \cref{globalsectionsoffilteredarefiltered}-(iii)) to deduce the base change property against all maps.

\noindent Finally, using \cref{basechangeBfilteredcircle} we conclude that that base-change holds for $\pi$ in the context of \cref{globalsectionsoffilteredarefiltered}-(i) and \cref{globalsectionsoffilteredarefiltered}-(ii), namely:

\begin{itemize}
    \item  The combination of \eqref{beckchevalley1}
 and \eqref{beckchevalley2} implies that the Beck-Chevalley transformation
 
 \begin{equation}
     \label{beckchevalley3}
     ([\Ofil(\filteredloops X)]^{\mathrm{h}\Filcircle})^{\mathrm{gr}}\simeq 0^\ast\,\, \pi_\ast\,\, a_\ast\,\,\structuresheaf\,\,\to (\pi^{\mathrm{gr}})_\ast\,\, (0_a)^\ast\,\, a_\ast\,\, \structuresheaf\simeq [(\Ofil(\filteredloops X))^{\mathrm{gr}}]^{\mathrm{h}\Filcirclegr}
 \end{equation}

\noindent is an equivalence, or that, in order words, taking fixed points commutes with taking the associated graded.\\

\item The failure of \eqref{beckchevalley2foropen} to be an equivalence in general tells that we cannot guarantee that the natural map induced by \eqref{beckchevalley1foropen}

 \begin{equation}
     \label{beckchevalley3foropen}
     ([\Ofil(\filteredloops X)]^{\mathrm{h}\Filcircle})^{\mathrm{u}}\simeq 1^\ast\,\, \pi_\ast\,\, a_\ast\,\,\structuresheaf\,\,\to (\pi^{\mathrm{u}})_\ast\,\, (1_a)^\ast\,\, a_\ast\,\, \structuresheaf\simeq [(\Ofil(\filteredloops X))^{\mathrm{u}}]^{\mathrm{h}\Filcircleund}
 \end{equation}

\noindent is an equivalence in general, or, in other words, that taking the underlying object of the fixed points of the filtration is the fixed points of the original underlying object. Notice that via the actions of $\B\circle\simeq\B \Filcircleund$ of \cref{proprepmonoidal}-(i), and the base change \eqref{beckchevalley1foropen}, the r.h.s of \eqref{beckchevalley3foropen} recovers the definition of negative cyclic homology

$$[(\Ofil(\filteredloops X))^{\mathrm{u}}]^{\mathrm{h}\Filcircleund}\simeq [\HH( X)]^{\mathrm{h}\circle} \simeq \HCmin(X)$$.

\medskip

\noindent However, since $1_a$ is an open immersion, the \cref{globalsectionsoffilteredarefiltered}-(i) guarantees that that the Beck-Chevaley transformation \eqformula{beckchevalley3foropen} is an equivalence for objects in $\Qcoh(\B\Filcircle)^{<\infty}$, ie, the commutativity of 

\begin{equation}
    \label{okforboundedabove}
    \xymatrix{
    \Qcoh(\B\Filcircle)^{<\infty}\ar[r]^-{(-)^{\mathrm{u}}}\ar[d]^{(-)^{\mathrm{h}\Filcircle}}& \Qcoh(\B \circle)\ar[d]^{(-)^{\mathrm{h}\circle}}\\
    \Qcoh(\Filstack)\ar[r]^-{(-)^{\mathrm{u}}} &\Mod_{k}}
\end{equation}

\noindent is established.

\end{itemize}

\end{construction}

\medskip

\noindent Due to the failure of \eqref{beckchevalley3foropen} to be an equivalence in general, we introduce the following definition:

\begin{definition}
\mylabel{definition-filteredHCminus}
Consider the filtered loop space as in \cref{filteredloopspace}. We define a filtered version of negative cyclic homology as the filtered object 

\begin{equation}
\label{formula-HCminusFil}
\HCminFil(X):=  [\Ofil(\filteredloops X)]^{\mathrm{h}\Filcircle}\simeq  1^\ast\,\, \pi_\ast\,\, a_\ast\,\,\structuresheaf\,\,\in \,\,\Qcoh(\Filstack)
\end{equation}

\medskip

\noindent By the \cref{filteredglobalsectionsfixedpoints}, the underlying object of this filtration comes canonically equipped with a map to usual negative Hochschild homology

\begin{equation}
\label{comparisonmapfilteredHCminus}
\HCminFil(X)^{\mathrm{u}}\to \HCmin(X)
\end{equation}

\end{definition}

\medskip

\subsection{De Rham algebra functor}

\begin{construction}
\mylabel{DRfunctor}
Recall that any $A\in \SCRings{k}$
possesses a derived de Rham algebra $\DR(A/k)$. 
For us this is a graded mixed $\Einfinity$-algebra
constructed as follows. When $A$ is smooth
over $k$, $\DR(A/k)$ simply is the strictly commutative dg-algebra $\Sym_A(\Omega^1_{A/k})$ 
endowed with its de Rham differential. This
is a strictly commutative monoid inside the 
strict category of graded mixed complexes, and thus, by the \cref{proprepmonoidal}-(ii),
can be considered as a $\Einfinity$-algebra object
inside $\Qcoh(\B\Filcirclegr)$, the symmetric monoidal $\infty$-category of graded mixed complexes (here over $k$).

This produces an $\infty$-functor from smooth 
algebras over $k$, and thus from polynomial $k$-algebras, to graded mixed $\Einfinity$-algebras. 
By left Kan extension we get the de Rham functor

$$\DR : \SCRings{k} \longrightarrow \CAlg(\Qcoh(\B\Filcirclegr)).$$
\end{construction}

\medskip

\begin{remark}
\mylabel{upgradedDR}
We will see below that the functor $\DR$ as defined above with values in $\Einfinity$-algebras in $\Qcoh(\B\Filcirclegr)$ can be refined to take in values  "simplicial commutative  mixed graded algebras". More precisely, what this means is that we can exhibit it as a functor with valued in derived affine schemes over $\Filcirclegr$, which contains slightly more information. \end{remark}

\medskip

\begin{construction}
\mylabel{derivedderrhamcomplex}
\noindent The functor of global sections produces a symmetric
lax-monoidal $\infty$-functor
$$\Qcoh(\B\Filcirclegr) \longrightarrow \Qcoh(\B\Gm{}).$$
\noindent which formally corresponds to taking homotopy fixed points with respect to the action of the graded group stack $\Filcirclegr$. We will denote it by $(-)^{\mathsf{h}\Filcirclegr}$. The composition
$$
\xymatrix{\SCRings{k}\ar[r]^-{\DR}& \Qcoh(\B\Filcirclegr)\ar[r]& \Qcoh(\B\Gm{})}
$$

\noindent sends $A$ to the derived de Rham 
complex of $A$

$$\mathbb{L}\widehat{\DR}(A/k):= \DR(A/k)^{\mathsf{h}\Filcirclegr} \in \CAlg(\Qcoh(\B\Gm{})).$$

\noindent This is a graded $\Einfinity$-algebra whose piece of 
weight $i$ will be denoted by $\mathbb{L}\widehat{\DR}^{\,\,\geq i}(A/k)$. One can provide an explicit description of these graded pieces. Indeed, by means of the equivalence $\Qcoh(\B\Filcirclegr)\simeq \Mod_\Lambda$ of \cref{proprepmonoidal}-(ii), the functor $(-)^{\mathrm{h}\Filcirclegr}$ can be identified with the functor 

$$
\Mod_\Lambda\to \Mod_k^{\bbZ-\gr}\,\,\,\,,\,\,\,\,\rmM\mapsto \bigoplus_{i\in \bbZ} \RHom_{\Mod_\Lambda}(\integerslocalp(i), \rmM)
$$

\noindent where the graded object $\integerslocalp(i)$ is seen as a graded $\Lambda$-module with a trivial action of the element $\epsilon$. In \cite[Prop. 1.3]{1111.3209} the authors prescribe an explicit resolution of $\integerslocalp(i)$ as a $\Lambda$-module, given by $Q(i):= \underset{n\geq 0}{\bigoplus} \Lambda(i+n)[2n]$ so that the graded piece of weight $i$ in $(\rmM)^{\mathrm{h}\Filcirclegr}$ is given by $\underset{n\geq 0}{\prod}\gr^{i+n}{\rmM}[-2n]$. In the case of the de Rham complex, we obtain for $i\geq 0$ that  $\mathbb{L}\widehat{\DR}^{\geq i}(A/k)$ should be understood as the complex 
$\prod_{n\geq 0}(\wedge^{i+n}\mathbb{L}_{A/k}[i-n])\simeq \prod_{q \geq i}(\wedge^{q}\mathbb{L}_{A/k}[2i-q])  $ endowed with the total differential $d+\dderham$, 
where $d$ is the cohomological differential induced
from $A$ and $\dderham$ is the de Rham differential. See for instance \cite[\S 5]{MR3285853} or \cite[\S 1.2]{MR3090262}. For $i\leq 0$ the graded pieces of weight $i$ are all equivalent to a shifted copy of the piece of weight $0$, namely,  $\mathbb{L}\widehat{\DR}^{\,\,\geq 0}(A/k)[2i]$, thus reflecting the 2-periodic shape of the cohomology of the graded circle of \cref{groupcohomologyfilteredcircle}.
\end{construction}

\medskip

\begin{remark}
\mylabel{truncatedderhamcomplex}

\noindent When $A\in \SCRings{}$ is a smooth discrete algebra, for $i\geq 0$, the graded pieces $\mathbb{L}\widehat{\DR}^{\,\,\geq i}(A/k)$ of \cref{derivedderrhamcomplex} are given by $\Omega_A^{\geq i}[2i]$ where  $\Omega_A^{\geq i}$ is the chain complex 

$$
\cdots \to 0\to \Omega_A^i\to \Omega_A^{i+1}\to \Omega_A^{i+2}\to\cdots
$$

\noindent where $\Omega_A^i$ is in homological degree $-i$ and the maps are given by the de Rham differential $d_R$. In this case when $i\leq 0$, we have

\begin{equation}
\mylabel{negativegradedpieces}
\gr^i(\DR(A/k)^{\mathsf{h}\Filcirclegr})\simeq \, \Omega_A^{\geq 0}[2i]
\end{equation}
\end{remark}

\medskip

\subsection{Main theorem}

We are now ready to  prove the main results in this paper.

\medskip

\begin{theorem}\mylabel{thmhkr}
Let $X=\Spec\, A$ be an affine derived scheme over $k$ and $\filteredloops X$ its filtered loop space as in \cref{filteredloopspace}.  Then, the cohomology $\Einfinity$-algebra
    $\Ofil(\filteredloops X)$
    endowed with its natural $\Filcircle$-action 
    is such that:
\medskip

\begin{enumerate}[(a)]
        \item Its underlying object is naturally
        equivalent to $\HH(A/k)$, the Hochschild homology of $A$ over $k$, together with its
        natural $\circle$-action.
        \medskip
        
        \item Its associated graded is naturally 
        equivalent to $\DR(A/k)$ as a graded 
        mixed $\Einfinity$-algebra. 
        \medskip
        
        \item Being compatible with the action of the filtered circle, the filtration descends to fixed points in the sense of \cref{definition-filteredHCminus} and makes $\HCminFil(A)$ a filtered algebra, whose
        
        \begin{itemize}
            \item associated graded pieces are  the truncated complete derived de Rham complexes
    $\mathbb{L}\widehat{\DR}^{\,\,\geq p}(A/k)$;
    \item underlying object comes equipped with a canonical map to usual negative cyclic homology \eqref{comparisonmapfilteredHCminus}:$\,\,\HCminFil(X)^{\mathrm{u}}\to \HCmin(X)$

        \end{itemize}

  \item  If $A$ is discrete and smooth, then the above map \eqref{comparisonmapfilteredHCminus} is an equivalence, hence it endows $\HCmin(X)$  with an exhaustive filtration in the sense of \cite[Definition 5.1]{MR3949030}. 
\end{enumerate}
\medskip

\begin{proof}
The claim (a) is direct a re-interpretation of the base change \eqref{beckchevalley1foropen} established in the \cref{filteredglobalsectionsfixedpoints}.\\

\medskip

To prove (b) we have to 
compare the two $\infty$-functors
$$\SCRings{k} \longrightarrow \CAlg(\Qcoh(\B\Filcirclegr_k))$$
The first one given by $\DR(-/k)$, the second
given by 
$$A \mapsto \structuresheaf(\Map_k(\Filcirclegr_k,\Spec\, A))$$

\noindent For this, we first notice that if we forget 
the action of the group $\Filcirclegr$ (i.e.
the mixed structure), then these two $\infty$-functors
are equivalent and given by $A \mapsto \Sym_A(\cotangent_A[1])$. As the functor $A \mapsto 
\cotangent_A$ is obtained by extension under sifted colimits 
from polynomial rings (see \cref{notationcosimplicialandsimplicial}), this shows that the same is true for the two functors to be shown to be equivalent. In other words, we can restrict these to the category of polynomial $k$-algebras.

We then observe that for any 
polynomial $k$-algebra $A$ the space
of graded mixed structures on the graded $\Einfinity$-algebra
$\Sym_A(\Omega_{A/k}^1[1])$ is a discrete space. Indeed, this follows from the fact that the space
of graded $\Einfinity$-endomorphisms is itself discrete, because
the weight grading coincide with the cohomological grading (as in the proof of \cref{uniqueEinfinityalgebra}).
As a consequence, in order to show that 
the two above $\infty$-functors are equivalent 
it is enough to show that for a fixed
polynomial $k$-algebra $A$, the natural 
isomorphism of graded algebras
$$\DR(A/k) \simeq \mathcal{O}(\filteredloops^{\gr}(X))$$
intertwine the two graded mixed structures. We can even be more precise, the compatible graded mixed
structures on the graded $\Einfinity$-algebra 
$\DR(A/k)$ form a discrete space which embeds
into the set of $k$-linear derivations $A \longrightarrow \Omega_A^1$. 

As a result, we are reduced to proving that, by the 
above identification, the differential obtain
from the $\Filcirclegr$-action on the right hand side
$$d : \pi_0(\mathcal{O}(\filteredloops^{\gr}(X))\simeq A \longrightarrow 
\pi_1(\mathcal{O}(\filteredloops^{\gr}(X)))\simeq 
\Omega_{A/k}^1$$
is indeed equal to the standard de Rham differential.
For this, we can of course 
assume that $k=\integerslocalp$, as the general
case would be obtained by base change. But in this
case all complexes involved are torsion free; one may
then simply base change to $\bbQ$ to check the mixed structure above is the de Rham
differential. But the result is well known in characteristic zero (see 
\cite{MR2862069}).  \\

Parts (c) is a direct consequence of the base change formula \eqformula{beckchevalley3} in  Construction \cref{filteredglobalsectionsfixedpoints}, \cref{derivedderrhamcomplex} and point (b).\\

 It remains to prove (d). Recall that the map \eqref{comparisonmapfilteredHCminus} is a Beck-Chevalley transformation of \eqformula{beckchevalley3foropen}. To conclude that it is an equivalence, following the commutativity of \eqformula{okforboundedabove}, it is enough to show that when $A$ is smooth and discrete, $\HHFil(A)\in \Qcoh(\B \Filcircle)_{<\infty}$. By the \cref{tstructureQcohBfilteredcircle}, this means that as an object $\HHFil(A)\in\Fil(\Mod_k)$, each level of the filtration, $\HHFil(A)_i$ is in $\Mod_k^{<\infty}$. We show that this is indeed the case for $A$ smooth and discrete. By (a) we know that the underlying object of $\HHFil(A)$ is $\HH(A)$. By (b), in the smooth discrete case the associated graded is $\bigoplus_{i\geq 0} \Omega^i_A [i]$ where the $\Omega^i_A$ are concentrated in a single degree, zero. Moreover, if $A$ is smooth of finite dimension $d$, by the classical HKR theorem \cite{MR0142598} we have an identification of abelian groups

$$
 \pi_i(\HH(A)) \simeq  
 \begin{cases}
    \Omega^i_A, & \text{for } 0\leq i\leq d\\
    0, & \text{ for } i<0 \text{ and } i> d 
    \end{cases}
$$

Therefore,  we have  

$$
\gr^i(\HHFil(A))\simeq 0 \text{ for } i<0 \text{ and } i> d 
$$

Using the cofiber sequences

$$
\xymatrix{
\HHFil(A)_{i+1}\ar[r]\ar[d]& \HHFil(A)_{i}\ar[d]\\
0\ar[r]& \gr^ i\, \HHFil(A)
}
$$

\noindent  we have

$$
\HHFil(A)_0\, \simeq \HH(A) \,\,\,\, \text{ , }\,\,\, \lim\, \HHFil(A) \, \simeq \HHFil(A)_{d+1}\simeq 0
$$

\medskip

\noindent and $\HHFil(A)_{i}\in \Mod_k^{\leq d}$. This concludes the proof.

\end{proof}
\end{theorem}

\medskip

\subsection{Comparison with the filtration of Antieau }
\mylabel{section-comparison-Antieu}

\noindent In this section we compare the filtered object $\HCminFil(A)$ (\cref{definition-filteredHCminus} and \cref{thmhkr}) with the filtered object $\HCminFilAntieu(A)$ constructed by Antieau in \cite[Thm 1.1]{1808.05246} which we will revise below. Our comparison neglects the compatibility with the algebra structures and focuses only on  underlying filtered complexes. 

\medskip
\noindent The strategy consists of two main steps:
\medskip

\begin{enumerate}
    \item We compare the two filtrations on smooth polynomial algebras (\cref{comparisonBenresult1})
    \medskip
    
    \item We use the fact that $\SCRings{k}$ is the sifted completion of the discrete category of polynomial algebras $\Nerve(\Poly{k})$ (see \cite[25.1.1.5]{lurie-sag} and \cite[5.5.9.3]{lurie-htt}) to Kan extend the comparison to all simplicial commutative algebras (\cref{finalcomparisonBen}).

\medskip

\end{enumerate}

\medskip

\noindent Let us start by setting some terminology and collecting some facts.

\medskip

\begin{definition}
\mylabel{completemodulesS1action} We say that an object $\rmE\in \Qcoh(\B\Filcircle)$ is \emph{complete} if its underlying filtered module obtained by pullback along the canonical atlas $\Filstack\to \B\Filcircle$
$$
\Qcoh(\B\Filcircle)\to \Qcoh(\Filstack)\simeq \Fil(\Mod_k)
$$
is complete (in the sense of \cref{definition-completefiltration}). We denote by $\widehat{\Qcoh(\B\Filcircle)}$ the full subcategory of complete filtered modules with an $\Filcircle$-action. Moreover, as in \cref{remark-conditionforcomplete}, we will denote by $\Qcoh(\B\Filcircle)^{\dagger}$ the full subcategory of $\widehat{\Qcoh(\B\Filcircle)}$  spanned by those representations whose underlying filtered object is in $\Fildescend(\Mod_k)$. In particular, $\structuresheaf\in \Qcoh(\B\Filcircle)^{\dagger}$ by definition of the tensor unit in filtered objects (see \cref{construction-Filteredobjects}).
\end{definition}

\medskip

\begin{remark}
\mylabel{remark-smoothcasefiltrationisPostnikov}
The proof of \cref{thmhkr}-(d) shows that when $A$ is smooth discrete, we have a canonical equivalence  in $\Fil(\Mod_k)$, $\HHFil(A)\simeq \tau_\geq \HH(A)$, where $\tau_\geq $ is the Whitehead tower of \cref{construction-whiteheadtowerfunctor} for the standard $t$-structure on $\Mod_k$.  Indeed, this follows from the discussion in loc.cit and the characterization of the essential image of $\tau_{\geq}$.
In particular, since the $t$-structure on $\Mod_k$ is left complete \cite[7.1.1.13]{lurie-ha}, by the \cref{remark-conditionforcomplete}, the filtration $\HHFil(A)$ is complete and  $\HHFil(A)\in \Qcoh(\B\Filcircle)^{\dagger}$.
\end{remark}

\medskip

\begin{proposition}
\mylabel{prop-HHcommutessiftedcolimits}
The $\infty$-functor $\HHFil:\SCRings{k}\to \Qcoh(\B\Filcircle)$ 
preserves sifted colimits. In particular it is determined by its restriction 
\begin{equation}
\label{formula-HHfilrestrictedpolynomial}
\Nerve(\Poly{k})\subseteq\SCRings{k}\to \Qcoh(\B\Filcircle)
\end{equation}
\begin{proof}
Since the functor extracting the underlying filtered module $\Qcoh(\B\Filcircle)\to \Qcoh(\Filstack)$  preserves all colimits and is conservative, to conclude the proof of the proposition it is enough to look at the composition
$$
\xymatrix{
\SCRings{k}\ar[r]^{\Spec}_{\sim}& \dAff^{\op}\ar[rr]^-{\Map(\Filcircle,-)}&&(\dSt^{\mathrm{rel-aff}}_{/\Filstack_k})^{\op}\ar[r]^{\structuresheaf^{\Fil}}& \Qcoh(\Filstack)
}$$

\noindent where $(\dSt^{\mathrm{rel-aff}}_{/\Filstack_k})^{\op}$ denotes the category of derived stacks over $\Filstack$ that are relatively affine (\cref{filteredloopsrepresentablyrelative}). \\
The functor $\Spec$, being an equivalence, sends sifted colimits to sifted limits of affine schemes.  By definition (and Yoneda), the functor $\Map(\Filcircle,-)$ commutes with all limits. Finally, since the derived loop spaces are relatively affine over $\Filstack$, by base-change along the atlas $\affineline{k}\to \Filstack$ one deduces that the functor of (relative) global sections also commutes with all limits. The last claim in the proposition follows the description of $\SCRings{k}$ as a completion of $\Nerve(\Poly{k})$ under sifted colimits.
\end{proof}
\end{proposition}

\medskip

\begin{remark}
\mylabel{Fildescendstablecolimits}
\noindent The inclusion $\Fildescend(\Mod_k)\subseteq \Fil(\Mod_k)$ is stable under all colimits. Indeed, since colimits in $\Fil(\Mod_k)$ are computed levelwise, this amounts to the fact that the full  subcategory $\Mod_k^{\geq i}\subseteq\Mod_k$ of $i$-connected objects is stable under all colimits (see \cite[ 1.2.1.6]{lurie-ha}). This implies that the inclusion $\Fildescend(\Mod_k)\subseteq \Filcomplete(\Mod_k)$ of \cref{remark-conditionforcomplete} also commutes with colimits, where on the r.h.s colimits are computed using the completion functor.

\noindent The same argument shows that both inclusions  $\Qcoh(\B\Filcircle)^{\dagger}\subseteq \widehat{\Qcoh(\B\Filcircle)}$ and $\Qcoh(\B\Filcircle)^{\dagger}\subseteq \Qcoh(\B\Filcircle)$ commute with all colimits.
\end{remark}

\medskip

\begin{proposition}
\mylabel{prop-HHfilterediscomplete}
The $\infty$-functor $\HHFil:\SCRings{k}\to \Qcoh(\B\Filcircle)$ has image in the subcategory $\Qcoh(\B\Filcircle)^{\dagger}$. Moreover, the factorization

\begin{equation}
\label{functor-HHfil-complete}
    \HHFil: \SCRings{k}\to \Qcoh(\B \Filcircle)^{\dagger}
\end{equation}

\noindent is compatible with sifted colimits.

\begin{proof}
This is now a consequence of the fact $\SCRings{k}$ is the sifted completion of $\Nerve(\Poly{k})$, the \cref{prop-HHcommutessiftedcolimits}, the fact that the restriction of $\HHFil$  along $\Nerve(\Poly{k})\subseteq \SCRings{k}$ factors through $\Qcoh(\B \Filcircle)^{\dagger}$ (\cref{remark-smoothcasefiltrationisPostnikov}) and finally the fact the inclusion $\Qcoh(\B\Filcircle)^{\dagger}\subseteq \Qcoh(\B\Filcircle)$ commute with all colimits (\cref{Fildescendstablecolimits}).

\end{proof}
\end{proposition}

\medskip

\begin{remark}
\mylabel{Whiteheadtowerrepcirclecommute}
We denote by $\tau^{\Fil}_{\geq}$ the Whitehead tower functor for the left complete $t$-structure on $\Qcoh(\B\Filcircle) $ of \cref{tstructureQcohBfilteredcircle}. The argument used in loc. cit. also explains the existence of  left complete $t$-structures both on $\Qcoh(\B \Filcircleund)$ and $\Qcoh(\B \Filcirclegr)$:

\medskip

\begin{enumerate}

\item Since the map $1_a:\B\Filcircleund\to \B\Filcircle$ in \eqformula{keydiagramobstructionbasechange} is an open immersion, the pullback $1_a^\ast:\Qcoh(\B\Filcircle)\to \Qcoh(\B \Filcircleund) $ is $t$-exact. The characterization of the two $t$-structures via the atlas guarantees that under the equivalence of \cref{proprepmonoidal}-(i)
$$\Qcoh(\B \Filcircleund)\simeq \Qcoh(\B\circle)\simeq \Fun(\B\circle, \Mod_k)$$

\noindent  the $t$-structure on $\Qcoh(\B \Filcircleund)$ becomes the $t$-structure on $\Fun(\B\circle, \Mod_k)$ induced from the \emph{standard} t-structure on $\Mod_k$ via the forgetful functor $\Fun(\B\circle, \Mod_k)\to \Mod_k$. Therefore, the associated Whitehead tower functors of \cref{construction-whiteheadtowerfunctor} commute:

\begin{equation}
    \label{whiteheadtowercommute}
    \xymatrix{
    \Qcoh(\B\Filcircle)\ar[d]_{\tau^{\Fil}_{\geq}}\ar[r]^{1_a^\ast}& \Fun(\B\circle, \Mod_k)\ar[d]_{\tau_{\geq}^{\mathsf{std}}}\\
    \Fil(\Qcoh(\B\Filcircle))\ar[r]^-{1_a^\ast\circ -}&\Fil(\Fun(\B\circle, \Mod_k))
    }
\end{equation}
\medskip

\noindent Notice that forgetting the circle actions and under the equivalence of \cref{prop:gradingsandstacks}, the discussion in \cref{tstructureQcohBfilteredcircle} shows that the $t$-exactness of $1_a^\ast$ amounts to the $t$-exactness of the $\colim$ functor
$
\colim: \Fun(\Nerve(\bbZ^\op), \Mod_k)\to \Mod_k
$ where on the l.h.s we have the levelwise standard $t$-structure of \cite[Proposition 1.4.3.6]{lurie-ha}. In this form, the statement amounts to the fact that homology groups commute with filtered colimits (see \cite[Proposition 1.3.5.21]{lurie-ha}).

\medskip

\vspace{1cm}

\item The same argument of \cref{tstructureQcohBfilteredcircle} using the atlas, shows that under the equivalence $\Qcoh(\B\Gm{})\simeq \Mod_k^{\bbZ-\gr}$ of \cref{prop:gradingsandstacks}, the $t$-structure on $\Qcoh(\B\Filcirclegr)$ is the levelwise $t$-structure. In particular, since the pullback along the map $0_a:\B\Filcirclegr\to \B \Filcircle$ of \eqformula{keydiagramobstructionbasechange} is right $t$-exact (\cref{left-complete-t-structure}-b)), implying that the canonical maps

\begin{equation}
    \label{compatibilitywhiteheadgraded}
    \xymatrix{
  \tau^{\gr}_{\geq i}(\, (0_a)^*\circ \tau^{\Fil}_{\geq i}\,)\ar[r]&   (\,(0_a)^*\circ \tau^{\Fil}_{\geq i}\,)
    }
\end{equation}

\noindent are equivalences.


\end{enumerate}
\end{remark}

\medskip

\begin{remark}
\mylabel{postnikovbifilteredcircle}
If $\rmE\in \Qcoh(\B\Filcircle)^{\dagger}$ then for each $n\in \bbZ$ we have $\tau_{\geq n}^{\Fil}\rmE \in \Qcoh(\B\Filcircle)^{\dagger}$. Indeed, as explained in the \cref{tstructureQcohBfilteredcircle}, the underlying filtered object of $\tau_{\geq n}^{\Fil}\rmE$ is the sequence

$$
\cdots \to \tau_{\geq n}\rmE_{3} \to \tau_{\geq n}\rmE_{2}\to \tau_{\geq n}\rmE_{1}\to \tau_{\geq n}\rmE_{0}\to \cdots
$$

\noindent where $\tau_{\geq n}$ is the truncation in $\Mod_k$. In particular, the fact that $\rmE_i\in \Mod_k^{\geq i}$ implies, as explained in the \cref{remark-conditionforcomplete}, that $\tau_{\geq n}\rmE_i\in \Mod_k^{\geq i}$ so that $\tau_{\geq n}^{\Fil}\rmE \in \Qcoh(\B\Filcircle)^{\dagger}$. In particular, since the Whitehead tower provides a complete filtered object (see the conclusion of \cref{remark-conditionforcomplete}) we find a commutative diagram

$$
\xymatrix{
\Qcoh(\B\Filcircle)^{\dagger}\ar@{^{(}->}[r]\ar@{-->}[d]& \Qcoh(\B\Filcircle)\ar[d]^{\tau_{\geq }^{\Fil}}\\
\Filcomplete(\Qcoh(\B\Filcircle)^{\dagger})\ar@{^{(}->}[r]&\Filcomplete (\Qcoh(\B\Filcircle))
}
$$

\end{remark}

\medskip

\begin{remark}
\mylabel{remark-fixedpointspreservescomplete} The construction of homotopy fixed points $(-)^{\mathrm{h}\Filcircle}$ preserves complete filtrations. Indeed, taking fixed points corresponds to the pushforward functor along the projection $\B\Filcircle\to \Filstack$:
$$(-)^{\mathrm{h}\Filcircle}:\Qcoh(\B\Filcircle)\to \Qcoh(\Filstack)\simeq \Fil(\Mod_k)$$
By construction this functor preserves limits and therefore by the \cref{completemodulesS1action}, it preserves complete filtrations. In particular, we have a commutative diagram

$$
\xymatrix{
\widehat{\Qcoh(\B\Filcircle)}\ar[rr]^-{(-)^{\mathrm{h}\Filcircle}}\ar@{^{(}->}[d]&& \Filcomplete(\Mod_k)\ar@{^{(}->}[d]\\
\Qcoh(\B\Filcircle)\ar[rr]^-{(-)^{\mathrm{h}\Filcircle}}&& \Fil(\Mod_k)
}
$$

\end{remark}

\medskip

\vspace{1cm}

We finally have all the ingredients to construct the commutative diagram that will allow us to compare our construction to that of Antieau \cite{1808.05246} in the smooth polynomial case:

\begin{construction}
\mylabel{comparisonBenpoly}
Notice that restriction to polynomial algebras
$$
\HHFil:\Nerve(\Poly{k})\subseteq \SCRings{k}\to \Qcoh(\B\Filcircle)^{\dagger}
$$
 takes values in $\Qcoh(\B\Filcircle)^{\dagger, <\infty}$ (see the proof of \cref{thmhkr}-(d)). Therefore, the combination of \cref{Whiteheadtowerrepcirclecommute}-(i), \cref{postnikovbifilteredcircle}, \cref{remark-fixedpointspreservescomplete} and the commutativity of \eqformula{okforboundedabove} allows us to establish the commutativity of 
 
 \begin{equation}
     \label{comparisonBensmoothdiagram1}
     \xymatrix{
     \Nerve(\Poly{k})\ar[d]^{\HHFil}\ar[drr]^{\HH}&&\\
     \Qcoh(\B\Filcircle)^{\dagger, <\infty}\ar[rr]^{(-)^{\mathrm{u}}}\ar[d]^{\tau_{\geq }^{\Fil}}&& \Qcoh(\B\circle)^{<\infty}\ar[d]^{\tau_{\geq}^{\mathrm{std}}}\\
     \Filcomplete[\,\,\Qcoh(\B\Filcircle)^{\dagger, <\infty}\,\,]\ar[d]^{(-)^{\mathrm{h}\Filcircle}_{lvl}}\ar[rr]^-{(-)^{\mathrm{u}}_{lvl}}&& \Filcomplete[\,\,\Qcoh(\B \circle)\,\,]\ar[d]^{(-)^{\mathrm{h}\circle}_{lvl}}\\
     \Filcomplete[\,\,\Filcomplete(\Mod_k)\,\,]\ar[rr]^{(-)^{\mathrm{u}}_{lvl}}&& \Filcomplete[\,\,\Mod_k\,\,]
     }
\end{equation}

\noindent Here the functor $(-)^{\mathrm{h}\circle}$ applied levelwise preserves complete filtrations because it is a right adjoint and therefore commutes with all limits.

\end{construction}

\medskip

\begin{notation}
\mylabel{Bennotation}
As in \cite[\S 3]{1808.05246} we denote the composition $(-)^{\mathrm{h}\circle}_{lvl}\,\circ \,\tau_{\geq}^{\mathrm{std}}\,\circ\, \HH$ by $\mathrm{F}_{\mathrm{HKR}}^{\bullet}\HCminus{-}$.
\end{notation}

\medskip

\begin{notation}
\mylabel{notationfilteredobjectsconstantfromlevel0} We shall denote by $\Fil^{\mathrm{const\geq 0}}(\C)$ the full subcategory of $\Fil(\C)$ spanned by those filtered objects $\rmE$ such that the maps $\rmE_0\to \rmE_{-1}\to \rmE_{-2}\to \cdots$ are equivalences. In particular, if $\rmE\in \Fil^{\mathrm{const\geq 0}}(\C)$, its colimit is $\rmE_0$. Moreover, for any functor $F:\C\to \D$, by functoriality $\Fil^{\mathrm{const\geq 0}}(\C)$ is sent to $\Fil^{\mathrm{const\geq 0}}(\D)$ and is compatible with the extraction of underlying objects.
\end{notation}

\medskip

\begin{remark}
\mylabel{remarkcolimitpostnikovrecoveroriginal}

\noindent We can complete the diagram \eqformula{comparisonBensmoothdiagram1} to the left. As explained in the \cref{remark-smoothcasefiltrationisPostnikov}, when $A$ is a polynomial algebra, as filtered objects we have  $\HHFil(A)\simeq \tau_\geq \, \HH(A)$. In particular, the computations in \cref{thmhkr}-(d) and the definition of the t-structure in $\Qcoh(\B\Filcircle)$ show that the restriction $\HHFil:\Nerve(\Poly{k})\to \Qcoh(\B\Filcircle)^{\dagger,<\infty}$ factors through the full subcategory $\Qcoh(\B\Filcircle)^{\dagger,<\infty}_{\geq 0}$. 

\noindent Notice that if $\rmE\in \Qcoh(\B\Filcircle)^{\dagger,<\infty}_{\geq 0}$ then the associated Whitehead tower $\tau^{\Fil}_{\geq}\, \rmE$ is a $\bbZ^{\op}$-diagram which is constant starting from level $0$. In particular, it follows that

$$\colim_{i\in \bbZ^{\op}}\, (\tau^\Fil_{\geq i}\, \rmE)= \rmE$$

\noindent In other words, we have commutativity for the diagram

 \begin{equation}
     \label{comparisonBensmoothdiagramleft1}
     \xymatrix{
     &\Nerve(\Poly{k})\ar[d]^{\HHFil}\\
     &\Qcoh(\B\Filcircle)^{\dagger, <\infty}_{\geq 0}\ar@{=}[dl]\ar[d]^{\tau_{\geq }^{\Fil}} \\
    \Qcoh(\B\Filcircle)^{\dagger, <\infty}_{\geq 0}\,\, &\Filcomplete^{\mathrm{const}\geq 0}[\,\,\Qcoh(\B\Filcircle)^{\dagger, <\infty}_{\geq 0}\,\,]\ar[l]_-{\colim_{}}
     }
\end{equation}

\noindent where the colimit is taken with respect to the Whitehead filtration.

\end{remark}

\medskip

\begin{remark}
\mylabel{colimitpostnikovfixedpoints}
We observe that we can extend the commutative diagram \eqformula{comparisonBensmoothdiagramleft1}  to the extraction of fixed points for the circle action. More precisely, the \cref{notationfilteredobjectsconstantfromlevel0} guarantees the commutativity of the diagram

\begin{equation}
     \label{comparisonBensmoothdiagramleft2}
     \xymatrix{
     \Qcoh(\B\Filcircle)^{\dagger, <\infty}_{\geq 0}\ar[d]^{(-)^{\mathrm{h}\Filcircle}}&\ar[l]_-{\colim_{}}\Filcomplete^{\mathrm{const}\geq 0}[\,\,\Qcoh(\B\Filcircle)^{\dagger, <\infty}_{\geq 0}\,\,]\ar[d]^{(-)^{\mathrm{h}\Filcircle}_{levelwise}}\\
     \Filcomplete(\Mod_k)&\ar[l]_-{\colim_{}}\Filcomplete^{\mathrm{const}\geq 0}[\,\,\Filcomplete(\Mod_k)\,\,]
     }
\end{equation}

\noindent where the bottom map is the colimit with respect to the exterior filtration on the bottom right corner. 

\end{remark}

\medskip

We now recall the construction of the Beilinson $t$-structure on filtered objects:

\begin{theorem}(Beilison $t$-structure \cite[Thm 5.4]{MR3949030})
\mylabel{Beilinsont-structure}
The category of filtered objects $\Fil(\Mod_k)$ admits a t-structure $t_B$ where $\Fil(\Mod_k)_{\geq_B 0}$ is the full subcategory spanned by those filtered objects $\rmE$ such that $\gr^n\rmE\in \Mod_k^{\geq -n}$. If $\tau^B_{\geq n}\rmE$ denotes the truncation, its associated graded pieces are given by 
\begin{equation}
\label{gradedpiecesbeilisontruncation}
    \gr^i\tau^B_{\geq n}\rmE\simeq \tau_{\geq n-i}\, \gr^i(\rmE)
\end{equation}
\noindent where on the r.h.s we have the standard t-structure on $\Mod_k$. The heart of this t-structure $\Fil(\Mod_k)^{\heartsuit_B}$ is equivalent to the abelian category of chain complexes via the construction sending a filtered complex $\rmE\in \Fil(\Mod_k)^{\heartsuit_B}$ to the chain complex

\begin{equation}
\label{formulapibeilison}
[\cdots\to \underbrace{\mathrm{H}_n(\gr^0\rmE)}_{0}\to \underbrace{\mathrm{H}_{n-1}(\gr^1\rmE)}_{-1}\to \underbrace{\mathrm{H}_{n-2}(\gr^2\rmE)}_{-2}\to \cdots]
\end{equation}

\noindent and the maps are the boundary maps of the cofiber sequences

$$
\gr^{i+1}(\rmE)= \rmE_{i+1}/ \rmE_{i+2} \to \rmE_i/ \rmE_{i+2} \to \rmE_i/ \rmE_{i+1}=\gr^i(\rmE)
$$

\end{theorem}

\medskip

\begin{notation}
\mylabel{whiteheadBeilisontower}
We denote by $\tau^B_{\geq}: \Fil(\Mod_k)\to \Fil(\Fil(\Mod_k)$ the Whitehead tower for the Beilinson $t$-structure.

Notice that since the Beilinson $t$-structure is not left complete (see \cite[5.3]{MR3949030}), the Beilison-Whitehead tower is not automatically complete. However, by \cite[Lemma 3.2]{1808.05246}, if $\rmE\in \Filcomplete$ then each truncation $\tau^B_{\geq n}\, \rmE$ is also complete and since complete filtered objects are stable under limits ( \cref{remark-completemodulesleftorthogonal}) so is the filtered object $\underset{n}{\lim}\,\tau^B_{\geq n}\, \rmE$.

Therefore, we have a well-defined functor $\tau^B_{\geq}:\Filcomplete\to \Fil(\Filcomplete(\Mod_k))$.

Notice also that if $\rmE$ is in (\cref{notationfilteredobjectsconstantfromlevel0}) $\Filcomplete^{\mathrm{const}\geq 0}(\Mod_k)$, then the formula \eqformula{gradedpiecesbeilisontruncation} implies that every truncation $\tau^B_{\geq n}\rmE$ is also in $\Filcomplete^{\mathrm{const}\geq 0}(\Mod_k)$.
\end{notation}

\begin{remark}
\mylabel{doublespeed}
In \cite{1808.05246} Antieau computes the Whitehead tower of $\mathrm{F}_{\mathrm{HKR}}^{\bullet}\HCminus{A}$ for $A$ smooth, (see \cref{Bennotation})  with respect to the Beilinson $t$-structure, showing that under the equivalence between $\Fil(\Mod_k)^{\heartsuit_B}$ and chain complexes (see \cite[Thm 5.4-(3)]{MR3949030} for this equivalence) given by the formula \eqformula{formulapibeilison}, for $n\geq 0$, the filtered object $\pi_{2n}^B(\mathrm{F}_{\mathrm{HKR}}^{\bullet}\HCminus{A})$ corresponds to the chain complex $\Omega_A^{\geq n}$ of the \cref{truncatedderhamcomplex}. Moreover, $\pi_{n}^B(\mathrm{F}_{\mathrm{HKR}}^{\bullet}\HCminus{A})$ is zero for positive odd $n$'s. For $n<0$ one can use the formula \eqformula{formulapibeilison} to show that  $\pi_{n}^B(\mathrm{F}_{\mathrm{HKR}}^{\bullet}\HCminus{A})=\pi_{0}^B(\mathrm{F}_{\mathrm{HKR}}^{\bullet}\HCminus{A})$ if $n$ is even, and zero if $n$ is odd (see also \cite[Example 2.4]{1808.05246} or the projection formula for invariants on the trivial circle action discussed in the \cref{trivialrepresentationgradedandunderlying} below for $\B\circle\simeq \B\Filcircleund$.)

\noindent For this reason one is allowed to run the Whitehead tower for the Beilison $t$-structure at a double-speed, since $\tau^B_{\geq 2n+2}(\mathrm{F}_{\mathrm{HKR}}^{\bullet}\HCminus{A})\to \tau^B_{\geq 2n+1}(\mathrm{F}_{\mathrm{HKR}}^{\bullet}\HCminus{A})$ are equivalences $\forall n$.
\end{remark}

\medskip
Although the Beilinson-Whitehead tower is not complete for a general filtered object (see \cref{whiteheadBeilisontower}), it is in the case that concerns us:

\medskip

\begin{proposition}
\mylabel{BeilisontowerofBeniscomplete}
Let $A\in \Nerve(\Poly{k})$. Then, the double-speed Beilinson-Whitehead tower of the filtered object $\mathrm{F}_{\mathrm{HKR}}^{\bullet}\HCminus{A}$,

\begin{equation}
\label{twist3}
[\cdots \to \underbrace{\tau^{B}_{\geq 2n}\mathrm{F}^\bullet_{\mathrm{HKR}}\HCminus{A}}_{n}\to \cdots \to \underbrace{\tau^{B}_{\geq 2}\mathrm{F}^\bullet_{\mathrm{HKR}}\HCminus{A}}_{1}\to \underbrace{\tau^{B}_{\geq 0}\mathrm{F}^\bullet_{\mathrm{HKR}}\HCminus{A}}_{0}\to \cdots]
\end{equation}

\noindent is complete.
\begin{proof}
The discussion in the \cref{doublespeed} shows that the objects $\pi_{n}^B(\mathrm{F}_{\mathrm{HKR}}^{\bullet}\HCminus{A})$ are non-zero only for $n=0,2,4,..., 2d$, where $d=dim\, A$. This implies that the double-speed Beilison-Whitehead tower \eqformula{twist3} is constant for $n\geq d+1$. Therefore its limit coincides with the filtered object $\tau^{B}_{\geq 2d+2}\mathrm{F}^\bullet_{\mathrm{HKR}}\HCminus{A}$. We argue that this object is zero in $\Fil(\Mod)$.
Since $\mathrm{F}^\bullet_{\mathrm{HKR}}\HCminus{A}$ is complete (see \cref{Bennotation}), by the discussion in the \cref{whiteheadBeilisontower}, $\tau^{B}_{\geq 2d+2}\mathrm{F}^\bullet_{\mathrm{HKR}}\HCminus{A}$ is complete. Therefore by the discussion in \cref{remark-completemodulesleftorthogonal}, to prove that it is zero it is enough to show all its graded pieces vanish.


\noindent Using \eqformula{gradedpiecesbeilisontruncation} we have

$$\gr^i\tau^{B}_{\geq 2d+2}\mathrm{F}^\bullet_{\mathrm{HKR}}\HCminus{A}\simeq \tau_{\geq 2d+2-i}\gr^i(\mathrm{F}^\bullet_{\mathrm{HKR}}\HCminus{A}) $$

\noindent But the construction of $\mathrm{F}^\bullet_{\mathrm{HKR}}\HCminus{A}$ (see \cref{Bennotation} and \cite[Example 2.4]{1808.05246} ) shows that

$$
\gr^i(\mathrm{F}^\bullet_{\mathrm{HKR}}\HCminus{A})= \begin{cases}
    (\Omega^i_A[i])^{\mathrm{h}\circle}\simeq \Omega^i_A[i]\otimes \cochains(\B\circle,k)\simeq  \underset{j\geq 0}{\bigoplus}\Omega^i_A[i-2j] , & \text{if $0\leq i\leq d$}.\\
    0\,, & \text{otherwise}
  \end{cases} 
$$

\noindent Therefore, all graded pieces are automatically zero outside the range $0\leq i\leq d$. Inside the range, we find $2d+2-i\geq i+2> i$ so that the truncations also vanish. This concludes the proof.

\end{proof}
\end{proposition}

\medskip

\noindent The following is our first comparison result: the smooth polynomial case.

\begin{proposition}
\mylabel{comparisonBenresult1}
\noindent Consider the commutative diagram \eqformula{comparisonBensmoothdiagram1}:

\begin{equation}
\label{compatiblewithBeilinsontower}
    \xymatrix{
    \Nerve(\Poly{k})\ar[d]_{(\tau^{\Fil}_{\geq}\,\HHFil)^{\mathrm{h}\Filcircle}_{lvl}}\ar[drr]^{\mathrm{F}^\bullet_{\mathrm{HKR}}\HCminus{-}}&&\\
    \Filcomplete(\Filcomplete(\Mod_k))\ar[rr]^{\Fil(\colim)}&& \Filcomplete(\Mod_k)
    }
\end{equation}

\noindent Then the universal property of the double-speed Beilinson-Whitehead tower $\tau_{\geq \, 2\ast}^{B}: \Fil(\Mod_k)\to \Fil(\Fil(\Mod_k))$ renders the commutativity of 

\begin{equation}
\label{compatiblewithBeilinsontower2}
    \xymatrix{
    \Nerve(\Poly{k})\ar[d]_{(\tau^{\Fil}_{\geq}\,\HHFil)^{\mathrm{h}\Filcircle}_{lvl}}\ar[drr]^{\mathrm{F}^\bullet_{\mathrm{HKR}}\HCminus{-}}&&\\
    \Filcomplete(\Filcomplete(\Mod_k))\ar[d]^{\sim}_{\mathrm{twist}}&& \Filcomplete(\Mod_k)\ar[dll]_{\tau_{\geq2\ast-}^{B}}\\
    \Filcomplete(\Filcomplete(\Mod_k))&&
    }
\end{equation}

\noindent where the equivalence $\Fil(\Fil)\simeq \Fil(\Fil)$ is the twist interchanging the order of the filtrations.
\begin{proof}
\noindent By the universal property of the Whitehead tower, it is enough to show that for each polynomial algebra $A$, the composition 

$$
\mathrm{twist}\, \circ \, (\tau^{\Fil}_{\geq}\,\HHFil(A))^{\mathrm{h}\Filcircle}_{lvl}
$$

\noindent has the properties described in \cref{construction-whiteheadtowerfunctor} in $\Fil(\Fil(\Mod_k))$. The bifiltered object $(\tau^{\Fil}_{\geq}\,\HHFil(A))^{\mathrm{h}\Filcircle}_{lvl}$  is given by the sequence of filtered objects

\begin{equation}
\label{twist1}
[\cdots \to (\tau^{\Fil}_{\geq i}\,\HHFil(A) )^{\mathrm{h}\Filcircle}_{}\to \cdots \to (\tau^{\Fil}_{\geq 1 }\,\HHFil(A) )^{\mathrm{h}\Filcircle}_{}\to (\tau^{\Fil}_{\geq 0 }\,\HHFil(A) )^{\mathrm{h}\Filcircle}_{} = \cdots]
\end{equation}

\noindent To describe its image under the twist operation, let us denote by $\rmF_n$ the filtered object obtained by extracting the level $n$ in \eqformula{twist1}:

$$
\rmF_n:=[ \cdots \to [(\tau^{\Fil}_{\geq i}\,\HHFil(A) )^{\mathrm{h}\Filcircle}_{}]_n\to \cdots \to [(\tau^{\Fil}_{\geq 1 }\,\HHFil(A) )^{\mathrm{h}\Filcircle}_{}]_n\to [(\tau^{\Fil}_{\geq 0 }\,\HHFil(A) )^{\mathrm{h}\Filcircle}_{}]_n = \cdots]
$$

\noindent Then, the construction $\mathrm{twist}\, \circ \, (\tau^{\Fil}_{\geq}\,\HHFil(A))^{\mathrm{h}\Filcircle}_{lvl}$ is given by the bifiltered object $\rmF$

\begin{equation}
\label{twist2}
\rmF:=[\cdots \to \rmF_{n+1}\to \rmF_{n}\cdots \to \rmF_{1}\to \rmF_{0} \to \rmF_{-1}  \to \cdots]
\end{equation}

\medskip

\noindent Our goal is to show that  \eqformula{twist2} is equivalent to the double-speed Beilinson-Whitehead tower \eqformula{twist3}. We start with the computation of the graded pieces $\gr^n(\rmF)$. This is a filtered object and unfolding the construction of $\rmF$ we find that its $i$-th level is given by

$$
\gr^n(\rmF)_i\simeq \gr^n[(\tau^{\Fil}_{\geq i }\,\HHFil(A) )^{\mathrm{h}\Filcircle}]
$$

\noindent Furthermore, the base-change formula of  \eqformula{beckchevalley2} tells us that

$$
\gr[(\tau^{\Fil}_{\geq i }\,\HHFil(A) )^{\mathrm{h}\Filcircle}]\simeq [\gr(\tau^{\Fil}_{\geq i }\,\HHFil(A) )]^{\mathrm{h}\Filcirclegr}
$$

\noindent The compatibility with truncations of \eqformula{compatibilitywhiteheadgraded} in \cref{Whiteheadtowerrepcirclecommute}-(ii), tells us that 

$$
 [\gr(\tau^{\Fil}_{\geq i }\,\HHFil(A) )]^{\mathrm{h}\Filcirclegr}\simeq   [\tau_{\geq i}^{\gr}\,\gr(\tau^{\Fil}_{\geq i }\,\HHFil(A) )]^{\mathrm{h}\Filcirclegr}
$$

\medskip

\noindent and one now observes that the canonical map $\tau_{\geq i}^{\Fil}\HHFil(A)\to \HHFil(A)$ induces an equivalence

$$
\tau_{\geq i}^{\gr}\,\left(\gr(\tau^{\Fil}_{\geq i }\,\HHFil(A) \right)\simeq \tau_{\geq i}^{\gr}\,\left( \gr (\HHFil(A)\right)
$$

\medskip

\noindent Indeed, using the \cref{thmhkr}-(b) and the \cref{remark-smoothcasefiltrationisPostnikov} for the smooth polynomial case, this map reads as

$$
\tau_{\geq i}^{\gr}\,\left( \underset{j\geq i}{\bigoplus} \Omega^j_A[j](j)\right)\, \to \tau_{\geq i}^{\gr}  \left(\underset{j\geq 0}{\bigoplus} \Omega^j_A[j](j)\right)
$$

\noindent which we can check to be an equivalence using the definition of $\tau_{\geq i}^{\gr}$ as weight-wise truncations, togehter with the fact that for a smooth polynomial algebra, the $\Omega^i_A$ are all concentrated in homological degree $0$. All summarized, we now have obtained

$$
\gr^n(\rmF)_i\simeq \gr^n\left[\left(\tau_{\geq i}^{\gr}\DR(A)\right)^{\mathrm{h}\Filcirclegr}\right]
$$

\medskip

\noindent with $\tau_{\geq i}^{\gr}\DR(A) \simeq \bigoplus_{j\geq 0} \tau_{\geq i}\Omega^j[j]\simeq \bigoplus_{j\geq i} \Omega^j[j]$, and we find

$$
\gr^n(\rmF)_i\simeq \gr^n\left[\left(\bigoplus_{j\geq i} \Omega^j[j]\right)^{\mathrm{h}\Filcirclegr}\right]
$$

\noindent Let us first deal with the case $n\geq 0$.\\

\noindent We find that for $i\leq 0$, $\gr^n(\rmF)$ is constant and that for $i=0$, we have $\gr^n(\rmF)_0\simeq \gr^n[\DR(A)^{\mathrm{h}\Filcirclegr}]\simeq \Omega_A^{\geq n}[2n]$ (see \cref{truncatedderhamcomplex} and \cref{thmhkr}-(c)). We compute $\gr^n(\rmF)_1$ using the cofiber sequence of graded objects

$$
\xymatrix{
\ar[d]\gr^n(\rmF)_1\simeq \gr^n[(\bigoplus_{j\geq 1} \Omega^j[j])^{\mathrm{h}\Filcirclegr}]\ar[rr]&& \gr^n[(\bigoplus_{j\geq 0} \Omega^j[j])^{\mathrm{h}\Filcirclegr}]\simeq \Omega_A^{\geq n}[2n] \ar[d]\\
0\ar[rr]&& \gr^n[(\Omega_A^0)^{\mathrm{h}\Filcirclegr}]
}
$$

\noindent where $\Omega_A^0$ is seen as a graded module pure of weight $0$ with a trivial action of $\Filcirclegr$. Using the projection formula discussed in the  \cref{trivialrepresentationgradedandunderlying}, we obtain $(\Omega_A^0)^{\mathrm{h}\Filcirclegr}\simeq \Omega_A^0\otimes \cochains(\B\Filcircle, \structuresheaf)^{\gr}$. After the \cref{groupcohomologyfilteredcircle}, we find $ \Omega_A^0\otimes \cochains(\B\Filcircle, \structuresheaf)^{\gr}\simeq \bigoplus_{k\geq 0}\,\Omega_A^0(-k)[-2k]$ so that

$$\gr^n[(\Omega_A^0)^{\mathrm{h}\Filcirclegr}] = \begin{cases}
    0\,, & \text{if $n>0$}.\\
    \Omega_A^0(n)[2n], & \text{if $n\leq 0$}
  \end{cases}  $$

\noindent and

$$\gr^n(\rmF)_1= \begin{cases}
    \gr^n(\rmF)_0\simeq \Omega_A^{\geq n}[2n]\,, & \text{if $n>0$}.\\
    \fiber(\,\Omega_A^{\geq 0}\to \Omega_A^0)\simeq \,\Omega_A^{\geq 1}\,, & \text{if $n=0$}
  \end{cases}  $$
  
\noindent Recall that by \cref{thmhkr-stackversion}-(2) $\Omega_A^1[1]$ is in weight $1$ in $\DR(A)$. A similar computation as above shows that for a fixed $i\geq 0$, and the graded object of pure weight $i$, $\Omega_A^i[i](i)$ with a trivial action of $\Filcirclegr$  we find

$$
\gr^n[(\Omega_A^i[i](i))^{\mathrm{h}\Filcirclegr}]\simeq (\Omega_A^i[i](i))\otimes \cochains(\B\Filcircle, \structuresheaf)^{\gr} = \begin{cases}
    0\,, & \text{if $n>i$}.\\
    \Omega_A^i[-i+2n], & \text{if $0\leq n\leq i$}
  \end{cases} 
$$

\noindent By induction on $i$, using the cofiber sequences

\begin{equation}
    \label{keycofibersequenceinduction2}
\xymatrix{
\ar[d]\gr^n(\rmF)_{i+1}\simeq \gr^n[(\bigoplus_{j\geq i+1} \Omega^j[j])^{\mathrm{h}\Filcirclegr}]\ar[rr]&& \gr^n[(\bigoplus_{j\geq i} \Omega^j[j])^{\mathrm{h}\Filcirclegr}]\ar[d]\\
0\ar[rr]&& \gr^n[(\Omega_A^i[i](i))^{\mathrm{h}\Filcirclegr}]
}
\end{equation}

\noindent for $n\geq 0$, we find the filtered object 

$$
\gr^n(\rmF)=[\cdots \to \underbrace{\Omega_A^{\geq n+2}[2n]}_{n+2} \to \underbrace{\Omega_A^{\geq n+1}[2n]}_{n+1} \to \underbrace{\Omega_A^{\geq n}[2n]}_{n} = \underbrace{\Omega_A^{\geq n}[2n]}_{n-1} = \cdots  = \underbrace{\Omega_A^{\geq n}[2n]}_{0} = \cdots ]
$$

\noindent Following the discussion in the \cref{doublespeed}, we see that we have an equivalence of filtered objects 
$$
\gr^n(\rmF)\simeq \pi_{2n}^B(\mathrm{F}^\bullet_{\mathrm{HKR}}\HCminus{A})[2n] \,\,\, \text{ if $n\geq 0$}
$$

\medskip

\noindent For $n<0$ we have 
$$
\gr^{n}(\rmF) \simeq \pi_{0}^B(\mathrm{F}^\bullet_{\mathrm{HKR}}\HCminus{A})[2n]\simeq \Omega^{\geq 0}_A[2n]  \,\,\, \text{ if $n< 0$}
$$

\noindent Indeed, this follows from the use of the same cofiber sequences \eqformula{keycofibersequenceinduction2}, but this time starting from the formula \eqformula{negativegradedpieces} for the negative graded pieces and comparing with the discussion in the \cref{doublespeed}.\\

\noindent Since both \eqformula{twist2}  and \eqformula{twist3} are complete, this shows they are the same double-speed Beilinson-Whithead tower.

\end{proof}
\end{proposition}

\medskip

\begin{lemma}
\mylabel{lemmacohomologytrivialaction}
Let $\pi:\B\Filcircle\to \Filstack$ be the canonical projection and let $\rmE\in \Qcoh(\Filstack)^{<\infty}$ (homologically bounded above). Then the canonical map

\begin{equation}
\label{mapprojectionformula}
 (\pi^\ast (\rmE))^{\mathrm{h}\Filcircle}\simeq \pi_\ast \, \pi^\ast (\rmE)\to \pi_\ast (\structuresheaf)\otimes \rmE \simeq  \cochains(\B\Filcircle, \structuresheaf)\otimes \rmE
\end{equation}
\noindent is an equivalence in $\Qcoh(\Filstack)$.
\begin{proof}
Following \cref{left-complete-t-structure}-(b), $\pi_\ast$  is left-$t$-exact. Moreover, since the t-structure in $\Qcoh(\B\Filcircle)$ is determined by pullback along the atlas $e: \Filstack\to \B\Filcircle$ (see \cref{tstructureQcohBfilteredcircle}), we have $e^\ast\circ \pi^\ast =\mathrm{id}$ which implies that $\pi^\ast$ is $t$-exact.

\noindent Moreover, since for every $N$, $\Qcoh(\Filstack)_{\leq N}\subseteq \Qcoh(\Filstack)$ is stable under filtered colimits (because homology groups commute with filtered colimits), so is $\Qcoh(\B\Filcircle)_{\leq N}\subseteq \Qcoh(\B\Filcircle)$. This shows that $\pi^\ast:\Qcoh(\Filstack)_{\leq N}\to \Qcoh(\B\Filcircle)_{\leq N}$ preserves filtered colimits. Finally, using \cite[A.1.3 - part 2]{1402.3204} (evoking our flat atlas by affines for $\B\Filcircle$ of \cref{tstructureQcohBfilteredcircle}) we deduce that  $\pi_\ast:\Qcoh(\B\Filcircle)_{\leq N}\to \Qcoh(\Filstack)_{\leq N}$ also commutes with filtered colimits. In parallel, is is easy to notice that the operation $\rmE\mapsto \cochains(\B\Filcircle, \structuresheaf)\otimes \rmE$ is well-defined as a functor 
$$\Qcoh(\Filstack)_{\leq N}\to \Qcoh(\Filstack)_{\leq N}$$ 
\noindent  and also commutes with filtered colimits. Since the family $\structuresheaf(n)\in \Qcoh(\Filstack)$ generate $\Qcoh(\Filstack)$, we are reduced to show that \eqformula{mapprojectionformula} is an equivalence when $\rmE=\structuresheaf(n)$. But since the $\structuresheaf(n)$ are invertible objects (therefore dualizable), we are reduced to show the claim for $\rmE=\structuresheaf$, where this is a tautology.

\end{proof}
\end{lemma}

\medskip

\begin{remark}
\mylabel{trivialrepresentationgradedandunderlying}
The arguments and conclusion  of \cref{lemmacohomologytrivialaction} apply mutatis-mutantis to the two projections
$\B\Filcirclegr\to \B\Gm{}$ and $\B\Filcircleund\to \Spec \,\integerslocalp$.
\end{remark}

\medskip

\vspace{1cm}
This concludes the comparison of the two constructions in the smooth polynomial case. We now embark on a new digression to explain how to extend this comparison to all $\SCRings{k}$. This is not a straightforward consequence of \cref{comparisonBenresult1} because none of the functors used in the diagram \eqformula{compatiblewithBeilinsontower}, when Kan extended, give rise to the correct construction, as the following remark intends to illustrate:

\medskip

\begin{remark}
\mylabel{failureHCminusfilteredcommutesifted} The composition 
$$
\xymatrix{
\HCminFil:\SCRings{k}\ar[r]^{\HHFil}& \widehat{\Qcoh(\B\Filcircle)}\ar[r]^{(-)^{\mathrm{h}\Filcircle}}& \Filcomplete(\Mod_k)}
$$
\noindent does not commute with sifted colimits in general. Indeed, recall from the \cref{remark-completemodulesleftorthogonal} that $\Filcomplete(\Mod_k)$ is a presentable localization of $\Fil(\Mod_k)$ with a left adjoint to the inclusion $\Filcomplete(\Mod_k)\subseteq \Fil(\Mod_k)$ given by the completion functor $\widehat{(-)}$. Let then $A:K\to \SCRings{k}$ be a sifted diagram with $B:=\colim_{k\in K}\, A_k$. Then we a canonical map

$$
\underbrace{\colim_{k\in K}}_{\text{in }\,\,\, \Filcomplete(\Mod_k)} \HCminFil(A_k)\to \HCminFil(B)
$$

\noindent But by definition of a presentable localization, l.h.s is computed as the completion of the colimit in $\Fil(\Mod_k)$

$$
\underbrace{\colim_{k\in K}}_{\text{in }\,\,\, \Filcomplete(\Mod_k)} \HCminFil(A_k)\simeq \widehat{(\underbrace{\colim_{k\in K}}_{\text{in }\,\,\, \Fil(\Mod_k)} \HCminFil(A_k))}
$$

\noindent In other words, the commutation with sifted colimits is measured by fact the canonical map in $\Fil(\Mod_k)$

\begin{equation}
\label{covidattacksagain}
(\underbrace{\colim_{k\in K}}_{\text{in }\,\,\, \Fil(\Mod_k)} \HCminFil(A_k))\to \HCminFil(B)
\end{equation}

\noindent becomes an isomorphism after completion. But again using the \cref{remark-completemodulesleftorthogonal} we know that the map \eqref{covidattacksagain} becomes an equivalence after completion if and only if the induced map between the associated graded objects is an equivalence:

\begin{equation}
\label{covidattacksagain2}
[\underbrace{\colim_{k\in K}}_{\text{in }\,\,\, \Fil(\Mod_k)} \HCminFil(A_k))]^{\gr}\to \HCminFil(B)^{\gr}
\end{equation}

\noindent Even better, since $\gr$ commutes with all colimits, we only have to test the map of graded objects

\begin{equation}
\label{covidattacksagain3}
\underbrace{\colim_{k\in K}}_{\text{in }\,\,\, \Mod_k^{\bbZ-\gr}} \HCminFil(A_k))^{\gr}\to \HCminFil(B)^{\gr}
\end{equation}

\noindent But this poses a problem: in light of \cref{thmhkr}-(c), both sides are computed in terms of the completed truncated de Rham complex which as explained in \cref{derivedderrhamcomplex} contains an infinite product of wedge powers of the cotangent complex. But infinite products do not commute with sifted colimits in general. 
\end{remark}

\medskip

The idea to solve the problem posed in the \cref{failureHCminusfilteredcommutesifted} is to consider an extra filtration that disassembles the components of the infinite product found in the formula \eqformula{covidattacksagain3}.

\medskip

\begin{construction}(Skeletal Filtration)\mylabel{skeletalfiltration} 
\noindent Consider the map \eqformula{uinS1rep} of \cref{elementu}:
\begin{equation}
u:\pi^\ast(\structuresheaf(-1))[-2]\to \structuresheaf
\end{equation}
\noindent in $\Qcoh(\B\Filcircle)$. By abuse of notation, we will write again $\structuresheaf(-1)[-2]:=\pi^\ast(\structuresheaf(-1))[-2]$. Since $\structuresheaf(-1)$ is a line bundle, $u$ can also be seen as a map

$$
\structuresheaf\to \structuresheaf(1)[2]
$$

\noindent in $\Qcoh(\B\Filcircle)$. In what follows, we will consider the $\bbZ^\op$-sequence $\widetilde{\structuresheaf}$ in $\Qcoh(\B\Filcircle)$ given by

$$
\xymatrix{
\cdots \ar[r]&  \widetilde{\structuresheaf}_1=\structuresheaf\ar[r]^{=}& \widetilde{\structuresheaf}_0=\structuresheaf\ar[r]^-u& \widetilde{\structuresheaf}_{-1}=\structuresheaf (1)[2]\ar[r]^-u &\widetilde{\structuresheaf}_{-2}=\structuresheaf (2)[4]\ar[r]^-u &\cdots } 
$$

\noindent Since we have a canonical identification

$$
(\rmE)^{\mathrm{h}\Filcircle}:=\pi_\ast(\rmE)\simeq \RHom_{\B\Filcircle}(\structuresheaf, E)
$$

\noindent we can use $\widetilde{\structuresheaf}$ to obtain a lift $\widetilde{\pi_{\ast}}$ of $\pi_{\ast}$:

\begin{equation}
\label{liftskeletal}
    \xymatrix{
    \Qcoh(\B\Filcircle)\ar[r]^{\pi_\ast}\ar[d]^{\widetilde{\pi_\ast}}& \Qcoh(\Filstack)\\
    \Fil(\Qcoh(\Filstack))\ar[ur]^{\colim}
    }
\end{equation}

\noindent Informally, this lift sends $\rmE\in \Qcoh(\B\Filcircle)$ to

$$
[\cdots \to \RHom_{\B\Filcircle}(\structuresheaf(2)[4], E)\to \RHom_{\B\Filcircle}(\structuresheaf(1)[2], E)\to = (\rmE)^{\mathrm{h}\Filcircle}= (\rmE)^{\mathrm{h}\Filcircle}=\cdots]
$$

\noindent Moreover, since $\structuresheaf(t)[2t]$ are line bundles, we have 

$$
\RHom_{\B\Filcircle}(\structuresheaf(t)[2t], E)= (\rmE)^{\mathrm{h}\Filcircle}\otimes \structuresheaf(-t)[-2t]=: (\rmE)^{\mathrm{h}\Filcircle}(-t)[-2t]$$

\medskip

\noindent for $t\geq 0$, and the object $\widetilde{\pi_\ast}(E)$ becomes

\begin{equation}
    \label{skeletaltower}
[\cdots \to \underbrace{(\rmE)^{\mathrm{h}\Filcircle}(-2)[-4]}_{2}\to \underbrace{(\rmE)^{\mathrm{h}\Filcircle}(-1)[-2]}_{1}\to = \underbrace{(\rmE)^{\mathrm{h}\Filcircle}}_{0}= \underbrace{(\rmE)^{\mathrm{h}\Filcircle}}_{-1}=\cdots]
\end{equation}

\end{construction}

\medskip

\begin{construction}
\mylabel{usualskeletalfiltration} The extra filtration of the \cref{skeletalfiltration} is our version for the filtered circle of the usual skeletal filtration of the topological space $\B \circle$ by complex projective spaces $\bbC\mathbb{P}^n$ used in \cite[\S 4]{1808.05246} and \cite{lurie-K}. Indeed, we have $(\bbC)^{\mathrm{h}\circle}\simeq \cochains(\B\circle, \bbC)\simeq \bbC[u]$ where $u$ is a generator in homological degree $(-2)$. In particular, we find cofiber sequences

$$
\xymatrix{
\bbC[u][-2t]\ar[rr]^u \ar[d]&& \bbC[u]=(\bbC)^{\mathrm{h}\circle}\ar[d]\\ 
0\ar[rr]&& (\bbC)^{\mathrm{h}\Omega\, \bbC\mathbb{P}^{t-1}}\simeq \bbC[0]\oplus \bbC[-2]\oplus \cdots \oplus \bbC[-2(t-1)]
}
$$

\noindent More generally, if $\rmM\in \Qcoh(\B\circle)$, we define the skeletal filtration of $(\rmM)^{\mathrm{h}\circle}$ to be the $\bbZ^{op}$ sequence $\rmF_\bullet(\rmM)$ whose level $t\geq 0$ is given by

$$
\rmF_t(\rmM):= \fiber\, ((\rmM)^{\mathrm{h}\circle}\to (\rmM)^{\mathrm{h}\Omega \bbC\mathbb{P}^{t-1}})
$$

\noindent and is constant equal to $(\rmM)^{\mathrm{h}\circle}$ for $t\leq 0$. This construction provides a lifting of $(-)^{\mathrm{h}\circle}:\Qcoh(\B\circle)\to \Mod_k$ to filtered objects

\begin{equation}
\label{liftskeletaltopological}
    \xymatrix{
    \Qcoh(\B\circle)\ar[r]^-{(-)^{\mathrm{h}\circle}}\ar[d]_{\widetilde{(-)^{\mathrm{h}\circle}}}& \Mod_k\\
    \Fil(\Mod_k)\ar[ur]_-{\colim}
    }
\end{equation}
\end{construction}

\medskip

\begin{remark}
\mylabel{skeletalfiltrationscompatibleunderlyingobjects} The \cref{usualskeletalfiltration} and the \cref{skeletalfiltration} are compatible under the extraction of underlying objects. Indeed,  the commutativity of the diagram

$$
\xymatrix{
\B\Filcircle\ar[d]^{\pi}& \ar[l]_{1_a} \B\circle\ar[d]^{\pi^{\mathrm{u}}}\\
\Filstack \ar[d]^q & \ar[l]_{1} \Spec\, \integerslocalp\ar[dl]_e\\
\B \Gm{} 
}
$$

\noindent implies that 

$$
(1_a)^\ast \, (\structuresheaf(t)[2t])\simeq (1_a)^\ast\, \pi^\ast \,q^\ast (\integerslocalp(t)[2t])\simeq (\pi^{\mathrm{u}})^\ast \, e^\ast (\integerslocalp(t)[2t])
$$

\noindent but since $e$ is the atlas, $e^\ast$ amounts to forgetting the grading, so that 

$$(\pi^{\mathrm{u}})^\ast \, e^\ast (\integerslocalp(t)[2t])\simeq \structuresheaf[2t]$$.

\noindent It follows, since $1^\ast$ is symmetric monoidal, that  

$$
((\rmE)^{\mathrm{h}\Filcircle}(-t)[-2t])^{\mathrm{u}}\simeq  ((\rmE)^{\mathrm{h}\Filcircle})^{\mathrm{u}}[-2t]
$$

\noindent As in \eqformula{okforboundedabove}, if we assume that $\rmE\in \Qcoh(\B\Filcircle)^{<\infty}$,  we find 

$$
((\rmE)^{\mathrm{h}\Filcircle}(-t)[-2t])^{\mathrm{u}}\simeq  ((\rmE^\mathrm{u})^{\mathrm{h}\circle}[-2t]
$$

\medskip

\noindent In summary, this discussion establishes the commutativity of the diagram

\begin{equation}
\label{bifilteredcompatibleunderlyingspectrum}
    \xymatrix{
    \ar[d]^{\widetilde{\pi_\ast}}\Qcoh(\B\Filcircle)^{<\infty}\ar[r]^-{1_a^\ast} &\Qcoh(\B\circle)\ar[d]^{\widetilde{(-)^{\mathrm{h}\circle}}}\\
    \Fil(\Qcoh(\Filcircle))\ar[r]^-{\Fil(\, 1_a^\ast)}& \Fil(\Mod_k)
    }
\end{equation}

\end{remark}

\medskip

\begin{notation}
\mylabel{notationbifilteredHCminus}
We denote by $\HCminBiFil$ the composition
$$
\xymatrix{\SCRings{k}\ar[r]^-{\HHFil}& \Qcoh(\B\Filcircle)\ar[r]^-{\widetilde{\pi_\ast}}& \Fil(\Fil(\Mod_k))}
$$
\noindent providing $\HCminFil$ with the extra skeletal filtration of \cref{skeletalfiltration}.
\end{notation}

\medskip

\begin{proposition}
\mylabel{skeletalfiltrationcomplete}
The skeletal filtration of \cref{skeletalfiltration} is complete for any $\rmE\in \Qcoh(\B\Filcircle)_{>-\infty}$ (homologically bounded below). In particular, the functor $\HCminBiFil$ factors through bi-complete filtered modules:

$$\HCminBiFil:\SCRings{k}\to \Filcomplete(\Filcomplete(\Mod_k))$$

\begin{proof}
Let $\rmE\in \Qcoh(\B\Filcircle)_{>-\infty}$. We want to show that the limit of the tower \eqformula{skeletaltower} is zero. Let  us denote by $C_t$ the fiber in $\Qcoh(\B\Filcircle)$

$$
\xymatrix{
C_t\ar[d]\ar[r]&0\ar[d]\\
\structuresheaf\ar[r]^{u^t}& \structuresheaf(t)[2t]
}
$$

\noindent Unfolding the definitions, this gives us cofiber sequences

\begin{equation}
\label{diagramCt}
\xymatrix{
(\rmE)^{\mathrm{h}\Filcircle}(-t)[-2t] \ar[d]\ar[r]^{u^t}&(\rmE)^{\mathrm{h}\Filcircle}\ar[d]\\
0\ar[r]^{}& \RHom_{\B\Filcircle}(C_t, \rmE)
}
\end{equation}

\noindent so that by stability,  the limit of \eqformula{skeletaltower} vanishes if and only  the induced map

\begin{equation}
    \label{inducedmaptothelimittowerskeletal}
    (\rmE)^{\mathrm{h}\Filcircle}=\RHom_{\B\Filcircle}(\structuresheaf, \rmE)\to \lim_t\, \RHom_{\B\Filcircle}(C_t, \rmE)\simeq \RHom_{\B\Filcircle}(\colim_t \, C_t, \rmE)
\end{equation}

\noindent is an equivalence. Using the assumption that  $\rmE$ is homologically bounded below we are reduced to show the statement for $\rmE$ in the heart of the structure of $\Qcoh(\B\Filcircle)$. Indeed, since the $t$-structure is left-$t$-complete (see \cref{left-complete-t-structure})-(c)), every object $\rmE$ is the limit of its Postnikov tower
$\rmE\simeq \lim_n \tau^{\Fil}_{\leq n} \rmE$. In particular, if we assume that $\rmE$ is bounded below, there exists $N\in \mathbb{Z}$ such that 
$\rmE\simeq \lim_{n\geq N}\, \tau^{\Fil}_{\leq n} \rmE$ with $\tau^{\Fil}_{\leq N} \rmE = (\pi_N^{\Fil}\, \rmE )[N]$. Since both sides of \eqformula{inducedmaptothelimittowerskeletal} preserve limits in $\rmE$, and by the nature of the Postnikov tower, by induction we are reduced to show that \eqformula{inducedmaptothelimittowerskeletal} is an equivalence when $\rmE$ is in $\Qcoh(\B\Filcircle)^{\heartsuit}$. But in this case the action of $\Filcircle$ must be trivial and we can use the \cref{lemmacohomologytrivialaction} and the fact that $\otimes$ is compatible with colimits separetely in each variable, to establish that the cofiber sequence \eqformula{diagramCt} is equivalent to

\begin{equation}
\label{diagramCt2}
\xymatrix{
\rmE\otimes \cochains(\B\Filcircle, \structuresheaf)(-t)[-2t] \ar[d]\ar[r]^-{u^t}&\rmE\otimes \cochains(\B\Filcircle, \structuresheaf)\ar[d]\\
0\ar[r]^{}& \rmE\otimes \RHom_{\B\Filcircle}(C_t, \structuresheaf)
}
\end{equation}
\noindent in filtered objects. Therefore, to conclude the proof it will be enough to show that if $\rmE\in \Fil(\Mod_k)^{\heartsuit_{\mathrm{std}}}$ (the levelwise standard $t$-structure), then the the canonical map of filtered objects

\begin{equation}
    \label{canonicalmapbetweenlimits}
    \rmE\otimes \cochains(\B\Filcircle, \structuresheaf)\to \underset{t}{\lim}\,\rmE\otimes \RHom_{\B\Filcircle}(C_t, \structuresheaf)
\end{equation}

\noindent is an equivalence of filtered objects. For this purpose we start with the observation that the since the standard $t$-structure in $\Mod_k$ is both left and right complete (see \cite[7.1.1.13]{lurie-ha}), so is the levelwise $t$-structure on $\Fil(\Mod_k)$. Therefore, if we denote by $\pi^{\Fil}_i$ the homotopy groups with respect to the levelwise $t$-structure in $\Fil(\Mod_k)$, to show that \eqformula{canonicalmapbetweenlimits} is an equivalence in $\Fil(\Mod_k)$ it is enough to show that it for every $i$ the induced map

\begin{equation}
    \label{canonicalmapbetweenlimits2}
    \pi^{\Fil}_{i}(\rmE\otimes \cochains(\B\Filcircle, \structuresheaf))\to \pi^{\Fil}_{i}(\underset{t}{\lim}\,\rmE\otimes \RHom_{\B\Filcircle}(C_t, \structuresheaf))
\end{equation}

\noindent is an isomorphism. By the conclusion of \cref{groupcohomologyfilteredcircle}, we know that as a filtered object $\cochains(\B\Filcircle, \structuresheaf)$ decomposes as a direct sum of filtered objects

\begin{equation}
\label{cohomologyBS1filtereddirectsum}
    \cochains(\B\Filcircle, \structuresheaf))\simeq \underset{p\geq 0}{\bigoplus} \integerslocalp(-p)[-2p]
    \end{equation}

\noindent This formula, together with the cofiber-sequence \eqformula{canonicalmapbetweenlimits} when $\rmE=\structuresheaf$, tells us that, as filtered objects, we have a direct sum decomposition

\begin{equation}
\label{cohomologyCPnformuladirectsum}
\RHom_{\B\Filcircle}(C_t, \structuresheaf))\simeq \, \bigoplus^{t-1}_{p\geq 0} \integerslocalp(-p)[-2p]
\end{equation}

\noindent Combining the formulas \eqformula{cohomologyBS1filtereddirectsum} and \eqformula{cohomologyCPnformuladirectsum} and the compatibility of the Day tensor products with direct sums in $\Fil(\Mod_k)$, the morphism \eqformula{canonicalmapbetweenlimits} can be written as

\begin{equation}
    \label{canonicalmapbetweenlimits3}
    \underset{p\geq 0}{\bigoplus} \rmE(-p)[-2p]\to \underset{t}{\lim}\, \, \bigoplus^{t-1}_{p\geq 0} \rmE(-p)[-2p]
\end{equation}

\noindent By definition of $\pi^{\Fil}_{i}$ and the assumption that $\rmE = \pi^{\Fil}_{0}(\rmE)$, we find

\begin{equation}
\label{formulapifilteredheart}
 \pi^{\Fil}_{i}(\underset{p\geq 0}{\bigoplus} \rmE(-p)[-2p])\simeq \underset{p\geq 0}{\bigoplus} \,\pi^{\Fil}_{i}(\rmE(-p)[-2p]))\simeq \begin{cases}
    \rmE(-p) & \text{for } i=-2p\\
    0, & \text{ otherwise } 
    \end{cases}
\end{equation}

\noindent To conclude we have to compute the objects $\pi^{\Fil}_{i}(\underset{t}{\lim}\, \, \bigoplus^{t-1}_{p\geq 0} \rmE(-p)[-2p])$. For this purpose we use the limit spectral sequence. Since $\Fil(\Mod_k)$ is a stable $\infty$-category and the sequence

$$t\mapsto \underset{t}{\lim}\, \, \rmE\otimes \RHom_{\B\Filcircle}(C_{t}, \structuresheaf))$$

\noindent vanishes (by definition) for $t<0$, we can use a dual version of \cite[1.2.2.14]{lurie-ha} providing a spectral sequence with first page given by

$$
\pi^{\Fil}_{t+q}(\fiber\,[\,\,\rmE\otimes\RHom_{\B\Filcircle}(C_{t}, \structuresheaf))\to \rmE\otimes \RHom_{\B\Filcircle}(C_{t-1}, \structuresheaf)\,\,]\, )$$

\medskip

\noindent and converging to $\pi^{\Fil}_{t+q}(\underset{t}{\lim}\, \, \bigoplus^{t-1}_{p\geq 0} \rmE(-p)[-2p]))$. But an immediate computation with the formula \eqformula{cohomologyCPnformuladirectsum} shows that 

$$
\fiber\,[\rmE\otimes\RHom_{\B\Filcircle}(C_{t}, \structuresheaf))\to \rmE\otimes \RHom_{\B\Filcircle}(C_{t-1}, \structuresheaf)]\,\,\simeq \rmE(-t)[-2t]
$$

\noindent 

\noindent and the spectral sequence becomes

$$
\pi^{\Fil}_{t+q}(\rmE(-t)[-2t])\implies \,\,\pi^{\Fil}_{t+q}(\underset{t}{\lim}\, \, \bigoplus^{t-1}_{p\geq 0} \rmE(-p)[-2p]))
$$

\medskip

\noindent But since $\rmE$ is in the heart, we have  

$$\pi^{\Fil}_{t+q}(\rmE(-t)[-2t])\simeq \, \begin{cases}
    \rmE(-t) & \text{for } t+q=-2t\\
    0, & \text{ otherwise } 
    \end{cases}$$
    
\medskip

\noindent and therefore the spectral sequence degenerates at the first page and we find

$$\pi^{\Fil}_{i}(\underset{t}{\lim}\, \, \bigoplus^{t-1}_{p\geq 0} \rmE(-p)[-2p]))= \begin{cases}
    \rmE(-t) & \text{for } i=-2t\\
    0, & \text{ otherwise } 
    \end{cases}$$

\noindent thus coinciding with \eqformula{formulapifilteredheart}. This concludes the proof.

\end{proof}

\end{proposition}

\medskip

\begin{remark}
\mylabel{skeletalfiltrationiscompleteusualcircle}
The argument in the proof of \cref{skeletalfiltrationcomplete} can also be used to show that the functor $\widetilde{(-)^{\mathrm{h}\circle}}$ in \cref{usualskeletalfiltration} also lands in complete filtrations. See the proof of  \cite[Lemma 4.2]{1808.05246}. 
\end{remark}

\medskip

We will now compute the bigraded pieces $\HCminBiFil$. We start with a simple observation:

\begin{remark}
\mylabel{bifilteredassociatedgraded}
The two ways of extracting the associated bigraded object from a biltered object, coincide. More precisely, we have a commutative diagram 
\begin{equation}
  \label{compatiblebigradedtwoways}  \xymatrix{
  \Fil(\,\Fil(\Mod_k)\,)\ar[rr]^{\Fil(\,\gr\,)}\ar[d]^{\gr(\,-\,)}&& \Fil(\,\Mod_k^{\bbZ-\gr}\,)\ar[d]^{\gr(\,-\,)}&\\
    (\,\Fil(\Mod_k)\,)^{\bbZ-\gr}\ar[rr]^-{(\,\gr\,)^{\bbZ-\gr}}&& (\,\Mod_k^{\bbZ-\gr}\,)^{\bbZ-\gr}\ar[r]^-{\sim}& \Mod_k^{\bbZ\times \bbZ-\gr}
    }
\end{equation}
To check this it is enough to see what happens for each bigraded piece $(i,n)$: if $\rmE\in \Fil(\Fil(\Mod_k))$ with $i$ the index of exterior filtration and $n$ for the interior, we find

$$
\text{down-left composition} = \cofiber (\rmE^{n+1}_{i}/\rmE^{n+1}_{i+1}\to \rmE^{n}_{i}/\rmE^{n}_{i+1})
$$

$$
\text{up-right composition} = \cofiber (\rmE^{n}_{i+1}/\rmE^{n+1}_{i+1}\to \rmE^{n}_{i}/\rmE^{n+1}_{i})
$$

\noindent It is an easy exercice to use concatenation of pushouts in $\Mod_k$ to check that the two agree canonically.

\end{remark}

\begin{remark}
\mylabel{gradedpiecesskeletalfiltration}Let us compute the graded pieces of the skeletal filtration of \cref{skeletalfiltration}. Given $\rmE\in \Qcoh(\B\Filcircle)$, the bifiltered object $\widetilde{\pi_\ast}(\rmE)$ is given by a diagram of filtered objects

$$
[\cdots \to \underbrace{\rmE^{\mathrm{h}\Filcircle}(-t-1)[-2(t+1)]}_{t+1}\to  \underbrace{\rmE^{\mathrm{h}\Filcircle}(-t)[-2t]}_{t}\to \cdots \to \underbrace{\rmE^{\mathrm{h}\Filcircle}}_{0}= \rmE^{\mathrm{h}\Filcircle}=\cdots  ]
$$

\noindent In particular, the  associated graded piece of degree $t$ with respect to the skeletal filtration $\gr_{\mathrm{sk}}^t(\widetilde{\pi_\ast}(\rmE))$, is the filtered object  given by the cofiber in $\Fil(\Mod_k)$ 

\begin{equation}
\label{cofibersequenceskeletalgraded}
\xymatrix{
\ar[d]\rmE^{\mathrm{h}\Filcircle}(-t-1)[-2(t+1)]\ar[r]&\rmE^{\mathrm{h}\Filcircle}(-t)[-2t]\ar[d]\\
0\ar[r]& \gr_{\mathrm{sk}}^t(\widetilde{\pi_\ast}(\rmE))
}
\end{equation}

\noindent The filtered object $\gr_{\mathrm{sk}}^t(\widetilde{\pi_\ast}(\rmE))$ has its own internal associated graded $\gr(\gr_{\mathrm{sk}}^t(\widetilde{\pi_\ast}(\rmE)))$ that can be computed as follows: since $\gr:\Fil(\C)\to \C$ commutes with all colimits, the cofiber sequence \eqformula{cofibersequenceskeletalgraded} produces a cofiber sequence in graded objects

\begin{equation}
\label{cofibersequencebigradedskeletal}
\xymatrix{
\ar[d]\gr(\rmE^{\mathrm{h}\Filcircle}(-t-1)[-2(t+1)])\ar[r]&\gr(\rmE^{\mathrm{h}\Filcircle}(-t)[-2t])\ar[d]\\
0\ar[r]& \gr(\gr_{\mathrm{sk}}^t(\widetilde{\pi_\ast}(\rmE)))
}
\end{equation}

\noindent Finally, we notice that we have a canonical equivalence in graded objects

$$
\gr(\rmE^{\mathrm{h}\Filcircle}(-t)[-2t])\simeq \gr(\rmE^{\mathrm{h}\Filcircle})\otimes \gr(\structuresheaf(-t)[-2t]))\simeq \gr(E)^{\mathrm{h}\Filcirclegr}(-t)[-2t]
$$

\noindent that follows from the base-change formula \eqformula{beckchevalley2} together with the fact that $\gr$ is identified with the pullback along $0:\B\Gm{}\to \Filstack$ (see \cref{prop:gradingsandstacks}) so that $\gr(\structuresheaf(-t)[-2t])\simeq 0^\ast(q^\ast(\integerslocalp(-t)[-2t])= \mathrm{Id}^\ast (\integerslocalp(-t)[-2t])= \integerslocalp(-t)[-2t]$ using the notations in \cref{elementu}.

\noindent By definition of the Day tensor product on graded objects and since $\integerslocalp(-1)$ is pure of weight -1, we find 

$$
\gr^i(\gr(E)^{\mathrm{h}\Filcirclegr}(-t][-2t])\simeq  \gr^{i+t}(\gr(E)^{\mathrm{h}\Filcirclegr})[-2t]
$$ 

\noindent and the cofiber sequence \eqformula{cofibersequencebigradedskeletal} becomes

\begin{equation}
\label{cofibersequencebigradedskeletal2}
\xymatrix{
\ar[d]\gr^{i+t+1}(\gr(E)^{\mathrm{h}\Filcirclegr})[-2t-2]\ar[r]&\gr^{i+t}(\gr(E)^{\mathrm{h}\Filcirclegr})[-2t]\ar[d]\\
0\ar[r]& \gr^i(\gr_{\mathrm{sk}}^t(\widetilde{\pi_\ast}(\rmE)))
}
\end{equation}

\end{remark}

\medskip

\begin{proposition}
\mylabel{gradedpiecesbifilteredHCminus}
\noindent Let $A\in \SCRings{k}$. Then the graded pieces of the bifiltered object $\HCminBiFil(A)$ ( \cref{notationbifilteredHCminus}) are given by

\begin{equation}
\label{formulaforbigradedpieces}
\gr^i(\gr_{\mathrm{sk}}^t(\HCminBiFil(A)))\simeq \bigwedge^{i+t}\, \cotangent_A[i-t]
\end{equation}

\begin{proof}
Using the \cref{thmhkr}-(b), the cofiber sequence \eqformula{cofibersequencebigradedskeletal2} becomes

\begin{equation}
\label{cofibersequencebigradedskeletal3}
\xymatrix{
\ar[d]\gr^{i+t+1}(\DR(A)^{\mathrm{h}\Filcirclegr})[-2t-2]\ar[r]&\gr^{i+t}(\DR(A)^{\mathrm{h}\Filcirclegr})[-2t]\ar[d]\\
0\ar[r]& \gr^i(\gr_{\mathrm{sk}}^t(\HCminBiFil(A)))
}
\end{equation}

and using \cref{thmhkr}-(c), it becomes

\begin{equation}
\label{cofibersequencebigradedskeletal3}
\xymatrix{
\ar[d]\mathbb{L}\widehat{\DR}^{\,\,\geq i+t+1}(A/k)[-2t-2]\ar[r]&\mathbb{L}\widehat{\DR}^{\,\,\geq i+t}(A/k)[-2t]\ar[d]\\
0\ar[r]& \gr^i(\gr_{\mathrm{sk}}^t(\HCminBiFil(A)))
}
\end{equation}

\noindent Finally, using the explicit formula for the derived completed truncated de Rham complex of \cref{derivedderrhamcomplex}, we obtain 
$$
\gr^i(\gr_{\mathrm{sk}}^t(\HCminBiFil(A)))\simeq \bigwedge^{i+t}\, \cotangent_A[ 2(i+t)-(i+t)][-2t]\simeq \bigwedge^{i+t}\, \cotangent_A[i-t]
$$

\end{proof}
\end{proposition}

\medskip

\begin{proposition}
\mylabel{HCminBiFilpreservessifted}
\noindent The functor $\HCminBiFil:\SCRings{k}\to \Filcomplete(\Filcomplete(\Mod_k))$ (see \cref{skeletalfiltrationcomplete}) preserves sifted colimits. In particular it is the left Kan extension of its restriction to polynomial algebras.
\begin{proof}
This is now a consequence of the fact that a map of bi-complete, bi-filtered complexes is an equivalence if and only the associated bi-graded map is an equivalence - this follows from applying the discussion in the \cref{remark-completemodulesleftorthogonal} to filtered objects in filtered objects. We then proceed as in the \cref{failureHCminusfilteredcommutesifted} but this time it is enough to check that the colimit map \eqformula{covidattacksagain3} on the bi-graded pieces is compatible with sifted colimits. But this exactly where the formula in the \cref{gradedpiecesbifilteredHCminus} comes into play: the bi-graded pieces $ \bigwedge^{i+t}\, \cotangent_A[i-t]$ commute with sifted colimits. This can be deduced for instance as a combination of the \cref{prop-HHfilterediscomplete}
and the fact the functor $\gr$ commutes with all colimits and the graded pieces of $\HHFil(A)$ are precisely what we need by the \cref{thmhkr}-(b).
\end{proof}
\end{proposition}

\medskip We are now in position to establish the comparison result for all simplicial commmutative algebras.

\medskip

\begin{construction}
\mylabel{finalcomparisondiagram}
 By incorporating the skeletal filtration of \cref{skeletalfiltration} in the diagrams \eqformula{comparisonBensmoothdiagram1}, \eqformula{comparisonBensmoothdiagramleft1},  \eqformula{comparisonBensmoothdiagramleft2} and \eqformula{bifilteredcompatibleunderlyingspectrum}, we obtain a commutative diagram

 \begin{equation}
     \label{comparisonBenalldiagram1}
     \xymatrix{
     &&\Nerve(\Poly{k})\ar[d]^{\HHFil}\ar[drr]^{\HH}&&\\
     &&\Qcoh(\B\Filcircle)^{\dagger, <\infty}_{\geq 0}\ar@{=}[dll]\ar[rr]^{(-)^{\mathrm{u}}}\ar[d]^{\tau_{\geq }^{\Fil}}&& \Qcoh(\B\circle)^{<\infty}\ar[d]^{\tau_{\geq}^{\mathrm{std}}}\\
    \ar[d]^{\widetilde{\pi_\ast}}\Qcoh(\B\Filcircle)^{\dagger, <\infty}_{\geq 0} &&\underset{\,\mathrm{Wht}}{\Filcomplete}^{\mathrm{const}\geq 0}[\,\,\Qcoh(\B\Filcircle)^{\dagger, <\infty}_{\geq 0}\,\,]\ar[ll]^-{\colim_{\mathrm{Wth}}}\ar[d]^{(\widetilde{\pi_\ast})_{lvl}}\ar[rr]^-{(-)^{\mathrm{u}}_{lvl}}&& \underset{\mathrm{Wht}}{\Filcomplete}^{\mathrm{const}\geq 0}[\,\,\Qcoh(\B \circle)\,\,]\ar[d]^{(\widetilde{(-)^{\mathrm{h}\circle}})_{lvl}}\\
   \underset{\mathrm{sk}}{\Filcomplete}[\,\underset{\Filcircle}{\Filcomplete}(\Mod_k)\,]   &&\ar[ll]\underset{\mathrm{Wht}}{\Filcomplete}^{\mathrm{const}\geq 0}[\,\,\underset{\mathrm{sk}}{\Filcomplete}[\,\underset{\Filcircle}{\Filcomplete}(\Mod_k)\,]\,\,]\ar[rr]^{\colim_{\Filcircle}}\ar[ll]^{\colim_{\mathrm{Wth}}}\ar[d]_{\sim}^{\mathrm{twist}}&& \underset{\mathrm{Wht}}{\Filcomplete}^{\mathrm{const}\geq 0}[\,\,\underset{\mathrm{sk}}{\Filcomplete}[\Mod_k]\,\,]\ar[d]_{\sim}^{\mathrm{twist}_{\mathrm{sk},\mathrm{Wth}}}\\
   && \underset{\mathrm{sk}}{\Filcomplete}[\,\,\underset{\Filcircle}{\Filcomplete}[\,\underset{\mathrm{Wht}}{\Filcomplete}^{\mathrm{const}\geq 0}(\Mod_k)\,]\,\,]\ar[rr]^{\colim_{\Filcircle}}\ar[ull]^{\colim_{\mathrm{Wth}}}&& \underset{\mathrm{sk}}{\Filcomplete}[\,\,\underset{\mathrm{Wht}}{\Filcomplete}^{\mathrm{const}\geq 0}[\Mod_k]\,\,]
     }
\end{equation}

\noindent where the subscript indicates the origin of the filtration: Whitehead tower for the standard $t$-structures, $\mathrm{Wth}$, skeletal $\mathrm{sk}$ and our filtered circle $\Filcircle$. Finally, the comparison result of \cref{comparisonBenresult1} (and our \cref{notationbifilteredHCminus}) provides commutativity for

\begin{equation}
    \label{comparisonBenalldiagram2}
    \xymatrix{
    &&\Nerve(\Poly{k})\ar[drr]\ar[d]\ar[dll]_{\HCminBiFil}&&\\
    \underset{\mathrm{sk}}{\Filcomplete}[\,\underset{\Filcircle}{\Filcomplete}(\Mod_k)\,]   &&\ar[ll]^-{\colim_{\mathrm{Wth}}} \underset{\mathrm{sk}}{\Filcomplete}[\,\,\underset{\Filcircle}{\Filcomplete}[\,\underset{\mathrm{Wht}}{\Filcomplete}^{\mathrm{const}\geq 0}(\Mod_k)\,]\,\,]&& \underset{\mathrm{sk}}{\Filcomplete}[\,\,\underset{\mathrm{Wht}}{\Filcomplete}^{\mathrm{const}\geq 0}[\Mod_k]\,\,]\ar[ll]^-{\underset{\mathrm{sk}}{\Filcomplete}[\,\,\tau_{\geq\, 2\ast}^{B}\,]}
    }
\end{equation}
\end{construction}

\begin{notation}
\mylabel{samenotationsasBen}
\noindent As in \cite[\S 4]{1808.05246}, we denote the right-diagonal arrow in the diagram \eqformula{comparisonBenalldiagram2}, $\Nerve(\Poly)\to \underset{\mathrm{sk}}{\Filcomplete}[\,\,\underset{\mathrm{Wht}}{\Filcomplete}[\Mod_k]\,\,]$, by $\rmF_{\mathrm{HKR}}\rmF_{\mathrm{CW}}\HCmin$ and we denote by $\rmF^\bullet_B\HCmin$ the composition

$$
\rmF^\bullet_B\HCmin:= (\colim_{\mathrm{Wth}})\, \circ \, (\underset{\mathrm{sk}}{\Filcomplete}[\,\,\tau_{\geq\, 2\ast}^{B}\,])\, \circ \, \rmF_{\mathrm{HKR}}\rmF_{\mathrm{CW}}\HCmin
$$

\noindent In \cite[\S 4]{1808.05246} the author defines the value of $\rmF^\bullet_B\HCmin$ for objects in $\SCRings{k}$ by Kan extension from polynomial algebras, under sifted colimits. Let us denote this extension by $\mathrm{LKE}(\rmF^\bullet_B\HCmin)$.

\end{notation}

\medskip

\begin{corollary}
\mylabel{finalcomparisonBen}
The two $\infty$-functors

$$\rmF^\bullet_B\HCmin, \HCminBiFil: \Nerve(\Poly{k})\to \Filcomplete(\Filcomplete(\Mod_k))
$$

\noindent are naturally equivalent under the commutativity of \eqformula{comparisonBenalldiagram2}. Moreover, since $\HCminBiFil$ preserves sifted colimits (\cref{HCminBiFilpreservessifted}), both $\infty$-functors

$$\mathrm{LKE}(\rmF^\bullet_B\HCmin), \HCminBiFil: \Nerve(\Poly{k})\to \Filcomplete(\Filcomplete(\Mod_k))
$$

\noindent are naturally equivalent via the equivalence provided by restriction along $\Nerve(\Poly{k})\subseteq \SCRings{k}$:

$$
\xymatrix{
\Fun^{\mathrm{sifted}}(\SCRings{k}, \Filcomplete(\Filcomplete(\Mod_k)))\ar[r]^{\sim}& \Fun(\Nerve(\Poly{k}), \Filcomplete(\Filcomplete(\Mod_k)))
}
$$

\end{corollary}

\medskip

\section{Applications and complements}\mylabel{lastsection}

\subsection{Towards shifted symplectic structures 
in non-zero characteristics}
\mylabel{section-shiftedsymplectic}

Let $k$ be a commutative $\integerslocalp$-algebra. We 
assume that $p\neq 2$.\\

For a commutative simplicial $k$-algebra $A$, \cref{thmhkr} provides a filtered version  of negative cyclic homology complex $\HCminFil(A/k)$ and tells us that the
graded pieces are canonically given by 
$$\gr^i\HCminFil(A/k) \simeq \mathbb{L}\widehat{\DR}^{\,\,\geq i}(A/k).$$
The \ifunctors $A \mapsto \HCminFil(A/k)$ and $A \mapsto \mathbb{L}\widehat{\DR}^{\,\,\geq i}(A/k)$
are extended by descent to all derived stacks $\calY$:

$$\HCminFil(\calY/k)=\lim_{\Spec\, A \rightarrow \calY}\HCminFil(A/k)$$

\noindent and this comes equiped with 
a canonical filtration whose graded pieces are $\mathbb{L}\widehat{\DR}^{\,\,\geq i}(\calY/k)$, also defined by left Kan extension.
The natural generalization of the notion of shifted symplectic structures of \cite{MR3090262} is the following definition.

\begin{definition}
\mylabel{definition-closedforms}
\hfill \\
\begin{enumerate}
    \item 

For a derived stack $\calY$ over $k$, we define the \emph{ complex
of closed $q$-forms on $\calY$} to be 
$$\mathcal{A}^{cl,q}(\calY/k):=\gr^q\HCminFil(\calY/k)[-q]
\simeq \mathbb{L}\widehat{\DR}^{\geq q}(\calY/k)[-q].$$
\item A closed $2$-form $\omega$ of degree 
$n$ on a derived stack $\calY$ is \emph{non-degenerate}
if the underlying element in $\mathsf{H}^n(\calY,\wedge^2 \cotangent_{\calY/k})$ is non-degenerate in the
sense of \cite{MR3090262}.
\end{enumerate}
\end{definition}

\medskip

The above definition is a rather naive notion, as we believe that there may exist more subtle versions. For instance, 
it is very natural to ask for a shifted symplectic structure to lift to an element in the second level of the filtration $F^2\HCminFil(\calY/k)$.  We thus
define the notion of \emph{enhanced shifted symplectic structures} as follows.

\begin{definition}
\mylabel{definition-enhanced}
For a derived stack $\calY$ over $k$, an \emph{enhanced shifted symplectic
structure of degree $n$ on $\calY$} consists of the data of an element
$$\omega \in H^n(F^2\HCminFil(\calY/k)[-2])$$
such that its image by the canonical map
$$F^2\HCminFil(\calY/k) \longrightarrow \gr^2\HCminFil(\calY/k)$$
defines a non-degenerate closed $2$-form of degree 
$n$ on $\calY$.
\end{definition}

In characteristic zero, the HKR theorem implies that there is always a canonical
lift, but outside of this case lifts might not even exist (and if it does, it may not 
be canonical).
The data of such a lifting seems to be of some importance to us, 
in particular for questions concerning quantization. 
As a first example of existence of this type of lift we
show below that most shifted symplectic structures constructed in nature do possesses such lifts, by means of the Chern character construction. \\

\begin{reminder}[Chern Character]
\mylabel{cherncharacter}
Recall the existence of a Chern character $\chern : \Ktheoryconnective(A) \longrightarrow 
\HC^-(A/k)$, 
which is here considered as a map of spectra
(see for instance \cite{MR3338682}). 
Note also that $\Ktheoryconnective$ stands here for the space of connective
$K$-theory of $A$. This map can be enhanced into a morphism of stacks of spaces
on the site of derived affine schemes over $k$
$$\chern : \Ktheoryconnectivesheaf \longrightarrow \HC^-$$
($\Ktheoryconnectivesheaf$ is the stack associated to $A \mapsto \Ktheoryconnective(A)$, note that 
$A \mapsto \HC^-(A/k)$ is itself already a stack because $\HH$ itselt is a stack in the étale topology \cite{MR1120653}). 

When $\calY$ is a smooth Artin stack over $k$, theorem \ref{thmhkr} $(d)$ implies that 
the canonical map $\HCminFil(\calY/k) \longrightarrow \HC^-(\calY/k)$
is an equivalence, and thus the Chern character produces a well defined map
$$Ch : \Kspace_0(\calY) \longrightarrow \mathrm{H}^0(\HCminFil(\calY/k)).$$
This applies, in particular, when $\calY$ is of the form $\B G$ for $G$ a
smooth group scheme over $k$.

\end{reminder}

\medskip

We now consider the stack $\B\Sln$, and contemplate the following splitting 
statement.

\begin{lemma}
We have a canonical splitting 
$$\mathrm{H}^0(\HCminFil(\B\Sln)) \simeq \mathrm{H}^0(F^2\HCminFil(\B\Sln)) \oplus \mathrm{H}^0(\gr^0\HCminFil(\B\Sln)).$$
Moreover, the unit in the graded ring $\mathrm{H}^0(\gr^*\HCminFil(\B\Sln))$
induces an isomorphism
$$k\simeq \mathrm{H}^0(\gr^0\HCminFil(\B\Sln)).$$
\end{lemma}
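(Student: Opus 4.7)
The plan is to combine the HKR filtration of \cref{thmhkr} with a direct low-degree cohomological computation on $\B\Sln$. Associated to the filtration $F^{\bullet}\HCminFil(\B\Sln)$ are cofiber sequences
\[
F^{i+1}\HCminFil(\B\Sln) \longrightarrow F^{i}\HCminFil(\B\Sln) \longrightarrow \mathbb{L}\widehat{\DR}^{\,\,\geq i}(\B\Sln),
\]
from which the proposed splitting will be deduced by analyzing $\mathrm{H}^{0}$ and $\mathrm{H}^{-1}$ of the graded pieces for $i = 0,1$. The $k$-summand will be realized canonically by the unit coming from the structure map $\B\Sln \to \Spec\, k$.

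For the second assertion, the first step is to verify $\mathrm{H}^{0}(\mathbb{L}\widehat{\DR}^{\,\,\geq 0}(\B\Sln)) \simeq k$. Using $\mathbb{L}_{\B\Sln} \simeq \mathfrak{sl}_{n}^{\vee}[-1]$, the weight $q = 0$ term in $\prod_{q \geq 0}(\wedge^{q}\mathbb{L}_{\B\Sln}[-q])$ contributes $\mathrm{H}^{0}(\B\Sln, \calO) = k$ since $\B\Sln$ is connected, while the higher weight pieces $\Sym^{q}(\mathfrak{sl}_{n}^{\vee})[-2q]$ lie in cohomological degrees $\geq 2$ and do not interfere (using the standard vanishings of $\mathrm{H}^{i}(\Sln, \Sym^{q}\mathfrak{sl}_{n}^{\vee})$ in the relevant bidegrees for $\Sln$ reductive). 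The unit of the ring structure supplies the generator of the resulting copy of $k$.

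For the first assertion, the unit map $k \to \HCminFil(\B\Sln)$ coming from $\B\Sln \to \Spec\, k$ splits the augmentation $\mathrm{H}^{0}(\HCminFil(\B\Sln)) \twoheadrightarrow \mathrm{H}^{0}(Gr^{0}) = k$, giving $\mathrm{H}^{0}(\HCminFil(\B\Sln)) = k \oplus K$ with $K$ the kernel; by the long exact sequence from $F^{1} \to F^{0} \to Gr^{0}$, the kernel $K$ is the image of $\mathrm{H}^{0}(F^{1}) \to \mathrm{H}^{0}(F^{0})$. To identify $K$ with $\mathrm{H}^{0}(F^{2}\HCminFil(\B\Sln))$, the long exact sequence from $F^{2} \to F^{1} \to Gr^{1}$ reduces the task to establishing
\[
\mathrm{H}^{0}(\mathbb{L}\widehat{\DR}^{\,\,\geq 1}(\B\Sln)) = 0 \quad \text{and} \quad \mathrm{H}^{-1}(\mathbb{L}\widehat{\DR}^{\,\,\geq 1}(\B\Sln)) = 0.
\]
Unravelling the formula $\mathbb{L}\widehat{\DR}^{\,\,\geq 1}(A) = \prod_{q \geq 1}(\wedge^{q}\mathbb{L}_{A}[2-q])$ for $A$ corresponding to $\B\Sln$ and combining it with the Chern--Weil description $\mathrm{H}^{\ast}_{\mathrm{dR}}(\B\Sln) \simeq k[c_{2}, \ldots, c_{n}]$ (with $|c_{i}| = 2i$), both vanishings reduce to $\mathrm{H}^{1}_{\mathrm{dR}}(\B\Sln) = \mathrm{H}^{2}_{\mathrm{dR}}(\B\Sln) = 0$ (no Chern classes below degree $4$) together with $\mathrm{H}^{i}(\Sln, k) = 0$ for $i = 1, 2$.

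The principal obstacle is verifying these de Rham and group cohomology computations integrally over $\integerslocalp$: namely, the Hodge-to-de-Rham degeneration for $\B\Sln$ and the low-degree vanishing of rational cohomology of $\Sln$ in positive characteristic. For primes where $\Sln$ has torsion-free integral cohomology these are classical; at bad primes one argues using the base-change formalism of \cref{basechangeBfilteredcircle} together with the flatness properties of the filtered circle from \cref{filteredcircleisaffine} to reduce to the separate rational and mod-$p$ pictures, where the required vanishings can be checked directly.
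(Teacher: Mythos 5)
Your proposal is correct and its skeleton coincides with the paper's: the splitting comes from the unit section of $\mathrm{H}^0(\HCminFil(\B\Sln)) \to \mathrm{H}^0(Gr^0\HCminFil(\B\Sln)) \simeq k$ together with the long exact sequences of the filtration, and everything reduces to the vanishing of $\mathrm{H}^0$ and $\mathrm{H}^{-1}$ of $Gr^1\HCminFil(\B\Sln)\simeq \mathbb{L}\widehat{\DR}^{\,\,\geq 1}(\B\Sln/k)$ (the paper obtains the $\mathrm{H}^{-1}$-vanishing, and hence left-exactness of all the relevant sequences, from the blanket observation that every $Gr^p$ is cohomologically concentrated in non-negative degrees). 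Where you genuinely diverge is in establishing that vanishing: you feed in the full low-degree de Rham cohomology of $\B\Sln$ --- the integral Chern--Weil statement $\mathrm{H}^{*}_{\mathrm{dR}}(\B\Sln)\simeq k[c_2,\dots,c_n]$ --- plus $\mathrm{H}^1(\Sln,k)=0$, whereas the paper runs the Hodge filtration one step further: the triangle $\mathbb{L}\widehat{\DR}^{\,\,\geq 2}(\B\Sln/k)[-2] \to \mathbb{L}\widehat{\DR}^{\,\,\geq 1}(\B\Sln/k) \to \Gamma(\B\Sln,\mathbb{L}_{\B\Sln})[1]$, combined with coconnectivity of $\mathbb{L}\widehat{\DR}^{\,\,\geq 2}$, identifies $\mathrm{H}^0(\mathbb{L}\widehat{\DR}^{\,\,\geq 1}(\B\Sln/k))$ with $(\sln^*)^{\Sln}$, which vanishes because $\Sln$ is perfect and hence admits no nontrivial character to $\Ga{}$. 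Note that your input strictly subsumes the paper's: the Hodge graded pieces of $\mathrm{H}^2_{\mathrm{dR}}(\B\Sln)$ are exactly $\mathrm{H}^2(\Sln,k)$ and $(\sln^*)^{\Sln}$, so you are proving the needed invariant-theoretic fact \emph{and more}, and the "more" is the expensive part --- the computation of $\mathrm{H}^{*}_{\mathrm{dR}}(\B\Sln)$ over $\integerslocalp$ at primes $p\mid n$ is a substantive theorem (of Totaro type), not something one checks "directly" after base change to $\bbQ$ and $\finitefieldp$. Your route is viable, but I would replace the Chern--Weil step by the Hodge-filtration reduction, which isolates the single light fact $(\sln^*)^{\Sln}=0$.
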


\begin{proof}
First of all, by \ref{thmhkr} $(c)$,
$\gr^0\HCminFil(\B\Sln)$ computes the algebraic de Rham
complex of the stack $\B\Sln$. Therefore, the unit map 
$$k \longrightarrow \mathrm{H}^0(\gr^0\HCminFil(\B\Sln))$$ is indeed an isomorphism. Moreover, we see by the same
argument that all the complexes $\gr^p\HCminFil(\B\Sln)$ are
cohmologically concentrated in non-negative degrees. As a result, we have
a short exact sequence of $k$-modules
$$\xymatrix{
0 \ar[r] & \mathrm{H}^0(F^1\HCminFil(\B\Sln)) \ar[r] & 
\mathrm{H}^0(\HCminFil(\B\Sln)) \ar[r] & k.}$$
This sequence splits on the right because of the unit map
$k \rightarrow \mathrm{H}^0(\HCminFil(\B\Sln))$.

In order to prove the lemma it thus remains to show that the natural 
map
$$\mathrm{H}^0(F^2\HCminFil(\B\Sln)) \longrightarrow \mathrm{H}^0(F^1\HCminFil(\B\Sln))$$
is bijective. 

Because of the exact sequence
$$\xymatrix{
0 \ar[r] & \mathrm{H}^0(F^2\HCminFil(\B\Sln)) \ar[r] & \mathrm{H}^0(F^1\HCminFil(\B\Sln))
\ar[r] & \mathrm{H}^0(\gr^1\HCminFil(\B\Sln)),
}$$
it only remains to show that $\mathrm{H}^0(\gr^1\HCminFil(\B\Sln))=0$. By the theorem
\ref{thmhkr} $(c)$ we now that $\gr^1\HCminFil(\B\Sln)\simeq 
\mathbb{L}\widehat{\DR}^{\,\,\geq 1}(\B\Sln/k)$. We claim that 
$\mathrm{H}^0(\mathbb{L}\widehat{\DR}^{\,\,\geq 1}(\B\Sln/k))\simeq (\sln^*)^{\Sln}$
is the module of $\SLn$-invariants in the dual of the Lie algebra $\sln$, and 
thus
is indeed zero. To see this we use the Hodge filtration on the
derived de Rham complex, which here consists of an exact triangle
$$\xymatrix{\mathbb{L}\widehat{\DR}^{\,\,\geq 2}(\B\Sln/k)[-2]
\ar[r] & \mathbb{L}\widehat{\DR}^{\,\,\geq 1}(\B\Sln/k) \ar[r] & 
\Gamma(\B\Sln,\mathbb{L}_{\B\Sln})[1].}$$
The cotangent complex of $\B\Sln$ is given by $\sln^*[-1]$, with
its natural action of $\Sln$. Therefore, 
$\Gamma(\B\Sln,\mathbb{L}_{\B\Sln})[1]\simeq (\sln^*)^{h\Sln}$
is the homotopy fixed point of coadjoint representation. We have $\mathrm{H}^0((\sln^*)^{h\Sln})\simeq (\sln^*)^{\Sln}\simeq 0$.
\end{proof}

The lemma implies that the Chern character of the 
tautological rank $n$ bundle on $\B\Sln$ defines an element
$$Ch_{\geq 2} \in \mathrm{H}^0(F^2\HCminFil(\B\Sln)).$$
The image of this element in $\mathrm{H}^0(\gr^2\HCminFil(\B\Sln))\simeq 
(Sym^2(\sln)^*)^{\Sln}$ is the quadratic form
$$\sln \otimes \sln \longrightarrow k$$
sending $(A,B)$ to $Tr(AB)$ and is thus non-degenerate. In other words
$Ch_{\geq 2}$ defines an enhanced shifted symplectic structure 
of degree $2$ on $\B\SLn$.

As a result, any vector bundle on a derived stack $X$, equipped
wit a trivialization of its determinant line bundle, defines
an element
$$\omega \in \mathrm{H}^0(F^2\HCminFil(X/k))$$
by pull-back of $Ch_{\geq 2}$ by the classifying map $X \longrightarrow
\B\Sln$. This construction can be used in order to construct
enhancements of the shifted symplectic structures on moduli 
of $\Sln$-bundles on Calabi-Yau varieties constructed in 
\cite{MR3090262}.

\medskip

\subsection{Filtration on Hochschild cohomology}
\mylabel{filtrationonHochschildcohomology}

As a second example of possible applications of the filtered circle, we explain here how it can also provide interesting filtrations on Hochschild cohomology. For this, we 
will have to consider $\Filcircle$ not 
as a group anymore but as a cogroup object
inside filtered stacks. It is even more, 
as it carries a $\Etwo$-cogroupoid structure
over $\Filstack$
which can be exploited to get a filtration 
on Hochschild cohomology compatible with 
its natural $E_2$-structure. Moreover, all the constructions in this part make sense
over the sphere spectrum, and so provide filtrations on topological Hochschild cohomology as well. \\

As a start we consider the natural closed embedding of stacks
$$0 : \B\Gm{} \hookrightarrow
\Filstack.$$
The direct image of the structure sheaf 
defines a commutative cosimplicial algebra
over $\Filstack$. Let us
denote it by $\calE$, and let 
$\structuresheaf \longrightarrow \calE$ be the unit
map. The nerve of this map
produces a groupoid object inside
commutative cosimplicial algebras
over $\Filstack$, which
is denoted by $\calE^{(1)}$. In the same
manner, we can consider the nerve of $\structuresheaf \longrightarrow \calE^{(1)}$ to get
an $\Etwo$-groupoid object (that is a groupoid
object inside groupoid objects) 
$\calE^{(2)}$ and so on and so forth.

We define this way an $\En$-groupoid object
$\calE^{(n)}$ inside the $\infty$-category of
commutative cosimplicial algebras over 
$\Filstack$.

\begin{definition}
\mylabel{filteredsphere}
The \emph{filtered $n$-sphere} is defined to be $\cospec\, \calE^{(n+1)}$. It is an 
$\Enone$-cogroupoid object in affine stacks
over $\Filstack$. It is
denoted by $\Filsphere$.
\end{definition}

Note that $\Filsphere$ possesses an 
underlying object of $(1,...,1)$-morphisms.
Explicitly, this is given by 
$\Spec\, \mathsf{H}^*(\nsphere,\integerslocalp)$, and is called the
formal or graded sphere. When $n=1$
we recover our filtered circle $\Filcircle$
as a filtered affine stack, but now 
it comes equiped with an $\Etwo$-cogroupoid
structure rather than a group structure.

We now consider $\mathcal{L}_{Fil}^{(n)}X=
\Map(\Filsphere,X)$, for
a derived affine scheme $X=\Spec\, A$.
The cogroupoid structure on $\Filsphere$ 
endows $\mathcal{L}_{Fil}^{(n)}X$ with an 
$\Enone$-groupoid structure acting on $X$.
Passing to functions and taking linear duals
we get a filtered $\Enone$-algebra
over $\bbZ_p$ whose underlying 
object is $\HH_{\En}^{*}(A)$, the $n$-the iterated Hochschild cohomology of $A$, and the
associated graded is $\Sym_A(\cotangent_{A}[n])^{\vee}$, the dual 
of shifted differential forms, which 
can be defined as shifted polyvector fields over $X$. In summary, we expect the following proposition:

\begin{proposition}
\mylabel{iteratedHH}
Let $k$ be a commutative $\bbZ_p$-algebra, 
and $A$ a commutative simplicial $k$-algebra. The iterated 
Hochschild cohomology $\HH^*_{\Enone}(A/k)$,
carries a canonical filtration compatible with its $\Enone$-multiplicative structure,
whose associated graded is 
the $\Einfinity$-algebra of
$n$-shifted polyvectors on $X$.
\end{proposition}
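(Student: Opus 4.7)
The plan is to lift the strategy used in the $n=1$ case (the HKR Theorem of Section \ref{HKRsection}) to the iterated nerve construction underlying $\Filsphere$. The first step is to identify the underlying and associated graded of $\Filsphere$ itself. By induction on $n$, applied to the iterated nerve construction defining $\calE^{(n+1)}$, I expect that the underlying affine stack $(\Filsphere)^{\mathrm{u}}$ recovers the affinization of the topological $n$-sphere $\nsphere$ over $\integerslocalp$, while the associated graded $(\Filsphere)^{\mathrm{gr}}$ is $\cospec\, \mathsf{H}^*(\nsphere,\integerslocalp)$ as a graded affine stack. The base case $n=1$ is exactly Proposition \ref{propositionaffinizationunderlying} and Theorem \ref{theorem-splitsquarezero}; the inductive step uses that the nerve functor commutes with pullback along $0,1: \{*,\B\Gm{}\} \to \Filstack$ (by the affineness of $\Filsphere$ over $\Filstack$, in the sense of Proposition \ref{filteredcircleisaffine}, together with the base-change results of Remark \ref{basechangeBfilteredcircle}).

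Next, I would form the mapping stack $\mathcal{L}^{(n)}_{Fil}X = \Map_{/\Filstack}(\Filsphere, X\times\Filstack)$. Its relative representability by an affine derived scheme over $\Filstack$ follows exactly as in Proposition \ref{filteredloopsrepresentablyrelative}. The $\Enone$-cogroupoid structure on $\Filsphere$ produces an $\Enone$-groupoid structure on $\mathcal{L}^{(n)}_{Fil}X$ acting on $X$, and hence on its cosimplicial algebra of functions $\Ofil(\mathcal{L}^{(n)}_{Fil}X)$. Taking the linear dual produces the desired filtered $\Enone$-algebra over $\integerslocalp$. The identification of the associated graded proceeds as in Theorem \ref{thmhkr-stackversion}(2): using step one, $\Map((\Filsphere)^{\mathrm{gr}}, X) \simeq \Spec\,\Sym_A(\cotangent_{A/k}[n])$, and dualizing gives the $\Einfinity$-algebra of $n$-shifted polyvector fields on $X$. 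The identification of the underlying uses, in the same way, that $(\Filsphere)^{\mathrm{u}}$ is the affinization of $\nsphere$ together with the fact that for affine $X$ the mapping stack $\Map(\nsphere, X)$ factors through affinization (as in the proof of Theorem \ref{thmhkr-stackversion}(1) using finite cohomological dimension and projection formulas).

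The main obstacle, in my view, is not the identifications as complexes but rather the compatibility of the $\Enone$-multiplicative structures. On the mapping-stack side, the $\Enone$-algebra structure on the dual of functions arises from the iterated nerve presentation of $\Filsphere$. On the Hochschild side, the standard $\Enone$-structure on $\HH^*_{\En}(A/k)$ is the higher Deligne structure, computable via factorization homology $\int_{\nsphere} A$ together with the $\Enone$-structure coming from the framed little-disks operad of $\nsphere$. Matching these two $\Enone$-structures requires comparing the iterated-nerve presentation of $\nsphere$ with the Eilenberg-Zilber/factorization presentation, and this is where the technical work lies: one should exhibit an equivalence of $\Enone$-coalgebras between the cosimplicial model underlying $\Filsphere$ (restricted to the underlying) and the standard nerve model of $\nsphere$ used in the definition of higher Hochschild cohomology. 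For the associated graded piece the analogous comparison is easier because the cohomology ring $\mathsf{H}^*(\nsphere,k)$ is a strict square-zero extension and formality makes the $\Enone$-structures discrete in the sense of Lemma \ref{uniqueEinfinityalgebra}, so the identification with the shifted polyvector $\Einfinity$-algebra is essentially forced.
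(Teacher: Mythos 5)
Be aware first that the paper does not actually prove \cref{iteratedHH}: the statement is introduced with the words ``we expect the following proposition'' and no argument is supplied, so there is no proof of record to compare yours against. Judged on its own terms, your outline does follow the route the authors clearly have in mind (identify the underlying and associated graded of $\Filsphere$, form the relative mapping stack, dualize, and transport the filtration through fixed points), and you have correctly isolated the hardest point: the comparison between the $\Enone$-structure produced by the iterated conerve of $\structuresheaf\to\calE$ and the higher Deligne/factorization $\Enone$-structure on $\HH^*_{\En}(A/k)$. That comparison is genuinely open in the text, and your proposal flags it without closing it, so as written this is a programme rather than a proof.

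Beyond that acknowledged gap, two steps are less routine than you suggest. First, the identification of $(\Filsphere)^{\mathrm{u}}$ with the affinization of $\nsphere$ does not follow by a formal induction on conerves: the underlying object of $\Filsphere$ is built from iterated derived self-intersections of $0:\B\Gm{}\hookrightarrow\Filstack$, and relating this to $\Kspace(\Frobfixed,n)$ --- which \cref{KFixnaffinization} identifies as the affinization of $\Kspace(\bbZ,n)$, not of $\nsphere$ --- requires an argument you have not given; since the affinizations of $\nsphere$ and of $\Kspace(\bbZ,n)$ differ for $n\geq 2$, one must also decide which object actually computes $\HH^*_{\En}$ after mapping into affine $X$. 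Second, the ``essentially forced'' identification of the associated graded does not transpose directly from \cref{uniqueEinfinityalgebra}: that lemma exploits that on $\integerslocalp\oplus\integerslocalp[-1]$ the weight grading coincides with the cohomological grading, whereas on the dual of $\Sym_A(\cotangent_{A/k}[n])$ for a general simplicial $A$ the graded endomorphism spaces are not discrete, so the space of compatible $\Enone$-structures must be controlled by a separate argument (for instance reduction to polynomial algebras and torsion-freeness followed by base change to $\bbQ$, as in the proof of \cref{thmhkr}, which you do not spell out). Finally, the passage from functions on $\mathcal{L}^{(n)}_{Fil}X$ to Hochschild \emph{cohomology} by ``taking the linear dual'' hides duality and finiteness issues that the paper itself leaves implicit and that a complete proof would have to address.
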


The last proposition can possibly be used 
in order to define
singular supports of coherent sheaves, or of sheaves of linear categories, over any base scheme. For instance, 
in the context of bounded coherent sheaves, 
this could allows one to extend the notion and construction of 
\cite{MR3300415}.

\subsection{Generalized cyclic homology and formal groups}
\mylabel{section-generalizedcyclicformalgroups}
The filtered circle $\Filcircle$ we have constructed in this paper is
part of a much more general framework that associates
a circle $\circle_E$ to any reasonable abelian formal group $E$. To be more precise:

\begin{construction}
\mylabel{filteredEcircle}
We can start by an abelian formal 
group $E$ over some base commutative ring $k$, and assume that 
$E$ is formally smooth and of relative dimension $1$ over $k$. 
The Cartier dual $G_E$ of $E$ is a flat abelian group scheme over $\Spec\, k$, obtained as $\Spec\, \mathcal{O}(E)^\vee$, 
where $\mathcal{O}(E)^{\vee}$ is the commutative and cocommutative
Hopf algebra of distributions on $E$. Because $E$ is smooth and
of relative dimension $1$, $\mathcal{O}(E)^\vee$ is a flat 
commutative $k$-coalgebra which is locally for the Zariski topology on $k$ 
isomorphic to $k[X]$ with the standard comultiplication 
$\Delta(X^n)=\sum_{i+j=n}\binom{n}{i}X^i\otimes X^j$. The \emph{$E$-circle} is defined as the group stack over 
$k$ defined by 
$$\circle_E:=\B G_E.$$
\end{construction}

Under reasonable assumptions on $E$ 
the stack $\circle_E$ is an affine stack over $k$. Moreover, its \icategory of representations, $\Qcoh(\B \circle_E)$ is naturally equivalent
to the \icategory of mixed complexes over $k$, at least locally 
on $\Spec\, k$. To be more precise, if we denote by $\omega_E$ the
line bundle of relative $1$-forms on $G_E$, the \icategory 
$\Qcoh(\B \circle_E)$ is equivalent to $\omega_E$-twisted mixed complexes, namely
comodules over the $k$-coalgebra $k\oplus \omega_E[-1]$.
However, the symmetric
monoidal structure on $\Qcoh(\B \circle_E)$ corresponds to a non-standard 
monoidal structure on mixed complexes that depends on the formal group
structure on $E$. 

The filtration on $\Filcircleund$ whose associated
graded is $\Filcirclegr$ seems to also exists in some interesting examples
of formal group laws. 



We recover the results in this paper when $E$ is either the additive or the multiplivative formal group $\formalGa{}$, resp. $\formalGm{}$:

\begin{construction}
\mylabel{deformationGmtoGa}
Let $k$ be a commutative ring. There exists a filtered group deforming $\Gm{k}$ to $\Ga{k}$. Namely, given $\lambda\in k$ take $\Gm{k}^\lambda=:\Spec (k[T, \frac{1}{1+\lambda T}])$. This is a group scheme under the multiplicative rule $T\mapsto 1\otimes T+ T\otimes 1+ \lambda.T\otimes T$ and unit $T\mapsto 0$. When $\lambda=0$ we get $\Ga{k}$ and for $\lambda=1$ we get $\Gm{k}$. Taking formal completions this deforms $\formalGm{k}$ to $\formalGa{k}$. 
\end{construction}

\personal{\margin{new comment}Explain this in terms of the deformation to the normal cone from $\formalGm{}$ to the exponential of its lie algebra $\formalGa{}$. Also say that in char.0,  that the exponential makes a map of groups but does not produce a map of cogroups and this is the origin of HKR (reference)}

\begin{proposition}
\mylabel{recoverviaCartierduality}
Let $k=\integerslocalp$. Then 
$$\circle_{\formalGa{}}:= \B G_{\formalGa{}}\simeq  \Filcirclegr\,\,\,\, \text{ and }\,\,\,\, \circle_{\formalGm{}}:= \B G_{\formalGm{}}\simeq \Filcircleund$$
Moreover, the filtration on $\Frobfixed$ is Cartier dual to the filtration on $\formalGm{k}$ of \cref{deformationGmtoGa}.

\begin{proof}
The proposition is equivalent to the claims that:
\begin{enumerate}
    \item $\Frobfixed$ is Cartier dual to $\formalGm{}$;
    \item $\Frobkernel$ is Cartier dual to $\formalGa{}$;
    \item the filtrations are Cartier dual
\end{enumerate}
This is precisely the content of \cite[Theorem]{MR1829979}. See \cite[37.3.4]{MR2987372} for Cartier duality.
\end{proof}
\end{proposition}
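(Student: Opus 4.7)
The strategy is to reduce the claim to three pieces of Cartier duality, all at the level of commutative (formal) group schemes over $\integerslocalp$, and then to deduce the statement for the classifying stacks $\circle_E = \B G_E$ by functoriality of the bar construction. Specifically, it suffices to establish: (a) $G_{\formalGm{}} \simeq \Frobfixed$; (b) $G_{\formalGa{}} \simeq \Frobkernel$; (c) the interpolating family $\ker \calG_p \to \affineline{}$ of \cref{construction-interpolationkernelfixedpoints}, together with its $\Gm{}$-equivariance, is Cartier dual to the family $\Gm{k}^{\lambda}$ of \cref{deformationGmtoGa}, descending to the corresponding duality of filtered objects over $\Filstack$. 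Items (a) and (b) immediately yield the two stated equivalences after applying $\B$, and (c) yields the comparison of filtrations.

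For (a) and (b), the plan is to invoke \cite[Theorem]{MR1829979} (equivalently the classical Cartier-Dieudonné theory of curves on formal groups, cf.~\cite[37.3.4]{MR2987372}). Concretely, under Cartier duality the $R$-points of $G_{\formalGm{}}$ are identified with the $R$-valued curves on $\formalGm{}$ via the Artin-Hasse style exponential $W(R) \xrightarrow{\sim} 1 + t\,R[[t]]$, and the Frobenius operator on curves corresponds to the Frobenius $\Frob_p$ on $\Wittp$. Taking fixed points gives $\Frobfixed \simeq G_{\formalGm{}}$. Similarly, the curves on $\formalGa{}$ correspond, under the standard pairing, to Witt vectors annihilated by $\Frob_p$, identifying $\Frobkernel \simeq G_{\formalGa{}}$. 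Since both $\formalGm{}$ and $\formalGa{}$ are formally smooth of relative dimension one over $\integerslocalp$, the relevant Hopf algebras $\mathcal{O}(E)^{\vee}$ are flat, and the duality is self-dual, so these identifications are canonical.

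For (c) I would first upgrade Cartier duality to a family statement over $\affineline{}$. The deformation $\Gm{k}^{\lambda} = \Spec(k[T, (1+\lambda T)^{-1}])$ is a flat commutative group scheme over $\affineline{} = \Spec(k[\lambda])$; its formal completion along the identity is a flat family of one-dimensional formal groups interpolating between $\formalGa{}$ (at $\lambda=0$) and $\formalGm{}$ (at $\lambda = 1$). Taking the Cartier dual fiberwise, one obtains a flat family of affine group schemes over $\affineline{}$ whose fibers at $0$ and $1$ are $\Frobkernel$ and $\Frobfixed$ respectively, by (a) and (b). The central task is to identify this dual family with $\ker \calG_p$. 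The natural candidate pairing is induced by the formula $(f, a) \mapsto \Frob_p(f) - [a^{p-1}](f)$ defining $\calG_p$: the twist by $[a^{p-1}]$ on the Witt side corresponds precisely to the deformation of the group law $T \mapsto T \otimes 1 + 1 \otimes T + \lambda\, T \otimes T$ on the dual side, and the $\Gm{}$-equivariance observed in \cref{remark-underlyingandassociatedgradedFilCircle} matches the evident $\Gm{}$-action $\lambda \mapsto t\lambda$ on the deformation (since then $\formalGm{}^\lambda$ acquires the same weight-one twist used in \cref{construction-GmactionWittVectors}).

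The main obstacle will be this last identification, which ultimately reduces to a direct calculation at the level of Hopf algebras: one must verify that the natural pairing between $\mathcal{O}(\ker \calG_p)$ and $\mathcal{O}(\Gm{}^\lambda)_{\mathrm{formal}}$ over $k[\lambda]$ is perfect and exchanges the comultiplications. Once this is done, descending along the $\Gm{}$-action (which is fpqc by \cref{lemma-Gpisfpqccover} applied in a family, combined with the usual $\Gm{}$-quotient presentation of $\Filstack$) yields the Cartier duality of filtered group schemes over $\Filstack$, and hence the last assertion of the proposition.
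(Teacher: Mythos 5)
Your proposal follows the paper's proof exactly: it reduces the proposition to the same three Cartier-duality claims — $\Frobfixed$ dual to $\formalGm{}$, $\Frobkernel$ dual to $\formalGa{}$, and the duality of the two interpolating filtrations — and invokes the same reference \cite{MR1829979} for the substance. The only divergence is that the paper cites that theorem for the family statement (your item (c)) as well, whereas you sketch, without completing, a direct Hopf-algebra verification of the pairing between $\ker \calG_p$ and the completed deformation $\Gm{k}^{\lambda}$; since that computation is precisely what the cited theorem supplies, nothing essential is missing.
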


Let $E$ be an abelian formal group over $k$ as before and $\circle_E$
the corresponding $E$-circle. For any 
derived affine $k$-scheme $X$ we define
the $E$-loop space $\mathcal{L}_E X:=\Map(\circle_E,X)$, that comes equiped with 
an $\circle_E$-action. The $E$-Hochschild homology of $X$ over $k$
is by definition the complex of functions $\mathcal{O}(\mathcal{L}_EX)$. 
It is denoted by $\HH(X,E)$. The $\circle_E$-action on $\HH^E(X)$ induces
a mixed structure on $\HH(X,E)$ whose total complex computes the
$\circle_E$-equivariant cohomology and is called by definition 
the negative cyclic $E$-homology $\HC^-(X,E)$. When a filtration 
exists on $E$, then there is an HKR-type filtration on $\HC^-(X,E)$ whose
associated graded is again derived de Rham cohomology.\\

Of course, the results of this work are recovered when $E$ is taken to be
the multiplicative formal group law and we recover an isomorphism of filtered group schemes $\Hgroup=G_{E_T}$. See \cite{MR1829979}.\\

An example of particular interest is when $E$ comes, by completion, 
from an elliptic curve. The corresponding Hochscild and cyclic homology
can be called \emph{elliptic Hochschild and cyclic homology}.\\



 \subsection{Topological and $q$-analogues}\mylabel{topoqanalogue}

The filtered circle $\Filcircle$ constructed in this work
possesses at least two extensions, both of quantum /non-commutative nature: one
as a non-commutative group stack over the sphere spectrum and
a second extension as filtered group stack over
$\bbZ[q,q^{-1}]$.  \\

\subsubsection{\textbf{q-analogue}}
\mylabel{qanaloguefilteredcircle}
 As a start, we relax the restriction of working $p$ locally and attempt to make sense of these constructions over $\bbZ$. A first possibility is simply to use big Witt vectors and 
define the filtered  group scheme $H$ as the intersection of all kernels of the endomorphisms $\calG_p$
for all primes $p$. There is however a second 
possible description, which has the merit of showing 
the natural q-deformed version, which we now describe. 

We start by the filtered formal group $\mathbb{G}$, 
interpolating between the formal multiplicative and
the formal additive group over $\bbZ$. The corresponding
formal group over $\mathbb{A}^1$ is given by 
$X+Y+\lambda XY$ where $\lambda$ is the coordinate on 
the affine line. The underlying formal group 
is $\widehat{\mathbb{G}_m}$ whereas the associated graded 
is $\widehat{\mathbb{G}_a}$ together with its natural graduation given the natural action of $\mathbb{G}_m$. 
The algebra of distributions of the filtered formal 
scheme $\mathbb{G}$ defines a filtered commutative and
cocommutative Hopf algebra $\mathcal{R}$. This 
algebra can be described explicitely as being the
algebra of integer valued polynomials, that is the 
subring of $\bbQ[X]$ formed by all polynomials $P$ 
such that $P(\mathbb{Z})\subset \mathbb{Z}$. The filtration is then induced the degree of polynomials. 

The associated graded to this filtration is the ring 
$Gr\mathcal{R}$ of divided powers over $\mathbb{Z}$. This
is the subring of $\bbQ[X]$ generated by the $\frac{X^n}{n!}$. \\

An integral version of the filtered group scheme $\Hgroup$, 
and of the filtered circle $\Filcircle$, can then be
defined as $\mathsf{H}_{\bbZ}:=\Spec(Rees(\mathcal{R}))$, 
where $Rees(\mathcal{R})$ is the Rees construction associated to the filtered Hopf algebra $\mathcal{R}$. 
The integral version of the filtered circle is then 
defined to be
$$\FilcircleglobalZ:=\B \mathsf{H}_\bbZ.$$
It is a pleasant exercise to show that when restricted
over $\Spec\, \bbZ_p$ this recovers our filtered circle
$\Filcircle$. We believe that all the statement proved in this work can be extended over $\bbZ$, but some of the strategies of proof we use do not obviously extend to 
the situation where we deal with an infinite number of primes. \\

One advantage of the above presentation using 
integer valued polynomial algebras is the striking fact that these admit natural q-deformed versions. 
The q-deformed version  $\mathcal{R}_q$ of the ring $\mathcal{R}$
is introduced and studied in \cite{MR3604068}, and is essentially 
the Cartan part $\textbf{U}^0(\sltwo)$ of the divided power
quantum group of Lusztig (see \cite{MR3604068} end of section 4). 
In particular, 
we think that the filtered Hopf algebra
$\mathcal{R}$ possesses a q-deformed version 
$\mathcal{R}_q$, which is a commutative and cocommutative
filtered Hopf algebra over $\bbZ[q,q^{-1}]$, recovering
$\mathcal{R}$ when $q=1$. The spectrum of this
provides a q-deformed version of $\mathsf{H}_\bbZ$ that we denote
by $\mathsf{H}_{\bbZ,q}$. Its classifying stack is by definition
the q-deformed filtered circle.

\begin{definition}
\mylabel{qdeformedcircledefinition}
The $q$\emph{-deformed filtered circle} is the 
filtered stack $\FilcircleglobalZ(q):=\B \mathsf{H}_{\bbZ,q}$. It is a stack over
$\Filstack\times \Spec\, \bbZ[q,q^{-1}]$.
\end{definition}

As in \cref{thmhkr}, by considering the derived mapping 
stack $\Map(\FilcircleglobalZ(q),X)$, it is then possible to define q-analogues of Hochschild
and cyclic homology of a scheme $X$, 
together with a filtration 
whose associated graded may be used to define 
a notion
of a q-deformed derived de Rham cohomology; this should be
compared to various notions of q-deformed
de Rham complexes appearing the literature (see
for instance \cite{MR1792063} and 
\cite{MR1225788}).

However, 
to make the above definition precise requires some extra work. 
For instance, it seems to us that the associated graded
of $\FilcircleglobalZ(q)$ can not truly exist  as
a naive commutative object and requires to work over
some braided monoidal base category associated to 
$\bbZ[q,q^{-1}]$, as this is done for instance in the theory 
of Ringel-Hall algebras, see for instance \cite{MR1778168}. In fact, we
expect the associated graded of $\FilcircleglobalZ(q)$ to be 
of the form $\B K(q)$, where $K(q)$ is the spectrum 
of the Ringel-Hall algebra over the one point Quiver. 

\subsubsection{\textbf{Topological Analogue}}
\mylabel{topologicalanaloguefilteredcircle}
 Let us mention yet another extension of the filtered circle, now over the sphere spectrum. We do not believe that the filtered stack $\Filcircle$ can exist 
as a spectral stack in any sense, as
the associated graded $\Filcirclegr$ probably cant exist
over the sphere spectrum. However, it is possible to construct a non-commutative version of this object, using the $2$-periodic sphere spectrum of \cite{lurie-K}. As shown
in \cite{lurie-K} there exists a filtered $\Etwo$-algebra 
whose underlying object is $\Sphere^{K(\bbZ,2)}$
(so is $\Einfinity$) but its associated graded 
is a $2$-periodic version of the sphere spectrum
$\Sphere[\beta,\beta^{-1}]$. This $2$-periodic 
sphere spectrum is known not to exist as an $\Einfinity$-ring. However, we can consider the natural augmentation 
$$\Sphere^{K(\bbZ,2)} \longrightarrow \Sphere$$
and consider the spectrum
$$A:=\Sphere\otimes_{\Sphere^{K(\bbZ,2)}}\Sphere
$$
As a mere spectrum, this is equivalent to the 
group ring over the circle $A \simeq \Sphere[K(\bbZ,1)]$. However, the $E_2$-filtration 
on $\Sphere^{K(\bbZ,2)}$ induces a structure of a filtered bialgebra on $A$, which should be considered
as a non-commutative analogue of the filtered circle. 

More precisely, the idea is to study 
the dual filtered bialgebra $B=A^*$ and consider
$\Spec B$ in some sense to produce a 
topological version of the filtered circle. We however
do not know currently know how to exploit the existence of $B$ and invite the interested reader to pursue this approach further.

\BIBLIO

\end{document}